\definecolor{references}{rgb}{0,0,1}
\newtheorem*{theorem*}{Theorem}
\newtheorem{theorem}{Theorem}[section]
\newtheorem{lemma}[theorem]{Lemma}
\newtheorem{proposition}[theorem]{Proposition}
\newtheorem{corollary}[theorem]{Corollary}
\newtheorem{conjecture}[theorem]{Conjecture}
\newtheorem{introthm}{Theorem}
\theoremstyle{definition}
\newtheorem{definition}[theorem]{Definition}
\newtheorem{example}[theorem]{Example}
\newtheorem{construction}[theorem]{Construction}
\theoremstyle{remark}
\newtheorem{remark}[theorem]{Remark}
\numberwithin{equation}{section}
\newcommand{\iinv}{^{-1}}
\newcommand{\inv}{\ensuremath\mathrm{inv}}
\newcommand{\codinv}{\ensuremath\mathtt{codinv}}
\newcommand{\dinv}{\ensuremath\mathtt{dinv}}
\newcommand{\coarea}{\ensuremath\mathtt{coarea}}
\newcommand{\area}{\ensuremath\mathtt{area}}
\newcommand{\ngen}{\ensuremath{\mathtt{ngen}}}
\newcommand{\mgen}{\ensuremath{\mathtt{mgen}}}
\newcommand{\cogen}{\ensuremath{\mathtt{cogen}}}
\newcommand{\cogennn}{\ensuremath{\mathtt{cogen}_{\ge 0}}}
\newcommand{\Gaps}{\mathrm{Gaps}}
\newcommand{\gen}{\mathtt{gen}}
\newcommand{\KT}{K^{\T}}
\newcommand{\KnR}{K^{\R^3}}
\newcommand{\cox}{\ensuremath\mathrm{cox}}
\newcommand{\Inv}{\ensuremath\mathrm{Inv}}
\newcommand{\SSYT}{\ensuremath\mathrm{SSYT}}
\newcommand{\SYT}{\ensuremath\mathrm{SYT}}
\newcommand{\T}{\ensuremath\mathbb{T}}
\newcommand{\Z}{\ensuremath\mathbb{Z}}
\newcommand{\C}{\ensuremath\mathbb{C}}
\newcommand{\D}{\ensuremath\mathrm{D}}
\newcommand{\AD}{\ensuremath\mathcal{D}}
\newcommand{\Aa}{\ensuremath\mathcal{A}}
\newcommand{\PC}{\ensuremath\mathcal{P}}
\newcommand{\Br}{\mathrm{Br}}
\newcommand{\KR}{\text{KR}}
\newcommand{\Hik}{\ensuremath\mathcal{H}}
\newcommand{\N}{\mathbb{N}}
\newcommand{\St}{\mathrm{St}}
\newcommand{\ST}{\widehat{\mathrm{{St}}}}
\newcommand{\I}{\mathrm{I}}
\newcommand{\tr}{{\tilde{r}}}
\newcommand{\ts}{{\lfloor{s}\rfloor}}
\definecolor{teal}{rgb}{.1,.7,.7}
\definecolor{purp}{rgb}{.7,.1,.7}
\definecolor{orn}{rgb}{1,.6,.2}
\newcommand{\SBim}{\mathbb{S}\mathrm{Bim}}
\newcommand{\HHH}{\operatorname{HHH}}
\newcommand{\HH}{\operatorname{HH}}
\newcommand{\one}{\mathbbm{1}}
\newcommand{\JM}{\mathrm{JM}}
\def\R{\mathbb {R}}
\renewcommand{\k}{\mathbb{K}}
\newcommand{\kMod}[1]{\mathbbm{k}\text{-Mod}^{#1}}
\newcommand{\Hom}{\operatorname{Hom}}
\renewcommand{\a}{\alpha}
\renewcommand{\b}{\beta}
\newcommand{\e}{\epsilon}
\renewcommand{\d}{\delta}
\newcommand{\KRC}{C_{\mathrm{KR}}}
\newcommand{\KRH}{H_{\mathrm{KR}}}
\newcommand{\arm}{\mathrm{arm}}
\newcommand{\leg}{\mathrm{leg}}
\newcommand{\G}{\mathcal{G}}
\newcommand{\aA}{\mathsf{A}}
\newcommand{\EB}{\mathtt{EB}}
\newcommand{\AB}{\mathtt{AB}}
\newcommand{\Ch}{\operatorname{Ch}}
\newcommand{\newword}[1]{\emph{\textbf{#1}}}
\newlength\cellsize \setlength\cellsize{10\unitlength}
\newcommand\cellify[1]{\def\thearg{#1}\def\nothing{}%
\ifx\thearg\nothing\vrule width0pt height\cellsize depth0pt%
  \else\hbox to 0pt{\usebox2\hss}\fi%
  \vbox to 10\unitlength{\vss\hbox to 10\unitlength{\hss$#1$\hss}\vss}}
\newcommand\tableau[1]{\vtop{\let\\=\cr
\setlength\baselineskip{-10000pt}
\setlength\lineskiplimit{10000pt}
\setlength\lineskip{0pt}
\halign{&\cellify{##}\cr#1\crcr}}}
\newcommand{\cirfy}[1]{\def\thearg{#1}\def\nothing{}%
\ifx\thearg\nothing\vrule width0pt height\cellsize depth0pt%
  \else\hbox to 0pt{\usebox7\hss}\fi%
  \vbox to 10\unitlength{\vss\hbox to 10\unitlength{\hss$#1$\hss}\vss}}
\newcommand\cirtab[1]{\vtop{\let\\=\cr
\setlength\baselineskip{-10000pt}
\setlength\lineskiplimit{10000pt}
\setlength\lineskip{0pt}
\halign{&\cirfy{##}\cr#1\crcr}}}
\tikzstyle directed=[postaction={decorate,decoration={markings,
    mark=at position #1 with {\arrow{>}}}}]
\tikzstyle rdirected=[postaction={decorate,decoration={markings,
    mark=at position #1 with {\arrow{<}}}}]
\tikzset{fontscale/.style = {font=\relsize{#1}}
    }
\tikzset{anchorbase/.style={baseline={([yshift=-0.5ex]current bounding box.center)}},
    tinynodes/.style={font=\tiny,text height=0.75ex,text depth=0.15ex},
    smallnodes/.style={font=\scriptsize,text height=0.75ex,text depth=0.15ex},
    >={Latex[length=1mm, width=1.5mm]}
  }
\newcommand{\Harc}[4] % a horizontal arc in a monotone curve
{
\def\m{#1} %m
\def\n{#2} %n
\def\i{#3} %x position of this arc
\def\j{#4} %y position of this arc
\begin{scope}[shift={(\i,\j)}]
\draw[blue, thick]
%(-1,0) .. controls (-1,.4) and (1,.4) .. (1,0);
(0,{(\m-\i+.5)/\m}) .. controls ++({1/\n},0) and ++({-1/\n},0) .. (1,{(\m-\i-.5)/\m});
\draw (0,0) rectangle (1,1);
\end{scope}
}
\newcommand{\Varc}[4] % a vertical arc in a monotone curve
{
\def\m{#1} %m
\def\n{#2} %n
\def\i{#3} %x position of this arc
\def\j{#4} %y position of this arc
\begin{scope}[shift={(\i,\j)}]
\draw[blue, thick]
({(\n-\j-.5)/\n},1) .. controls ++(0,{-1/\m}) and ++(0,{1/\m}) .. ({(\n-\j+.5)/\n},0);
\draw (0,0) rectangle (1,1);
\end{scope}
}
\newcommand{\Rarc}[4] % a right-veering arc in a monotone curve
{
\def\m{#1} %m plus buffer
\def\n{#2} %n plus buffer
\def\i{#3} %x position of this arc
\def\j{#4} %y position of this arc
\begin{scope}[shift={(\i,\j)}]
\draw[blue, thick]
%(-1,0) .. controls (-1,.4) and (1,.4) .. (1,0);
(0,{(\n-\i-.5)/\m}) .. controls ++({(\j-.5)/(\n+1)},0) and ++(0,{(\i+.5)/(\m+1)}) .. ({(\n-\j+.5)/\n},0);
\draw (0,0) rectangle (1,1);
\end{scope}
}
\newcommand{\Larc}[4] % a left-veering arc in a monotone curve
{
\def\m{#1} %m plus buffer
\def\n{#2} %n plus buffer
\def\i{#3} %x position of this arc
\def\j{#4} %y position of this arc
\begin{scope}[shift={(\i,\j)}]
\draw[blue, thick]
({(\n-\j-.5)/\n},1) .. controls ++(0,{(-\i-.5)/(\m+1)}) and ++({(-\j-.5)/(\n+1)},0) .. (1,{(\m-\i-.5)/\m});
\draw (0,0) rectangle (1,1);
\end{scope}
}
\newcommand{\Farc}[2] % the final arc in a monotone curve
{
\def\m{#1} %m
\def\n{#2} %n
\begin{scope}
\draw[blue, thick]
(.5/\n,0) .. controls ++(0,.1) and ++(.1,0) .. (0,.5/\m);
\draw (0,0) rectangle (1,1);
\end{scope}
}
\newcommand{\Haarc}[4] % a horizontal arc in a monotone curve
{
\def\m{#1} %m
\def\n{#2} %n
\def\i{#3} %x position of this arc
\def\j{#4} %y position of this arc
\begin{scope}
\draw[white,line width=2mm]
(0,{(\m-\i+.5)/\m}) .. controls ++({1/\n},0) and ++({-1/\n},0) .. (1,{(\m-\i-.5)/\m});
\draw[blue, thick]
(0,{(\m-\i+.5)/\m}) .. controls ++({1/\n},0) and ++({-1/\n},0) .. (1,{(\m-\i-.5)/\m});
\draw (0,0) rectangle (1,1);
\end{scope}
}
\newcommand{\Vaarc}[4] % a vertical arc in a monotone curve
{
\def\m{#1} %m
\def\n{#2} %n
\def\i{#3} %x position of this arc
\def\j{#4} %y position of this arc
\begin{scope}
\draw[white,line width=2mm]
({(\n-\j-.5)/\n},1) .. controls ++(0,{-1/\m}) and ++(0,{1/\m}) .. ({(\n-\j+.5)/\n},0);
\draw[blue, thick]
({(\n-\j-.5)/\n},1) .. controls ++(0,{-1/\m}) and ++(0,{1/\m}) .. ({(\n-\j+.5)/\n},0);
\draw (0,0) rectangle (1,1);
\end{scope}
}
\newcommand{\Raarc}[4] % a right-veering arc in a monotone curve
{
\def\m{#1} %m plus buffer
\def\n{#2} %n plus buffer
\def\i{#3} %x position of this arc
\def\j{#4} %y position of this arc
\begin{scope}
\draw[white,line width=2mm]
(0,{(\n-\i-.5)/\m}) .. controls ++({(\j-.5)/(\n+1)},0) and ++(0,{(\i+.5)/(\m+1)}) .. ({(\n-\j+.5)/\n},0);
\draw[blue, thick]
(0,{(\n-\i-.5)/\m}) .. controls ++({(\j-.5)/(\n+1)},0) and ++(0,{(\i+.5)/(\m+1)}) .. ({(\n-\j+.5)/\n},0);
\draw (0,0) rectangle (1,1);
\end{scope}
}
\newcommand{\Laarc}[4] % a left-veering arc in a monotone curve
{
\def\m{#1} %m plus buffer
\def\n{#2} %n plus buffer
\def\i{#3} %x position of this arc
\def\j{#4} %y position of this arc
\begin{scope}
\draw[white,line width=2mm]
({(\n-\j-.5)/\n},1) .. controls ++(0,{(-\i-.5)/(\m+1)}) and ++({(-\j-.5)/(\n+1)},0) ..  (1,{(\m-\i-.5)/\m});
\draw[blue, thick]
({(\n-\j-.5)/\n},1) .. controls ++(0,{(-\i-.5)/(\m+1)}) and ++({(-\j-.5)/(\n+1)},0) ..  (1,{(\m-\i-.5)/\m});
\draw (0,0) rectangle (1,1);
\end{scope}
}
\newcommand{\Faarc}[2] % the final arc in a monotone curve
{
\def\m{#1} %m
\def\n{#2} %n
\begin{scope}
\draw[white,line width=2mm]
(.5/\n,0) .. controls ++(0,.1) and ++(.1,0) .. (0,.5/\m);
\draw[blue, thick]
(.5/\n,0) .. controls ++(0,.1) and ++(.1,0) .. (0,.5/\m);
\draw (0,0) rectangle (1,1);
\end{scope}
}
\newcommand{\dHarc}[4] % a horizontal arc in a monotone curve
{
\def\m{#1} %m
\def\n{#2} %n
\def\i{#3} %x position of this arc
\def\j{#4} %y position of this arc
\begin{scope}[shift={(\i,\j)}]
\draw[blue, thick]
%(-1,0) .. controls (-1,.4) and (1,.4) .. (1,0);
(0,{(\m-\i+.5)/\m}) .. controls ++({1/\n},0) and ++({-1/\n},0) .. (1,{(\m-\i-.5)/\m});
\draw (0,0) rectangle (1,1);
\end{scope}
}
\newcommand{\dVarc}[4] % a vertical arc in a monotone curve
{
\def\m{#1} %m
\def\n{#2} %n
\def\i{#3} %x position of this arc
\def\j{#4} %y position of this arc
\begin{scope}[shift={(\i,\j)}]
\draw[blue, thick]
({(\n-\j-1.5)/\n},1) .. controls ++(0,{-1/\m}) and ++(0,{1/\m}) .. ({(\n-\j-.5)/\n},0);
\draw (0,0) rectangle (1,1);
\end{scope}
}
\newcommand{\dRarc}[4] % a right-veering arc in a monotone curve
{
\def\m{#1} %m plus buffer
\def\n{#2} %n plus buffer
\def\i{#3} %x position of this arc
\def\j{#4} %y position of this arc
\begin{scope}[shift={(\i,\j)}]
\draw[blue, thick]
%(-1,0) .. controls (-1,.4) and (1,.4) .. (1,0);
(0,{(\m-\i-.5)/\m}) .. controls ++({(\n-\j-.5)/(\n+1)},0) and ++(0,{(\m-\i-.5)/(\m+1)}) .. ({(\n-\j-.5)/\n},0);
\draw (0,0) rectangle (1,1);
\end{scope}
}
\newcommand{\dLarc}[4] % a left-veering arc in a monotone curve
{
\def\m{#1} %m plus buffer
\def\n{#2} %n plus buffer
\def\i{#3} %x position of this arc
\def\j{#4} %y position of this arc
\begin{scope}[shift={(\i,\j)}]
\draw[blue, thick]
({(\n-\j-1.5)/\n},1) .. controls ++(0,{(-\i-1.5)/(\m+1)}) and ++({(-\j-.5)/(\n+1)},0) .. (1,{(\m-\i-1.5)/\m});
\draw (0,0) rectangle (1,1);
\end{scope}
}
\newcommand{\dFarc}[2] % the final arc in a monotone curve
{
\def\m{#1} %m
\def\n{#2} %n
\begin{scope}
\draw[blue, thick][shift={({\m-1},{\n-1})}]
({(\n-.5)/\n},1) .. controls ++(0,-.1) and ++(-.1,0) .. (1,{(\m-.5)/\m});
\draw (0,0) rectangle (1,1);
\end{scope}
}
\newcommand{\dHaarc}[4] % a horizontal arc in a monotone curve
{
\def\m{#1} %m
\def\n{#2} %n
\def\i{#3} %x position of this arc
\def\j{#4} %y position of this arc
\begin{scope}
\draw[white,line width=2mm]
(0,{(\m-\i+.5)/\m}) .. controls ++({1/\n},0) and ++({-1/\n},0) .. (1,{(\m-\i-.5)/\m});
\draw[blue, thick]
(0,{(\m-\i+.5)/\m}) .. controls ++({1/\n},0) and ++({-1/\n},0) .. (1,{(\m-\i-.5)/\m});
\draw (0,0) rectangle (1,1);
\end{scope}
}
\newcommand{\dVaarc}[4] % a vertical arc in a monotone curve
{
\def\m{#1} %m
\def\n{#2} %n
\def\i{#3} %x position of this arc
\def\j{#4} %y position of this arc
\begin{scope}
\draw[white,line width=2mm]
({(\n-\j-1.5)/\n},1) .. controls ++(0,{-1/\m}) and ++(0,{1/\m}) .. ({(\n-\j-.5)/\n},0);
\draw[blue, thick]
({(\n-\j-1.5)/\n},1) .. controls ++(0,{-1/\m}) and ++(0,{1/\m}) .. ({(\n-\j-.5)/\n},0);
\draw (0,0) rectangle (1,1);
\end{scope}
}
\newcommand{\dRaarc}[4] % a right-veering arc in a monotone curve
{
\def\m{#1} %m plus buffer
\def\n{#2} %n plus buffer
\def\i{#3} %x position of this arc
\def\j{#4} %y position of this arc
\begin{scope}
\draw[white,line width=2mm]
(0,{(\m-\i-.5)/\m}) .. controls ++({(\n-\j-.5)/(\n+1)},0) and ++(0,{(\m-\i-.5)/(\m+1)}) .. ({(\n-\j-.5)/\n},0);
\draw[blue, thick]
(0,{(\m-\i-.5)/\m}) .. controls ++({(\n-\j-.5)/(\n+1)},0) and ++(0,{(\m-\i-.5)/(\m+1)}) .. ({(\n-\j-.5)/\n},0);
\draw (0,0) rectangle (1,1);
\end{scope}
}
\newcommand{\dLaarc}[4] % a left-veering arc in a monotone curve
{
\def\m{#1} %m plus buffer
\def\n{#2} %n plus buffer
\def\i{#3} %x position of this arc
\def\j{#4} %y position of this arc
\begin{scope}
\draw[white,line width=2mm]
({(\n-\j-1.5)/\n},1) .. controls ++(0,{(-\i-1.5)/(\m+1)}) and ++({(-\j-.5)/(\n+1)},0) .. (1,{(\m-\i-1.5)/\m});
\draw[blue, thick]
({(\n-\j-1.5)/\n},1) .. controls ++(0,{(-\i-1.5)/(\m+1)}) and ++({(-\j-.5)/(\n+1)},0) .. (1,{(\m-\i-1.5)/\m});
\draw (0,0) rectangle (1,1);
\end{scope}
}
\newcommand{\dFaarc}[2] % the final arc in a monotone curve
{
\def\m{#1} %m
\def\n{#2} %n
\begin{scope}
\draw[white,line width=2mm]
({(\n-.5)/\n},1) .. controls ++(0,-.1) and ++(-.1,0) .. (1,{(\m-.5)/\m});
\draw[blue, thick]
({(\n-.5)/\n},1) .. controls ++(0,-.1) and ++(-.1,0) .. (1,{(\m-.5)/\m});
\draw (0,0) rectangle (1,1);
\end{scope}
}
\begin{document}

%%%%%%%%%%%%%%%%%%%%%%%%%%%%%%%%%%%%%%%%%%%%%%%%%%%%%%%%%%%%
%  TITLE PAGE information
%%%%%%%%%%%%%%%%%%%%%%%%%%%%%%%%%%%%%%%%%%%%%%%%%%%%%%%%%%%%

%     [Short Title]{Full Title}
\title[Khovanov--Rozansky Homology of Coxeter Knots and Schr\"oder Polynomials for paths under any line]{Khovanov--Rozansky Homology of Coxeter Knots and Schr\"oder Polynomials for paths under any line}

%    Information for first author
\author[C. Caprau]{Carmen Caprau}
\address{Department of Mathematics, California State University, Fresno, CA 93740, USA}
\email{ccaprau@csufresno.edu}
%\thanks{}

%    Information for second author
\author[N. Gonz\'alez]{Nicolle Gonz\'alez}
\address{Department of Mathematics,  University of California Berkeley, CA 94720-3840, USA}
\email{nicolle@math.berkeley.edu}
%\thanks{}

%    Information for first author
\author[M. Hogancamp]{Matthew Hogancamp}
\address{Department of Mathematics, Northeastern University, Boston, MA 02115, USA}
\email{m.hogancamp@northeastern.edu}
%\thanks{}

%    Information for first author
\author[M. Mazin]{Mikhail Mazin}
\address{Department of Mathematics, Kansas State University, Manhattan, KS 66506, USA}
\email{mmazin@math.ksu.edu}
%\thanks{}

%    General info
\subjclass[2020]{Primary 57K18, 05E05; Secondary 05A15, 05A17, 05A19}

%\date{\today}

%\dedicatory{}

\keywords{}

\begin{abstract}
We introduce a family of generalized Schr\"oder polynomials $S_\tau(q,t,a)$, indexed by triangular partitions $\tau$ and prove that $S_\tau(q,t,a)$ agrees with the Poincar\'e series of the triply graded Khovanov--Rozansky homology of the Coxeter knot $K_\tau$ associated to $\tau$.  For all integers $m,n,d\geq 1$ with $m,n$ relatively prime, the $(d,mnd+1)$-cable of the torus knot $T(m,n)$ appears as a special case.  It is known that these knots are algebraic, and as a result we obtain a proof of the $q=1$ specialization of the Oblomkov--Rasmussen--Shende conjecture for these knots.  Finally, we show that our Schr\"oder polynomial computes the hook components in the Schur expansion of the symmetric function appearing in the shuffle theorem under any line, thus proving a triangular version of the $(q,t)$-Schr\"oder theorem.
\end{abstract}

\maketitle
\setcounter{secnumdepth}{4}
\setcounter{tocdepth}{1}
\tableofcontents

\section{Introduction}\label{s:intro}

Triply graded Khovanov--Rozansky homology is a rich invariant of knots and links, which categorifies the HOMFLY-PT polynomial. During the last fifteen years a myriad of conjectures have appeared relating the Khovanov--Rozansky homology of various special families of links to Catalan combinatorics \cite{GorksyCatalan, GL20, GL23}, the Hilbert schemes of points on a plane curve singularity \cite{ORS}, representations of rational DAHA \cite{GORS, cherednik, CD16, CD17}, sheaves on the Hilbert scheme of points in the plane \cite{GorskyNegut,GNR21}, and refined Chern-Simmons theory \cite{aganagic} (see  \S \ref{ss:intro ORS} for brief summary of these conjectures). 
In order to employ a \emph{compute-both-sides} approach to such conjectures, one must first overcome the obvious challenge of computing Khovanov--Rozansky homology of the links or knots in question.

In \cite{EH16}, the third named author and Ben Elias introduced a technique for computing Khovanov--Rozansky homology relying on a recursively defined polynomial $R_{\bf u,v}$ for certain binary sequences $\bf u$ and $\bf v$, with the torus link $T(n,n)$ being the main example.  This technique was then exploited in subsequent papers by Elias, Hogancamp, and Mellit to compute the homologies of various families of links: the paper \cite{Ho17} computes the homologies of $T(m,md)$ and $T(m,md\pm1)$ for $m,d\geq 1$, followed shortly thereafter by \cite{Mellit-Homology} which computes the homology of $T(m,n)$ for $m,n\geq 1$  coprime, followed by \cite{HM}, generalizing all the aforementioned, which computes the homology of $T(m,n)$ for $m,n\geq 1$, not necessarily coprime. Importantly for us, the aforementioned computations include some non-torus-links as well.

The initial motivation for this paper stemmed from the problem of computing and giving a combinatorial interpretation of the Khovanov--Rozansky homology of iterated torus knots of the form $T(m,n)(d,mnd+1)$, where $m,n,d\in \Z_{\geq 1}$ and $m,n$ are coprime. Along the way, we realized such knots are special cases of the knots whose homology was computed in \cite{HM}, which in turn coincide with a certain subfamily of \emph{Coxeter knots} $K_\tau$, where $\tau$ is a \emph{triangular partition}. {Coxeter knots} were previously studied in \cite{OR, GL23} and triangular partitions in \cite{BM}. 

In this paper we investigate the Khovanov--Rozanksy homology of such knots. We introduce the new family of \emph{triangular Schr\"oder polynomials} $S_\tau(q,t,a)$ which we prove compute the Khovanov--Rozanksy homology of Coxeter knots $K_\tau$ for triangular partitions. As a consequence of the knots $T(m,n)(d,mnd+1)$ with $m,n$ coprime being algebraic, we obtain a proof of a conjecture of Oblomkov--Rasmussen--Shende restricted to these knots in the special case when $q=1$. Combinatorially, our 
triangular Schr\"oder polynomials fit nicely into the existing $(q,t)$-Catalan combinatorics \cite{Garsia-Haiman,BG-scifi, Haiman, Haiman-Vanishing, Ha04, HHLRU, HaglundCatalan, GorksyCatalan,  bergeron2016compositional, Mellit-Shuffle, CM18, BHMPS}. They generalize both the classical $(q,t)$-Catalan \cite{Garsia-Haiman} and Schr\"oder polynomials \cite{EHKK03} and correspond to the hook components of the Schur expansion of the shuffle theorem under any line of Blasiak-Haiman-Morse-Pun-Seelinger \cite{BHMPS}, thus providing a triangular generalization of the classical \emph{$(q,t)$-Schr\"oder Theorem} of Haglund \cite{Ha04}.

\subsection{Triangular Schr\"oder Polynomials and Invariant Subsets}
\label{ss:intro schroeder}
Deeply ingrained in the many conjectures surrounding Khovanov--Rozansky homology is the combinatorics of Dyck paths and related $(q,t)$-Catalan objects.  The $(q,t)$-Catalan polynomials were introduced in \cite{Garsia-Haiman} and studied extensively in Haglund's book \cite{HaglundCatalan}. In order to remain consistent with the notation used throughout this paper, we denote the $(q,t)$-Catalan by $C_{n,n+1}(q,t)$, where $n\in \Z_{\geq 1}$.

These polynomials appear in a wide variety of contexts.  First, $C_{n,n+1}(q,t)$ admits a combinatorial definition as a generating function which counts Dyck paths in the $n\times (n+1)$ rectangle (which are certain lattice paths below the line $(n+1)y+nx=n(n+1)$ consisting of north and west steps), weighted by some explicit combinatorially defined statistics $\area$ and $\dinv$. In representation theory, $C_{n,n+1}(q,t)$ appears as the coefficient of the Schur function $s_{(1^n)}$ in the Schur expansion of $\nabla e_n$, (which through work of Haiman \cite{Haiman, Haiman-Vanishing}, coincides with the Frobenius character of the ring of a diagonal coinvariants). Here, $e_n$ is the $n^{\text{th}}$ elementary symmetric function and $\nabla$ is the Bergeron-Garsia  operator on symmetric functions \cite{BG-scifi}, which acts diagonally on modified Macdonald polynomials. In topology, $C_{n,n+1}(q,t)$ also appears as the graded dimension of the lowest $a$-degree component of the Khovanov--Rozansky homology of the $(n,n+1)$-torus knot (this was conjectured by Gorsky in \cite{GorksyCatalan}, and proven in \cite{Ho17}).

We now point out two directions in which $(q,t)$-Catalan polynomials have been generalized. In the first, one replaces the Dyck paths appearing in the definition of $C_{n,n+1}(q,t)$ with the more general \emph{Schr\"oder paths} \cite{S1870}. These are certain lattice paths which are allowed to contain some number of northwest steps in addition to the usual north or west steps of a Dyck path. The resulting generating function $S_{n,n+1}(q,t,a)$ is called the \emph{Schr\"oder polynomial} \cite{EHKK03}, defined so that the coefficient of $a^k$ in $S_{n,n+1}(q,t,a)$ is a generating function counting Schr\"oder paths in the $n\times (n+1)$ rectangle with exactly $k$ diagonal steps. At the level of symmetric functions, this amounts to extracting the coefficient of hook-indexed Schur function $s_{(k,1^{n-k})}$ (for a precise statement see \S \ref{ss: intro schro thm}); at the level of link homology, this amounts to taking the graded dimension of the higher $a$-degree components of Khovanov--Rozansky homology of $T(n,n+1)$.

The second direction in which we can generalize $C_{n,n+1}(q,t)$ is to allow more Dyck paths in rectangles of arbitrary sizes. For instance, for a relatively prime pair of integers $m,n\geq 1$, one has the associated \emph{rational $(q,t)$-Catalan} $C_{m,n}(q,t)$ which counts Dyck paths in an $m\times n$ rectangle, weighted by an analogous pair of statistics $\area$, $\dinv$.  As in the classical case, $C_{m,n}(q,t)$ is indeed the sign component of an associated symmetric function, in this case defined by Hikita \cite{Hi14}.  At the level of link homology, $C_{m,n}(q,t)$ is the graded dimension of the lowest $a$-degree component of the Khovanov--Rozansky homology of $T(m,n)$ (this was conjectured in \cite{cherednik,GorskyNegut}, and proven by Mellit in \cite{Mellit-Homology}).  Of course, one can generalize in both directions, obtaining \emph{rational Schr\"oder polynomial} $S_{m,n}(q,t,a)$ which calculates the hook components of Hikita's symmetric function and also the graded dimension of the Khovanov--Rozansky homology of $T(m,n)$ in all $a$-degrees.

In \cite{BHMPS} Blasiak--Haiman--Morse--Pun--Seelinger provide a further generalization in the aforementioned second direction by considering set of Dyck paths ``under any line''. More precisely given arbitrary real numbers $r,s>0$, a \newword{Dyck path under $L_{r,s}$} is a path in $\Z_{\geq 0}\times \Z_{\geq 0}$ consisting of unit length west and north steps, from $(\lfloor r\rfloor, 0)$ to $(0,\lfloor s\rfloor)$, and staying weakly below $L_{r,s}$.  In \emph{loc.~cit.} it is shown how to construct an analogue of the Hikita symmetric function from the set of Dyck paths under $L_{r,s}$, and a combinatorial interpretation of the associated $(q,t)$-Catalan is given. From this perspective, the present paper provides the missing Schr\"oder polynomial and link homology interpretation for paths ``under any line''. In order to delve further into what this means, we must first introduce the notion of triangular partitions.

Given a partition $\lambda = (\lambda_1, \dots, \lambda_k)$ with $\lambda_1\geq\dots \geq \lambda_k \geq 0$, we identify $\lambda$ with its partition diagram in $\R_{\geq 0} \times \R_{\geq 0}$ and index any cell $\Box \in \lambda$ by its northeast coordinate $(x,y) \in \Z_{>0} \times \Z_{>0}$. Thus a partition can be viewed as a subset $\lambda \subset \Z_{>0} \times \Z_{>0}$. 

Any pair of positive real numbers $r,s>0$ determines a partition $\tau_{r,s}$ by declaring that the cell $\Box^{x,y}$ with northeast corner at $(x,y)\in \Z_{>0}\times\Z_{>0}$ is in $\tau_{r,s}$ if and only if $ry+sx\leq rs$.  In other words, $\tau_{r,s}$ consists of all boxes contained between the line $L_{r,s}:=\{ry+sx=rs\}$ and the coordinate axes. A \newword{triangular partition} is any partition of the form $\tau_{r,s}$ for some real numbers $r,s>0$.  The combinatorics of triangular partitions were studied by Bergeron and the fourth named author in \cite{BM}, and are the underlying indexing object in the generalized shuffle theorem of Blasiak--Haiman--Morse--Pun--Seelinger \cite{BHMPS}. 

Observe that if $\pi$ is a Dyck path under the line $L_{r,s}$ then there is an associated partition $\lambda$ consisting of all cells in the first quadrant and below $\pi$.  By definition, $\lambda$ will be a subpartition of $\tau_{r,s}$.  This correspondence is a canonical bijection between Dyck paths under $L_{r,s}$ and subpartitions of $\tau_{r,s}$.  In light of this observation, a pair $(\lambda,\tau)$ consisting of a triangular partition $\tau$ and a subpartition $\lambda\subset \tau$ will be referred to as a \newword{$\tau$-Dyck path}.   As matter of notation, given real numbers $r,s>0$ we will write $\D(r,s)$ for the set of all $\tau_{r,s}$-Dyck paths.

Our definition of the Schr\"oder polynomial associated to a triangular partition will utilize the recursions introduced by Gorsky-Mazin-Vazirani~\cite{GMV17, GMV20} and thus will be phrased in the language of invariant subsets rather than lattice paths, which we now briefly recall.

For coprime positive integers $m,n$, a set $\Delta \subset \Z_{\geq 0}$ is \newword{0-normalized and $(m,n)$-invariant} if it contains $0$ and is closed under translations by $+m$ and $+n$, i.e. $\Delta+m, \Delta+n \subset \Delta$. Denote by $\Inv_{m,n}^0$ the set of all such subsets. In \cite{GMV17, GMV20}, the fourth named author alongside Gorsky and Vazirani defined a bijection
\[
\mathcal{A}: \D(m,n) \to \Inv_{m,n}^0 \qquad \text{sending} \qquad \pi=(\lambda, \tau_{m,n}) \mapsto \Z_{\geq 0} \setminus \Gaps(\pi),
\]
where $\Gaps(\pi):=\{ mn-my-nx \; | \; (x,y) \in \tau_{m,n} \setminus \lambda \}$. 
More generally, Gorsky--Mazin--Vazirani also considered invariant subsets indexed by certain trinary sequences $\bf x,y$ in letters $\{0,1,\bullet\}$ and defined the polynomial
\[
Q_{{\bf x},{\bf y}}(q,t,a):=\sum_{\Delta\in \Inv_{{\bf x},{\bf y}}} q^{\area'(\Delta)}t^{-\codinv'(\Delta)}\prod_{k\in\cogennn(\Delta)}(1+at^{-\xi_k(\Delta)}),
\]
where $\area, \codinv$, and $\cogen$ are certain combinatorial statistics. 
In particular, they proved that $Q_{\bf x,y}$ satisfies recursion relations that coincide with those of Hogancamp-Mellit \cite{HM}, so that if $\bf u,v$ are the binary sequences obtained from $\bf x,y$ by omitting the bullets $\bullet$, then $Q_{\bf x,y} = R_{\bf u,v}$ \cite{GMV17}. As a consequence, they showed that the $(q,t)$-Schr\"oder (and consequently Catalan) polynomial $S_{m,n}(q,t,a)$ could be obtained directly as a certain $q,t,a$-graded sum over $\Inv_{m,n}^0$.

In order to extend their construction to the setting of $\D(r,s)$ with $r,s$ positive real numbers, we choose an embedding of $\tau_{r,s}$ into a larger triangular partition $\tau_{m,n}$, such that $m,n$ are coprime positive integers with $L_{m,n}$ parallel to $L_{r,s}$. Indeed, such a choice exists since any given triangular partition $\tau$ may be cut out by lines of various slopes \cite{BM}. In particular, if $\tau_{r,s} \subset \tau_{m,n}$, then there exists $0 < z \leq mn$ such that $(r,s) = (z/n, z/m)$, for which if $\ell = \lfloor z \rfloor$ then $\tau_{r,s} = \tau_{\ell/n,\ell/m}$ and $\D(r,s) = \D(\ell/n,\ell/m)$. Since the choice of $\tau_{m,n}$ plays a role in the definitions, we modify the notation slightly in order to make this choice explicit and set, 
\[ \tau_{m,n,\ell}:= \tau_{\ell/n, \ell/m}, \qquad  \qquad \D(m,n,\ell):= \D(\ell/n,\ell/m), \qquad \text{and} \qquad L_{m,n,\ell}:= L_{\ell/n,\ell/m}.
\]
So then, for any such triple $(m,n,\ell)$ we define 
\[ 
\Inv_{m,n,\ell}^0:= \{ \Delta \in \Inv_{m,n}^0 \; |  \; \Delta \cap \Gaps(\tau_{m,n,l},\tau_{m,n}) = \emptyset \},
\]
and prove that 
\begin{equation}\label{eqn:Inv-intro}
\Inv_{{\bf x}(m,n,\ell), {\bf y}(m,n,\ell)}  = (\Inv_{m,n,\ell}^0 -(mn-\ell) +(n+m)) \cap \Z_{\geq 0},
\end{equation}
where ${\bf x}(m,n,\ell), {\bf y}(m,n,\ell)$ are certain trinary sequences constructed directly from the combinatorial data of $L_{m,n,\ell}$.

The \newword{triangular $(q,t)$-Schr\"oder polynomial} associated to the triple $(m,n,\ell)$ is defined by
\begin{equation*}
S_{m,n,\ell}(q,t,a):=t^{|\tau_{m,n,\ell}|}Q_{{\bf x}(m,n,\ell),{\bf y}(m,n,\ell)}(q,t,a).
\end{equation*}

As before, we have a bijection 
$\AD :\D(m,n,\ell) \to \Inv_{{\bf w}(m,n,\ell)}$, allowing us to give a formulation of these polynomials in terms of Schr\"oder paths (see Theorem \ref{thm:Schroder-dyckpaths}).

\begin{introthm}\label{introthm:schroeder}
We have
\begin{equation}\label{eq:intro schroeder 2}
    S_{m,n,\ell}(q,t,a)
    = \sum_{k\geq 0} a^k \left(\sum_{\pi\in\D(m,n,\ell)}
q^{\area(\pi)}t^{\dinv(\pi)} \sum_{\substack{
L\subset\AB(\pi)\\ |L|=k}}
t^{-\sum_{\Box\in L} \xi(\pi,\Box)}\right),
\end{equation}
where $\AB(\pi)$, with $\pi = (\lambda, \tau_{m,n,\ell})$, is the set of all addable boxes of $\lambda$ and $\area$, $\dinv$, and $\xi$ are some explicit statistics.
\end{introthm}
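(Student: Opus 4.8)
The plan is to unwind the definition $S_{m,n,\ell}(q,t,a)=t^{|\tau_{m,n,\ell}|}\,Q_{{\bf x}(m,n,\ell),{\bf y}(m,n,\ell)}(q,t,a)$, substitute the defining sum for $Q_{{\bf x},{\bf y}}$, and transport that sum over $\Inv_{{\bf x}(m,n,\ell),{\bf y}(m,n,\ell)}$ along an explicit bijection onto $\D(m,n,\ell)$, matching each of the statistics $\area'$, $\codinv'$, $\cogennn$, $\xi_k$ with its path-side counterpart $\area$, $\dinv$, $\AB$, $\xi$. So the proof splits into (a) producing the bijection and (b) a term-by-term comparison of weights.

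For (a) I would first prove the set-theoretic identity \eqref{eqn:Inv-intro}, $\Inv_{{\bf x}(m,n,\ell),{\bf y}(m,n,\ell)}=(\Inv_{m,n,\ell}^0-(mn-\ell)+(n+m))\cap\Z_{\geq 0}$, by unwinding the recipe that builds the trinary sequences ${\bf x}(m,n,\ell),{\bf y}(m,n,\ell)$ from the line $L_{m,n,\ell}$: one identifies the bulleted positions with the cells of $\tau_{m,n}\setminus\tau_{m,n,\ell}$ and checks that the closure/generation conditions defining $\Inv_{{\bf x},{\bf y}}$ become, after the affine shift by $(n+m)-(mn-\ell)$, exactly $0$-normalized $(m,n)$-invariance together with the avoidance condition $\Delta\cap\Gaps(\tau_{m,n,\ell},\tau_{m,n})=\emptyset$. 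Combined with the Gorsky--Mazin--Vazirani bijection $\mathcal{A}\colon\D(m,n)\to\Inv^0_{m,n}$ recalled above — which restricts to a bijection $\D(m,n,\ell)\xrightarrow{\ \sim\ }\Inv^0_{m,n,\ell}$, since for $\pi=(\lambda,\tau_{m,n})$ the condition $\lambda\subseteq\tau_{m,n,\ell}$ is equivalent to $\Gaps(\pi)\supseteq\Gaps(\tau_{m,n,\ell},\tau_{m,n})$, i.e.\ to $\mathcal{A}(\pi)\cap\Gaps(\tau_{m,n,\ell},\tau_{m,n})=\emptyset$ — this yields the bijection $\AD\colon\D(m,n,\ell)\to\Inv_{{\bf x}(m,n,\ell),{\bf y}(m,n,\ell)}$ (and, repackaged, its version valued in $\Inv_{{\bf w}(m,n,\ell)}$).

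For (b), writing $\Delta=\AD(\pi)$ with $\pi=(\lambda,\tau_{m,n,\ell})$, I would verify: (i) $\area'(\Delta)=\area(\pi)$, the affine shift in \eqref{eqn:Inv-intro} being calibrated so that the quantity counted by $\area'$ matches $|\tau_{m,n,\ell}\setminus\lambda|$; (ii) $\codinv'(\Delta)=|\tau_{m,n,\ell}|-\dinv(\pi)$, so that the prefactor $t^{|\tau_{m,n,\ell}|}$ converts $t^{-\codinv'}$ into $t^{\dinv}$ — the standard complementation of $\dinv$ inside the full box count, checked from the two definitions; and (iii) the set $\cogennn(\Delta)$ of nonnegative cogenerators of $\Delta$ is carried bijectively onto the set $\AB(\pi)$ of addable boxes of $\lambda$, with $\xi_k(\Delta)=\xi(\pi,\Box)$ under this correspondence — the point being that a nonnegative cogenerator encodes a lattice point of $\pi$ at which a northwest step may be inserted, i.e.\ an addable corner of $\lambda$ lying weakly below $L_{m,n,\ell}$, and that the $\xi$-label is the same arm/leg-type quantity on either side. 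Granting (i)--(iii), expanding $\prod_{k\in\cogennn(\Delta)}(1+at^{-\xi_k(\Delta)})=\sum_{L\subseteq\cogennn(\Delta)}a^{|L|}t^{-\sum_{k\in L}\xi_k(\Delta)}$, pushing the sum through $\AD$, and collecting the terms with $|L|=k$ yields exactly the right-hand side of \eqref{eq:intro schroeder 2}.

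The main obstacle I anticipate is step (iii): translating the arithmetic of $(m,n)$-invariant subsets — generators, cogenerators, and their $\xi$-labels — into the geometry of addable corners of subpartitions of $\tau_{m,n,\ell}$, and in particular checking that \emph{nonnegativity} of a cogenerator corresponds precisely to the addable box lying below the cutting line $L_{m,n,\ell}$ (and not merely below $L_{m,n}$), so that the diagonal steps being counted are those of Schr\"oder paths under $L_{m,n,\ell}$. The careful bookkeeping of the bullet positions and of the shift $(n+m)-(mn-\ell)$ in part (a), where an off-by-one is easy to make, is the other place demanding attention; the remaining manipulations are routine expansion and reindexing.
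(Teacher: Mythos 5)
Your plan reproduces the paper's proof almost step for step: part (a) is Proposition \ref{prop:shiftedInv} (establishing \eqref{eqn:Inv-intro} via the interval description of $W_{m,n,\ell}$ in Lemma \ref{lem:W-interval} and the rigidity statement of Lemma \ref{lemma: initial interval for Inv_r,s^0}) together with the restriction of $\Aa$ to $\D(m,n,\ell)$ in Proposition \ref{prop:Dmap}; the three weight identities in (b) are exactly Proposition \ref{prop:Dmap} (for $\area'$ and $\codinv'$) and Lemma \ref{lem:cogen} (for $\cogennn$ and $\xi$); and the final expansion is Theorem \ref{thm:Schroder-dyckpaths}. One small imprecision in your gloss of (a): the bulleted positions in ${\bf x},{\bf y}$ are \emph{not} the cells of $\tau_{m,n}\setminus\tau_{m,n,\ell}$ (those cells are precisely what force the zeros of ${\bf w}$), but rather the positions recording elements of $\Delta\cap[n,n+m-1]$, resp.\ $[m,m+n-1]$, that are in $\Delta$ yet fail to be $n$-, resp.\ $m$-generators; this does not affect the soundness of the overall plan, but it is a distinction you would need to get right in carrying out the bookkeeping you yourself flag as delicate.
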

In the language of lattice paths, an addable box of $\lambda$ corresponds to a west step (W) immediately followed by a north step (N) (reading the path right to left and bottom to top). Thus, we can form a Schr\"oder path by replacing each such pair (W,N) in a given subset  $L\subset \AB(\pi)$ of addable boxes with a single northwest pointing diagonal step. In particular, for each fixed $k$, the sum over pairs $(\pi, L)$ can be interpreted as a sum over lattice paths $\pi \in \D(m,n,\ell)$ with exactly $k$ diagonal steps. Consequently, we recover the more familiar rational $(q,t)$-Schr\"oder polynomial $S_{m,n}(q,t,a)$ in the special case $\ell=mn-1$.

The following result, exhibited in Corollary \ref{cor:Catalan}, is now immediate (see \S\ref{sec:Schroder} for details).
\begin{introthm} \label{intro2}
The triangular $(q,t)$-Schr\"oder polynomial $S_{m,n,\ell}(q,t,a)$ specializes to the triangular $(q,t)$-Catalan polynomial of \cite{BM,BHMPS} at $a=0$, 
\begin{equation*}
   S_{m,n,\ell}(q,t,0)=\sum_{\pi\in\D(m,n,\ell)} q^{\area(\pi)}t^{\dinv(\pi)}=:C_{m,n,\ell}(q,t).
\end{equation*}
\end{introthm}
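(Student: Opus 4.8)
The plan is to deduce the statement directly from Theorem~\ref{introthm:schroeder}. Setting $a = 0$ in the expansion~\eqref{eq:intro schroeder 2} annihilates every summand with $k \geq 1$, and in the $k = 0$ layer the unique subset $L \subseteq \AB(\pi)$ with $|L| = 0$ is $L = \emptyset$, which contributes the factor $t^{-\sum_{\Box \in \emptyset}\xi(\pi,\Box)} = t^{0} = 1$. Hence
\[
S_{m,n,\ell}(q,t,0) \;=\; \sum_{\pi \in \D(m,n,\ell)} q^{\area(\pi)}\, t^{\dinv(\pi)},
\]
which is exactly the middle expression in the corollary. This half is purely formal once Theorem~\ref{introthm:schroeder} --- or equivalently its lattice-path reformulation in Theorem~\ref{thm:Schroder-dyckpaths} --- is in hand.

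It then remains to recognize the right-hand side above as the triangular $(q,t)$-Catalan polynomial of~\cite{BM, BHMPS}, and for this I would appeal to the dictionary set up in \S\ref{sec:Schroder}. Concretely: the statistics $\area$ and $\dinv$ attached here to a $\tau$-Dyck path $\pi = (\lambda,\tau_{m,n,\ell})$ arise by transporting, along the bijection $\AD$ and the trinary sequences ${\bf x}(m,n,\ell), {\bf y}(m,n,\ell)$, the invariant-subset statistics $\area'$ and $\codinv'$ that define $Q_{{\bf x},{\bf y}}$ (together with the normalizing prefactor $t^{|\tau_{m,n,\ell}|}$). One checks that the resulting $\area$ is, up to the standard convention relating $\area$ and $\coarea$, the number of cells lying between $\pi$ and the line $L_{m,n,\ell}$, and that the resulting $\dinv$ is the arm--leg statistic on $\lambda$ determined by the slope of $L_{m,n,\ell}$. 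Since $L_{m,n,\ell}$ is by construction parallel to $L_{r,s}$ and $\tau_{m,n,\ell} = \tau_{r,s}$, the two lines impose the same inequalities on lattice cells, so under the canonical bijection between Dyck paths under $L_{r,s}$ and subpartitions of $\tau_{r,s}$ both statistics agree with those of~\cite{BM, BHMPS}. This yields the definition $\sum_{\pi} q^{\area(\pi)} t^{\dinv(\pi)} =: C_{m,n,\ell}(q,t)$ together with its identification as the~\cite{BM, BHMPS} triangular $(q,t)$-Catalan.

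The only genuine content --- and hence the main obstacle --- is this comparison of the $\dinv$ statistics, which is not localized in the corollary but already built into the proof of Theorem~\ref{introthm:schroeder}. Because our $\dinv$ is defined on $(m,n)$-invariant subsets via $\codinv'$ and then carried across $\AD$ and the shift by $(mn-\ell)-(n+m)$ appearing in~\eqref{eqn:Inv-intro}, the delicate point is that restricting the $(m,n)$-invariant-subset combinatorics to $\Inv_{m,n,\ell}^{0}$ reproduces the ``under-the-line-$L_{r,s}$'' combinatorics independently of the auxiliary coprime pair $(m,n)$ chosen to realize the embedding $\tau_{r,s} \subset \tau_{m,n}$. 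Once that verification is complete --- as it must be in order to state Theorem~\ref{introthm:schroeder} over $\D(m,n,\ell)$ at all --- the corollary follows at once: the $a=0$ specialization is by inspection the $k = 0$ layer of~\eqref{eq:intro schroeder 2}, which is manifestly the~\cite{BM, BHMPS} triangular $(q,t)$-Catalan sum.
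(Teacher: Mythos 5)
Your argument is correct and is essentially the paper's own: Corollary~\ref{cor:Catalan} follows immediately from~\eqref{eq:Schroder2} (equivalently from Theorem~\ref{thm:Schroder-dyckpaths}), since at $a=0$ the product $\prod_{k\in\cogennn(\AD(\pi))}\bigl(1+at^{-\xi_k(\AD(\pi))}\bigr)$ collapses to~$1$, leaving exactly $\sum_{\pi}q^{\area(\pi)}t^{\dinv(\pi)}$. One small over-reach in your discussion: you suggest the invariance under different choices of the auxiliary triple $(m,n,\ell)$ must be settled in order to state Theorem~\ref{introthm:schroeder} at all, but that theorem is formulated for a \emph{fixed} triple, and the paper deliberately defers the triple-independence (hence the full identification with the Catalan polynomial of~\cite{BM,BHMPS}) to Theorem~\ref{thm: schroder=poincare} and Corollary~\ref{cor:Schroder-Shuffle}, as flagged in Remark~\ref{rem:Catalan}; also, $\dinv(\pi)$ is defined directly on the Dyck path via the arm--leg statistic $\dinv_{s/r}$ and only afterward matched with $|\tau|-\codinv'(\AD(\pi))$ in Proposition~\ref{prop:Dmap}, not the other way around.
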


\subsection{The Coxeter Knot Associated to a Partition} 
\label{ss:intro knot families}
The family of (positive) Coxeter knots was introduced in the work of Oblomkov and Rozansky \cite{OR} and was recently proved to coincide with the family of monotone knots independently defined by Galashin--Lam \cite{GL23}. Coxeter knots featured prominently in connections between link homology and the geometry of the Hilbert scheme of points on $\C^2$ (see also \cite{GNR21}). We recall their construction next. 

 Given an integer $M\geq 1$, let $\Br_M$ denote the braid group on $M$ strands with $\sigma_1,\ldots,\sigma_{M-1}$ the usual Artin generators.
Let $\lambda$ be a partition with $\lambda_1<M$, and let $\mu_1\geq \mu_2\geq \cdots \geq \mu_M=0$ be the lengths of the columns of $\lambda$, i.e.~$\mu$ is the transpose partition of $\lambda$, extending by zeros to obtain a sequence of length $M$.  We define the \newword{Coxeter knot associated to $\lambda$}, denoted $K_\lambda$,  to be the closure of the $M$-strand braid
\begin{equation*}
\beta^{cox}_{M,\lambda} := \JM_{M,2}^{\mu_1-\mu_2}\JM_{M,3}^{\mu_2-\mu_3}\dots \JM_{M,M}^{\mu_{M-1}-\mu_M} \sigma_1\sigma_2\cdots\sigma_{M-1},
\end{equation*}
where $\JM_{M,i}\in \Br_{M}$ is the $M$-strand \newword{Jucys-Murphy} braid
\[
\JM_{M,i}=(\sigma_{M-i+1}\sigma_{M-i+2}\cdots \sigma_{M-1})(\sigma_{M-1}\cdots\sigma_{M-i+2}\sigma_{M-i+1})
\]
(these Jucys-Murphy braids are obtained from the more standard ones $(\sigma_{i-1}\cdots \sigma_2\sigma_1)(\sigma_1\sigma_2\cdots\sigma_{i-1})$ by conjugating with the half-twist).
It is straightforward to prove (see Proposition \ref{prop:Ktau}) that the closure of $\beta^{cox}_{M,\lambda}$ depends only on $\lambda$ (and not on $M$) up to isotopy.

We are particularly interested in knots of the form $K_\tau$ where $\tau$ is a triangular partition. A few important examples include the following:
\begin{itemize}
\item[(1)]
If $\tau = \tau_{m,n}$ with $m,n\geq 1$ coprime integers, then $K_\tau=T(n,m)$.  This is most easily seen by translating into the language of monotone knots using a result of Galashin--Lam \cite{GL23}.  

\item[(2)]
If $\tau=\tau_{M,N}$ with $M,N\geq 1$ arbitrary positive integers (not necessarily coprime), then $K_\tau$ is a certain cabled torus knot.  Specifically let $d=\gcd(M,N)$ and write $(M,N)=(md,nd)$, so that $m,n\geq 1$ are coprime.  Then $K_\tau=T(m,n)(d,mnd+1)$, where $T(m,n)(d,mnd+1)$ is the $(d,mnd+1)$-cable of the $(m,n)$-torus knot, $T(m, n)$.  This fact was observed in Galashin--Lam \cite{GL20}, and we give an alternate proof in Appendix \ref{sec:AppendixB}.
\end{itemize}

To compute the Khovanov--Rozansky homology of $K_\tau$, we will recognize that the family of knots $K_\tau$ (in which $\tau$ is a triangular partition) coincides with the family of knots whose homology is already computed by Hogancamp and Mellit in \cite{HM}.  Let us recall this latter family now.

Given $\mathbf{u}\in \{0,1\}^m$ and $\mathbf{v}\in \{0,1\}^n$ binary sequences with exactly one `1', we write $\mathbf{u}=(0^i10^j)$ and $\mathbf{v}=(0^k10^l)$.  Denote by $K_{\mathbf{u},\mathbf{v}}$ the closure of the braid
\[
(\sigma_1\cdots\sigma_{i})(\sigma_1\sigma_2\cdots \sigma_{m-1})^k(\sigma_2\cdots \sigma_{m-1})^l.
\]
For an example diagram of such a link, see Figure \ref{fig:binary knot}.  The main result of \cite{HM} is a computation of the Khovanov--Rozansky homology of $K_{\mathbf{u},\mathbf{v}}$, via a recursion involving general pairs of binary sequences with the same number of occurences of `1'.  Specifically, \cite{HM} defines series $R_{\mathbf{u},\mathbf{v}}(q,t,a)$ indexed by pairs of binary sequences $\mathbf{u}\in \{0,1\}^m$, $\mathbf{v}\in\{0,1\}^n$ with $\sum_i u_i=\sum_jv_j$, and proves that 
\begin{equation}\label{eq:intro HM computation}
(1-q)(at^{-1/2}q^{-1/2})^{-\d(\mathbf{u},\mathbf{v})} P_{K_{\mathbf{u},\mathbf{v}}}^{\KR}(q,t,a) = t^{\d(\mathbf{u},\mathbf{v})} R_{\mathbf{u},\mathbf{v}}(q,t,a),
\end{equation}
where $\d(0^i10^j,0^k10^l)=\frac{1}{2}((i+j)(k+l)-j-l+c-1)$, where $c$ is the number of components of $K_{0^i10^j,0^k10^l}$ (see \S \ref{ss:HM recursion}). Note that $K_{\mathbf{u},\mathbf{v}}$ may be a link of more than one component.  For the case of $K_{\mathbf{u},\mathbf{v}}$ a knot, we prove in \S \ref{sec:trian knot} the following (see Lemma \ref{lem:K-L isotopic} and Theorem \ref{thm: schroder=poincare}):

\begin{introthm}
\label{thm:intro3}
For any triangular partition $\tau$, the knot $K_\tau$ is isotopic to a knot of the form $K_{\mathbf{u},\mathbf{v}}$.  Conversely, if $K_{\mathbf{u},\mathbf{v}}$ is a knot (as opposed to a link with more than one component), then $K_{\mathbf{u},\mathbf{v}}=K_\tau$ for some triangular partition, and moreover $\d(\mathbf{u},\mathbf{v})=|\tau|$.

Furthermore, if $(m,n,\ell)$ is a triple of integers such that $\tau = \tau_{m,n,\ell}$, then the Khovanov--Rozansky homology of $K_\tau$ is free over $\Z$ with Poincar\'e series given by the triangular $(q,t)$-Schr\"oder polynomial up to normalization.  Specifically:
\[
S_{m,n,\ell}(q,t,a)= (1-q)(at^{-1/2}q^{-1/2})^{-|\tau|} P_{K_{\tau}}^{\KR}(q,t,a).\]
\end{introthm}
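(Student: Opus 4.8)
The plan is to prove Theorem~\ref{thm:intro3} by splitting it into three logically independent pieces: (i) the knot-theoretic identification of $K_\tau$ with some $K_{\mathbf{u},\mathbf{v}}$ and its converse, including the identity $\d(\mathbf{u},\mathbf{v})=|\tau|$; (ii) the combinatorial identity $S_{m,n,\ell}(q,t,a) = t^{|\tau|} R_{\mathbf{u}(m,n,\ell),\mathbf{v}(m,n,\ell)}(q,t,a)$, where $\mathbf{u},\mathbf{v}$ are the binary sequences underlying $\mathbf{x}(m,n,\ell),\mathbf{y}(m,n,\ell)$; and (iii) the translation of Hogancamp--Mellit's normalization \eqref{eq:intro HM computation} into the claimed statement about $P^{\KR}_{K_\tau}$.

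First I would establish (i). Both $\beta^{cox}_{M,\lambda}$ and the braid defining $K_{\mathbf{u},\mathbf{v}}$ are explicit positive braids, so the task is a direct braid-word manipulation: rewrite $\beta^{cox}_{M,\tau}$ using Markov moves and braid relations, exploiting that $\tau$ is triangular (hence its column lengths $\mu_i$ satisfy the consecutive-difference constraints of a triangular partition) to collapse the Jucys--Murphy factors into the shape $(\sigma_1\cdots\sigma_i)(\sigma_1\cdots\sigma_{m-1})^k(\sigma_2\cdots\sigma_{m-1})^l$. The key input here is that $\tau$ being triangular is precisely what forces $\mathbf{u},\mathbf{v}$ to each have a single `$1$'; conversely, starting from binary $\mathbf{u}=(0^i10^j)$, $\mathbf{v}=(0^k10^l)$ one reads off a line $L_{m,n,\ell}$ and checks the associated $\tau_{m,n,\ell}$ reproduces $K_{\mathbf{u},\mathbf{v}}$. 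The identity $\d(\mathbf{u},\mathbf{v})=|\tau|$ should then follow by directly computing $|\tau_{m,n,\ell}|$ via a lattice-point count (number of boxes weakly under $L_{m,n,\ell}$) and matching it to the closed form $\frac{1}{2}((i+j)(k+l)-j-l+c-1)$ with $c=1$ in the knot case. I would isolate this as Lemma~\ref{lem:K-L isotopic}.

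Next, for (ii), the definition $S_{m,n,\ell}(q,t,a) = t^{|\tau_{m,n,\ell}|} Q_{\mathbf{x}(m,n,\ell),\mathbf{y}(m,n,\ell)}(q,t,a)$ already packages in the factor $t^{|\tau|}$, and by the Gorsky--Mazin--Vazirani result recalled in the excerpt (the identity $Q_{\mathbf{x},\mathbf{y}} = R_{\mathbf{u},\mathbf{v}}$ when $\mathbf{u},\mathbf{v}$ are obtained from $\mathbf{x},\mathbf{y}$ by deleting bullets) one gets $S_{m,n,\ell} = t^{|\tau|}R_{\mathbf{u},\mathbf{v}}$ \emph{provided} the trinary sequences $\mathbf{x}(m,n,\ell),\mathbf{y}(m,n,\ell)$ constructed from the line data really do reduce to the binary sequences associated to $K_\tau$. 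This is where \eqref{eqn:Inv-intro} does the work: it identifies $\Inv_{\mathbf{x}(m,n,\ell),\mathbf{y}(m,n,\ell)}$ with a shift of $\Inv^0_{m,n,\ell}$, and one must check the shift amount $-(mn-\ell)+(n+m)$ is consistent with the deletion-of-bullets recipe, i.e.\ that the bullet positions encode exactly the gaps $\Gaps(\tau_{m,n,\ell},\tau_{m,n})$. So this step is a bookkeeping verification that the two routes to the same binary data — one through the Coxeter braid, one through the invariant-subset combinatorics — agree.

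Finally, (iii) is a purely formal substitution: combining $S_{m,n,\ell} = t^{|\tau|}R_{\mathbf{u},\mathbf{v}}$ from (ii) with \eqref{eq:intro HM computation}, namely $(1-q)(at^{-1/2}q^{-1/2})^{-\d(\mathbf{u},\mathbf{v})}P^{\KR}_{K_{\mathbf{u},\mathbf{v}}} = t^{\d(\mathbf{u},\mathbf{v})}R_{\mathbf{u},\mathbf{v}}$, and using $\d(\mathbf{u},\mathbf{v}) = |\tau|$ together with $K_{\mathbf{u},\mathbf{v}} = K_\tau$ from (i), cancels the $t^{|\tau|}$ factors and yields $S_{m,n,\ell}(q,t,a) = (1-q)(at^{-1/2}q^{-1/2})^{-|\tau|}P^{\KR}_{K_\tau}(q,t,a)$. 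Freeness over $\Z$ of $\HHH(K_\tau)$ is inherited from the Hogancamp--Mellit computation, since their recursion produces free complexes. I expect the main obstacle to be step (i): the braid-word identification of $K_\tau$ with $K_{\mathbf{u},\mathbf{v}}$ — and in particular pinning down the precise dictionary between a triangular partition presented as $\tau_{m,n,\ell}$ and the exponents $(i,j,k,l)$ — requires care with Markov destabilizations and with the fact that a triangular partition admits many presentations by lines of different slopes, so one has to argue the resulting knot is well-defined. The combinatorial identity in (ii), by contrast, is mostly a matter of carefully transporting the already-established GMV and HM recursions through the shift in \eqref{eqn:Inv-intro}.
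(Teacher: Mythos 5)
Your three-part decomposition matches the paper's logical structure: Lemma~\ref{lem:K-L isotopic} is your step~(i), and the short proof of Theorem~\ref{thm: schroder=poincare} does your~(ii) and~(iii) by chaining Theorem~\ref{thm:Qrecursion} (for $S_{m,n,\ell}=t^{|\tau|}R_{\mathbf{u},\mathbf{v}}$) with Theorem~\ref{thm:RHHH} and the normalization; your account of~(ii) and~(iii) is essentially what the paper does. Where you genuinely diverge is in establishing~(i). You propose a direct braid-word reduction of $\b^{\cox}_{M,\tau}$ to the binary form via Markov moves and braid relations. The paper deliberately avoids this: it cites Galashin--Lam's identification of Coxeter knots with monotone knots (Proposition~\ref{prop:CoxConjugate}, via \cite[Prop.~7.5]{GL23}) and then shows $\KnR_C=K_{\mathbf{u},\mathbf{v}}$ by comparing square diagrams on the torus. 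Concretely, the proof of Lemma~\ref{lem:K-L isotopic} builds a piecewise-linear primitive monotone curve $C=C_\infty\cup C_{n/m}\cup C_0$ cutting out $\tau$, uses Lemma~\ref{lemma: x and y from m,n,l} to check that the intersection pattern of the linear middle segment $C_{n/m}$ with the integer grid produces exactly $\mathbf{u}(m,n,\ell),\mathbf{v}(m,n,\ell)$ and the arcs $A_k,H_i,V_j$ of Construction~\ref{constr:Kuv}, and then matches the end pieces $C_\infty,C_0$ with the closing arc $B$. This geometric route also disposes of the multiple-presentation concern you flag, since changing the cutting line is an isotopy of $C$ in $[0,M]\times[0,N]\setminus\Z^2$. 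Carrying out the braid manipulation directly, as you suggest, would amount to re-deriving the content of GL's Prop.~7.5 rather than importing it, so it buys nothing and is substantially more work. Likewise, $\d(\mathbf{u},\mathbf{v})=|\tau|$ is obtained in the paper by a crossing count on $\b^{\cox}_{M,\tau}$ (Proposition~\ref{prop:Ktau}) plus invariance of the normalizing exponent, rather than your proposed lattice-point count, though both would succeed. Finally, one small inaccuracy in your~(ii): by Definition~\ref{def:trinaryseq}, the bullets in $\mathbf{x}(m,n,\ell),\mathbf{y}(m,n,\ell)$ do not encode the gaps $\Gaps(\tau_{m,n,\ell},\tau_{m,n})$; the gaps are the positions marked~$0$, while bullets mark non-generator elements of $\Delta$ in the window. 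This does not derail your plan, but the reconciliation of the two sources of $\mathbf{u},\mathbf{v}$ is done in the paper through the explicit residue formula of Lemma~\ref{lemma: x and y from m,n,l} and the diagram matching in Lemma~\ref{lem:K-L isotopic}, not through~\eqref{eqn:Inv-intro}, which instead underpins the Schr\"oder-path reformulation in Theorem~\ref{thm:Schroder-dyckpaths}.
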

In the above theorem (and throughout the rest of the paper), we write $P_K^{\KR}(q,t,a)$ for the Poincar\'e series of the triply graded Khovanov--Rozansky homology of $K$, and for brevity refer to this as the \newword{KR series} of $K$.

The mapping from triangular partitions to pairs of binary sequences is many-valued, and requires us to choose an expression of $\tau=\tau_{m,n,\ell}$, similar to the Schr\"oder polynomials themselves (see \S \ref{ss:invariance thm}).  Nonetheless, the knot $K_\tau$ really only depends on $\tau$ up to isotopy, and well-definedness of Khovanov--Rozansky homology establishes the following.

\begin{corollary}
The triangular Schr\"oder polynomial $S_{m,n,\ell}(q,t,a)$ depends only on the triangular partition $\tau=\tau_{m,n,\ell}$.
\end{corollary}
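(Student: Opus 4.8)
The plan is to deduce this directly from Theorem~\ref{thm:intro3}. That theorem asserts the identity
\[
S_{m,n,\ell}(q,t,a)= (1-q)(at^{-1/2}q^{-1/2})^{-|\tau|}\, P_{K_{\tau}}^{\KR}(q,t,a), \qquad \tau = \tau_{m,n,\ell},
\]
so it suffices to observe that every quantity on the right-hand side is intrinsic to the triangular partition $\tau$, independent of the chosen triple $(m,n,\ell)$ realizing it. The exponent $|\tau|$ is the number of cells of $\tau$, which is obviously determined by $\tau$. The Coxeter knot $K_{\tau}$ is, by Proposition~\ref{prop:Ktau}, a knot well-defined up to isotopy from $\tau$ alone: although it is presented as the closure of the braid $\beta^{cox}_{M,\lambda}$ for a choice of strand number $M$, its isotopy type does not depend on $M$, nor on any auxiliary data beyond $\tau$. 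Since triply graded Khovanov--Rozansky homology is an isotopy invariant of knots, its Poincar\'e series $P_{K_{\tau}}^{\KR}(q,t,a)$ depends only on $\tau$, and hence so does $S_{m,n,\ell}(q,t,a)$.

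Concretely, suppose $(m,n,\ell)$ and $(m',n',\ell')$ are two triples with $\tau_{m,n,\ell}=\tau_{m',n',\ell'}=\tau$; such coincidences do occur, since a triangular partition can be cut out by lines of various slopes \cite{BM}. Applying Theorem~\ref{thm:intro3} to each triple — using that $K_{\tau}$ is isotopic to $K_{\mathbf{u},\mathbf{v}}$ for the binary sequences produced by either presentation (Lemma~\ref{lem:K-L isotopic}), and that $\d(\mathbf{u},\mathbf{v})=|\tau|$ in both cases — we see that the two Schr\"oder polynomials are equal to the same normalized KR series of the single knot $K_{\tau}$, and therefore agree. There is essentially no obstacle here: the content has already been absorbed into Theorem~\ref{thm:intro3}, whose proof in turn rests on the Hogancamp--Mellit computation~\eqref{eq:intro HM computation} and the identification of $\D(m,n,\ell)$ with the Gorsky--Mazin--Vazirani invariant-subset model.

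One could instead look for a purely combinatorial proof starting from the lattice-path formula of Theorem~\ref{introthm:schroeder}. The indexing set $\D(m,n,\ell)=\D(\ell/n,\ell/m)$ is already intrinsic to $\tau$ — it is the set of pairs $(\lambda,\tau)$ with $\lambda\subseteq\tau$ — and $\area(\pi)=|\tau\setminus\lambda|$ is manifestly intrinsic as well. The difficulty is that $\dinv(\pi)$, the addable-box set $\AB(\pi)$, and the weights $\xi(\pi,\Box)$ are defined through the invariant-subset machinery attached to the specific line $L_{m,n,\ell}$, and it is not evident from the definitions that these are unchanged when $\tau$ is re-cut by a line of different slope. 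Establishing that invariance directly would be the main obstacle along that route; the topological argument above sidesteps it entirely, which is why we adopt it.
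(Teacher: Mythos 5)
Your argument is correct and is precisely the paper's own: both appeal to the invariance of Khovanov--Rozansky homology under isotopy together with the fact (Proposition~\ref{prop:Ktau}) that $K_\tau$ and the normalizing exponent $|\tau|$ depend only on $\tau$, so that the right-hand side of the identity in Theorem~\ref{thm: schroder=poincare} is intrinsic to $\tau$. Your closing remark about the obstacles to a direct combinatorial proof also matches the paper's own discussion (and the alternate route through Corollary~\ref{cor:Schroder-Shuffle}).
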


Below we will state some of the most important special cases of Theorem \ref{thm:intro3}.  For combinatorial purposes, it is best to state the result directly in terms of the series $R_{\mathbf{u},\mathbf{v}}$, using \eqref{eq:intro HM computation}.

\begin{itemize}
\item[(1)] The torus knot $T(m,n)$ is $K_\tau$ where $\tau=\tau_{m,n}$ (with $\ell$ taken to be the maximum value available, $\ell=mn-1$).  The associated pair of binary sequences is $(\mathbf{u},\mathbf{v})=(0^{m-1}1,0^{n-1}1)$, and we recover the known identity
\[
S_{m,n}(q,t,a) = t^{\d(m,n)}R_{0^{m-1}1,0^{n-1}1}(q,t,a),
\]
where $S_{m,n}(q,t,a)=S_{m,n,mn-1}(q,t,a)$ is the usual rational Schr\"oder polynomial and $\d(m,n)$ is as in Definition \ref{def:delta MN}.

\item[(2)] The cabled torus knot $T(m,n)(d,mnd+1)$ is $K_\tau$, where $\tau=\tau_{md,nd}$ (again, $\ell$ is taken to be its maximum value $\ell=(md)(nd)-1$).  The associated pair of binary sequences is $(\mathbf{u},\mathbf{v})=(0^{d-1}1 0^{(m-1)d},0^{nd}1)$
Given integers $m,n,d\geq 1$ with $m,n$ coprime, the $(d,mnd+1)$-cable of $T(m,n)$ is the closure of the braid $\b(,0^{nd}1)\in \Br_{md}$, thus 
\[
S_{md,nd}(q,t,a) =  t^{\d(md,nd)}R_{0^{d-1}10^{(m-1)d},0^{nd}1}(q,t,a).
\]
\end{itemize}

\subsection{The Oblomkov-Rasmussen-Shende Conjecture}
\label{ss:intro ORS}
Let $Z\subset \C^2$ be a complex plane curve with a singularity at the origin $0\in \C^2$, and let $K$ denote the \newword{link of the singularity} $0\in Z$ (i.e.~$K$ is the intersection of $Z$ with the 3-sphere of radius $\e$ around 0, for sufficiently small $\e>0$.  A link is \emph{algebraic} if it arises in this way. Algebraic links have been widely studied for past century \cite{B28,K29,EN85}, with their associated plane curve singularities particularly receiving a lot of attention in recent years \cite{OS, MS13, MY14, ORS, GNR21, GMO, OR}.

The \newword{Oblomkov-Rasmussen-Shende conjecture} asserts that the lowest $a$-degree part of  the Khovanov--Rozansky homology of an algebraic link $K$ is isomorphic to the cohomology of the Hilbert scheme of points on the plane curve singularity whose link is $K$ (with the higher $a$-degree components obtained by considering nested Hilbert schemes).  We may refer to this conjecture as the ORS1 conjecture, referring to \cite{ORS} for the precise statement.

When $K$ is a knot, the associated plane curve singularity is called a \newword{unibranch singularity}; in this case ORS propose a second conjecture, which we may refer to as ORS2, which asserts that the lowest $a$-degree part of Khovanov--Rozansky homology of $K$ is isomorphic to the cohomology of the \emph{compactified Jacobian} of the singularity $0\in Z$, with its perverse filtration (the higher $a$-degree components are excluded from this conjecture; they were incorporated in a recent preprint by Oscar Kivinen and Minh-T\^am Quang Trinh \cite{KT24}).  Work of Migliorini and Shende \cite{MS13}, and independently Maulik and Yun \cite{MY14}, establishes the equivalence of ORS1 and ORS2 (for knots, and only the lowest $a$-degree part).

Ignoring the perverse filtration corresponds to setting $q=1$ on the level of Poincar\'e series.  Specialized at $q=1$, the ORS2 conjecture can be stated at the level of Poincar\'e series as
\[
\Big((1-q)\PC^{\KR}_K(q,t,a)\Big)|_{\substack{q=1\\ t=u^{-2}}} = a^\d u^{-\d} \PC_{\overline{JC}}(u) + (\text{terms with $a$-degree $>\d$}),
\]
where $\d$ is the Milnor number of the plane curve singularity whose link is $K$, and $\overline{JC}$ is its compactified Jacobian.

In a recent article \cite{GMO}, the fourth named author jointly with Gorsky and Oblomkov proved that the compactified Jacobian of (the singularity at the origin of) the plane curve $(x(t),y(t))=  (t^{nd}, t^{md}+\lambda t^{md+1}+ \dots)$ has an affine paving with cells indexed by $\tau_{md,nd}$-Dyck paths, with Poincar\'e series computed by a specialization of the triangular $(q,t)$-Catalan polynomial $C_{\tau_{md,nd}}(q,t)$. Combined with the realization that the link of the singularity in this case is $T(m,n)(d,mnd+1)$, we obtain a proof of the ORS2 conjecture at $q=1$ for these knots.

\begin{introthm}\label{intro4}
The second Oblomkov-Rasmussen-Shende conjecture at $q=1$ holds for the $(d,mnd+1)$-cable of the $(m,n)$-torus knot; that is, for $\tau=\tau_{md,nd}$ we have:
\begin{align*}
\Big((1-q)\PC^{\KR}_{K_\tau}(q,t,a)\Big)|_{\substack{q=1\\ t=u^{-2}}} &=  a^\d u^{-\d} u^{2\delta}S_\tau(q,t,0) + (\text{terms with $a$-degree $>\d$})\\
&=a^\d u^{-\d} \PC_{\overline{JC}}(u) + (\text{terms with $a$-degree $>\d$}).
\end{align*}
\end{introthm}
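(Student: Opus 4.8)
The plan is to assemble Theorem \ref{intro4} by combining three ingredients already established or cited in the paper: the link-homology computation of Theorem \ref{thm:intro3}, the Catalan specialization of Theorem \ref{intro2}, and the geometric result of Gorsky--Mazin--Oblomkov \cite{GMO} on the affine paving of the compactified Jacobian. First I would fix $\tau=\tau_{md,nd}$ and recall from example (2) of \S\ref{ss:intro knot families} (proved in Appendix \ref{sec:AppendixB}) that $K_\tau=T(m,n)(d,mnd+1)$, so that $K_\tau$ is the link of the plane curve singularity $(x(t),y(t))=(t^{nd},t^{md}+\lambda t^{md+1}+\cdots)$. Next I would invoke Theorem \ref{thm:intro3} with $(m,n,\ell)=(md,nd,(md)(nd)-1)$, which gives
\[
(1-q)\PC^{\KR}_{K_\tau}(q,t,a) = (at^{-1/2}q^{-1/2})^{|\tau|}\, S_{md,nd}(q,t,a).
\]
Since $S_{md,nd}(q,t,a)$ has nonnegative powers of $a$ with constant term $S_\tau(q,t,0)$, the lowest $a$-degree part of the right-hand side is exactly $a^{|\tau|}t^{-|\tau|/2}q^{-|\tau|/2}S_\tau(q,t,0)$, with all other contributions carrying strictly higher $a$-degree.

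The second step is the $q=1$, $t=u^{-2}$ specialization. Under $t=u^{-2}$ the prefactor $t^{-|\tau|/2}q^{-|\tau|/2}$ becomes $u^{|\tau|}$ at $q=1$, so the lowest $a$-degree term becomes $a^{|\tau|}u^{|\tau|}\,S_\tau(1,u^{-2},0)$. Here I must reconcile the exponent: the Milnor number $\delta$ of the unibranch singularity with link $T(m,n)(d,mnd+1)$ equals $|\tau_{md,nd}|$, which follows from the genus formula for cable knots (or directly: $\delta$ is the number of lattice points under the relevant line, cf.\ the identification in \S\ref{ss:intro knot families} and \cite{GMO}). Writing $\delta=|\tau|$, the lowest-degree term is $a^\delta u^\delta S_\tau(1,u^{-2},0)$. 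Rewriting $u^\delta = u^{-\delta}u^{2\delta}$ matches the first displayed line of Theorem \ref{intro4}, with $S_\tau(q,t,0)$ understood at $q=1,t=u^{-2}$.

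The third step identifies $u^{2\delta}S_\tau(1,u^{-2},0)$ with the Poincar\'e polynomial of the compactified Jacobian. By Theorem \ref{intro2}, $S_\tau(q,t,0)=C_{md,nd}(q,t)$, the triangular $(q,t)$-Catalan polynomial, which is the generating function $\sum_{\pi\in\D(md,nd)}q^{\area(\pi)}t^{\dinv(\pi)}$. By the main theorem of \cite{GMO}, the compactified Jacobian $\overline{JC}$ of this singularity admits an affine paving whose cells are indexed by $\tau_{md,nd}$-Dyck paths, with the dimension of the cell for $\pi$ recorded by (a specialization of) these statistics; hence $\PC_{\overline{JC}}(u)=u^{2\delta}C_{md,nd}(1,u^{-2})$, the factor $u^{2\delta}$ accounting for the passage from the grading normalization in the Catalan polynomial to cohomological degree. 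Substituting gives the second displayed line, completing the proof. I expect the main obstacle to be bookkeeping of the normalizing powers of $q,t,u,a$ — in particular verifying that $\delta=|\tau|$ under all the relevant conventions and that the degree shift in \cite{GMO} matches $u^{2\delta}$ — rather than any conceptual difficulty; all the substantive content is imported from Theorem \ref{thm:intro3} and \cite{GMO}.
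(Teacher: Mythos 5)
Your proposal is correct and follows essentially the same route as the paper's proof (Theorem \ref{thm: ORS for gen curves}): identify $K_{\tau_{md,nd}}$ with $T(m,n)(d,mnd{+}1)$, apply Theorem \ref{thm: schroder=poincare} (equivalently Theorem \ref{thm:intro3}) to write $(1-q)\PC^{\KR}_{K_\tau}$ as $(at^{-1/2}q^{-1/2})^{|\tau|}S_\tau(q,t,a)$, extract the lowest $a$-degree term $(au)^{\delta}C_\tau(1,u^{-2})$ after specializing $q=1,\,t=u^{-2}$, and cite \cite[Thm.~1.4]{GMO} for $\PC_{\overline{JC}}(u)=u^{2\delta}C_\tau(1,u^{-2})$. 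The bookkeeping you flag (that $\delta=|\tau_{md,nd}|$ is both the Milnor number and the normalizing exponent) is exactly the point the paper dispatches as ``well-known,'' and your computation of the specialized prefactor matches the paper's.
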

This theorem is restated and proven as Theorem \ref{thm: ORS for gen curves} in \S \ref{subsec:ORS}.

\subsection{The $(q,t)$-Schr\"oder Theorem}\label{ss: intro schro thm}
The classical \newword{shuffle theorem}, conjectured by Haglund-Haiman-Loehr-Remmel-Ulyanov \cite{HHLRU} and proven by Carlsson-Mellit \cite{CM18}, gives a combinatorial formula in terms of $\tau_{n,n+1}$-Dyck paths for $\nabla e_n$.  The \newword{$(q,t)$-Schr\"oder Theorem} was conjectured by Egge, Haglund, Killpatrick and Kremer \cite{EHKK03} and proven by Haglund \cite{Ha04} over ten years before Carlsson and Mellit's proof of the shuffle theorem. It states that the hook components of the Schur expansion of $\nabla e_n$ coincide with the classical Schr\"oder polynomials $S_{n,n+1}(q,t,a)$, i.e. the following equality holds: 
\[
S_{n,n+1}(q,t,a)= \sum_{k\geq 0} \langle \nabla e_n, h_ke_{n-k} \rangle \; a^k. 
\]

Motivated by their study of certain superpolynomials in the double affine Hecke algebra and their connection to Khovanov--Rozansky homology of torus knots, Gorsky and Negut \cite{GorskyNegut} conjectured a rational generalization of the shuffle theorem (which Mellit \cite{Mellit-Shuffle} proved), wherein $\nabla e_n$ is replaced by the action of certain operator $P_{m,n}$ in the \emph{elliptic Hall algebra} on the element $1$ in the ring of symmetric functions and the combinatorial side now ranges over $\tau_{m,n}$-Dyck paths with $m,n$ relatively prime integers. The action of the elliptic Hall algebra $\mathcal{E}$ on symmetric functions and its connection to Macdonald and $(q,t)$-Catalan combinatorics is well documented with both algebraic and geometric formulations studied by Feigin-Tsymbaliuk \cite{FT} and Burban, Schiffman, and Vasserot \cite{BS, schiffman, SV11, SV12, SV13}. Inspired by this construction, Blasiak-Haiman-Morse-Pun-Seelinger \cite{BHMPS} proved that a similar equality could be given by considering certain operators $\mathsf{D}_{\tau_{r,s}} \in \mathcal{E}$ indexed by triangular partitions $\tau_{r,s}$, with $r,s$ arbitrary positive real numbers:
\begin{equation*}
\mathsf{D}_{\tau_{r,s}}(1) = \sum_{\pi \in \D(r,s)} q^{\area(\pi)} t^{\dinv(\pi)} \omega(\G_{\nu(\pi)}(X;t^{-1})),
\end{equation*}
where $\omega$ is the standard Weyl involution on symmetric functions and $\G_{\nu(\pi)}(X;t^{-1})$ is the LLT-polynomial for some particular tuple of skew shapes $\nu(\pi)$. Although this so-called \newword{shuffle theorem under any line} fully generalized the combinatorial side of the previous shuffle theorems, its connections to Khovanov--Rozansky homology and torus links remained completely open. In this article, we make considerable progress towards bridging this gap by showing that the hook components of the Schur expansion of $\mathsf{D}_{\tau}(1)$ are exactly given by our triangular $(q,t)$-Schr\"oder polynomial $S_\tau(q,t,a)$ and thus the KR series of the Coxeter knot $K_\tau$. Using a superization procedure for LLT-polynomials, we prove:
\begin{introthm}\label{intro5}
The triangular $(q,t)$-Schr\"oder theorem holds:
\[
S_{\tau_{r,s}}(q,t,a)= \sum_{k\geq 0} \langle \Hik_{\tau_{r,s}}(X;q,t), h_ke_{\lfloor s \rfloor-k} \rangle \;a^k. 
\]
\end{introthm}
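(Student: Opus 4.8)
The plan is to deduce the identity by combining Theorem~\ref{thm:intro3} with the shuffle theorem under any line of Blasiak--Haiman--Morse--Pun--Seelinger, passing through a ``superization'' of the LLT expansion. First I would recall the formula $\Hik_{\tau_{r,s}}(X;q,t)=\mathsf D_{\tau_{r,s}}(1)=\sum_{\pi\in\D(r,s)} q^{\area(\pi)}t^{\dinv(\pi)}\,\omega\big(\G_{\nu(\pi)}(X;t^{-1})\big)$. The key point is that hook Schur coefficients $\langle f, h_k e_{N-k}\rangle$ of a symmetric function $f$ are governed by a single extra ``super'' alphabet: writing $f$ in the superspace and evaluating suitably extracts exactly the hook components. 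Concretely, for an LLT polynomial $\G_\nu(X;t^{-1})$, superization replaces the fillings of the tuple of skew shapes $\nu$ by \emph{super} fillings, where each cell carries either an ``unbarred'' (commuting) or ``barred'' (anticommuting) entry, and the barred entries, being forced to form vertical strips, correspond precisely to the diagonal (northwest) steps in the Schr\"oder picture. Thus one shows
\[
\sum_{k\ge 0} a^k \langle \omega(\G_{\nu(\pi)}(X;t^{-1})), h_k e_{N-k}\rangle
= \sum_{k\ge 0} a^k \!\!\sum_{\substack{L\subset \AB(\pi)\\ |L|=k}} t^{-\sum_{\Box\in L}\xi(\pi,\Box)},
\]
i.e.\ the hook-coefficient generating function of each LLT term matches exactly the inner sum over addable boxes appearing in Theorem~\ref{introthm:schroeder}. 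This requires unwinding the combinatorics of LLT fillings under the $\omega$-twist and matching the $t$-weight of a barred vertical strip in $\nu(\pi)$ with the statistic $\xi(\pi,\Box)$; the bijection $\pi\mapsto\nu(\pi)$ of \cite{BHMPS} between $\tau$-Dyck paths and skew-shape tuples is what makes the addable boxes of $\lambda$ correspond to the ``barrable'' cells of $\nu(\pi)$.

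Summing over $\pi\in\D(r,s)$ and using $\D(r,s)=\D(m,n,\ell)$ for any admissible triple $(m,n,\ell)$ with $\tau_{m,n,\ell}=\tau_{r,s}$, the right-hand side becomes
\[
\sum_{k\ge 0} a^k\!\!\sum_{\pi\in\D(m,n,\ell)} q^{\area(\pi)} t^{\dinv(\pi)}\!\!\sum_{\substack{L\subset\AB(\pi)\\ |L|=k}}\!\! t^{-\sum_{\Box\in L}\xi(\pi,\Box)},
\]
which is precisely $S_{m,n,\ell}(q,t,a)=S_{\tau_{r,s}}(q,t,a)$ by Theorem~\ref{introthm:schroeder}. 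Combined with $\langle \Hik_{\tau_{r,s}},h_k e_{\lfloor s\rfloor - k}\rangle = \sum_{\pi} q^{\area(\pi)}t^{\dinv(\pi)}\langle\omega(\G_{\nu(\pi)}),h_k e_{\lfloor s\rfloor - k}\rangle$ (using that the total number of cells is $|\tau_{r,s}| = \lfloor s\rfloor + (\text{first column length})$, so the relevant $N$ is consistent with $\lfloor s\rfloor$ after accounting for the path-to-partition correspondence), the theorem follows. Note one must check the degree bookkeeping: a Schr\"oder path with $k$ diagonal steps lands in the $h_k e_{\lfloor s\rfloor - k}$ component, not some other hook, and this is exactly the content of the superization dictionary.

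I expect the main obstacle to be the superization step for LLT polynomials and the precise matching of statistics: proving that $\langle \omega(\G_{\nu}(X;t^{-1})), h_k e_{N-k}\rangle$ equals the $t^{-\xi}$-weighted count of $k$-subsets of addable boxes. The subtlety is threefold: (i) one needs a clean combinatorial rule for hook coefficients of an LLT polynomial, which is not standard and must be developed via the ``super LLT'' or ``super ribbon tableau'' formalism (barred/unbarred entries with the appropriate sign and inversion-statistic adjustments); (ii) the Weyl involution $\omega$ interacts nontrivially with this, swapping the roles of rows and columns in the super fillings, so that barred entries become \emph{horizontal} rather than vertical strips and one must track how this affects the spin/inversion statistic; and (iii) the geometric statistic $\xi(\pi,\Box)$ defined via the Gaps/invariant-subset combinatorics of \S\ref{ss:intro schroeder} must be shown to coincide, box by box, with the $t$-exponent contributed by barring the corresponding cell of $\nu(\pi)$ — this is a compatibility between two a priori unrelated combinatorial models (the GMV invariant-subset side and the BHMPS LLT side) and will likely require either an explicit bijection or an inductive argument mirroring the shared recursion. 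Once this dictionary is in place, everything else is formal.
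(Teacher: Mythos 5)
Your proposal follows essentially the same route the paper takes in \S\ref{sec:qtSchroderTheorem}: apply the shuffle theorem under any line, superize the LLT expansion via fundamental quasisymmetric functions to extract hook coefficients, identify the barred/unbarred choice in a super filling with a choice of addable boxes, and match the resulting $t$-exponent to $\xi(\pi,\Box)$. One small mechanical inaccuracy in your plan: the paper handles the $\omega$-twist not by making barred entries form horizontal strips but by passing to the rotated tuple $\nu(\pi)^R$ and the BHMPS bijection onto the vertical strip $\rho_\pi=(\lambda+1^{\lfloor s\rfloor})/\lambda$, after which the key identity $I(\pi)-\inv(P)=\sum_{\Box\in L_P}\xi(\pi,\Box)$ is established by a direct inequality computation with the Anderson filling and the $\phi$-statistic rather than by an inductive/recursive argument.
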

This result is stated and proven as Corollary \ref{cor:Schroder-Shuffle}. Note that $\Hik_\tau(X;q,t)$ is the symmetric function obtained by the action of particular elliptic Hall algebra elements indexed by a triangular partition $\tau$ on $1$. 

\subsection{Further Directions} There are many open questions that remain to be studied. Although we were able to compute the KR series for all Coxeter \emph{knots}, ideally one would like such a result for all Coxeter \emph{links} and general monotone curves. It is our hope that an extension of our work might yield light in this direction. 

Combinatorially there are also several directions worth pursuing. In a recent article, the second named author alongside Simental and Vazirani \cite{GSV} gave a `finite' analog of the rational shuffle theorem in which the elliptic Hall algebra is replaced the double affine Hecke algebra (DAHA). Conjecturally, their construction coincides with the bigraded $GL_r$-character of certain quantized Gieseker variety representations. On the other hand, Galashin and Lam's superpolynomial for monotone links is built from the DAHA. It it unknown for exactly which monotone links the superpolynomial agrees with the KR series. Thus, one can ask whether a generalization to the triangular setting of the finite shuffle theorem of Gonz\'alez-Simental-Vazirani could yield light on a geometric interpretation of our triangular Schr\"oder polynomials, which combined with the results in this paper might provide a partial answer to when the KR series and the superpolynomial coincide.

\subsection{Structure of the paper:} In \S\ref{sec:KR and HM}, we recall the necessary topological notions of knots and cables, the construction of Khovanov--Rozansky homology and binary recursion of Hogancamp and Mellit. \S\ref{sec:GMV-Recursions} is similarly laying the combinatorial background in the form of invariant subsets, the bijection onto Dyck paths, and the Gorsky-Mazin-Vazirani recursions that form the building blocks of the upcoming definitions. \S\ref{sec:Schroder} is the combinatorial heart of the article and where Theorems \ref{introthm:schroeder} and \ref{intro2} are proved. Here, triangular partitions are further explored and the $(q,t)$-Schr\"oder polynomials are finally defined. It is in \S\ref{sec:trian knot} that most of the main results come together. We recall the work of Galashin-Lam relating monotone and Coxeter knots, and follow up by connecting them further to the binary knots of Hogancamp-Mellit, ending with the proofs of Theorems \ref{thm:intro3} and \ref{intro4}. Finally, in \S\ref{sec:qtSchroderTheorem} we relate the triangular $(q,t)$-Schr\"oder polynomial with the existing Catalan combinatorics. We review the shuffle theorems, LLT polynomials, and superization, before proving Theorem \ref{intro5}. Lastly, in Appendix \ref{sec:AppendixA} we provide an alternative self contained derivation of the standardization procedure of  \S\ref{sec:qtSchroderTheorem} and in Appendix \ref{sec:AppendixB} we give a way of extending the binary sequences in \S\ref{sec:trian knot} to certain links. 

\section*{Acknowledgments}
We are grateful to the American Institute of Mathematics for hosting the workshop ``Algebra, Geometry, and Combinatorics of Link Homology" where this work was initiated. We also thank Eugene Gorsky for fruitful conversations. C. Caprau was partially supported by NSF RUI grant, DMS-2204386.

\section{Khovanov--Rozansky Homology and the Hogancamp-Mellit Recursion} \label{sec:KR and HM} 

We begin by recalling some basic topological notions.

\subsection{Framed Knots and Cables}\label{ss:framed knots}

Let $D^2 =\{z\in \C\:|\: |z|\leq 1\}$ denote the unit disk in the plane $\C = \R^2$, and let $S^1=\partial D^2$ be the unit circle.  Let $\T=S^1\times S^1$ denote the 2-dimensional torus.  We identify $S^1=\R/\Z$ via the mapping $\R\twoheadrightarrow S^1$ sending $t\mapsto e^{2\pi i t}$, which induces an identification of $\T$ with $\R^2/\Z^2$ (restricting to the unit square $[0,1]^2$ gives the familiar realization of $\T$ as the square with opposite edges identified).  The \emph{longitude} of $\T$ is the copy of $S^1$ identified as the image of the $x$-axis $\R\times\{0\}$ with \emph{leftward orientation}.  The \emph{meridian} of $\T$ is the copy of $S^1$ identified as the image of the $y$-axis $\{0\}\times \R$ with its \emph{upward} orientation.

We may embed $\T$ in $\R^3$ in a standard fashion with image the set of points of distance $1/2$ from $\{x^2+y^2=1\}$, so that the meridian of $\T$ maps to the circle with parametrization $\theta\mapsto (0,1,0)+\frac{1}{2}(0,-\cos(\theta),\sin(\theta))$, and the longitude of $\T$ maps to the circle parametrized by $\theta\mapsto \frac{1}{2}(\cos(\theta),\sin(\theta),0)$.

Given relatively prime integers $m,n$, we consider the line segment in $\R^2$ from $(m,0)$ to $(0,n)$.  The image of this segment in $\T$ is a closed curve, denoted $T(m,n)^\T$.  The image of $T(m,n)^\T$ under the standard embedding $\T\hookrightarrow \R^3$ is the \newword{$(m,n)$-torus knot}, denoted $T(m,n)$.  Given $d\geq 1$, let $T(md,nd)^\T$ denote $d$ parallel copies of $T(m,n)^\T$ in $\T$, whose image in $\R^3$ is the torus link $T(md,nd)$.

We will be interested in knots which are obtained as cables of torus knots.  In order to discuss these, we must introduce the notion of framed knots.  
A \newword{framed knot} $\mathcal{K}$ is an embedding of the {solid torus} $S^1\times D^2\hookrightarrow \R^3$, regarded up to isotopy.  Given any point $p\in D^2$ we obtain an ordinary knot $K$ by restricting $\mathcal{K}$ to an embedding $S^1\times\{p\}\hookrightarrow \R^3$.  The knot $K$ does not depend on $p$, up to isotopy, and we say that $\mathcal{K}$ is a framing of $K$.

If $\mathcal{K}$ is a framed knot and $p_1\neq p_2\in D^2$ are distinct points, then we may consider knots $K_i$ obtained as the images of $S^1\times\{p_i\}$ under $\mathcal{K}$.  Let $r$ denote the \newword{linking number} of the 2-component link $K_1\cup K_2$  (upon choosing a diagram for $K_1\cup K_2$, the linking number is one half the number of crossings between $K_1$ and $K_2$, counted with signs).  This linking number does not depend on the choice of $p_1$ and $p_2$, and is called the \newword{framing coefficient of $\mathcal{K}$}.  Equivalently (but perhaps less precise), the framing coefficient of $\mathcal{K}$ is the number of twists in the embedded annulus obtained by taking the image of $S^1\times a$, where $a\subset D^2$ is an embedded arc connecting the points $p_1$ and $p_2$.

Up to isotopy, a framed knot is completely determined by its framing coefficient $r$, and we will abuse notation and write $\mathcal{K}=(K,r)$.  The \newword{Seifert framing} of $K$ is the unique framing with coefficient $r=0$.

Given a framed knot $\mathcal{K}=(K,r)$, restricting from the solid torus to its boundary yields embedding $\phi_{K,r}\colon \T\hookrightarrow \R^3$.  When $K=U$ is the unknot with Seifert framing, we obtain the standard embedding $\phi_{U,0}\colon \T\hookrightarrow \R^3$.  For more general $r\in \Z$, the embedding $\phi_{U,r}$ sends the closed curve $T(p,q)^\T$ to $T(p,q+rp)$.

\begin{definition}\label{def:cable}
Let $K$ be a knot.  The \newword{$(p,q)$-cable of $K$}, denoted $K(p,q)$, is defined to be the image of $T(p,q)^\T$ under the embedding $\phi_{K,0}\colon \T\hookrightarrow \R^3$ introduced above.
\end{definition}

\begin{remark}\label{rmk:cable other framings}
We can also realize $K(p,q)$ as the image of $T(p,q-rp)$ under $\phi_{K,r}$ for any $r\in \Z$, since the effect of changing the framing from 0 to $r$ is cancelled by the insertion of $r$ negative full twists (in the meridian direction) into $T(p,q)^\T$, obtaining $T(p,q-rp)^\T$.
\end{remark}

\subsection{Braid Presentations of Some Special Knots}
\label{ss:special cable}

In this paper we are particularly interested in the $(d,mnd+1)$-cable of the torus knot $T(m,n)$, denoted $T(m,n)(d,mnd+1)$, where $m,n,d$ are positive integers with $m,n$ relatively prime. 
These knots are the simplest instances of algebraic knots (see \S \ref{subsec:ORS}).  Note that this family includes positive torus knots as the special case $d=1$.   In this section we will focus on finding a braid presentation of these knots.

Let $\Br_n$ denote the \newword{braid group} on $n$ strands.  Let $\one_n\in \Br_n$ denote the identity braid.  For $1\leq i\leq n-1$ let $\sigma_i$ denote the standard Artin generator, and let $\cox_n:=\sigma_1\cdots \sigma_{n-1}\in \Br_n$ denote the \newword{standard Coxeter braid} on $n$ strands.  These are pictured in Figure \ref{eq:artin and coxeter}.

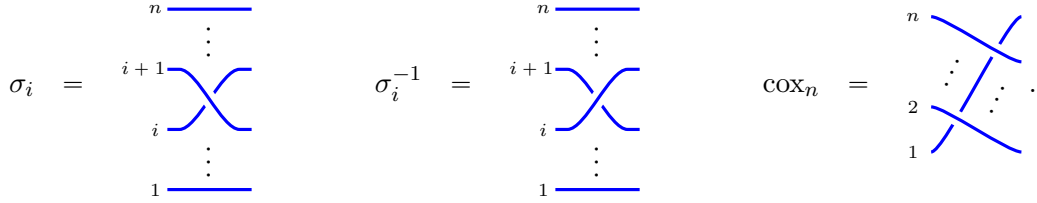
\begin{figure}[ht]
\begin{equation*}
\sigma_i \ \ = \ \ 
\begin{tikzpicture}[scale=.8,baseline=.5cm]
\draw[blue, very thick]
(-.2,0)--(0,0) ..controls++(.3,0)and++(-.3,0)..(1,1)--(1.2,1);
\draw[white, line width=2mm]
(-.2,1)--(0,1) ..controls++(.3,0)and++(-.3,0)..(1,0)--(1.2,0);
\draw[blue, very thick]
(-.2,1)--(0,1) ..controls++(.3,0)and++(-.3,0)..(1,0)--(1.2,0);
\draw[blue, very thick]
(-.2,-1)--(1.2,-1)
(-.2,2)--(1.2,2);
\node[rotate=90] at (.5,-.5) {$\cdots$};
\node[rotate=90] at (.5,1.5) {$\cdots$};
\node at (-.4,-1) {\tiny $1$};
\node at (-.4,0) {\tiny $i$};
\node at (-.6,1) {\tiny $i+1$};
\node at (-.4,2) {\tiny $n$};
\end{tikzpicture}
\qquad \qquad
\sigma_i\iinv \ \ = \ \ 
\begin{tikzpicture}[scale=.8,baseline=.5cm]
\draw[blue, very thick]
(-.2,1)--(0,1) ..controls++(.3,0)and++(-.3,0)..(1,0)--(1.2,0);
\draw[white, line width=2mm]
(-.2,0)--(0,0) ..controls++(.3,0)and++(-.3,0)..(1,1)--(1.2,1);
\draw[blue, very thick]
(-.2,0)--(0,0) ..controls++(.3,0)and++(-.3,0)..(1,1)--(1.2,1);
\draw[blue, very thick]
(-.2,-1)--(1.2,-1)
(-.2,2)--(1.2,2);
\node[rotate=90] at (.5,-.5) {$\cdots$};
\node[rotate=90] at (.5,1.5) {$\cdots$};
\node at (-.4,-1) {\tiny $1$};
\node at (-.4,0) {\tiny $i$};
\node at (-.6,1) {\tiny $i+1$};
\node at (-.4,2) {\tiny $n$};
\end{tikzpicture}
\qquad \qquad
\cox_n \ \ = \ \ 
\begin{tikzpicture}[baseline=.8cm,scale=.6]
\draw[blue, very thick]
(0,0)..controls++(.3,0)and++(-.3,0)..(2,3);
\draw[white, line width=2mm]
(0,3)..controls++(.3,0)and++(-.3,0)..(2,2)
(0,1)..controls++(.3,0)and++(-.3,0)..(2,0);
\draw[blue, very thick]
(0,3)..controls++(.3,0)and++(-.3,0)..(2,2)
(0,1)..controls++(.3,0)and++(-.3,0)..(2,0);
\node[rotate=65] at (1.5,1.2) {$\cdots$};
\node[rotate=65] at (.5,1.8) {$\cdots$};
\node at (-.4,0) {\tiny $1$};
\node at (-.4,1) {\tiny $2$};
\node at (-.4,3) {\tiny $n$};
\end{tikzpicture}.
\end{equation*}
\caption{The standard Coxeter generators of the braid group (left) and the standard Coxeter braid (right).}\label{eq:artin and coxeter}
\end{figure}

Given braids $\b_1\in \Br_{n_1}$ and $\b_2\in \Br_{n_2}$, let $\b_1\sqcup \b_2\in \Br_{n_1+n_2}$ denote the union of the braids $\b_1$ and $\b_2$.  In terms of the diagrams above, $\b_1\sqcup \b_2$ consists of $\b_2$ above $\b_1$. Given a braid $\b$, let $\hat{\b}$ denote the oriented link obtained by closing $\b$.

\begin{example}\label{ex:Tmn braid}
For integers $M,N$ with $M\geq 1$, the torus link $T(M,N)$ is the closure of the braid $\cox_M^N$.
\end{example}

\begin{lemma}\label{lemma:Tndmd as cable}
For $m,n$ coprime and $d\in \Z_{\geq 1}$, we have $T(md,nd)=T(m,n)(d,mnd)$.
\end{lemma}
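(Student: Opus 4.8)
The plan is to identify $T(md,nd)$ as a cable of the torus knot $T(m,n)$ by passing through the torus presentation and tracking framings carefully, using the machinery set up in \S\ref{ss:framed knots}. Recall that $T(md,nd)$ is, by definition, the image in $\R^3$ under the standard embedding $\phi_{U,0}\colon \T\hookrightarrow\R^3$ of $d$ parallel copies of the curve $T(m,n)^\T$ inside $\T$. The key observation is that these $d$ parallel copies of $T(m,n)^\T$ live inside an annular neighborhood of a single copy of $T(m,n)^\T$ in $\T$, and that single copy, when pushed into $\R^3$ via $\phi_{U,0}$, is exactly the torus knot $T(m,n)$ equipped with some framing $\mathcal{K}=(T(m,n),r)$; the annular neighborhood inside $\T$ then becomes (a neighborhood of) the image of $\phi_{T(m,n),r}$.

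First I would determine the framing coefficient $r$ induced on $T(m,n)$ by the embedding into $\T\subset\R^3$. This is the self-linking number of $T(m,n)^\T$ computed via a pushoff inside the torus $\T$ (i.e.\ the ``torus framing''), which is a classical computation: the surface framing of $T(m,n)^\T$ coming from $\T$ differs from the Seifert framing of $T(m,n)$ by $mn$. Concretely, two parallel copies of the $(m,n)$-curve on the standardly embedded torus have linking number $mn$, so the torus framing is the framing with coefficient $r=mn$. Hence the solid-torus neighborhood of $T(m,n)^\T$ inside (the solid torus bounded by) $\T$ realizes the framed knot $\mathcal{K}=(T(m,n),mn)$, and its boundary is the embedding $\phi_{T(m,n),mn}\colon \T\hookrightarrow\R^3$.

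Next I would express the $d$ parallel copies of $T(m,n)^\T$ sitting in this neighborhood as the curve $T(d,?)^\T$ with respect to the coordinate system on the boundary torus of $\mathcal{K}$, and invoke Remark \ref{rmk:cable other framings}. The $d$ parallel pushoffs, viewed in the neighborhood torus, wind $d$ times in the longitudinal direction and $0$ times in the meridional direction \emph{with respect to the torus framing}; that is, they form the curve $T(d,0)^\T$ with respect to $\phi_{T(m,n),mn}$. By the formula in \S\ref{ss:framed knots}, $\phi_{T(m,n),mn}$ sends $T(p,q)^\T$ to the same knot that $\phi_{T(m,n),0}$ sends $T(p,q+mn\cdot p)^\T$ to — equivalently, by Remark \ref{rmk:cable other framings}, $K(p,q)$ is the image of $T(p,q-rp)^\T$ under $\phi_{K,r}$. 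Taking $K=T(m,n)$, $r=mn$, $p=d$, and $q-mn\cdot d = 0$ gives $q = mnd$, so the image of the $d$ parallel copies is precisely $T(m,n)(d,mnd)$. Since that image is by construction $T(md,nd)$, this yields $T(md,nd)=T(m,n)(d,mnd)$.

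The main obstacle is getting the framing bookkeeping exactly right: one must be careful about orientation conventions for the meridian and longitude (fixed in \S\ref{ss:framed knots}), about the sign of the $mn$ self-linking of the torus curve, and about which direction the $d$ pushoffs wind relative to which framing. A clean way to sidestep sign ambiguities is to verify the identity on an invariant — e.g.\ compare the number of components (both sides are knots since $\gcd(md,nd)=d$... actually $T(md,nd)$ has $d$ components, matching $T(m,n)(d,mnd)$ which also has $\gcd(d,mnd)=d$ components) and the Seifert genus or the HOMFLY-PT polynomial via the cabling formula — but the geometric argument above, done carefully with the conventions already laid out, is the most transparent. I would also remark that the coprimality of $m,n$ is used only to ensure $T(m,n)^\T$ is connected so that ``$d$ parallel copies'' is unambiguous and the cabling description applies verbatim.
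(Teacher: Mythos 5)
Your argument is correct and follows essentially the same route as the paper: both reduce the claim to identifying the $\T$-surface framing of $T(m,n)$ with framing coefficient $mn$, then invoke Remark~\ref{rmk:cable other framings} to match the $d$ parallel copies with $T(m,n)(d,mnd)$. The one place you appeal to a ``classical computation'' --- that two $\T$-parallel copies of the $(m,n)$-curve have linking number $mn$ --- is exactly the point the paper verifies directly, by exhibiting $T(2m,2n)$ as the closure of $(\cox_{2m})^{2n}$ and counting the $2mn$ inter-component crossings among the $2n(2m-1)$ total; including that short crossing count would make your sketch fully self-contained.
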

\begin{proof}
By definition, $T(md,nd)$ consists of $d$ parallel copies of $T(m,n)$, where the notion of ``parallel'' comes from the embedding in the standard torus $\T$.  On the other hand, $T(m,n)(d,mnd)$ is the image of $T(d,0)^\T$ under the embedding $\phi_{T(m,n),mn}$; this consists of $d$ parallel copies of $T(m,n)$, where the notion of ``parallel'' comes from the framing coefficient $mn$, so that two distinct copies of $T(m,n)$ in $\phi_{T(m,n),mn}(T(d,0)^\T)$ have linking number $mn$.  To prove the lemma, we just need to see that any two distinct copies of $T(m,n)$ in $T(md,nd)$ have linking number $mn$.  To see this, assume without loss of generality that $d=2$.  Then we have a presentation of $T(2m,2n)$ as the closure of the braid $(\cox_{2m})^{2n}$.  Of the $2n(2m-1)$ crossings, $2mn$ of these involve the two distinct components, so the linking number between these components is $mn$, which proves the lemma.
\end{proof}

\begin{figure}[ht]
\[\includegraphics[scale=.17,angle=0,origin=c]{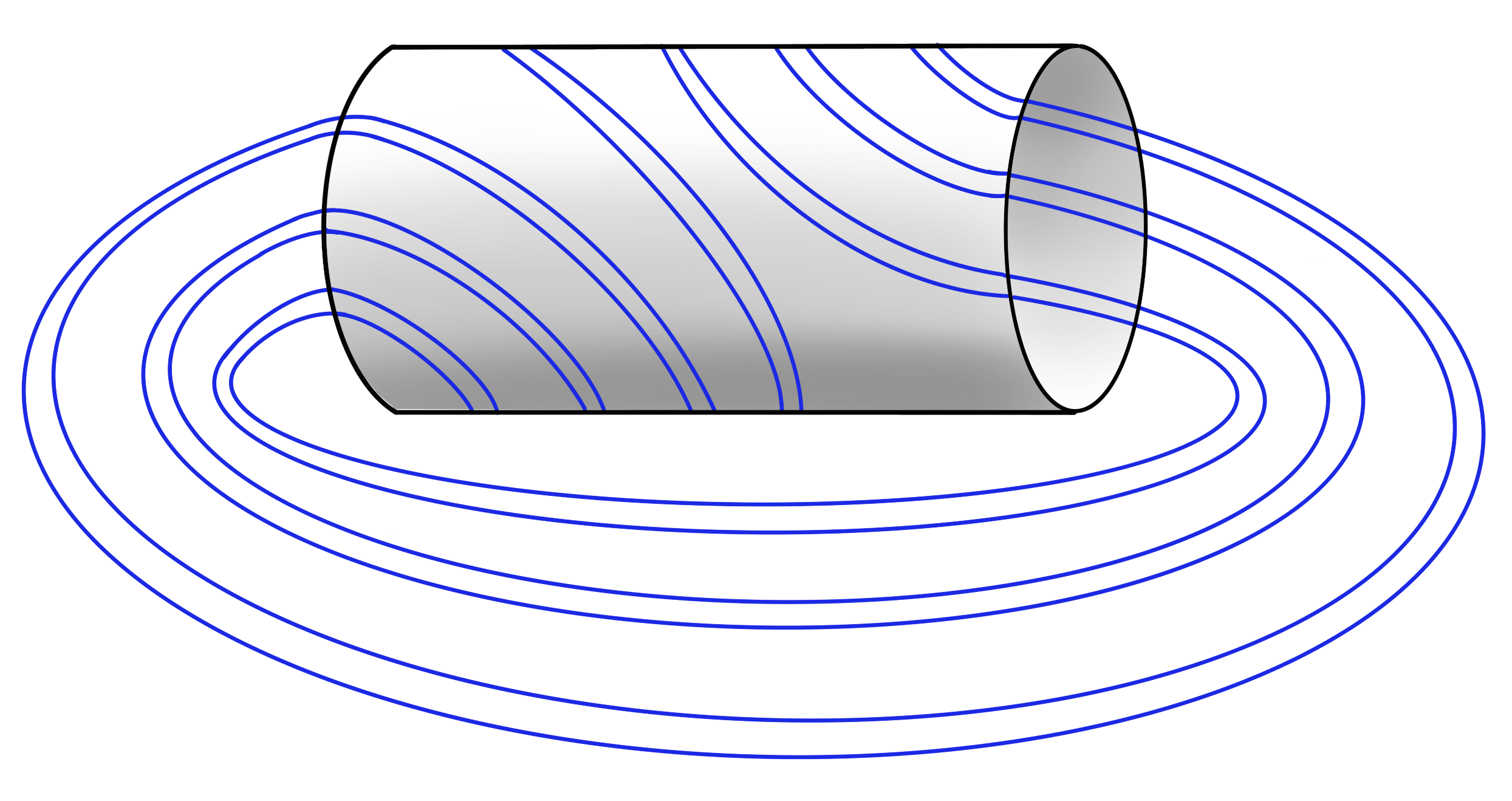}\]
\caption{Realizing $T(6,8)$ as the $(2,24)$ cable of $T(3,4)$.}
\end{figure}

From Lemma \ref{lemma:Tndmd as cable} we deduce the following.

\begin{lemma}\label{lemma:cable presentation}
$T(m,n)(d,mnd+1)$ can be represented as the closure of $(\cox_d\sqcup \one_{(m-1)d})(\cox_{md})^{nd}$. 
\end{lemma}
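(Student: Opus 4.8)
Lemma \ref{lemma:cable presentation} asks us to realize the $(d, mnd+1)$-cable of $T(m,n)$ as the closure of $(\cox_d \sqcup \one_{(m-1)d})(\cox_{md})^{nd}$.

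The plan is to start from Lemma \ref{lemma:Tndmd as cable}, which gives $T(md, nd) = T(m,n)(d, mnd)$, and to understand precisely how changing the cabling parameter from $(d, mnd)$ to $(d, mnd+1)$ affects a braid presentation. First I would recall from Example \ref{ex:Tmn braid} and Lemma \ref{lemma:Tndmd as cable} that $T(m,n)(d, mnd)$ is the closure of $(\cox_{md})^{nd}$, the torus link $T(md, nd)$. The key geometric observation is that the $(d,q)$-cable of $K$ is the image of $T(d,q)^\T$ under the framing-$0$ embedding $\phi_{K,0}$, and by Remark \ref{rmk:cable other framings} it is also the image of $T(d, q - rd)^\T$ under $\phi_{K, r}$ for any $r$. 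Choosing $K = T(m,n)$ and $r = mn$, the $(d, mnd+1)$-cable is the image of $T(d, 1)^\T$ under $\phi_{T(m,n), mn}$, whereas the $(d, mnd)$-cable is the image of $T(d,0)^\T = d$ parallel longitudes under the same embedding.

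The heart of the argument is then to compare $T(d,1)^\T$ with $T(d,0)^\T$ inside the standard torus: the curve $T(d,1)^\T$ is obtained from the $d$ parallel longitudes $T(d,0)^\T$ by inserting a single positive meridional twist among the $d$ strands, i.e. by composing with a Coxeter braid $\cox_d$ on those $d$ strands. Under the embedding $\phi_{T(m,n), mn}$, the $d$ parallel longitudes become the closure of $(\cox_{md})^{nd}$ in the obvious way — the $nd$ full Coxeter twists on $md$ strands realize the $d$-fold cabling of the $(m,n)$-torus braid $\cox_m^n$ with the $mn$-framing built in. Inserting the extra $\cox_d$ twist on the block of $d$ strands corresponding to one cable translates, at the braid level, into left-multiplying by $\cox_d \sqcup \one_{(m-1)d}$, since the $d$ strands being twisted sit in the first $d$ positions and the remaining $(m-1)d$ strands pass through untouched. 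Thus the closure of $(\cox_d \sqcup \one_{(m-1)d})(\cox_{md})^{nd}$ is exactly $T(m,n)(d, mnd+1)$, as claimed.

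The main obstacle is making the phrase ``insert a single meridional twist on the $d$ cable strands'' rigorous at the level of braid words rather than pictures — that is, justifying carefully that conjugating the standard embedding's effect by the framing change and the extra twist lands precisely on the word $(\cox_d \sqcup \one_{(m-1)d})(\cox_{md})^{nd}$ and not some Markov-equivalent variant requiring stabilization or conjugation. I would handle this by drawing the explicit diagram: take the torus-braid diagram for $T(md, nd) = \widehat{(\cox_{md})^{nd}}$, identify the $d$ groups of strands forming parallel copies of $T(m,n)$, note that one cable strand can be isotoped to pass through a small disk in which we perform one full meridional twist, and observe that this full twist on $d$ adjacent strands is realized by the braid $\cox_d$ (up to the standard identity that a full positive twist on $d$ strands differs from $\cox_d$ only by an overall framing/writhe shift which is absorbed into the cabling convention, matching the $+1$ in $mnd+1$). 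Since all the other strands are bystanders, the resulting braid is $(\cox_d \sqcup \one_{(m-1)d})(\cox_{md})^{nd}$ up to conjugation, and taking closures kills the conjugation.
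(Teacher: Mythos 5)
Your proof is correct and follows essentially the same route as the paper's: start from Lemma~\ref{lemma:Tndmd as cable} to get $T(m,n)(d,mnd)=T(md,nd)=\widehat{(\cox_{md})^{nd}}$, then observe that passing from the $(d,mnd)$-cable to the $(d,mnd+1)$-cable corresponds to replacing $T(d,0)^\T$ by $T(d,1)^\T$ and hence to inserting one $\cox_d$ on a block of $d$ adjacent strands; the paper's proof says exactly this, in one line. One small caution about your closing parenthetical: the insertion really is the single Coxeter braid $\cox_d$ and not a full twist, because $T(d,1)^\T$ closes up the braid $\cox_d^1$, not $\cox_d^d$; the claim that ``a full positive twist on $d$ strands differs from $\cox_d$ only by a framing/writhe shift'' is false for $d>1$ (they are genuinely different braids with different closures), and the framing discrepancy between the Seifert framing and the $mn$-framing has already been absorbed into the $mnd$ term via Lemma~\ref{lemma:Tndmd as cable}, so there is no residual gap for the parenthetical to repair. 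Dropping that sentence leaves a clean argument matching the paper's.
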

\begin{proof}
Lemma \ref{lemma:Tndmd as cable} tells us that $T(m,n)(d,mnd)=T(md,nd)$.  It is known that $T(md,nd)$ can be represented as the closure of the braid $(\cox_{md})^{nd}$.  To obtain $T(m,n)(d,mnd+1)$ from $T(m,n)(d,mnd)$, insert one more copy of the braid $\cox_d$, and the statement follows.
\end{proof}

\subsection{Soergel Bimodules and Rouquier Complexes}
\label{ss:HH}
We briefly recall the main ideas needed for the construction of Khovanov--Rozansky homology and refer the reader to \cite{KR1, Kh-Sbim, KR2, EH16, HM, SoergelBook, Mellit-Homology} for detailed exposure on this topic. 

Fix an integer $n\geq 0$ and let $\k$ be a commutative ring.  Let $R=\k[x_1,\ldots,x_n]$ regarded as a graded ring with $\deg(x_i)=2$.  If $B$ is a graded $(R,R)$-bimodule and $k\in \frac{1}{2}\Z$, then we denote by $q^k B$ the bimodule $B$ with grading ``shifted up by $2k$ units''; that is, $(q^kB)_i = B_{i-2k}$.  Let $\star=\otimes_R$ denote the monoidal operation on bimodules, and let $\boxtimes=\otimes_\mathbb{K}$ denote the external tensor product.

If $X$ is a complex of graded $(R,R)$-bimodules, then we let $\Omega X$ denote the cohomological shift; that is, the $i$-th chain group of $\Omega X$ is $X^{i-1}$, with negated differential $d_{\Omega X} = - d_X$.

Given $k\in \frac{1}{2}\Z$, we let $t^k X:= \Omega^{2k}q^{-k}X$.  Observe that $q,t,\Omega$ are not independent, but satisfy the relation
$q^{\frac{1}{2}}t^{\frac{1}{2}} = \Omega.$

\begin{figure}[ht]
\[
\includegraphics[scale=.2,angle=0,origin=c]{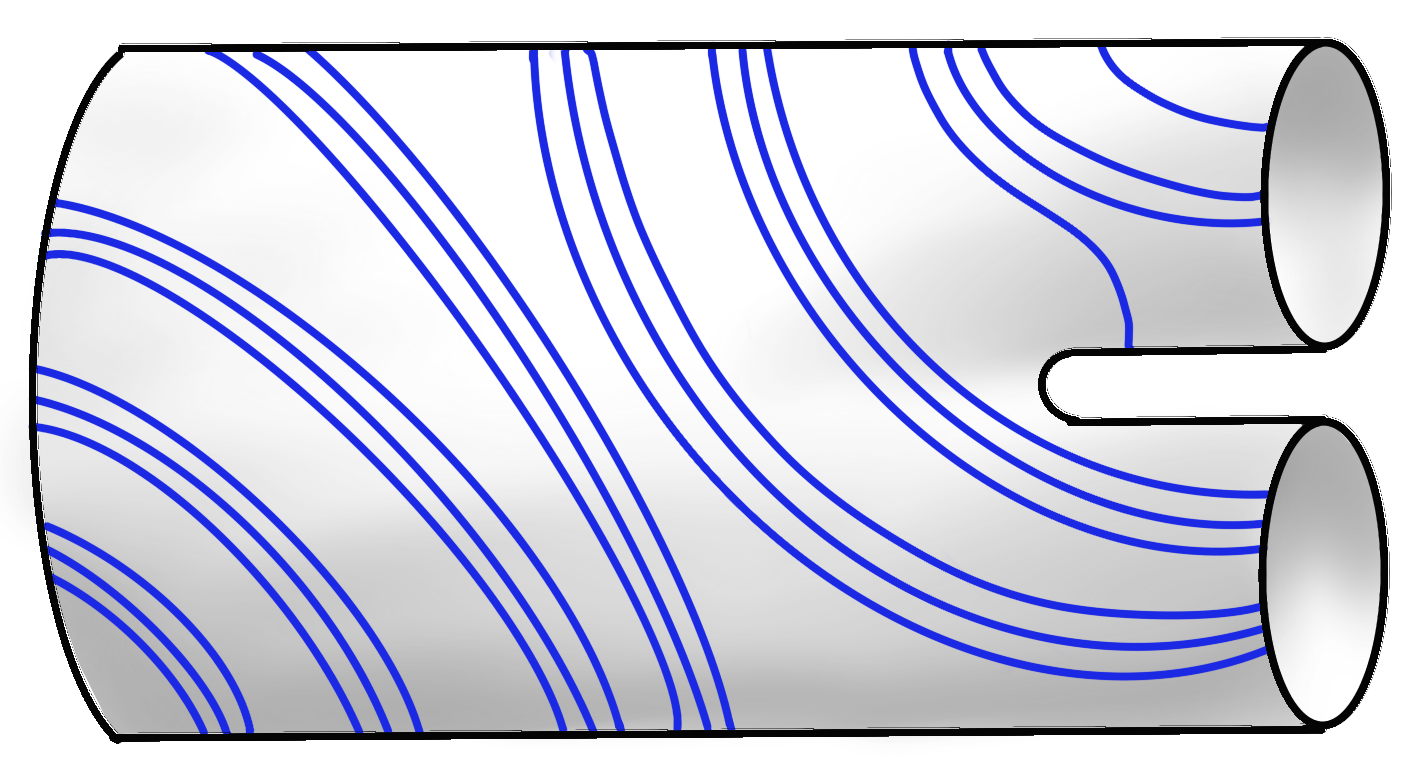}
\]
\caption{A braid whose closure is $T(3,4)(3,37)$, shown here as embedded in a pair-of-pants surface.
}\label{fig: 3-37 cable}
\end{figure}

We let $\SBim_n$ denote the category of \newword{Soergel bimodules} for the symmetric group $S_n$ \cite{SoergelBook}.  This is a monoidal subcategory of the category of graded $(R,R)$-bimodules. We denote by $ \Ch^b(\SBim_n)$ the category of bounded chain complexes in $\SBim_n$. 

The grading shifts interact with $\Hom$ spaces as follows:
\[
\Hom_{\SBim_n}(B,q^k B') = q^k \Hom_{\SBim_n}(B,B') \ ,\qquad \Hom_{\SBim_n}(q^l B,B') = q^{-l}\Hom_{\SBim_n}(B,B').
\]

Now, for $1\leq i\leq n-1$, we let 
\[B_i:=q^{-\frac{1}{2}}R\otimes_{R^{s_i}}R,\]
where $s_i$ is the simple transposition $(i,i+1)$ in the symmetric group $S_n$, and $R^{s_i}\subset R$ is the graded subalgebra of $s_i$-invariant polynomials.  We have canonical degree zero maps
\[
B_i\rightarrow q^{-\frac{1}{2}}R \ \text {given by} \ f\otimes g\mapsto fg
\]
and
\[
q^{\frac{1}{2}} R\rightarrow B_i \ \text{given by} \ 1\mapsto x_i\otimes 1 - 1\otimes x_{i+1} .
\]

\begin{definition}
Let $T_i^{\pm1}= T(\sigma_i^{\pm1})$ be following two-term complexes:
\[
T_i :=\left(t^{-1/2}B_i\rightarrow R\right) \quad \text{and} \qquad T_i^{-1} := \left(R\rightarrow t^{1/2} B_i\right).
\]
The \newword{Rouquier complex} associated to a braid $\beta = \sigma^\pm_{i_1}\dots\sigma^\pm_{i_k} \in \Br_n$ is the chain complex $T(\beta) \in \Ch^b(\SBim_n)$ obtained by tensoring 
\[
T(\sigma^\pm_{i_1}),\dots, T(\sigma^\pm_{i_k}).
\]
\end{definition}

\begin{remark}
The above conventions for $q$ and $t$ will enable a direct comparison between our computations of $\HHH$ and the $(q,t)$-Catalan numbers.  It is more traditional in Soergel bimodule literature to denote the grading and homological shifts of $X$ by $X(1)$ and $X[1]$, respectively; these are related to our $q,t,\Omega$ by
\[
X(1) = q^{-\frac{1}{2}}X \ ,\qquad X[1] = \Omega\iinv X.
\]
\end{remark}

\subsection{Khovanov--Rozansky Homology} Let $\kMod{\Z\times \Z}$ denote the category of bigraded $\k$-modules.  Let $\HH^\cdot\colon \SBim_n\to \kMod{\Z\times \Z}$ be the $\k$-linear functor sending a graded $(R,R)$-bimodule to its Hochschild cohomology.  Extending to complexes gives us a functor 
\[\HH^\cdot\colon \Ch^b(\SBim_n)\to \Ch^b(\kMod{\Z\times \Z}).\]
The complex $\HH^\cdot(X)$ is triply graded via
\[
\HH^{i,j,k}(X) := \text{Soergel degree-$i$ component of } \HH^j(X^k).
\]
The homology of $\HH^\cdot(X)$ is denoted by
\[
\HHH^\cdot(X):=H^\cdot(\HH^\cdot(X)).
\]
On triply graded $\k$-modules, we will use the grading shifts $q,a,t$ defined by
\[
(qM)^{i,j,k} = M^{i-2,j,k} \ , \qquad (aM)^{i,j,k} = M^{i+2,j-1,k} \ ,\qquad (tM)^{i,j,k}=M^{i+2,j,k-2},
\]
and the cohomological shift
\[
(\Omega M)^{i,j,k} = M^{i,j,k-1}.
\]
Note that $qt = \Omega^2$.  Note also that $a$ acts by shifing up by 1 in the Hochschild degree, together with a shift down by 2 in the Soergel bimodule degree.
\begin{definition}\label{def:KR homology}
Let $\b\in \Br_n$ and $e=e(\b)$ be the number of positive crossings minus the number of negative crossings, and let $c=c(\b)$ denote the number of components of the link represented by $\b$. Define the \newword{normalizing exponent} of the braid $\b$ by the formula
\begin{equation}\label{eq:normalizing exponent}
\d(\b):=\frac{1}{2}(e(\b)+c(\b)-n(\b)).
\end{equation} 
The \newword{Khovanov--Rozansky complex} of $\b$ is
\begin{align*}
\KRC(\b)
&:= (at^{1/2}q^{-1/2})^{\d(\b)} \HH^\cdot(T(\b)).
\end{align*}
Its homology is the \newword{Khovanov--Rozansky} homology of the link $\widehat{\b}$ represented by $\b$, and is denoted by $\KRH(\b):=(at^{1/2}q^{-1/2})^{\d(\b)}\HHH(T(\b))$.  
\end{definition}

\begin{remark}\label{rmk:normalizing exponent and genus}
If $K$ is a positive knot and $\b$ is a positive braid representing $K$, then the normalizing exponent $\d(\b)$ agrees with the \emph{genus} of $K$ (in general, $\d(\b)$ is the genus of the Seifert surface produced by Seifert's algorithm). In particular, for positive braids $\d(\b)$ depends only on the braid closure, hence is actually a well-defined knot invariant. Note that all the braids of interest in this paper are positive, so this remark applies.
\end{remark}

Recall that for any tri-graded $\k$-module $M = \bigoplus_{i,j,k} M^{i,j,k}$, the \newword{Poincar\'e series} of $M$ computes the sum of the graded ranks over $\k$; namely, 
\[\PC(M) = \sum_{i,j,k} q^it^ja^k \mathrm{rank}_\k(M^{i,j,k}).\]

\begin{definition}\label{def:KR series}
The \newword{Khovanov--Rozansky (KR) series} of a link $L=\widehat{\b}$ is defined to be $\PC_{L}^{\KR}(q,t,a):= \PC(\KRH(\b))$.
\end{definition}

\begin{theorem}[\cite{KR1, Kh-Sbim}] \label{thm:KhR isotopic}
If $\b$ and $\b'$ represent isotopic links, then there is a homotopy equivalence $\KRC(\b)\simeq \KRC(\b')$, and consequently $\PC_{\widehat{\b}}^{\KR}(q,t,a) = \PC_{\widehat{\b'}}^{\KR}(q,t,a)$.
\end{theorem}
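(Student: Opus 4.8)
The plan is to reduce to Markov's theorem and then verify invariance move-by-move. Markov's theorem states that two braids $\b$ and $\b'$ — possibly living in braid groups on different numbers of strands — have isotopic closures if and only if they are related by a finite sequence of (i) conjugations inside a fixed braid group $\Br_n$ (equivalently, applications of the braid relations together with cyclic rotations of braid words) and (ii) positive and negative Markov (de)stabilizations $\beta \leftrightarrow \beta\sigma_n^{\pm 1}$, where $\beta\in\Br_n$ is regarded inside $\Br_{n+1}$. Hence it suffices to show that $\KRC(\b)$ is preserved, up to homotopy equivalence, under each of these moves; the equality of Poincar\'e series then follows immediately, since homotopy equivalent complexes have isomorphic homology and $\PC$ depends only on graded ranks of homology.

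For the conjugation move I would first invoke Rouquier's theorem that $\beta\mapsto T(\beta)$ is well defined up to canonical homotopy equivalence in $\Ch^b(\SBim_n)$ — that is, $T(\sigma_i\sigma_{i+1}\sigma_i)\simeq T(\sigma_{i+1}\sigma_i\sigma_{i+1})$, $T(\sigma_i\sigma_j)\simeq T(\sigma_j\sigma_i)$ for $|i-j|\geq 2$, and $T(\sigma_i)\star T(\sigma_i^{-1})\simeq R\simeq T(\sigma_i^{-1})\star T(\sigma_i)$ — so that $T$ descends to a monoidal functor on the homotopy category and $T(\gamma\beta\gamma^{-1})\simeq T(\gamma)\star T(\beta)\star T(\gamma^{-1})$. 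Then I would use that Hochschild cohomology is a trace: for complexes $X,Y$ of bimodules there is a natural isomorphism $\HH^\cdot(X\star Y)\cong \HH^\cdot(Y\star X)$, giving $\HHH^\cdot(T(\gamma)\star T(\beta)\star T(\gamma^{-1}))\cong \HHH^\cdot(T(\gamma^{-1})\star T(\gamma)\star T(\beta))\cong \HHH^\cdot(T(\beta))$. Since the exponents $e(\b)$, $c(\b)$, $n(\b)$, and hence $\d(\b)$, are all conjugation invariant, the normalizing shifts $(at^{1/2}q^{-1/2})^{\d(\b)}$ match, and the conjugation case is complete.

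The stabilization move carries the real content. Writing $T(\beta\sigma_n) = T(\beta)\star T_n$ with $T_n = (t^{-1/2}B_n \to R_{n+1})$, one observes that $T(\beta)$ involves only $B_1,\dots,B_{n-1}$, hence is base-changed along $R_n\hookrightarrow R_{n+1}$ from a complex over $R_n=\k[x_1,\dots,x_n]$. Resolving $B_n = q^{-1/2}R_{n+1}\otimes_{R_{n+1}^{s_n}}R_{n+1}$ and applying the standard computation of Hochschild (co)homology of an induced bimodule, a partial-trace argument identifies $\HH^\cdot(T(\beta)\star T_n)$ with $\HH^\cdot(T(\beta))$ over $R_n$, up to an explicit shift in $q,a,t$; the negative stabilization $\beta\sigma_n^{-1}$ is handled symmetrically using $T_n^{-1}=(R_{n+1}\to t^{1/2}B_n)$. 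One then checks that this explicit shift is exactly cancelled by the change in $\d(\b)$ under stabilization — which is precisely why the normalizing exponent is defined as in \eqref{eq:normalizing exponent} — so that $\KRC(\beta\sigma_n^{\pm1})\simeq \KRC(\beta)$.

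I expect the stabilization computation to be the main obstacle: it is the only step demanding an honest calculation with Hochschild cohomology of Soergel bimodules rather than a formal manipulation, and tracking the three gradings against the normalizing exponent is delicate. The remaining ingredients — Markov's theorem, Rouquier functoriality, and the trace property of $\HH^\cdot$ — are classical topology or standard homological algebra. Since the theorem in its stated form is due to Khovanov--Rozansky \cite{KR1} with its Soergel-bimodule reformulation by Khovanov \cite{Kh-Sbim}, in the paper itself it suffices to cite these references rather than reproduce the argument.
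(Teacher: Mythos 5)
The paper does not prove Theorem~\ref{thm:KhR isotopic} at all; it simply cites \cite{KR1,Kh-Sbim}, as you correctly note at the end of your proposal. Your sketch is the standard argument from those references — reduce via Markov's theorem to conjugation and (de)stabilization, handle conjugation by Rouquier functoriality plus the trace property $\HH^{\cdot}(X\star Y)\cong\HH^{\cdot}(Y\star X)$, and handle stabilization by a partial-trace computation with the cone $T_n^{\pm1}$ — so there is no discrepancy with the paper to report.

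One small clarification worth recording: the two stabilizations are asymmetric with respect to the normalizing exponent $\d(\b)=\tfrac12(e+c-n)$. Positive stabilization $\beta\mapsto\beta\sigma_n$ leaves $\d$ unchanged (both $e$ and $n$ increase by $1$), and the partial-trace computation for $T_n$ produces no net grading shift; negative stabilization $\beta\mapsto\beta\sigma_n^{-1}$ decreases $\d$ by $1$, and the partial-trace computation for $T_n^{-1}$ produces exactly the compensating shift of $(at^{1/2}q^{-1/2})^{-1}$. Your phrasing ``an explicit shift ... exactly cancelled by the change in $\d$'' is correct but hides that the cancellation is trivial in one of the two cases; making this explicit is a useful sanity check that the normalization in \eqref{eq:normalizing exponent} is the right one.
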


\begin{example}\label{ex:delta MN}
For non-negative integers $M,N\geq 0$, the torus link $T(M,N)$ is the closure of $\cox_M^N$.  For this braid we have $e=(M-1)N$ and $c=\gcd(M,N)$, hence
\[
\d = \frac{MN-M-N+\gcd(M,N)}{2}.
\]
\end{example}

\begin{example}\label{ex:delta mnd}
For integers $m,n,d\geq 0$ with $m,n$ coprime, we consider the $(d,mnd+1)$-cable of $T(m,n)$ as the closure of $(\cox_d\sqcup \one_{(m-1)d})\cox_{md}^{nd}$, as in Lemma \ref{lemma:cable presentation}.  For this braid we have $e=(d-1)+(md-1)(nd)$, with $c=1$ and number of strands $md$, hence
\[
\d = \frac{(md)(nd)-md-nd+d}{2}.
\]
\end{example}

The integers $\d$ appearing in the above examples will occur frequently in this paper, and we will abbreviate them.

\begin{definition}\label{def:delta MN}
For integers $M,N\geq 0$, we let $\d(M,N):=\frac{MN-M-N+\gcd(M,N)}{2}$.
\end{definition}

\subsection{Links from Binary Sequences}
\label{ss:binary links}

\begin{definition}\label{def:Cvw}
If $\mathbf{u}=(u_1,\ldots,u_m)\in \{0,1\}^m$, we let $|\mathbf{u}|=\sum_i u_i$.  
Given sequences $\mathbf{u}\in \{0,1\}^m$, $\mathbf{v}\in \{0,1\}^n$ with $|\mathbf{u}|=|\mathbf{v}|$, we define a complex $C(\mathbf{u},\mathbf{v})\in \Ch^b(\SBim_n)$ as follows.  Abbreviate by writing $l=|\mathbf{u}|=|\mathbf{v}|$.  Let $\pi_{\mathbf{u}}\in S_m$ denote the shuffle permutation corresponding to $\mathbf{u}$, i.e.~ the permutation given in one-line notation by $(i_1,\ldots,i_k, j_1,\ldots,j_l)$ with $k+l=m$, where $i_1<\cdots <i_k$ are the indices for which $u_i=0$, and $j_1<\cdots < j_l$ are the indices for which $u_j=1$.  Let $\b_{\mathbf{u}}$ denote the positive braid lift of $\pi_{\mathbf{u}}\iinv$.  Next, write $\mathbf{v}$ as $\mathbf{v}=(0^{r_0}10^{r_1}1 \cdots 1 0^{r_l})$, and set
\[
\gamma_{m,\mathbf{v}}:= (\cox_m)^{r_0} (\one_1\boxtimes \cox_{m-1})^{r_1}\cdots (\one_{l}\boxtimes \cox_{m-l})^{r_{l}}.
\]
Finally, let $\b(\mathbf{u},\mathbf{v})=\b_{\mathbf{u}}\gamma_{m,\mathbf{v}}$, and
\[
C(\mathbf{u},\mathbf{v}):=T(\b(\mathbf{u},\mathbf{v}))\star (K_{l}\boxtimes \one_{n-l}),
\]
where $K_l$ is the complex constructed in \cite{EH16, HM}.  When $l=1$ we let $K_{\mathbf{u},\mathbf{v}}$ denote the closure of the braid $\b(\mathbf{u},\mathbf{v})$.
\end{definition}

\begin{example}
If $\mathbf{u}=(10001)$ and $\mathbf{v}=(011000)$, then $C(\mathbf{u},\mathbf{v})$ may be pictured as in Figure \ref{fig:Cuv}.
\end{example}

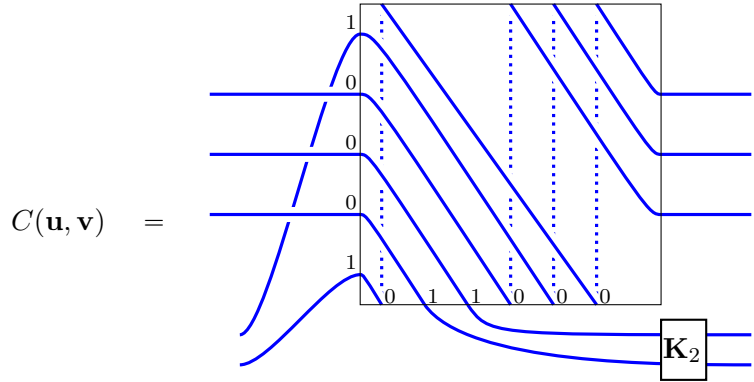
\begin{figure}[ht]
\[
C(\mathbf{u},\mathbf{v})\ \ \ = \ \ \ 
\begin{tikzpicture}[scale=4,baseline=1cm]
% u=(10001)
% v=(011000b)
\def\m{5}
\def\n{7}
\def\r{2} %n-m
%
% special strands left
\draw[blue, very thick]
(-.4,-.1) .. controls ++(.1,0) and ++(-.1,0).. (0,{(5-.5)/\m})
(-.4,-.2) .. controls ++(.1,0) and ++(-.1,0).. (0,{(1-.5)/\m});
% special strands bottom
\draw[blue, very thick]
({(2-.5)/\n},0) ..controls ++(.14,-.2)and++(-.3,0).. (1.3,-.2)
({(3-.5)/\n},0) ..controls ++(.07,-.1).. (1.3,-.1);
% extensions of left strands
\foreach \i in {2,3,4}{  % range is i=1,..,m such that u_i=0
\draw[white, line width=2mm] (-.5,{(\i-.5)/\m})--(0,{(\i-.5)/\m});
\draw[blue, very thick] (-.5,{(\i-.5)/\m})--(0,{(\i-.5)/\m});
}
%back strands
\foreach \j in {1,4,5,6}{ % range is j=1,..,n such that v_j=0
\draw[blue, very thick,dotted] ({(\j-.5)/\n},1) -- ({(\j-.5)/\n},0);
}
%left edge to bottom edge
\foreach \i in {1,2,3,4,5}{ % range is i=1,...,m such that u_i=0 or u_i=1
\draw[white, line width=2mm] (0,{(\i-.5)/\m}) .. controls ++({\i/(20*\m)},0)..  ({(\i-.5)/\n},0);
\draw[blue, very thick] (0,{(\i-.5)/\m}) .. controls ++({\i/(20*\m)},0)..  ({(\i-.5)/\n},0);
}
%top edge to bottom edge
\foreach \j in {1}{ % range is j=1,..,n such that v_j=0 and j\leq n-m
\draw[white, line width=2mm] ({(\j-.5)/\n},1) -- ({(\j+\m-.5)/\n},0);
\draw[blue, very thick] ({(\j-.5)/\n},1) -- ({(\j+\m-.5)/\n},0);
}
%top edge to right edge
\foreach \j in {2,3,4}{ % range is i=1,...,n such that v_i=0 and n-m<i\leq n (with i=n-m+j)
\draw[white, line width=2mm]  (1,{(\j-.5)/\m}) .. controls ++({(\j-\m-1)/(20*\m)},0).. ({(\j+\r-.5)/\n},1);
\draw[blue, very thick]  (1,{(\j-.5)/\m}) .. controls ++({(\j-\m-1)/(20*\m)},0).. ({(\j+\r-.5)/\n},1);
\draw[blue, very thick] (1.3,{(\j-.5)/\m})--(1,{(\j-.5)/\m});
}
%bounding rectangle
\draw (0,0) rectangle (1,1);
%projector
\draw[thick,fill=white] (1,-.25) rectangle (1.15,-.05);
\node at (1.07,-.15) {$\mathbf{K}_2$};
\node at (-.03,{(1-.3)/\m}) {\scriptsize{$1$}};
\node at (-.03,{(2-.3)/\m}) {\scriptsize{0}};
\node at (-.03,{(3-.3)/\m}) {\scriptsize{0}};
\node at (-.03,{(4-.3)/\m}) {\scriptsize{0}};
\node at (-.03,{(5-.3)/\m}) {\scriptsize{1}};
\node at ({(1-.3)/\n},.03) {\scriptsize{0}};
\node at ({(2-.3)/\n},.03) {\scriptsize{1}};
\node at ({(3-.3)/\n},.03) {\scriptsize{1}};
\node at ({(4-.3)/\n},.03) {\scriptsize{0}};
\node at ({(5-.3)/\n},.03) {\scriptsize{0}};
\node at ({(6-.3)/\n},.03) {\scriptsize{0}};
\end{tikzpicture}
\]
\caption{The complex $C({\bf u,v})$ for $\mathbf{u}=(10001)$ and $\mathbf{v}=(011000)$.}\label{fig:Cuv}
\end{figure}

The following is proven in \cite{HM}.
\begin{lemma}\label{lemma:Cvw link}
If $|\mathbf{u}|=|\mathbf{v}|=1$, then $\HHH(\b(\mathbf{u},\mathbf{v})) \cong \HHH(C(\mathbf{u},\mathbf{v})) \otimes \k[x_1]$.
\end{lemma}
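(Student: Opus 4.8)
The plan is to compare the Hochschild homology of the Rouquier complex $T(\b(\mathbf{u},\mathbf{v}))$ with that of the full complex $C(\mathbf{u},\mathbf{v}) = T(\b(\mathbf{u},\mathbf{v}))\star(K_1\boxtimes \one_{n-1})$. Recall that when $l=|\mathbf{u}|=|\mathbf{v}|=1$, the extra factor inserted is just $K_1$ on the first strand. So the first step is to understand the complex $K_1$ as constructed in \cite{EH16, HM}: it is the cone of a map between two Rouquier-type complexes, and crucially it is homotopy equivalent as an object of $\Ch^b(\SBim_n)$ to $\one_1\boxtimes(\cdots)$, or more precisely $K_1$ realizes the projector/partial idempotent whose effect is to quotient out by a single polynomial generator. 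I would recall (citing \cite{EH16, HM}) the precise statement that $K_1$ fits into a distinguished triangle, or equivalently that tensoring with $K_1$ on one strand has a controlled effect on Hochschild homology.

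Second, I would unwind what $K_l$ does for $l=1$: the point is that $K_1\boxtimes\one_{n-1}$ is, up to homotopy, the identity bimodule $R$ tensored against a Koszul-type factor, so that $\HHH(T(\b)\star(K_1\boxtimes\one_{n-1})) \cong \HHH(T(\b))\otimes \k[x_1]$ — the extra polynomial variable $x_1$ arising because passing from $\HHH$ of a complex on $n$ strands through the $K_1$ factor contributes a single free polynomial generator (this mirrors the standard fact that the Hochschild homology of $R$ itself as an $(R,R)$-bimodule over $\k[x_1,\dots,x_n]$ picks up polynomial and exterior generators; here the construction of $K_1$ is arranged precisely so the net contribution on the relevant strand is one polynomial variable). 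Concretely, I would either cite the statement directly from \cite{HM} — where the complexes $K_l$ are built and their basic properties established — or reconstruct it from the fact that $K_1 \simeq \Cone(\text{something contractible after }\HH)$ so that $\HH^\cdot(C(\mathbf{u},\mathbf{v}))$ acquires the claimed $\k[x_1]$-module structure, and then take homology. Since the Rouquier complex $T(\b(\mathbf{u},\mathbf{v}))$ and the complex $C(\mathbf{u},\mathbf{v})$ differ only by this $K_1$ factor, and $\HHH$ is the homology of $\HH^\cdot$ applied to each, the isomorphism $\HHH(\b(\mathbf{u},\mathbf{v}))\cong \HHH(C(\mathbf{u},\mathbf{v}))\otimes\k[x_1]$ follows.

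The main obstacle, and the step requiring genuine care, is pinning down the precise behavior of the $K_1$ factor on Hochschild homology: one must verify that tensoring with $K_1\boxtimes\one_{n-1}$ and then applying $\HH^\cdot$ has exactly the effect of tensoring with the free module $\k[x_1]$, with no grading shifts left unaccounted for and no higher-differential contributions surviving to homology. This is essentially a bookkeeping exercise given the explicit form of $K_1$ from \cite{EH16, HM}, but it is where all the content lies; once it is established, the statement is immediate. I would therefore structure the proof as: (i) recall the construction and key homotopy property of $K_1$ from \cite{HM}; (ii) observe $C(\mathbf{u},\mathbf{v}) = T(\b(\mathbf{u},\mathbf{v}))\star(K_1\boxtimes\one_{n-1})$ when $l=1$; (iii) apply the computation of $\HH^\cdot$ of this $K_1$ factor to conclude $\HH^\cdot(C(\mathbf{u},\mathbf{v}))\simeq \HH^\cdot(T(\b(\mathbf{u},\mathbf{v})))\otimes\k[x_1]$ as complexes; and (iv) take homology. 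Indeed, this is precisely the argument given in \cite{HM}, so in the interest of brevity I would present it as a short reduction to the cited result rather than reprove the properties of $K_1$ from scratch.
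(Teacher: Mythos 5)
The paper offers no proof here; it simply cites \cite{HM}, so you cannot be faulted for pointing to that reference. But the sketch you give of how the cited argument goes has two real problems.

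First, you have the direction of the key intermediate isomorphism backwards. You write $\HHH(T(\b)\star(K_1\boxtimes\one_{n-1}))\cong\HHH(T(\b))\otimes\k[x_1]$, but the lemma asserts the opposite containment: $\HHH(T(\b))\cong\HHH(T(\b)\star(K_1\boxtimes\one_{n-1}))\otimes\k[x_1]$, so the complex \emph{with} the $K_1$ factor is the smaller one. Correspondingly, your intuition for the role of $K_1$ is inverted: $K_1$ is (up to homotopy) a Koszul-type cone on $x_1$, so inserting $K_1\boxtimes\one_{n-1}$ \emph{kills} a polynomial generator rather than contributing one. The $\k[x_1]$ on the left-hand side of the lemma is the polynomial action that $\HHH$ of a knot already carries; $C(\mathbf{u},\mathbf{v})$ is the reduced complex designed to strip it off. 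This is consistent with Remark 2.16, which says the denominator $\frac{1}{1-q}$ in \eqref{eq:Luv Poinc} comes from this $\k[x_1]$ factor.

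Second, and more substantively, passing from ``$K_1\simeq\Cone(x_1)$'' to the clean tensor decomposition of the \emph{homology} requires knowing that $x_1$ acts freely on $\HHH(T(\b(\mathbf{u},\mathbf{v})))$; otherwise the long exact sequence induced by the cone leaves a $\ker(x_1)$ contribution and you only get a short exact sequence, not a tensor product. That freeness is a genuine theorem, established in \cite{HM} for these braids (it is also what underlies the freeness over $\Z$ asserted in Theorem~\ref{thm:intro3}); it is not a ``bookkeeping exercise.'' Your proposal never names this input, and it is exactly where the content lies. If you want to sketch the mechanism rather than merely cite \cite{HM}, the sketch needs the direction corrected and freeness flagged as the load-bearing hypothesis.
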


\begin{lemma}\label{lemma:binary braids}
For $\mathbf{u}=0^i10^j$ and $\mathbf{v}=0^k10^l$ we have that $K_{\mathbf{u},\mathbf{v}}$ is the closure of the braid
\[
\b(0^i10^j,0^k10^l)=(\sigma_1\cdots\sigma_{i})(\sigma_1\sigma_2\cdots \sigma_{m-1})^k(\sigma_2\cdots \sigma_{m-1})^l
\]
with normalizing exponent \eqref{eq:normalizing exponent} given by
\begin{equation}\label{eq:binary normalization}
\d(\b(0^i10^j,0^k10^l)) = \frac{(i+j)(k+l)-j-l+c-1}{2}.
\end{equation}
\end{lemma}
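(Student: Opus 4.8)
The statement to prove, Lemma \ref{lemma:binary braids}, has two assertions: first, that $K_{\mathbf{u},\mathbf{v}}$ for $\mathbf{u}=0^i10^j$, $\mathbf{v}=0^k10^l$ is the closure of the braid $\b(0^i10^j,0^k10^l) = (\sigma_1\cdots\sigma_i)(\sigma_1\sigma_2\cdots\sigma_{m-1})^k(\sigma_2\cdots\sigma_{m-1})^l$ in $\Br_m$ (where $m = i+j+1$); and second, that the normalizing exponent of this braid is $\frac{(i+j)(k+l)-j-l+c-1}{2}$. The plan is to unwind the definitions directly, since both parts are essentially bookkeeping.

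For the first assertion, I would go back to Definition \ref{def:Cvw} with $l=|\mathbf{u}|=|\mathbf{v}|=1$. The braid $\b(\mathbf{u},\mathbf{v}) = \b_{\mathbf{u}}\gamma_{m,\mathbf{v}}$ is built from two pieces. First, $\pi_{\mathbf{u}}\in S_m$ for $\mathbf{u} = 0^i10^j$ is the shuffle permutation with one-line notation $(1,2,\ldots,i,i+2,\ldots,m,i+1)$ — i.e.\ it moves the single $1$-entry (at position $i+1$) past the $j$ zeros after it. Then $\pi_{\mathbf{u}}\iinv$ moves strand $m$ down to position $i+1$; its positive braid lift is $\sigma_{m-1}\sigma_{m-2}\cdots\sigma_{i+1}$, but one must check the convention in the definition so that this equals $\sigma_1\cdots\sigma_i$ after the relabeling/flip implicit in the setup (the $\sigma_1\cdots\sigma_i$ form in the statement suggests the strands are counted from the other end, or equivalently that $\b_{\mathbf{u}}$ is read as a word producing the first $i+1$ strands crossing). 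Second, $\gamma_{m,\mathbf{v}}$ for $\mathbf{v}= 0^k10^l$ (here using $l$ in its role from the definition as an exponent, $\mathbf{v} = 0^{r_0}10^{r_1}$ with $r_0 = k$, $r_1 = l$) is $(\cox_m)^k(\one_1\boxtimes\cox_{m-1})^l = (\sigma_1\sigma_2\cdots\sigma_{m-1})^k(\sigma_2\cdots\sigma_{m-1})^l$. Concatenating $\b_{\mathbf{u}}$ and $\gamma_{m,\mathbf{v}}$ gives exactly the claimed word. The only subtle point is matching orientations and the indexing convention between Definition \ref{def:Cvw} and the displayed formula; I expect this to be the main (though still routine) obstacle, and I would resolve it by carefully tracking one small example, e.g.\ $i=1$, $j=1$, $k=1$, $l=1$, against Figure \ref{fig:Cuv}-style conventions.

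For the second assertion, I would apply Definition \ref{def:KR homology}, equation \eqref{eq:normalizing exponent}: $\d(\b) = \tfrac12(e(\b)+c(\b)-n(\b))$. The number of strands is $n(\b) = m = i+j+1$. The braid is positive, so $e(\b)$ is just the total number of crossings: the prefix $\sigma_1\cdots\sigma_i$ contributes $i$, the factor $(\sigma_1\cdots\sigma_{m-1})^k$ contributes $k(m-1) = k(i+j)$, and $(\sigma_2\cdots\sigma_{m-1})^l$ contributes $l(m-2) = l(i+j-1)$. Summing: $e(\b) = i + k(i+j) + l(i+j-1) = (i+j)(k+l) - l + i$. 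Then
\[
\d(\b) = \frac{(i+j)(k+l) - l + i + c - (i+j+1)}{2} = \frac{(i+j)(k+l) - l - j + c - 1}{2},
\]
which is exactly \eqref{eq:binary normalization}. The component count $c = c(\b(0^i10^j,0^k10^l))$ is left as a parameter in the statement, so nothing further is needed there; if one wanted a closed form one would analyze the permutation underlying the braid, but that is not required for the lemma as stated. The whole argument is a direct computation, with the permutation/convention check in the first part being the only place demanding care.
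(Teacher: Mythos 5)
Your second claim (the exponent formula) is fine and matches what the paper's terse proof is pointing at: taking the stated braid word, the crossing count is $e = i + k(i+j) + l(i+j-1) = (i+j)(k+l) + i - l$, and with $n(\b) = m = i+j+1$ the definition $\d = \frac{1}{2}(e+c-n)$ yields \eqref{eq:binary normalization} on the nose.

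The first claim is where the gap is, and you were right to flag the convention as the crux, but the fix you propose cannot work. Reading Definition~\ref{def:Cvw} literally (with the $0$-positions listed first in the one-line notation), the shuffle permutation for $\mathbf{u}=0^i10^j$ is $\pi_{\mathbf{u}}=(1,\ldots,i,i+2,\ldots,m,i+1)$, whose inverse has length $j$, so the positive lift $\b_{\mathbf{u}}$ is a word in $j$ generators, e.g.\ $\sigma_{m-1}\cdots\sigma_{i+1}$; this is what you found. But the lemma's prefix $\sigma_1\cdots\sigma_i$ has $i$ crossings, and when $i\neq j$ no relabeling of the strands can turn a $j$-crossing positive braid into an $i$-crossing one, since the flip $\sigma_p\mapsto\sigma_{m-p}$ preserves word length --- applied to your word it produces $\sigma_1\cdots\sigma_j$, not $\sigma_1\cdots\sigma_i$. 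The sanity check you propose, $i=j=1$, is precisely the degenerate case that hides the problem; trying $\mathbf{u}=0100$ (so $i=1$, $j=2$, $m=4$) already exhibits two positive braids of different lengths. Consistency with Lemma~\ref{lemma:cable via binary}, Example~\ref{ex:delta mnd}, and Remark~\ref{rmk:normalizing exponent and genus} forces the $i$-crossing prefix to be correct, which means the shuffle permutation must in effect be read with the $1$-positions listed \emph{first}: then $\pi_{\mathbf{u}}^{-1}=s_1s_2\cdots s_i$ and $\b_{\mathbf{u}}=\sigma_1\cdots\sigma_i$. Your overall strategy (unwind the definitions and count crossings) is the same as the paper's, but the convention step is not routine as you hoped, and the relabeling argument you sketch would produce the wrong braid word and the wrong exponent, with $i$ and $j$ interchanged, whenever $i\neq j$.
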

\begin{proof}
Immediate from the definitions.
\end{proof}

\begin{lemma}\label{lemma:cable via binary}
Given integers $m,n,d\geq 0$ with $m,n$ coprime, the $(d,mnd+1)$-cable of $T(m,n)$ coincides with $K_{0^{d-1}1 0^{(m-1)d},0^{nd}1}$.  Moreover, the normalizing exponent \eqref{eq:binary normalization} agrees with $\d(md,nd)$ from Definition \ref{def:delta MN}.
\end{lemma}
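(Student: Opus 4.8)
The plan is to combine the braid presentation of the cable from Lemma \ref{lemma:cable presentation} with the explicit braid word for $K_{\mathbf{u},\mathbf{v}}$ given in Lemma \ref{lemma:binary braids}, observe that the two braid words literally coincide, and then compute the normalizing exponent by direct substitution into formula \eqref{eq:binary normalization}.

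First I would specialize Lemma \ref{lemma:binary braids} to $\mathbf{u} = 0^{d-1}10^{(m-1)d}$ and $\mathbf{v} = 0^{nd}1$. The ambient strand number (the ``$m$'' of that lemma) is here the length of $\mathbf{u}$, namely $(d-1)+1+(m-1)d = md$; and $(i,j) = (d-1,\,(m-1)d)$ while $(k,l) = (nd,\,0)$. The lemma then gives
\[
\b(0^{d-1}10^{(m-1)d},\,0^{nd}1) = (\sigma_1\cdots\sigma_{d-1})(\sigma_1\sigma_2\cdots\sigma_{md-1})^{nd},
\]
the trailing factor $(\sigma_2\cdots\sigma_{md-1})^{l}$ being empty since $l=0$. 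I would then note that $\sigma_1\cdots\sigma_{d-1}$, viewed inside $\Br_{md}$ as acting on the first $d$ strands, is exactly $\cox_d\sqcup\one_{(m-1)d}$, while $\sigma_1\cdots\sigma_{md-1} = \cox_{md}$. Hence this braid is precisely $(\cox_d\sqcup\one_{(m-1)d})(\cox_{md})^{nd}$, whose closure is $T(m,n)(d,mnd+1)$ by Lemma \ref{lemma:cable presentation}. Since $K_{0^{d-1}10^{(m-1)d},\,0^{nd}1}$ is by definition the closure of this braid, the first assertion follows; note this is genuinely a knot, consistent with $\gcd(d,mnd+1)=1$.

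For the second assertion, I would substitute $(i,j,k,l) = (d-1,\,(m-1)d,\,nd,\,0)$ and $c=1$ into \eqref{eq:binary normalization}:
\[
\d\big(\b(0^{d-1}10^{(m-1)d},\,0^{nd}1)\big) = \frac{(md-1)(nd) - (m-1)d}{2} = \frac{(md)(nd) - md - nd + d}{2},
\]
using $(md-1)(nd) = mnd^2 - nd$. Since $m,n$ are coprime, $\gcd(md,nd) = d$, so the right-hand side equals $\d(md,nd)$ from Definition \ref{def:delta MN}.

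I do not anticipate a real obstacle here: the only points requiring care are bookkeeping — correctly identifying that the ``$m$'' of Lemma \ref{lemma:binary braids} is $md$ (not $m$) in this instance, handling the empty trailing factor when $l = 0$, and keeping the identity $\gcd(md,nd) = d$ (which uses coprimality of $m,n$) in mind during the arithmetic.
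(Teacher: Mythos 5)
Your proof is correct and follows the same route as the paper: you identify the braid $\b(0^{d-1}10^{(m-1)d},0^{nd}1)$ with $(\cox_d\sqcup\one_{(m-1)d})(\cox_{md})^{nd}$ via Lemma \ref{lemma:binary braids}, invoke Lemma \ref{lemma:cable presentation}, and substitute into \eqref{eq:binary normalization}, which is exactly what the paper does (with its arithmetic step left implicit as ``elementary''). Your bookkeeping of the indices, the degenerate $l=0$ factor, and the identity $\gcd(md,nd)=d$ are all handled correctly.
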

\begin{proof}
From the definitions we have $\b(0^{d-1}10^{(m-1)d},0^{nd}1)=(\cox_d\sqcup \one_{d(m-1)})\cox_{md}^{nd}$, whose closure is the $(d,mnd+1)$-cable of the torus knot $T(m,n)$, by Lemma \ref{lemma:cable presentation}.  Comparison of the normalizing exponent with $\d(md,nd)$ is elementary.
\end{proof}

\begin{construction}\label{constr:Kuv}
Let $\mathbf{u}\in \{0,1\}^M$, $\mathbf{v}\in \{0,1\}^N$ be binary sequences with $|\mathbf{u}|=|\mathbf{v}|=1$.  We provide a description of the link $K_{\mathbf{u},\mathbf{v}}$ from Definition \ref{def:Cvw}.  First, choose real numbers $0<y_1<\cdots<y_M<1$ and $0<x_1<\cdots<x_N<1$.  Consider the unit square $[0,1]^2$ with the following set of points on its boundary:
\[
S=\bigcup_{1\leq i\leq M} (0,y_i) \cup \bigcup_{1\leq j\leq N} (x_j,0) \cup \bigcup_{\substack{1\leq i\leq M \\ i\neq a}} (1,y_i)\cup \bigcup_{\substack{1\leq j\leq N \\ j\neq b}} (x_j,1),
\]
where $a$ is the index with $u_a=1$ and $b$ is the index with $v_b=1$.
Let $D$ be union of arcs of the following types:
\begin{enumerate}
    \item Consider the ordered tuples of boundary points
    \begin{align*}
        (p_1,\ldots,p_{M+N-1})&=\Big((0,y_1),\ldots,(0,y_M),(x_1,1),\ldots, \widehat{(x_b,1)},\ldots,(x_N,1)\Big),\\
        (q_1,\ldots,p_{M+N-1})&=\Big((x_1,0),\ldots,(x_N,0),(1,y_1),\ldots\widehat{(1,y_a)},\ldots,(1,y_M)\Big).
    \end{align*}
    For $1\leq k\leq M+N-1$, let $A_k$ denote the straight line segment from $p_k$ to $q_k$.
    \item For each $1\leq i\leq M$ with $i\neq a$, let $H_i$ denote an arc connecting $(0,y_i)$ to $(1,y_i)$, disjoint from the interior of $[0,1]^2$.  Assume that the $H_i$, for $1\leq i\leq M$,  are pairwise disjoint.
    \item For each $1\leq j\leq N$ with $j\neq b$, let $V_j$ denote an arc connecting $(x_j,0)$ to $(x_j,1)$, beneath the arcs $A_k$, and disjoint from each $H_i$.  Assume that the $V_j$, for $1\leq j\leq N$, are pairwise disjoint.
    \item The only boundary points that remain unpaired are $(0,y_a)$ and $(x_b,0)$.  Choose $\e>0$ and let $B_-$ be an arc connecting $(0,y_a)$ to $(-\e,-\e)$ below the all arcs $H_i$ (and disjoint from all other arcs); let $B_+$ be an arc connecting $(-\e,-\e)$ to $(x_b,0)$ above all arcs $V_j$ (and disjoint from all other arcs).  Let $B=B_-\cup B_+$.
\end{enumerate}
The union of all arcs of the form $A_k,H_i,V_j$ and $B$ is a closed, piecewise linear diagram which represents $K_{\mathbf{u},\mathbf{v}}$.
\end{construction}

\begin{example}\label{ex:binary knot}
Construction \ref{constr:Kuv} is illustrated for $\mathbf{u}=0010$ and $\mathbf{v}=01000$ in Figure~\ref{fig:binary knot}. The arc $B$ has been drawn in orange in order to facilitate the comparison with the monotone link.
\end{example}

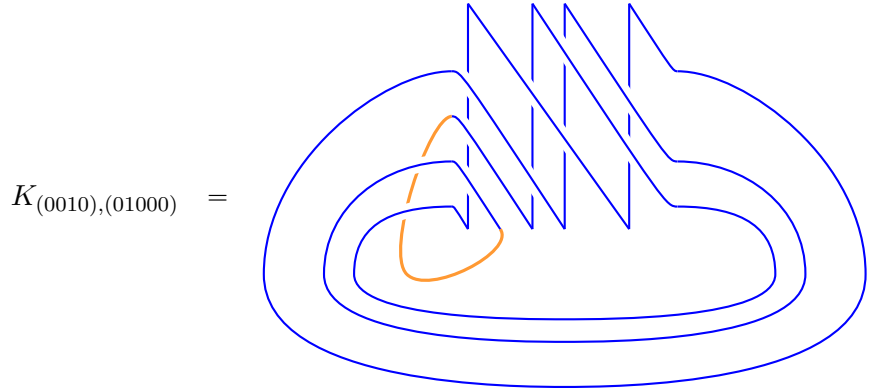
\begin{figure}[ht]
\[
K_{(0010),(01000)} \ \ = \ \ 
\begin{tikzpicture}[scale=3,anchorbase]
% u=(0010b)
% v=(0100b0b)
\def\m{5}
\def\n{7}
\def\r{2} %n-m
\def\I{3} %the one in u
\def\J{2} %the one in v
%
% special strand:
\draw[orn, very thick]
(0,.5)..controls++(-.1,0)and++(-.1,.1)..(-.2,-.2)..controls++(.1,-.1)and++(.07,-.1)..({(2-.5)/\n},0);
% horizontal strands
\foreach \i in {1,2,4}{  % range is i=1,..,m such that u_i=0
\draw[white, line width=2mm]
(0,{(\i-.5)/\m})..controls++(-.3,0)and++(0,{.1*(\i+1)})..
({-.3-\i/(1.5*\m)},-.2)..controls++(0,{-.1*(\i+1)})and++(-.2,0)..
(.5,{-\i/(2*\m)-.3})..controls++(.2,0)and++(0,{-.1*(\i+1)})..
({1.3+\i/(1.5*\m)},-.2)..controls++(0,{.1*(\i+1)})and++(.3,0)..
(1,{(\i-.5)/\m});
\draw[blue,  thick]
(0,{(\i-.5)/\m})..controls++(-.3,0)and++(0,{.1*(\i+1)})..
({-.3-\i/(1.5*\m)},-.2)..controls++(0,{-.1*(\i+1)})and++(-.2,0)..
(.5,{-\i/(2*\m)-.3})..controls++(.2,0)and++(0,{-.1*(\i+1)})..
({1.3+\i/(1.5*\m)},-.2)..controls++(0,{.1*(\i+1)})and++(.3,0)..
(1,{(\i-.5)/\m});
}
%\foreach \i in {1,2,4}{  % range is i=1,..,m such that u_i=0
%\draw[white, line width=2mm] (-.3,{(\i-.5)/\m})--(0,{(\i-.5)/\m});
%\draw[blue, very thick] (-.3,{(\i-.5)/\m})--(0,{(\i-.5)/\m});
%}
% vertical strands
\foreach \j in {1,3,4,6}{ % range is j=1,..,n such that v_j=0
\draw[blue, thick] ({(\j-.5)/\n},1) -- ({(\j-.5)/\n},0);
}
%left edge to bottom edge
\foreach \i in {2,3,4}{ % range is i=1,...,m such that u_i=0 or u_i=1
\draw[white, line width=2mm] (0,{(\i-.5)/\m}) .. controls ++({\i/(20*\m)},0)..  ({(\i-.5-.1*\m)/\n},.1);
\draw[blue,  thick] (0,{(\i-.5)/\m}) .. controls ++({\i/(20*\m)},0)..  ({(\i-.5)/\n},0);
}
{ \def\i{1}
\draw[blue,  thick] (0,{(\i-.5)/\m}) .. controls ++({\i/(20*\m)},0)..  ({(\i-.5)/\n},0);
}
%top edge to bottom edge
\foreach \j in {1}{ % range is j=1,..,n such that v_j=0 and j\leq n-m
\draw[white, line width=2mm] ({(\j-.5+.1*\m)/\n},1-.1) -- ({(\j+\m-.5-.1*\m)/\n},.1);
\draw[blue,  thick] ({(\j-.5)/\n},1) -- ({(\j+\m-.5)/\n},0);
}
%top edge to right edge
\foreach \j in {1,2,4}{ % range is i=1,...,n such that v_i and n-m<i\leq n (with i=n-m+j)
\draw[white, line width=2mm]  (1,{(\j-.5)/\m}) .. controls ++({(\j-\m-1)/(20*\m)},0).. ({(\j+\r-.5+.1*\m)/\n},1-.1);
\draw[blue,  thick]  (1,{(\j-.5)/\m}) .. controls ++({(\j-\m-1)/(20*\m)},0).. ({(\j+\r-.5)/\n},1);
}
%bounding rectangle
%\draw[gray] (0,0) rectangle (1,1);
\end{tikzpicture}
\]
\caption{Construction \ref{constr:Kuv} illustrated for $\mathbf{u}=0010$ and $\mathbf{v}=01000$.}\label{fig:binary knot}
\end{figure}

\subsection{The Hogancamp-Mellit Recursion}
\label{ss:HM recursion}
In \cite{HM}, the third named author and Mellit presented a recursive computation of $\HHH(C(\mathbf{u},\mathbf{v}))$ (for 
$(C(\mathbf{u},\mathbf{v})$ as defined in Definition \ref{def:Cvw}), which we now recall. 

\begin{definition}\label{def:Rrecursion}
Let ${\bf u } \in \{0,1\}^{m+\ell}$ and ${\bf v } \in \{0,1\}^{n+\ell}$ be two binary sequences with $|{\bf u}|=|{\bf v}|=\ell$. Define $R_{{\bf u},{\bf v}} \in \N[q,t^\pm, a, (1-q)^{-1}]$ as the power series uniquely determined by the following recursions:
\begin{align*}
R_{0{\bf u},0{\bf v}} & =t^{-|{\bf u}|}R_{{\bf u}1,{\bf v}1}+qt^{-|{\bf u}|}R_{{\bf u}0,{\bf v}0}, &
R_{1{\bf u},0{\bf v}}&=R_{{\bf u}1,{\bf v}}, & R_{\emptyset,0^n}&=\left(\frac{1+a}{1-q}\right)^n, 
\\
R_{1{\bf u},1{\bf v}}&=(t^{|{\bf u}|}+a)R_{{\bf u},{\bf v}},&
R_{0{\bf u},1{\bf v}}&=R_{{\bf u},{\bf v}1},
&
R_{0^m,\emptyset}&=\left(\frac{1+a}{1-q}\right)^m,
\end{align*}
where $R_{\emptyset, \emptyset}:=1$. 
\end{definition}

\begin{theorem}[\cite{HM}] \label{thm:RHHH}For any two binary sequences ${\bf u} $ and ${\bf v } $ with $|{\bf u}|=|{\bf v}|$,
we have
\[
\PC(\HHH^\cdot(C(\mathbf{u},\mathbf{v}))) = R_{\mathbf{u},\mathbf{v}}.
\]
In particular, if $|\mathbf{u}|=|\mathbf{v}|=1$, then the KR series of the link $K_{\mathbf{u},\mathbf{v}}:=\hat{\b}(\mathbf{u},\mathbf{v})$ is given by
\begin{equation}\label{eq:Luv Poinc}
\PC_{K_{\mathbf{u},\mathbf{v}}}^{\KR}(q,t,a)=\frac{(at^{1/2}q^{-1/2})^{\d(\b(\mathbf{u},\mathbf{v}))}}{1-q} R_{\bf u,v}(q,t,a),
\end{equation}
where $\d(\b)$ is the normalizing exponent \eqref{eq:normalizing exponent}.
\end{theorem}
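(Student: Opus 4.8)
The plan is to prove the first identity $\PC(\HHH^\cdot(C(\mathbf{u},\mathbf{v})))=R_{\mathbf{u},\mathbf{v}}$ by showing that the function $F(\mathbf{u},\mathbf{v}):=\PC(\HHH^\cdot(C(\mathbf{u},\mathbf{v})))$, defined on pairs of binary sequences with $|\mathbf{u}|=|\mathbf{v}|$, satisfies exactly the base cases and recursions of Definition~\ref{def:Rrecursion}, and then to invoke uniqueness. Uniqueness is itself a short induction on a well-founded measure built from the total length and the lengths of the leading zero-runs of $\mathbf{u}$ and $\mathbf{v}$: all rules with a leading $1$ strictly decrease the total length, the rule for a pair of the form $(0\mathbf{u},0\mathbf{v})$ shortens the leading zero-runs unless both sequences are all zeros, and the sole self-referential case $R_{0^m,0^n}$ is resolved by $R_{0^m,0^n}=\tfrac{1}{1-q}R_{0^{m-1}1,0^{n-1}1}$ after summing a geometric series in $q$ --- which is precisely where the denominator $(1-q)^{-1}$ enters, showing the recursion determines a unique element of $\N[q,t^{\pm},a,(1-q)^{-1}]$. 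Granting the first identity, the ``in particular'' clause is a two-line deduction: for $|\mathbf{u}|=|\mathbf{v}|=1$, Lemma~\ref{lemma:Cvw link} gives $\HHH(T(\b(\mathbf{u},\mathbf{v})))\cong\HHH(C(\mathbf{u},\mathbf{v}))\otimes\k[x_1]$, so $\PC(\HHH(T(\b(\mathbf{u},\mathbf{v}))))=R_{\mathbf{u},\mathbf{v}}/(1-q)$ since $\deg x_1=2$, and combining this with $\KRH(\b)=(at^{1/2}q^{-1/2})^{\d(\b)}\HHH(T(\b))$ and $\PC^{\KR}_L:=\PC(\KRH(\b))$ yields \eqref{eq:Luv Poinc}.

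Three of the recursions, together with the base cases, are comparatively routine. When one of the sequences is empty, unwinding Definition~\ref{def:Cvw} shows $C(\mathbf{u},\mathbf{v})$ is a grading shift of the identity bimodule $R=\k[x_1,\dots,x_n]$, whose Hochschild homology is the Koszul exterior algebra $\bigwedge(\xi_1,\dots,\xi_n)\otimes R$; feeding in the conventions of \S\ref{ss:HH} and the following subsection (each polynomial generator contributing $(1-q)^{-1}$, each exterior generator $1+a$) yields $\big(\tfrac{1+a}{1-q}\big)^n$, after checking the built-in shifts contribute nothing. The relations $R_{1\mathbf{u},0\mathbf{v}}=R_{\mathbf{u}1,\mathbf{v}}$ and $R_{0\mathbf{u},1\mathbf{v}}=R_{\mathbf{u},\mathbf{v}1}$ say that moving a solitary $1$ from the front to the back of $\mathbf{u}$, or deleting a leading $0$ from $\mathbf{v}$, realizes a cyclic rotation of $\b(\mathbf{u},\mathbf{v})$ possibly composed with a Markov (de)stabilization against a crossingless strand carried by the projector $K_l$; since $\HH^\cdot$ has the trace property $\HH^\cdot(B\star B')\cong\HH^\cdot(B'\star B)$ and Khovanov--Rozansky homology is Markov invariant (cf. Theorem~\ref{thm:KhR isotopic}), $F$ is unchanged, with the grading shifts absorbed into the normalizations built into $C(\mathbf{u},\mathbf{v})$. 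Finally $R_{1\mathbf{u},1\mathbf{v}}=(t^{|\mathbf{u}|}+a)R_{\mathbf{u},\mathbf{v}}$ is a ``split off a strand'' move: when both sequences begin with $1$ the projector $K_l$ sits on the top strand and lets one factor $C(1\mathbf{u},1\mathbf{v})$ as $C(\mathbf{u},\mathbf{v})$ on one fewer strand tensored with an object whose partial Hochschild trace contributes the scalar $t^{|\mathbf{u}|}+a$ --- the $a$ from a single exterior generator, the $t^{|\mathbf{u}|}$ from the Coxeter factors and the internal grading conventions of $K_l$ --- for which one imports from \cite{EH16, HM} the precise behaviour of partial closures of the projectors.

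The crux is the remaining relation $R_{0\mathbf{u},0\mathbf{v}}=t^{-|\mathbf{u}|}R_{\mathbf{u}1,\mathbf{v}1}+qt^{-|\mathbf{u}|}R_{\mathbf{u}0,\mathbf{v}0}$. Here the leading $0$ in $\mathbf{v}$ means $\b(\mathbf{u},\mathbf{v})$ begins with a full Coxeter braid $\cox_m$, and the leading $0$ in $\mathbf{u}$ governs the shuffle braid $\b_{\mathbf{u}}$; using the two-term structure of the Rouquier complexes $T_i=(t^{-1/2}B_i\to R)$ together with the recursions defining $K_l$ from \cite{EH16, HM}, I would construct an exact triangle of the form
\[
t^{-|\mathbf{u}|}C(\mathbf{u}1,\mathbf{v}1)\ \longrightarrow\ C(0\mathbf{u},0\mathbf{v})\ \longrightarrow\ qt^{-|\mathbf{u}|}C(\mathbf{u}0,\mathbf{v}0)\ \xrightarrow{+1}
\]
realizing $C(0\mathbf{u},0\mathbf{v})$, up to cohomological and internal grading shifts, as a mapping cone of a map between the other two complexes --- a categorified skein-type relation. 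Applying $\HH^\cdot$, which commutes with mapping cones, and passing to homology produces a long exact sequence among the three Poincar\'e series, and the claim reduces to showing this sequence splits into short exact pieces, i.e. that the connecting maps vanish, so that $F(0\mathbf{u},0\mathbf{v})=t^{-|\mathbf{u}|}F(\mathbf{u}1,\mathbf{v}1)+qt^{-|\mathbf{u}|}F(\mathbf{u}0,\mathbf{v}0)$ with no cancellation. I expect this vanishing to be the principal obstacle: it is exactly where a parity or positivity input is required --- either that the Hochschild homologies of the three complexes are supported in compatible parities (a feature of Soergel theory and of these particular projectors), or an induction that feeds the already-established cases back in to force degeneration of the relevant spectral sequence. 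The second substantial task is formal but delicate: establishing the cone decomposition at the chain level --- the precise categorical identity relating $\cox_m$, $\b_{\mathbf{u}}$ and $K_l$ --- and tracking every grading shift so that the coefficients $t^{\pm|\mathbf{u}|}$ and $q$ emerge exactly as written. Everything else is bookkeeping inside Definition~\ref{def:Cvw}.
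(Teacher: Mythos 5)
The paper does not prove this statement --- it is imported verbatim from \cite{HM}, and the only argument the paper offers is the one-sentence remark immediately following the theorem, which accounts for the $(1-q)^{-1}$ in \eqref{eq:Luv Poinc} by pointing to Lemma \ref{lemma:Cvw link}. Your deduction of the ``in particular'' clause is correct and is exactly what that remark intends: combine Lemma \ref{lemma:Cvw link}, $\PC(\k[x_1])=(1-q)^{-1}$, and Definition \ref{def:KR homology}.

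Your sketch of the main identity does reconstruct the actual strategy of \cite{HM}: show that $(\mathbf{u},\mathbf{v})\mapsto\PC(\HHH^\cdot(C(\mathbf{u},\mathbf{v})))$ satisfies the defining base cases and recursions of Definition \ref{def:Rrecursion} and invoke uniqueness; dispatch the easier rules by cyclic invariance of Hochschild homology and Markov moves; and reduce the hard rule $R_{0\mathbf{u},0\mathbf{v}}=t^{-|\mathbf{u}|}R_{\mathbf{u}1,\mathbf{v}1}+qt^{-|\mathbf{u}|}R_{\mathbf{u}0,\mathbf{v}0}$ to a skein-type exact triangle plus a degeneration argument. You correctly identify that degeneration step as the crux, and in \cite{HM} (building on \cite{EH16}) it is a \emph{parity} argument: each vertex of the triangle has Hochschild homology concentrated in a single parity of homological degree, so the connecting maps run between groups of opposite parity and must vanish. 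Your writeup flags this but leaves it as a black box, so as written it is a plan rather than a proof --- but that honestly reflects where the real difficulty lives. Two smaller caveats worth a careful line in any full writeup: your well-founded measure (total length, leading zero-runs) is fine, but only because the rule $R_{0\mathbf{u},0\mathbf{v}}$ strictly shortens the leading zero-run unless both $\mathbf{u},\mathbf{v}$ are all zeros, and that residual case $R_{0^m,0^n}$ you resolve correctly by summing the geometric series; and the base case $R_{\emptyset,0^n}=\bigl(\tfrac{1+a}{1-q}\bigr)^n$ as the Hochschild homology of the polynomial ring is right, but ``the built-in shifts contribute nothing'' is exactly the kind of grading claim that absorbs off-by-one errors and should be verified explicitly against the $q,t,a,\Omega$ conventions of \S\ref{ss:HH}.
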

\begin{remark}
The denominator $\frac{1}{1-q}$ in \eqref{eq:Luv Poinc} comes from the tensor factor $\k[x_1]$ in Lemma \ref{lemma:Cvw link}.  
\end{remark}
\begin{remark}
The special case ${\bf u}=0^{M-1}1$, ${\bf v}=0^{N-1}1$ yields the KR series of $T(M,N)$.  One of the central aims of this paper is to give a combinatorial interpretation of $K_{\bf u, v}$ and its KR series, when $K_{\bf u, v}$ is a knot.
\end{remark}

This together with Lemma \ref{lemma:cable via binary} gives the following.
\begin{corollary}\label{cor: poincare of cable}
For integers $m,n,d\geq 0$ with $m,n$ coprime, the KR series of the cable knot $T(m,n)(d,mnd+1)$ is given by
\[
\PC_{T(m,n)(d,mnd+1)} = \frac{(at^{1/2}q^{-1/2})^{\d(md,nd)}} {1-q} R_{(0^{d-1}10^{(m-1)d} , 0^{nd}1)}.
\]\qed
\end{corollary}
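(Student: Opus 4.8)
The plan is to combine the two immediately preceding results by direct substitution, using Theorem \ref{thm:KhR isotopic} to pass between isotopic knots. First I would invoke Lemma \ref{lemma:cable via binary}, which identifies the cable knot $T(m,n)(d,mnd+1)$ with the binary knot $K_{\mathbf{u},\mathbf{v}}$ associated to the sequences $\mathbf{u}=0^{d-1}10^{(m-1)d}\in\{0,1\}^{md}$ and $\mathbf{v}=0^{nd}1\in\{0,1\}^{nd+1}$. Since $|\mathbf{u}|=|\mathbf{v}|=1$, the second half of Theorem \ref{thm:RHHH} applies, and because the KR series is an isotopy invariant (Theorem \ref{thm:KhR isotopic}) we have $\PC_{T(m,n)(d,mnd+1)}^{\KR}=\PC_{K_{\mathbf{u},\mathbf{v}}}^{\KR}$.

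Next I would feed these sequences into equation \eqref{eq:Luv Poinc}, obtaining
\[
\PC_{K_{\mathbf{u},\mathbf{v}}}^{\KR}(q,t,a)=\frac{(at^{1/2}q^{-1/2})^{\d(\b(\mathbf{u},\mathbf{v}))}}{1-q}\,R_{0^{d-1}10^{(m-1)d},\,0^{nd}1}(q,t,a).
\]
The final step would be to replace $\d(\b(\mathbf{u},\mathbf{v}))$ by $\d(md,nd)$. This is exactly the last assertion of Lemma \ref{lemma:cable via binary}: the normalizing exponent \eqref{eq:binary normalization} of the braid $\b(\mathbf{u},\mathbf{v})=(\cox_d\sqcup\one_{(m-1)d})\cox_{md}^{nd}$ coincides with $\d(md,nd)=\tfrac12\big((md)(nd)-md-nd+d\big)$ from Definition \ref{def:delta MN}; one can also read this off directly from Example \ref{ex:delta mnd}. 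Substituting gives precisely the stated formula.

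I do not expect any genuine obstacle here: all of the content is carried by Theorem \ref{thm:RHHH} and the braid-diagram bookkeeping of Lemma \ref{lemma:cable via binary}. The one computation worth double-checking is the identity $\d(\b(0^{d-1}10^{(m-1)d},0^{nd}1))=\d(md,nd)$, which follows by setting $i=d-1$, $j=(m-1)d$, $k=nd$, $l=0$, and $c=1$ (the cable is a knot, not a multi-component link) in \eqref{eq:binary normalization} and simplifying.
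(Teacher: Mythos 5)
Your proposal is correct and is exactly the route the paper takes: the corollary is obtained by combining Theorem~\ref{thm:RHHH} (equation~\eqref{eq:Luv Poinc}) with Lemma~\ref{lemma:cable via binary}, and the normalizing exponent is identified with $\d(md,nd)$ via Example~\ref{ex:delta mnd} (equivalently, your direct substitution into~\eqref{eq:binary normalization}). Nothing is missing.
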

Here we are using $\d(md,nd)$ from Definition \ref{def:delta MN}, and the computation from Example \ref{ex:delta mnd}.

\section{Dyck paths, Invariant Subsets, and the Gorsky-Mazin-Vazirani Recursion}~
\label{sec:GMV-Recursions}

In this article a \newword{box} or \newword{cell} will be a unit square of the form $[x-1,x]\times [y-1,y]$ in $\R^2$, with $x,y\in \Z_{\geq 1}$.  The point $(x,y)$ is the \emph{northeast (NE) corner} of this box, while $(x-1,y-1)$ is the \emph{southwest (SW) corner}.  We will generally identify a box $\Box$ with its NE corner, thereby labeling boxes by elements of $\Z_{\geq 1}\times \Z_{\geq 1}$.

Given two real numbers $r,s\in\mathbb{R}_{>0},$ consider the line $L_{r,s}:=\{x/r+y/s=1\}$ connecting points $(r,0)$ and $(0,s)$ on the coordinate axes.

\begin{definition}\label{def:box on line}
    Given a line $L_{r,s}$, we say that a box $\Box=(x,y)$ \newword{lies on $L$}, if we have inequalities $(x-1)/r+(y-1)/s\leq 1$ (which expresses that the SW corner of $\Box$ lies weakly below $L_{r,s}$) and $x/r+y/s>1$  (which expresses that the NE corner of $\Box$ lies strictly above $L_{r,s}$).  We say that $\Box$ is below $L$ if $x/r+y/s\leq 1$, and above $L_{r,s}$ if $(x-1)/r+(y-1)/s>1$.
\end{definition}

Given an integer $m$, let $[m]:=\{1,\ldots,m\}$. We will identify a partition $\lambda=(\lambda_1,\ldots,\lambda_\ell)$ with its associated Young diagram (written in the French style) so that $\lambda_1\geq \lambda_2\geq \dots \geq \lambda_\ell$.
% $\bigsqcup_{i=1}^r[\lambda_r]\times\{i\}\subset\mathbb{Z}_{>0}^2$.

\begin{definition}[\cite{BM}]
A partition $\tau$ is \newword{triangular} if there exists a line $L_{r,s}=\{x/r+y/s=1\}$ such that $\tau$ consists of all the boxes contained between $L_{r,s}$ and the coordinate axes. If so, we write $\tau=\tau_{r,s}$ and say that $L_{r,s}$ \newword{cuts out} $\tau_{r,s}.$ Note that the same triangular partition can be cut out by different lines (see Figure \ref{fig: triangular partition}).
\end{definition}

\begin{figure}
\begin{tikzpicture}[scale=1]

\filldraw [fill=yellow!50!white] (0,0)--(0,3)--(1,3)--(1,2)--(2,2)--(2,1)--(4,1)--(4,0)--(0,0);

\draw [thin, color=gray!50!white] (0,0) grid +(7,5);
\draw [->] (0,0)--(0,5.3);
\draw [->] (0,0)--(7.3,0);

\draw [thick, color=green!50!black] (0,0)--(0,3)--(1,3)--(1,2)--(2,2)--(2,1)--(4,1)--(4,0)--(0,0);
\draw [purple](0,3.9)--(5.9,0);
\draw [purple](0,4.6)--(5.2,0);
\draw [purple](0,3.6)--(6.6,0);

\draw [thick, color=blue!50!black] (1,5)--(1,4)--(2,4)--(2,3)--(3,3)--(3,2)--(5,2)--(5,1)--(7,1);

\end{tikzpicture}
\caption{The partition $(4,2,1)$ is triangular and is cut out by different lines; for instance $(4,2,1)=\tau_{3.9,5.9}=\tau_{4.6,5.2}=\tau_{3.6,6.6}.$}\label{fig: triangular partition}
\end{figure}
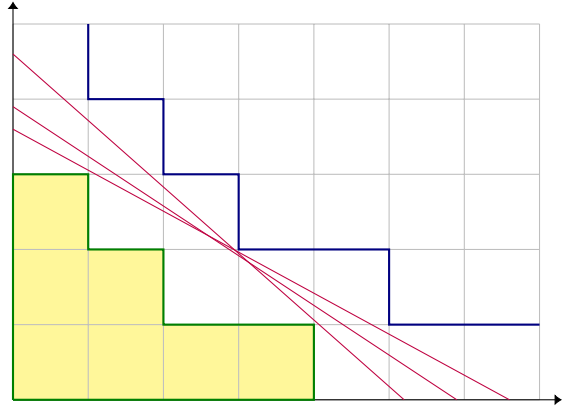

\begin{definition}\label{def:Dyck}
A \newword{Dyck path} is a pair $\pi=(\lambda,\tau)$, where $\tau$ is a triangular partition and $\lambda\subset \tau$ is a subpartition.  We refer to $(\lambda,\tau)$ as a \emph{$\tau$-Dyck path}, and write $\D(\tau)$ for the set of all such pairs.  When $\tau=\tau_{r,s}$ we write $\D(r,s):=\D(\tau_{r,s})$ and refer to elements of this set as \emph{$r,s$-Dyck paths}.

The \newword{area} of a Dyck path $\pi=(\lambda,\tau)$ is defined as
\[
\area(\pi):=|\tau|-|\lambda|.
\]
\end{definition}

\subsection{Rational Dyck Paths and Anderson Fillings} For the remainder of this section we mostly specialize to the case when $r,s$ are relatively prime integers and will resume the general triangular setting in \S\ref{sec:Schroder}. For the most part, the results in this subsection are well known and straightforward but we include the proofs for completeness. 

So then, let $m,n\in \mathbb{Z}_{>0}$ be relatively prime integers and consider the rectangular diagram $[m]\times[n]$.  We may decompose $[m]\times[n]$ as a disjoint union of three subsets:
\[
[m]\times [n] = \tau_{m,n}\sqcup \mathrm{diag}_{m,n}\sqcup \tau_{m,n}^\vee,
\]
where $\mathrm{diag}_{m,n}$ is the set of boxes that lie on the diagonal line $L_{m,n},$ $\tau_{m,n}$ is the corresponding triangular partition, and $\tau_{m,n}^\vee$ consists of those boxes in $[m]\times [n]$ which lie above the diagonal.

\begin{remark}
Since $m,n$ are relatively prime, there are no integer solutions to $nx+my=nm$ with $(x,y)\in [m]\times [n]$. This implies that the strict inequalities $nx+my<nm$ and $nx'+my'>nm$ are equivalent to the corresponding weak inequalities for boxes in $[m]\times [n]$.
\end{remark}

Note that the cardinality of $[m]\times[n]$ is $mn$, the cardinality of $\mathrm{diag}_{m,n}$ is $n+m-1$, and $|\tau_{m,n}|=|\tau_{m,n}^\vee|$ by symmetry, which forces
\[
|\tau_{m,n}| = \frac{(m-1)(n-1)}{2}.
\]

The following lemma is elementary but quite useful.

\begin{lemma}\label{lemma:mn lemma}
Let $m,n\geq 1$ be coprime integers.  Suppose we are given integers $x,x',y,y'\in \Z$ such that $nx+my=nx'+my'$.  Then $y,y'\in [n]$ implies $(x,y)=(x',y')$ (by symmetry $x,x'\in [m]$ implies $(x,y)=(x',y')$ as well).
\end{lemma}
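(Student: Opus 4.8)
The plan is to prove Lemma~\ref{lemma:mn lemma} directly from the hypothesis $nx+my = nx'+my'$ by a standard divisibility argument. First I would rewrite the equation as $n(x-x') = m(y'-y)$. Since $\gcd(m,n)=1$, the fact that $m$ divides the left-hand side $n(x-x')$ forces $m \mid (x-x')$; symmetrically $n \mid (y'-y)$. Write $x-x' = m k$ for some integer $k$; substituting back gives $nmk = m(y'-y)$, hence $y'-y = nk$.

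The next step is to use the hypothesis $y,y' \in [n] = \{1,\dots,n\}$ to pin down $k$. Since $y$ and $y'$ both lie in $[n]$, their difference satisfies $|y'-y| \leq n-1 < n$. But $y'-y = nk$, so $|nk| < n$, which forces $k=0$. Then $x = x'$ and $y = y'$, as desired. The symmetric statement, that $x,x' \in [m]$ implies $(x,y)=(x',y')$, follows by interchanging the roles of the two pairs of coordinates (equivalently, by running the same argument starting from $m \mid (x-x')$ and using $|x'-x| \leq m-1 < m$).

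There is essentially no obstacle here: the only point requiring the coprimality hypothesis is the passage from $m \mid n(x-x')$ to $m \mid (x-x')$, which is exactly Euclid's lemma, and the only point requiring the range hypothesis $y,y'\in[n]$ is the inequality $|y'-y| < n$ killing the multiplier $k$. I would present this as a three-line computation rather than belaboring it. If one wanted to be even more economical, one could simply observe that the map $(x,y)\mapsto nx+my$ restricted to $\Z \times [n]$ is injective because any two preimages differ by an element of the lattice $\{(mk,-nk):k\in\Z\}$, whose only member with second coordinate of absolute value less than $n$ is the origin.
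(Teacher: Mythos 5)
Your proof is correct and follows essentially the same route as the paper's: rearrange to $n(x-x')=m(y'-y)$, use coprimality to deduce $n\mid(y'-y)$, then use $y,y'\in[n]$ to force $y=y'$ and hence $x=x'$. The paper's version is just a more compressed statement of the identical argument.
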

\begin{proof}
Suppose $nx+my=nx'+my'$ for some boxes $(x,y)$ and $(x',y')$ in $Z\times[n]$.  This implies that $|y-y'|$ is divisible by $n$, and $1\leq y,y'$ forces $|y-y'|<n$, which then forces $y-y'=0$.  This then implies $nx=nx'$, hence $x=x'$.
\end{proof}

\begin{definition}[\cite{Anderson}]\label{def:anderson label}
For coprime integers $m,n\geq 1$,  the \newword{$(m,n)$-Anderson filling} is the function $\gamma_{m,n}:\Z^2 \to \Z$ defined by 
\[\gamma_{m,n}(x,y) = mn-nx-my.\]
Given a box $\Box$ with NE corner $(x,y)$, we refer to $\gamma_{m,n}(\Box):=\gamma_{m,n}(x,y)$ as the \newword{$(m,n)$-Anderson label of $\Box$}.  When $m,n$ are understood, we will write $\gamma=\gamma_{m,n}$ and use the terms Anderson filling and label unambigiously.
\end{definition}

\begin{definition}
    Let $\Gamma_{m,n}$ denote the additive submonoid of $\Z$ generated by $m$ and $n$.  That is, $\Gamma_{m,n}=\{k\in \Z\:|\: k = nx+my \text{ for some $x,y\in \Z_{\geq 0}$}\}$. $\Gamma_{m,n}$ is also called the \newword{semigroup generated by $m$ and $n.$}
\end{definition}

The main relevant properties of the Anderson filling are stated in the lemma below.  Before we state it, we make some very simple observations.  First, the fact that $\gamma(x-m,y+n)=\gamma(x,y)$ means that $\gamma$ descends to a well-defined function on the quotient $\gamma'\colon \Z\times \Z / \langle (-m,n)\rangle\to \Z$.  Secondly, modulo the equivalence relation $(x,y)\sim (x-m,y+n)$, every cell in $\Z\times \Z$ is equivalent to a unique cell in $\Z\times [n]$.  That is to say, the obvious composition
\begin{equation}\label{eq:strip and quotient}
\Z\times[n] \hookrightarrow \Z\times \Z\twoheadrightarrow (\Z\times \Z)/\langle(-m,n)\rangle
\end{equation}
is a bijection.

\begin{lemma}\label{lem:Anderson}
For coprime integers $m,n\geq 1$, the Anderson filling $\gamma=\gamma_{m,n}$ satisfies:
\begin{enumerate}
    \item $\gamma(\Box)>0$ if and only if the box $\Box$ is strictly below the line $nx+my=nm$.
    \item $\gamma$ induces a bijection  $\gamma'\colon \Z\times \Z / \langle (-m,n)\rangle\to \Z$.
    \item $\gamma$ restricts to a bijection $\Z\times [n]\to \Z$.
    \item we have $(x,y)\in \Z_{\leq 0}\times [n]$ if and only if $\gamma(x,y)\in\Gamma_{m,n}.$
\end{enumerate}
\end{lemma}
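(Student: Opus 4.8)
The plan is to prove the four statements essentially in order, with (2) and (3) following quickly from the simple observations recorded before the lemma, and with (1) and (4) being the ones that require a small argument. For (1), I would simply unwind the definition: $\gamma(x,y) = mn - nx - my > 0$ is literally the inequality $nx + my < nm$, so there is nothing to prove beyond noting that since $m,n$ are coprime there are no boxes in $[m]\times[n]$ on the line itself (the remark preceding Lemma \ref{lemma:mn lemma}), so strict and weak inequalities agree where relevant. For (2), the map $\gamma$ is constant on the orbits of $\langle(-m,n)\rangle$ since $\gamma(x-m,y+n) = mn - n(x-m) - m(y+n) = mn - nx - my = \gamma(x,y)$, hence descends to $\gamma'$; injectivity of $\gamma'$ is exactly Lemma \ref{lemma:mn lemma} (if $\gamma(x,y) = \gamma(x',y')$ then $nx+my = nx'+my'$, and working modulo the orbit relation we may assume $y,y' \in [n]$, whence $(x,y) = (x',y')$); surjectivity follows because for any $k \in \Z$ we can solve $nx + my = mn - k$ in integers (as $\gcd(m,n)=1$), giving a preimage. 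Statement (3) is then immediate from (2) together with the fact, recorded in \eqref{eq:strip and quotient}, that $\Z\times[n] \hookrightarrow \Z\times\Z \twoheadrightarrow (\Z\times\Z)/\langle(-m,n)\rangle$ is a bijection: the restriction $\gamma|_{\Z\times[n]}$ is the composite of this bijection with $\gamma'$.

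For (4), the substantive direction is to show $\gamma(x,y) \in \Gamma_{m,n}$ forces $x \le 0$ when $y \in [n]$, and conversely. Note first that $\gamma(x,y) = mn - nx - my = n(m-x) + m(-y)$. Suppose $x \le 0$ and $y \in [n]$; I want $\gamma(x,y) \in \Gamma_{m,n}$. Write $y' = n - y$, so $0 \le y' \le n-1$, and then $\gamma(x,y) = n(m-x) - my = n(m - x) - m(n - y') = n(-x) + m y'$ after absorbing: more carefully, $mn - nx - my = mn - nx - m(n-y') = -nx + my' = n(-x) + m y'$, and since $-x \ge 0$ and $y' \ge 0$ this lies in $\Gamma_{m,n}$. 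Conversely, if $y \in [n]$ and $\gamma(x,y) = n a + m b$ for some $a, b \ge 0$, then $n(-x) + m(n - y) = na + mb$, i.e. $n(-x - a) = m(b - n + y)$; since $\gcd(m,n) = 1$ this forces $m \mid (-x-a)$, say $-x - a = m c$, and then $b - n + y = nc$. Now $b \ge 0$ and $1 \le y \le n$ give $b - n + y \ge 1 - n > -n$, while if $c \ge 1$ we would need $b = nc + n - y \ge 2n - y \ge n$, which is not yet a contradiction — so here I would instead argue via uniqueness: the representation of an element of $\Z$ in the form $n\alpha + m\beta$ with $\beta \in \{0,\dots,n-1\}$ is unique (again by $\gcd(m,n)=1$), and both $n(-x) + m(n-y)$ (with $n - y \in \{0,\dots,n-1\}$) and $na + mb$ reduce to such a form, forcing $n - y \equiv b \pmod n$ and hence, comparing, $-x = a + m\cdot(\text{something} \ge 0) \ge 0$, so $x \le 0$.

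I expect the main obstacle to be the bookkeeping in part (4): the two directions both hinge on the uniqueness of writing an integer as $n\alpha + m\beta$ with the second coordinate in a fixed residue window $\{0,1,\dots,n-1\}$, and it is worth isolating this as the key sublemma (it is essentially Lemma \ref{lemma:mn lemma} applied with the shift $(x,y) \mapsto (m-x, n-y)$ that converts $\gamma$ into the form $n\alpha + m\beta$). Once that normal form is in hand, statement (4) becomes the bijection between $\Z_{\le 0}\times[n]$ and $\Gamma_{m,n}$ obtained by restricting the bijection $\gamma|_{\Z\times[n]}$ of part (3), since $\Gamma_{m,n}$ is precisely the image of the nonpositive-$x$ half-strip. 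Everything else is a direct substitution into $\gamma(x,y) = mn - nx - my$.
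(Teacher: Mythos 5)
Your argument is correct, and parts (1)--(3) follow the paper essentially verbatim: (1) is just the definition of $\gamma$, (2) factors through the observation that $\gamma$ is constant on $\langle(-m,n)\rangle$-orbits together with Lemma~\ref{lemma:mn lemma}, and (3) precomposes with the bijection~\eqref{eq:strip and quotient}. For part (4), though, you take a genuinely different route from the paper. The paper's proof works dynamically: it records that $(\gamma')^{-1}$ sends $0$ to a specific cell and sends translations by $+n$ and $+m$ to shifts by $(-1,0)$ and $(0,-1)$ (wrapping in the strip $\Z\times[n]$), and then reads off $(\gamma')^{-1}(\Gamma_{m,n}) = \Z_{\le 0}\times[n]$ by tracking which cells are reachable. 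Your argument is instead algebraic and static: rewriting $\gamma(x,y)=mn-nx-my$ as $n(-x)+m(n-y)$ immediately identifies $\gamma$ restricted to $\Z_{\le 0}\times[n]$ with the map $(a,b)\mapsto na+mb$ on $\Z_{\ge 0}\times\{0,\dots,n-1\}$, whence one direction is immediate and the converse follows from the uniqueness of writing an integer as $n\alpha+m\beta$ with $\beta\in\{0,\dots,n-1\}$ --- which, as you note, is just Lemma~\ref{lemma:mn lemma} in disguise. Your first attempt at the converse (the congruence chase with $c$) stalls, as you yourself observe, but the pivot to the normal-form argument closes the gap cleanly; I would cut the abandoned branch and go straight to the uniqueness argument. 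The algebraic identity $\gamma(x,y)=n(-x)+m(n-y)$ is a nice reformulation that makes part (4) shorter and more self-contained than the paper's shift-tracking, at the cost of being slightly less visual about where the generators of $\Gamma_{m,n}$ land.
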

\begin{proof}
Statement (1) is clear from the definitions.

For statement (2), observe that $\gamma'$ is surjective since $\gamma$ is.  To prove that $\gamma'$ is injective, note that modulo $(-m,n)$, every $(x,y)\in \Z^2$ is equivalent to a cell in $\Z\times[n]$, and Lemma \ref{lemma:mn lemma} tells us that $\gamma$ restricted to $\Z\times [n]$ is injective.  This proves statement (2), while precomposing with the bijection \eqref{eq:strip and quotient} proves statement (3).

For statement (4), observe that
\begin{itemize}
    \item $(\gamma')\iinv(0)=(-1,n)$,
    \item $(\gamma')\iinv(k+n)=(\gamma')\iinv(k)+(-1,0)$,
    \item $(\gamma')\iinv(k+m)=(\gamma')\iinv(k)+(0,-1)$ (modulo $(k,0)\sim (k-m,n)$).
\end{itemize}
These observations make it clear that $(\gamma')\iinv(\Gamma_{m,n})$ equals $\Z_{\geq 0}\times [n]$, which completes the proof of statement (4).
\end{proof}

\begin{definition}\label{def:associated lattice path}
When $(r,s)=(m,n)\in\mathbb{Z}^2_{>0}$ are integers, it is natural to identify an $m,n$-Dyck path $\pi=(\lambda,\tau_{m,n})$ with the lattice path $\tilde{\pi}$ from $(m,0)$ to $(0,n)$ following the boundary of $\lambda$ (hence the name `Dyck path'). To avoid confusion, we will call $\tilde{\pi}$ the \newword{associated lattice path} of $\pi$. 
\end{definition}

\begin{remark}
    Note that the associated lattice path depends on the choice of the integers $(m,n).$ For instance, one has $\tau_{n,n+1}=\tau_{n+1,n}$ and $\D(n,n+1)=\D(n+1,n),$ but the associated lattice paths are slightly different.
\end{remark}

\subsection{The $\dinv$ statistic}\label{def:dinv}
The \newword{dinv} statistic was introduced by Mark Haiman in the early $2000$'s \cite{Haiman,Haiman-Vanishing}, but didn't appear in print until $2009$, at least not in full generality.

\begin{definition}[\cite{LW}]
    Given any partition $\lambda$, define $\dinv_{p}(\lambda)$ to be the cardinality of the following set:
    \begin{equation*}
        \dinv_{p}(\lambda):= \Big\vert \left\{\Box\in \lambda:\frac{\leg(\Box)}{\arm(\Box)+1}<p\le\frac{\leg(\Box)+1}{\arm(\Box)}\right\} \Big\vert,
    \end{equation*}
    where  $\leg(\Box)$ and $\arm(\Box)$ denote the numbers of boxes in $\lambda$ that lie strictly above and in the same column and strictly right and in the same row as $\Box$, respectively (see Figure \ref{fig:dinv}).    
\end{definition}

\begin{figure}
\begin{tikzpicture}[scale=.7,every node/.style={scale=.7}]
\draw[fill = blue!60!white] (0,0) rectangle (1,5);
\draw[fill = red!60!white] (0,0) rectangle (7,1);
\draw[fill = yellow] (0,0) rectangle (1,1);
 \draw[very thin, gray] (0,0) grid (8,6);
\draw [thick](0,0) grid (1,5);
\draw [thick](0,0) grid (7,1);
\draw[thick, purple] (7,1) -- (1,6);
\draw[thick, orange] (8,1) -- (1,5);
\node[rotate=90] at (-.5,3){$\overbrace{\hspace{40mm}}$};
\node at (-1.2,3)[scale=1.3]{leg};
\node at (4,-.5){$\underbrace{\hspace{60mm}}$};
\node at (4,-1)[scale=1.3]{arm};
\node at (.5,.5)[scale=1.3]{$c$};
\end{tikzpicture}
\caption{The arm and leg of a box $c$ in $\Z^2$ denoted in red and blue, respectively. The orange and purple diagonal lines denote the lines with slopes $
\frac{\leg(c)}{\arm(c)+1}$ and $\frac{\leg(c)+1}{\arm(c)}$ used to determine the contribution of $c$ towards $\dinv$.  }\label{fig:dinv}
\end{figure}
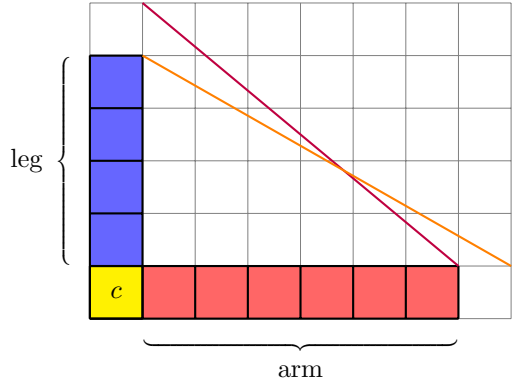

\begin{definition}
    Let $(r,s)\in\mathbb{R}_{>0}^2$ be a pair of positive real numbers. Then for an $r,s$-Dyck path $\pi=(\lambda,\tau_{r,s})$, set
    \begin{equation*}
        \dinv_{s/r}(\pi):=\dinv_{s/r}(\lambda).
    \end{equation*}
\end{definition}

Note that the statistic $\dinv_{s/r}$ depends not only on the partition $\tau_{r,s}$ and the set $\D(r,s),$ but also on the slope $s/r.$ For instance, one has $\tau_{n,n+1}=\tau_{n+1,n}$ and $\D(n,n+1)=\D(n+1,n),$ but $\dinv_{n/(n+1)}$ and $\dinv_{(n+1)/n}$ are two different statistics. However, when the slope $s/r$ (or $n/m$) is understood from the context, we will skip the subscript and use $\dinv$ instead of $\dinv_{s/r}$ (or $\dinv_{n/m}$).

\subsection{Invariant Subsets}

Let $(m,n)\in\mathbb{Z}^2_{>0}$ be a pair of relatively prime integers. In this case, it will be more convenient for us to use an alternative definition of the $\dinv=\dinv_{n/m}$ statistic originally inspired by the geometry of compactified Jacobians and developed in \cite{GM13}. See also \cite{Ma} and \cite{GMV17} for a simpler proof of the equivalence of the two definitions of $\dinv.$ To do so, we need some additional combinatorial tools. We follow the notations from \cite{GMV20}.

\begin{remark}
    Note that in \cite{GMV20}, the more general case where $m$ and $n$  are not necessarily relatively prime is considered. However, we only need the relatively prime case in this paper, and therefore we simplify the notations accordingly. 
\end{remark}

\begin{definition}\label{def:invariant subsets}
For each subset $\Delta \in \Z_{\geq 0}$, let $\Delta+n :=\{ k \in \Z \:|\: k-n \in \Delta\}$ and denote by $\overline{\Delta} = \Z_{\ge 0} \setminus \Delta$, the set complement of $\Delta$.  We say $\Delta$ is an \newword{$(m,n)$-invariant set} if $\Delta+n \subset \Delta$, $\Delta+m \subset \Delta.$ We will denote the set of all $(m,n)$-invariant subsets by $\Inv_{m,n}$ and call the elements of $\overline{\Delta}$ the \newword{gaps} in $\Delta$. 
In the case that $0 \in \Delta$, we say $\Delta$ is \newword{0-normalized} and write $\Inv_{m,n}^{0}$ for the collection of all such sets. 
\end{definition}

The sets of \newword{$n$-generators, $m$-generators} and \newword{co-generators} of any $\Delta \in \Inv_{m,n}$ are defined as follows:
\begin{align*}
\ngen(\Delta)&:= \Delta \setminus (\Delta+n) = \{ k \in \Delta \:|\: k-n \not\in \Delta\},  \\
\mgen(\Delta)&:= \Delta \setminus (\Delta+m) = \{ k \in \Delta \:|\: k-m \not\in \Delta\},\\
\cogen(\Delta) &:= \{ k \in \mathbb{Z}\setminus\Delta \:|\: k+n \in \Delta, k+m \in \Delta \}\\
\cogennn(\Delta) &:=\cogen(\Delta)\cap\mathbb{Z}_{\ge 0}.
\end{align*}
In particular, there is precisely one generator per equivalence class (mod $n$ or $m$), so that $|\ngen| = n$ and $|\mgen| = m$. 

\begin{definition} \label{def:area-dinv}
Let $[k,k+L]=\{k, k+1,\dots, k+L\} \in \Z$. For any $\Delta \in \Inv_{m,n}$, set:
\begin{align*}
\area(\Delta)&:= | \overline{\Delta}|,\\
\codinv(\Delta) &:= \sum_{k \in \ngen(\Delta)} |[k,k+m-1] \cap \overline{\Delta}| = \sum_{k \in \ngen(\Delta)} |\{ k \leq x < k+m \:|\: x \not\in \Delta\}|,
\end{align*}
so that area counts the number of gaps of $\Delta$ and codinv the number of gaps in length $m$ intervals whose starting value is an $n$-generator. We also define:
\begin{align*}
\delta(m,n)&:= \frac{(m-1)(n-1)}{2}, \\
\coarea(\Delta)&:= \delta(m,n) - \area(\Delta),\\
 \dinv(\Delta)&:= \delta(m,n) - \codinv(\Delta).
\end{align*}
\end{definition}

Note that the definition of $\d(m,n)$ above agrees with Definition \ref{def:delta MN}.  An $\area$ and $\dinv$ preserving bijection between $\D(m,n)$ and $\Inv_{m,n}^0$ was introduced in \cite{GM13}:

\begin{definition}[\cite{GM13}]\label{def:dyck-inv-bijection}
Let $\pi=(\lambda,\tau_{m,n})$ be a $m,n$-Dyck path, then the set of \newword{gaps} of $\pi$ is given by
\begin{equation*}
    \Gaps(\pi):=\{\gamma_{m,n}(\Box)|\Box\in\tau_{m,n}\setminus\lambda\}.
\end{equation*}
    The bijection $\Aa_{m,n}:\D(m,n)\to \Inv_{m,n}^0$ is then defined by
    \begin{equation*}
        \Aa_{m,n}(\pi):=\mathbb{Z}_{\ge 0}\setminus \Gaps(\pi).
    \end{equation*}
 \end{definition}   
 Notice that  $\Aa_{m,n}$ depends on the choice of the relatively prime integers $(m,n),$ not only on the partition $\tau_{m,n}$ and the set $\D(m,n).$ Nonetheless, whenever $(m,n)$ are clear from the context, we will skip the subscript and write $\Aa=\Aa_{m,n}.$
 
\begin{definition}\label{def:addable}
    For a partition $\lambda$, a box $\Box \not\in \lambda$ is called \newword{addable} if $\lambda \cup \Box$ is a partition diagram. Equivalently, $\Box$ is addable if both its south and west boundaries intersect the boundary of $\lambda.$ Thus, a box is addable for a Dyck path $\pi=(\lambda,\tau)$ if it is addable for $\lambda$. 
\end{definition}

Let $\tilde{\pi}$ be the associated lattice path of $\pi=(\lambda,\tau_{m,n})\in\D(m,n).$ Then under this bijection, we have the following correspondences:
\begin{align*}
&\{ \gamma (x,y) \;|\; \text{ the top boundary of the box } (x,y) \text{ is a horizontal step of } \tilde{\pi}\}= \mgen( \Aa(\pi)),\\
&\{ \gamma (x,y)\;|\; \text{ the right boundary of the box } (x,y) \text{ is the a vertical step of } \tilde{\pi}\}=  \ngen( \Aa(\pi)),\\
&\{ \gamma(x,y)\
\;|\; (x,y) \text{ is an addable box of } \lambda\} =  \cogen( \Aa(\pi)).
\end{align*}

See Figure \ref{fig:boxes to generators} for an example.

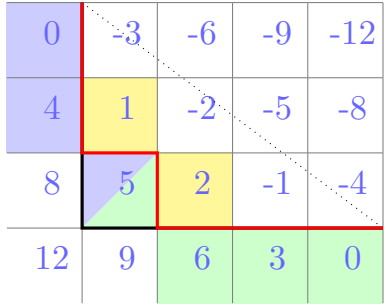
\begin{figure}
    \centering
    \begin{tikzpicture}[scale=1]
\draw (0,3.2) node {};
\fill [fill=blue!20!white] (-1,2) rectangle (0,3);
\fill [fill=blue!20!white] (-1,1) rectangle (0,2);
\fill [fill=blue!20!white] (0,0)--(0,1)--(1,1)--(0,0);
\fill [fill=green!20!white] (1,-1) rectangle (2,0);
\fill [fill=green!20!white] (2,-1) rectangle (3,0);
\fill [fill=green!20!white] (3,-1) rectangle (4,0);
\fill [fill=green!20!white] (0,0)--(1,1)--(1,0)--(0,0);
\fill [fill=yellow!50!white] (0,1) rectangle (1,2);
\fill [fill=yellow!50!white] (1,0) rectangle (2,1);
% pick an m and n HERE
 \pgfmathtruncatemacro{\m}{3} 
  \pgfmathtruncatemacro{\n}{4} 
%%%%%%% DONT TOUCH THIS PART %%%%%%%%%%%%%%%%
 \pgfmathtruncatemacro{\mi}{\m-1} %\mi=m-1 
    \pgfmathtruncatemacro{\ni}{\n-1} %\ni=n-1
  \foreach \x in {0,...,\n} % n=5
    \foreach \y in {0,...,\m}     % m=7  
       {\pgfmathtruncatemacro{\label}{ - \m * \x - \n *  \y +\m*\n} % - m * \x - n *  \y +mn
      % \node   (\x\y) at (\x,\y){};
       \node  at (\x-.4,\y-.4) [blue!60!white,scale=1.2]{ \label};} 
%generates the grid and anderson labels
\draw[dotted] (0,\m)--(\n,0); %draws the diagonal from (0,m) -- (n,0)
  \foreach \x in {0,...,\n} % n=5
    \foreach \y in {-1,...,\mi}   % m-1=6  
    %\draw (\x\y)--(\x\yi);
      \draw[gray] (\x,\y)--(\x,\y+1);
        \foreach \y in {0,...,\m}     % m=7  
       \foreach \x in {-1,...,\ni}   % n-1=4
        %\draw (\x\y)--(\xi\y);
         \draw[gray] (\x,\y)--(\x+1,\y);
%%%%%%%%%%%%%%%%%%%%%%%%%%%%%%%%%%%%%%%%%%%%%%%%
 %% Draw dyck path by hand below
\draw [very thick] (4,0)--(0,0)--(0,3); %lower boundary
\draw[ very thick,red](0,3)--(0,1)--(1,1)--(1,0)--(4,0); %Dyck path

\end{tikzpicture}

    \caption{The Dyck path $\pi=((1),(2,1))\in\D(4,3)$ with the path $\tilde{\pi}$ denoted in red. The set $\Gaps(\pi)=\{1,2\}$ obtained from $(2,1) \setminus (1)$ is highlighted in yellow and coincide in this case with the addable boxes, or co-generators, of $\pi$. The boxes corresponding to the $4$-generators of $\Aa(\pi)=\mathbb{Z}_{\ge 0}\setminus\Gaps(\pi)$ are shaded green, the boxes corresponding to the $3$-generators are shaded blue. Note that $5$ and $0$ are both $3$ and $4$-generators.}
    \label{fig:boxes to generators}
\end{figure}

\begin{theorem}[\cite{GM13}] \label{thm:dinvA=dinv}
    The bijection $\Aa:\D(m,n)\to \Inv^0_{m,n}$ is $\area$ and $\dinv$ preserving:
    \begin{equation*}
      \area(\Aa(\pi) ) = \area(\pi) \qquad \text{and} \qquad \dinv(\Aa(\pi))=\dinv_{n/m}(\pi).
    \end{equation*}
\end{theorem}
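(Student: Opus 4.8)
The plan is to prove the two equalities separately. The $\area$ statement is immediate, so I would dispatch it first. Since $m,n$ are coprime, every box $\Box=(x,y)\in\tau_{m,n}$ has $x\le m-1$, so Lemma~\ref{lemma:mn lemma} shows that the Anderson filling $\gamma_{m,n}$ is injective on $\tau_{m,n}$, and Lemma~\ref{lem:Anderson}(1) gives $\gamma_{m,n}(\Box)>0$ for every such $\Box$. Hence $\Gaps(\pi)$ is a subset of $\Z_{\ge 0}$ with $|\Gaps(\pi)|=|\tau_{m,n}\setminus\lambda|$, and therefore
\[
\area(\Aa(\pi))=\bigl|\,\overline{\Aa(\pi)}\,\bigr|=|\Gaps(\pi)|=|\tau_{m,n}|-|\lambda|=\area(\pi).
\]

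For the $\dinv$ equality the key idea is to rewrite both statistics as a count over the boxes of $\tau_{m,n}$. On the invariant-set side, interchange the order of summation in the definition of $\codinv$, using that $k\le g\le k+m-1$ is equivalent to $g-m<k\le g$, to obtain
\[
\codinv(\Delta)=\sum_{g\in\overline{\Delta}}\#\{\,k\in\ngen(\Delta):g-m<k\le g\,\}.
\]
Now take $\Delta=\Aa(\pi)$ and translate through the box/generator dictionary recalled just before the theorem: $\overline{\Aa(\pi)}$ is the set of Anderson labels of the boxes of $\tau_{m,n}\setminus\lambda$, and $\ngen(\Aa(\pi))$ is the set of Anderson labels of the boxes of $\tau_{m,n}$ whose right edge is a north step of the associated lattice path $\tilde\pi$. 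Thus $\codinv(\Aa(\pi))$ equals the number of ordered pairs of boxes $(b,c)$ with $c\in\tau_{m,n}\setminus\lambda$, with the right edge of $b$ a north step of $\tilde\pi$, and with $\gamma_{m,n}(b)\le\gamma_{m,n}(c)<\gamma_{m,n}(b)+m$.

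On the Dyck-path side, clearing denominators in the defining inequalities (with the convention $\tfrac{\leg(\Box)+1}{0}=+\infty$) shows that a box $\Box\in\lambda$ contributes to $\dinv_{n/m}(\lambda)$ exactly when $-m\le m\,\leg(\Box)-n\,\arm(\Box)\le n-1$. Since $|\tau_{m,n}|=\delta(m,n)$, splitting $\tau_{m,n}=\lambda\sqcup(\tau_{m,n}\setminus\lambda)$ shows that the desired identity $\dinv(\Aa(\pi))=\dinv_{n/m}(\lambda)$ is equivalent to
\[
\codinv(\Aa(\pi))=\bigl|\tau_{m,n}\setminus\lambda\bigr|+\#\{\,\Box\in\lambda: m\,\leg(\Box)-n\,\arm(\Box)\notin[-m,\,n-1]\,\}.
\]
So the heart of the argument is to build a bijection between the two sides of this last equation: to each gap box, and to each box of $\lambda$ failing the arm--leg inequality, one must associate a distinct pair $(b,c)$ of the type counted by $\codinv(\Aa(\pi))$, using the arm and leg rays to locate $b$ and $c$ and the relations $\gamma_{m,n}(x\pm1,y)=\gamma_{m,n}(x,y)\mp n$, $\gamma_{m,n}(x,y\pm1)=\gamma_{m,n}(x,y)\mp m$ to verify the label inequality.

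\textbf{Main obstacle.} The delicate point is precisely this matching: an increment of an Anderson label by $1$ is not realized by a single lattice step, so translating the arm/leg data of a box into the interval condition $\gamma_{m,n}(b)\le\gamma_{m,n}(c)<\gamma_{m,n}(b)+m$ takes genuine work. The route I would actually pursue to sidestep this bookkeeping is to verify instead that the triple $(\Aa,\area,\dinv)$ is compatible with the Euclidean-type recursion that strips the bottom row of $\tau_{m,n}$ (replacing the coprime pair $(m,n)$, say with $n>m$, by $(m,n-m)$, and symmetrically), which reduces the statement to the base cases $m=1$ and $\lambda\in\{\emptyset,\tau_{m,n}\}$; these are checked directly. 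This recursion is the combinatorial backbone of the Gorsky--Mazin--Vazirani framework used throughout the rest of the paper, and the direct bijective description above then serves as an independent check.
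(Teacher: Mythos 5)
The paper does not prove this theorem: it is cited verbatim from~\cite{GM13}, and the text even notes that~\cite{Ma} and~\cite{GMV17} contain alternative, simpler proofs. So there is no ``paper proof'' to compare your argument against; I evaluate it on its own.

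Your treatment of the $\area$ equality is correct and complete. Your reduction of the $\dinv$ equality is also correct as far as it goes: interchanging the order of summation in $\codinv$ to write it as a sum over gaps, translating gaps and $n$-generators into boxes of $\tau_{m,n}\setminus\lambda$ and east-boundary boxes via the dictionary preceding the theorem, clearing denominators in the arm--leg inequalities to get $-m\le m\,\leg(\Box)-n\,\arm(\Box)\le n-1$, and using $\dinv=\delta(m,n)-\codinv$ with $\delta(m,n)=|\tau_{m,n}|$ to arrive at
\[
\codinv(\Aa(\pi))=\bigl|\tau_{m,n}\setminus\lambda\bigr|+\#\{\Box\in\lambda:\ m\,\leg(\Box)-n\,\arm(\Box)\notin[-m,n-1]\}
\]
are all sound.

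However, this last identity is exactly where the proof has to happen, and you do not prove it. You name it as ``the heart of the argument,'' sketch one route (a bijection between pairs $(b,c)$ of east-boundary/gap boxes with $\gamma(b)\le\gamma(c)<\gamma(b)+m$ on one side, and gap boxes plus arm--leg failures on the other) and then say you would actually ``sidestep this bookkeeping'' by a Euclidean recursion that replaces $(m,n)$ by $(m,n-m)$. Neither route is carried out. For the bijective route you never say which pair $(b,c)$ is assigned to a given gap box or to a given failure box of $\lambda$, nor why the assignment is well-defined and bijective; this is precisely the nontrivial content of the theorem. For the recursive route, you would need to define the companion operation on $m\times n$ Dyck paths and on $(m,n)$-invariant subsets, check that it intertwines with $\Aa$, and verify explicit formulas for how $\area$, $\dinv_{n/m}$, and $\codinv$ transform under stripping; none of this is in the proposal, and it is not obvious that a simple bottom-row-stripping recursion respects the arm--leg version of $\dinv$ (the recursion actually used by Gorsky--Mazin--Vazirani, recalled in Theorem~\ref{thm:Qrecursion}, appends letters to trinary sequences and is not the naive Euclidean step you describe). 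As written, the statement remains unproved: you have reduced it to an equivalent combinatorial identity and identified two plausible strategies, but the argument stops exactly at the step that requires real work.
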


We follow \cite{GM13,GMV20} and define the $(q,t)$-Catalan and Schr\"oder polynomials as follows.

\begin{definition} For any coprime $m,n \in \Z_{>0}$, the \newword{rational $(q,t)$-Catalan polynomial} $ C_{m,n}(q,t)$ is given by:
 \begin{equation}\label{formula: Catalan definition}
        C_{m,n}(q,t):=\sum_{\Delta\in\Inv_{m,n}^0}q^{\area(\Delta)}t^{\dinv(\Delta)}=(1-q)\sum_{\Delta\in\Inv_{m,n}}q^{\area(\Delta)}t^{\dinv(\Delta)}.
    \end{equation}
\end{definition}

For each $k \in  \Z$, let  
\begin{equation}\label{eq:xi}
\xi_k(\Delta): = |\{ x \in \ngen(\Delta) | n+k+1\leq x \leq k+n+m\}|.
\end{equation}

\begin{definition}
For each coprime pair $(m,n)$, the \newword{Schr\"oder polynomial} $S_{m,n}(q,t,a)$ is:
\begin{equation}\label{formula: Schroder definition}
S_{m,n}(q,t,a):=\sum_{\Delta\in \Inv_{m,n}^0} q^{\area(\Delta)}t^{\dinv(\Delta)}\prod_{k\in\cogen(\Delta)}\left(1+at^{-\xi_k(\Delta)}\right).
\end{equation}
\end{definition}

It is evident that at $a=0$, the Schr\"oder polynomial recovers the rational $(q,t)$-Catalan polynomial. 

\begin{remark}
    In the case when $m$ and $n$ are not relatively prime, the set $\Inv_{m,n}^0$ is infinite. Still, the same formulas \eqref{formula: Catalan definition} and \eqref{formula: Schroder definition} yield well-defined power series. In \cite{GMV20} the definition of the rational $(q,t)$-Catalan and -Schr\"oder series include the non-relatively prime case. However, for the purposes of this paper, it is enough to consider the relatively prime case only.
\end{remark}

\begin{example}
In Figure \ref{figure: 4,3 Schroder} we can see the bijection between the $4,3$-Dyck paths and the $0$-normalized $(4,3)$-invariant subsets, with the 
 the $\area$ and $\dinv$ statistics, and additional information to compute the Schr\"oder polynomial explicitly computed in each case. Summing all the corresponding factors, we get the Schr\"oder polynomial $S_{4,3}(q,t,a)$. Namely,    
    \begin{align*}
    S_{4,3}(q,t,a)=&t^3(1+a)(1+at^{-1})(1+at^{-2})+qt^2(1+a)(1+at^{-1})+qt(1+a)(1+at^{-1})\\
    &\ \ \ \ \ \ \ \ +q^2t(1+a)(1+at^{-1})+q^3(1+a)\\
    =&q^3 + q^2t + qt^2 + t^3 + qt + a(q^3 + q^2t + qt^2 + t^3 + q^2 + 2qt + t^2 + q + t)\\
    &\ \ \ \ \ \ \ \ +a^2(q^2 + qt + t^2 + q + t + 1) + a^3.
    \end{align*}
\end{example}

\begin{figure}[ht]
\center
\begin{tabular}{|c|c|c|c|}
\hline
\begin{tikzpicture}[scale=0.5]
\draw (0,3.2) node {};
\fill [fill=gray!20!white] (0,0) rectangle (1,2);
\fill [fill=gray!20!white] (0,0) rectangle (2,1);%
% pick an m and n HERE
 \pgfmathtruncatemacro{\m}{3} 
  \pgfmathtruncatemacro{\n}{4} 
%%%%%% DONT TOUCH THIS PART %%%%%%%%%%%%%%%%
 \pgfmathtruncatemacro{\mi}{\m-1} %\mi=m-1 
    \pgfmathtruncatemacro{\ni}{\n-1} %\ni=n-1
  \foreach \x in {0,...,\n} % n=5
    \foreach \y in {1,...,\m}     % m=7  
       {\pgfmathtruncatemacro{\label}{ - \m * \x - \n *  \y +\m*\n} % - m * \x - n *  \y +mn
      % \node   (\x\y) at (\x,\y){};
       \node  at (\x-.4,\y-.4) [blue!60!white, scale=.8]{\footnotesize \label};} generates the grid and Anderson labels
\draw[dotted] (0,\m)--(\n,0); %draws the diagonal from (0,m) -- (n,0)
  \foreach \x in {0,...,\n} % n=5
    \foreach \y  [count=\yi] in {0,...,\mi}   % m-1=6  
    %\draw (\x\y)--(\x\yi);
      \draw (\x,\y)--(\x,\yi);
        \foreach \y in {0,...,\m}     % m=7  
       \foreach \x  [count=\xi] in {-1,...,\ni}   % n-1=4
        %\draw (\x\y)--(\xi\y);
         \draw (\x,\y)--(\xi-1,\y);
%%%%%%%%%%%%%%%%%%%%%%%%%%%%%%%%%%%%%%%%%%%%%%%
 %% Draw dyck path by hand below
\draw [very thick] (4,0)--(0,0)--(0,3); %lower boundary
\draw[ very thick,red](0,3)--(0,2)--(1,2)--(1,1)--(2,1)--(2,0)--(4,0); %Dyck path

\end{tikzpicture}
& 
\begin{tikzpicture}[scale=0.5]
\draw (0,5) node {$\Gaps=\emptyset$};
\draw (0,4) node {$\Delta=\mathbb{Z}_{\ge 0}$};
\draw (0,3) node {$\area(\Delta)=0$}; 
%\draw (0,2) node {};
\end{tikzpicture}
& 
\begin{tikzpicture}[scale=0.5]
\draw (0,5) node {$3$-$\gen=\{0,1,2\}$};
\draw (0,4) node {$\codinv(\Delta)=0$};
\draw (0,3) node {$\dinv(\Delta)=3$};
%\draw (0,2) node {};
\end{tikzpicture}
& 
\begin{tikzpicture}[scale=0.5]
\draw (0,5) node {$\cogen=\{-3,-2,-1\}$};     
\draw (0,4) node {\footnotesize $(1+a)(1+at^{-1})(1+at^{-2})$}; 
\draw (0,3) node {};
\end{tikzpicture}
\\
\hline
\begin{tikzpicture}[scale=0.5]
\draw (0,3.2) node {};
\fill [fill=yellow!50!white] (0,1) rectangle (1,2);
\fill [fill=gray!20!white] (0,0) rectangle (2,1);%
 pick an m and n HERE
 \pgfmathtruncatemacro{\m}{3} 
  \pgfmathtruncatemacro{\n}{4} 
%%%%%% DONT TOUCH THIS PART %%%%%%%%%%%%%%%%
 \pgfmathtruncatemacro{\mi}{\m-1} %\mi=m-1 
    \pgfmathtruncatemacro{\ni}{\n-1} %\ni=n-1
  \foreach \x in {0,...,\n} % n=5
    \foreach \y in {1,...,\m}     % m=7  
       {\pgfmathtruncatemacro{\label}{ - \m * \x - \n *  \y +\m*\n} % - m * \x - n *  \y +mn
      % \node   (\x\y) at (\x,\y){};
       \node  at (\x-.4,\y-.4) [blue!60!white, scale=.8]{\footnotesize \label};} 
generates the grid and Anderson labels
\draw[dotted] (0,\m)--(\n,0); %draws the diagonal from (0,m) -- (n,0)
  \foreach \x in {0,...,\n} % n=5
    \foreach \y  [count=\yi] in {0,...,\mi}   % m-1=6  
    %\draw (\x\y)--(\x\yi);
      \draw (\x,\y)--(\x,\yi);
        \foreach \y in {0,...,\m}     % m=7  
       \foreach \x  [count=\xi] in {-1,...,\ni}   % n-1=4
        %\draw (\x\y)--(\xi\y);
         \draw (\x,\y)--(\xi-1,\y);
%%%%%%%%%%%%%%%%%%%%%%%%%%%%%%%%%%%%%%%%%%%%%%%
 %% Draw dyck path by hand below
\draw [very thick] (4,0)--(0,0)--(0,3); %lower boundary
\draw[ very thick,red](0,3)--(0,1)--(2,1)--(2,0)--(4,0); %Dyck path

\end{tikzpicture}
& 
\begin{tikzpicture}[scale=0.5]
\draw (0,5) node {$\Gaps=\{1\}$};
\draw (0,4) node {$\Delta=\mathbb{Z}_{\ge 0}\setminus\{1\}$};
\draw (0,3) node {$\area(\Delta)=1$}; 
%\draw (0,2) node {};
\end{tikzpicture}
& 
\begin{tikzpicture}[scale=0.5]
\draw (0,5) node {$3$-$\gen=\{0,2,4\}$};
\draw (0,4) node {$\codinv(\Delta)=1$};
\draw (0,3) node {$\dinv(\Delta)=2$};
%\draw (0,2) node {};
\end{tikzpicture}
& 
\begin{tikzpicture}[scale=0.5]
\draw (0,5) node {$\cogen=\{-1,1\}$};     
\draw (0,4) node {$(1+a)(1+at^{-1})$}; 
\draw (0,3) node {};
\end{tikzpicture}
\\
\hline
\begin{tikzpicture}[scale=0.5]
\draw (0,3.2) node {};
\fill [fill=gray!20!white] (0,0) rectangle (1,2);
\fill [fill=yellow!40!white] (1,0) rectangle (2,1);%
% pick an m and n HERE
 \pgfmathtruncatemacro{\m}{3} 
  \pgfmathtruncatemacro{\n}{4} 
%%%%%%% DONT TOUCH THIS PART %%%%%%%%%%%%%%%%
 \pgfmathtruncatemacro{\mi}{\m-1} %\mi=m-1 
    \pgfmathtruncatemacro{\ni}{\n-1} %\ni=n-1
  \foreach \x in {0,...,\n} % n=5
    \foreach \y in {1,...,\m}     % m=7  
       {\pgfmathtruncatemacro{\label}{ - \m * \x - \n *  \y +\m*\n} % - m * \x - n *  \y +mn
      % \node   (\x\y) at (\x,\y){};
       \node  at (\x-.4,\y-.4) [blue!60!white, scale=.8]{\footnotesize \label};} 
%generates the grid and anderson labels
\draw[dotted] (0,\m)--(\n,0); %draws the diagonal from (0,m) -- (n,0)
  \foreach \x in {0,...,\n} % n=5
    \foreach \y  [count=\yi] in {0,...,\mi}   % m-1=6  
    %\draw (\x\y)--(\x\yi);
      \draw (\x,\y)--(\x,\yi);
        \foreach \y in {0,...,\m}     % m=7  
       \foreach \x  [count=\xi] in {-1,...,\ni}   % n-1=4
        %\draw (\x\y)--(\xi\y);
         \draw (\x,\y)--(\xi-1,\y);
%%%%%%%%%%%%%%%%%%%%%%%%%%%%%%%%%%%%%%%%%%%%%%%%
 %% Draw dyck path by hand below
\draw [very thick] (4,0)--(0,0)--(0,3); %lower boundary
\draw[ very thick,red](0,3)--(0,2)--(1,2)--(1,0)--(4,0); %Dyck path

\end{tikzpicture}
& 
\begin{tikzpicture}[scale=0.5]
\draw (0,5) node {$\Gaps=\{2\}$};
\draw (0,4) node {$\Delta=\mathbb{Z}_{\ge 0}\setminus\{2\}$};
\draw (0,3) node {$\area(\Delta)=1$}; 
%\draw (0,2) node {};
\end{tikzpicture}
& 
\begin{tikzpicture}[scale=0.5]
\draw (0,5) node {$3$-$\gen=\{0,1,5\}$};
\draw (0,4) node {$\codinv(\Delta)=2$};
\draw (0,3) node {$\dinv(\Delta)=1$};
%\draw (0,2) node {};
\end{tikzpicture}
& 
\begin{tikzpicture}[scale=0.5]
\draw (0,5) node {$\cogen=\{-3,2\}$};     
\draw (0,4) node {$(1+a)(1+at^{-1})$}; 
\draw (0,3) node {};
\end{tikzpicture}
\\
\hline
\begin{tikzpicture}[scale=0.5]
\draw (0,3.2) node {};
\fill [fill=gray!20!white] (0,0) rectangle (1,1);
\fill [fill=yellow!40!white] (0,1) rectangle (1,2);
\fill [fill=yellow!40!white] (1,0) rectangle (2,1);
% pick an m and n HERE
 \pgfmathtruncatemacro{\m}{3} 
  \pgfmathtruncatemacro{\n}{4} 
%%%%%%% DONT TOUCH THIS PART %%%%%%%%%%%%%%%%
 \pgfmathtruncatemacro{\mi}{\m-1} %\mi=m-1 
    \pgfmathtruncatemacro{\ni}{\n-1} %\ni=n-1
  \foreach \x in {0,...,\n} % n=5
    \foreach \y in {1,...,\m}     % m=7  
       {\pgfmathtruncatemacro{\label}{ - \m * \x - \n *  \y +\m*\n} % - m * \x - n *  \y +mn
      % \node   (\x\y) at (\x,\y){};
       \node  at (\x-.4,\y-.4) [blue!60!white, scale=.8]{\footnotesize \label};} 
%generates the grid and anderson labels
\draw[dotted] (0,\m)--(\n,0); %draws the diagonal from (0,m) -- (n,0)
  \foreach \x in {0,...,\n} % n=5
    \foreach \y  [count=\yi] in {0,...,\mi}   % m-1=6  
    %\draw (\x\y)--(\x\yi);
      \draw (\x,\y)--(\x,\yi);
        \foreach \y in {0,...,\m}     % m=7  
       \foreach \x  [count=\xi] in {-1,...,\ni}   % n-1=4
        %\draw (\x\y)--(\xi\y);
         \draw (\x,\y)--(\xi-1,\y);
%%%%%%%%%%%%%%%%%%%%%%%%%%%%%%%%%%%%%%%%%%%%%%%%
 %% Draw dyck path by hand below
\draw [very thick] (4,0)--(0,0)--(0,3); %lower boundary
\draw[ very thick,red](0,3)--(0,1)--(1,1)--(1,0)--(4,0); %Dyck path

\end{tikzpicture}
& 
\begin{tikzpicture}[scale=0.5]
\draw (0,5) node {$\Gaps=\{1,2\}$};
\draw (0,4) node {$\Delta=\mathbb{Z}_{\ge 0}\setminus\{1,2\}$};
\draw (0,3) node {$\area(\Delta)=2$}; 
%\draw (0,2) node {};
\end{tikzpicture}
& 
\begin{tikzpicture}[scale=0.5]
\draw (0,5) node {$3$-$\gen=\{0,4,5\}$};
\draw (0,4) node {$\codinv(\Delta)=2$};
\draw (0,3) node {$\dinv(\Delta)=1$};
%\draw (0,2) node {};
\end{tikzpicture}
& 
\begin{tikzpicture}[scale=0.5]
\draw (0,5) node {$\cogen=\{1,2\}$};     
\draw (0,4) node {$(1+a)(1+at^{-1})$}; 
\draw (0,3) node {};
\end{tikzpicture}
\\
\hline
\begin{tikzpicture}[scale=0.5]
\draw (0,3.2) node {};
\fill [fill=yellow!40!white] (0,0) rectangle (1,2);
\fill [fill=yellow!40!white] (0,0) rectangle (2,1);%pick an m and n HERE
 \pgfmathtruncatemacro{\m}{3} 
  \pgfmathtruncatemacro{\n}{4} 
%%%%%% DONT TOUCH THIS PART %%%%%%%%%%%%%%%%
 \pgfmathtruncatemacro{\mi}{\m-1} %\mi=m-1 
    \pgfmathtruncatemacro{\ni}{\n-1} %\ni=n-1
  \foreach \x in {0,...,\n} % n=5
    \foreach \y in {1,...,\m}     % m=7  
       {\pgfmathtruncatemacro{\label}{ - \m * \x - \n *  \y +\m*\n} % - m * \x - n *  \y +mn
      % \node   (\x\y) at (\x,\y){};
       \node  at (\x-.4,\y-.4) [blue!60!white, scale=.8]{\footnotesize \label};} %generates the grid and anderson labels
\draw[dotted] (0,\m)--(\n,0); %draws the diagonal from (0,m) -- (n,0)
  \foreach \x in {0,...,\n} % n=5
    \foreach \y  [count=\yi] in {0,...,\mi}   % m-1=6  
    %\draw (\x\y)--(\x\yi);
      \draw (\x,\y)--(\x,\yi);
        \foreach \y in {0,...,\m}     % m=7  
       \foreach \x  [count=\xi] in {-1,...,\ni}   % n-1=4
        %\draw (\x\y)--(\xi\y);
         \draw (\x,\y)--(\xi-1,\y);
%%%%%%%%%%%%%%%%%%%%%%%%%%%%%%%%%%%%%%%%%%%%%%%
 %% Draw dyck path by hand below
\draw [very thick] (4,0)--(0,0)--(0,3); %lower boundary
\draw [very thick, red] (4,0)--(0,0)--(0,3); %Dyck path

\end{tikzpicture}
& 
\begin{tikzpicture}[scale=0.5]
\draw (0,5) node {$\Gaps=\{1,2,5\}$};
\draw (0,4) node {$\Delta=\mathbb{Z}_{\ge 0}\setminus\{1,2,5\}$};
\draw (0,3) node {$\area(\Delta)=3$}; 
%\draw (0,2) node {};
\end{tikzpicture}
& 
\begin{tikzpicture}[scale=0.5]
\draw (0,5) node {$3$-$\gen=\{0,4,8\}$};
\draw (0,4) node {$\codinv(\Delta)=3$};
\draw (0,3) node {$\dinv(\Delta)=0$};
%\draw (0,2) node {};
\end{tikzpicture}
& 
\begin{tikzpicture}[scale=0.5]
\draw (0,5) node {$\cogen=\{5\}$};     
\draw (0,4) node {$(1+a)$}; 
\draw (0,3) node {};
\end{tikzpicture}
\\
\hline
\end{tabular}
\caption{All $4,3$-Dyck paths are on the left, with the area boxes in yellow. The corresponding $(4,3)$-invariant subsets are in the second column, together with the $\area$, $3$-generators, $\codinv,$ and $\dinv$ in the third column; the co-generators and contributing factors of $S_{4,3}(q,t,a) $ are given in the fourth column.}\label{figure: 4,3 Schroder}
\end{figure}

\subsection{Recursions for Invariant Subsets}
In \cite{GMV20} Gorsky-Mazin-Vazirani introduced certain power series indexed by invariant subsets and proved that they satisfy recursions similar to the Hogancamp-Mellit recursions \cite{HM}, thus providing an alternative model for computing the Khovanov--Rozansky homology of torus links. We now recall this construction. 

For any sequence ${\bf w}=(w_0,\dots,w_{m+n-1}) \in \{0,1\}^{n+m}$, set 
\begin{equation}\label{eq:Inv-w}
 \Inv_{\bf w}:= \{ \Delta \in \Inv_{m,n} \;|\; i \in \Delta \cap [0,n+m-1] \text{ if and only if }w_i =1\}.
 \end{equation}
A sequence $\bf w$ is \newword{admissible} if $\Inv_{\bf w} \neq \emptyset$. 

\begin{definition}\label{def:area' codinv'}
For any $\Delta \in \Inv_{m,n}$, define the modified statistics:
\begin{align*}
\area'(\Delta)&:= | \overline{\Delta} \cap \Z_{\geq n+m}|,\\
\codinv'(\Delta)&:= \sum_{x \in \ngen(\Delta)}  | \overline{\Delta} \cap \Z_{\geq n+m} \cap [x,x+m-1]| - {\binom{\xi_{-1}(\Delta)}{2}}.
%\frac{\xi_{-1}(\Delta) (\xi_{-1}(\Delta)-1)}{2}.
\end{align*}
\end{definition}

In particular, for $\Delta \in \Inv_{0^{m+n}}$ we have the identities:
\begin{align*}
\area'(\Delta) = -(n+m) + \area(\Delta), && \codinv'(\Delta) = \codinv(\Delta).
\end{align*}

Given a binary sequence ${\bf w}=(w_0,\dots,w_{m+n-1}) \in \{0,1\}^{m+n}$, we now construct two trinary sequences ${\bf x} \in \{0,1,\bullet\}^m$ and ${\bf y} \in \{0,1,\bullet\}^n$ such that,
\begin{itemize}[leftmargin=*]
\item[-]  ${\bf x}$ records gaps (by $0$), $n$-generators (by $1$), and any other element in $\Delta \cap [n,n+m-1]$ (by $\bullet$), and
\item[-] ${\bf y}$ records gaps (by $0$), $m$-generators (by $1$), and any other element in $\Delta \cap [m,n+m-1]$ (by $\bullet$). 
\end{itemize} 

\begin{definition}\label{def:trinaryseq} 
Given an admissible sequence ${\bf w} \in \{0,1\}^{m+n}$, define ${\bf x} = (x_0, \dots x_{m-1})$ and ${\bf y} = (y_0, \dots y_{n-1})$ by setting:
\begin{align*}
x_i := 
\begin{cases}
0 & ; w_{n+i} =0,\\
\bullet &; w_{n+i} = w_i=1,\\
1 &; w_{n+i} = 1 \text{ and } w_i = 0,
\end{cases}
&\text{ and }
y_i := 
\begin{cases}
0 & ; w_{m+i} =0,\\
\bullet &; w_{m+i} = w_i=1,\\
1 &; w_{m+i} = 1 \text{ and } w_i = 0.
\end{cases}
\end{align*}
We say a pair of sequences ${\bf x, y}$ are admissible if and only if ${\bf w}$ is admissible. 
\end{definition}

\begin{example} \label{ex:7-4sequences}
Let $(m,n)=(7,4)$ and $\Delta=\mathbb{Z}_{\ge 0}\setminus\{0,1,2,3,4,6,7,8,10\}.$
Then, the associated binary sequence $w=00000100010$ yields the ternary sequences ${\bf x}=01000\bullet 0$ and ${\bf y} =0010$,
since the only $4$-generator in $[4,10]$ is $5$ and the only $7$-generator in $[7,10]$ is $9$. In particular, $9$ is not a $4$-generator since $9-4=5 \in \Delta$. Thus, $ \Delta\in I_{00000100010}= I_{01000\bullet 0, 0010}$.
\end{example}

\begin{definition}\label{def:rec poly}
Given admissible trinary sequences $\bf x$ and $\bf y$ as above, set $\Inv_{\bf x,y} := \Inv _{\bf w}$ and let
\begin{equation*}
Q_{{\bf x},{\bf y}}(q,t,a):=\sum_{\Delta\in \Inv_{{\bf x},{\bf y}}} q^{\area'(\Delta)}t^{-\codinv'(\Delta)}\prod_{k\in\cogennn(\Delta)}(1+at^{-\xi_k(\Delta)}).
\end{equation*}
\end{definition} 

Note that one only considers non-negative co-generators in Definition \ref{def:rec poly}. However, if $\min(\Delta)\ge m+n-1\ge\min(m,n)$, then there are no negative co-generators and $\cogen(\Delta) = \cogen_{\geq 0}(\Delta)$.  In particular, one immediately obtains the following result.

\begin{lemma}\cite[Rem. 5.6]{GMV20}
    The Schr\"oder polynomial arises as a special case of $Q_{\bf x,y}$,
\begin{equation*}
    S_{m,n}(q,t,a)= {(1-q)t^{\delta(m,n)}}Q_{0^{n},0^{n}}(q,t,a)=t^{\delta(m,n)} Q_{0^{m-1}1,0^{n-1}1}(q,t,a).
\end{equation*}
\end{lemma}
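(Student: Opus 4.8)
The plan is to prove both equalities at once by identifying the index set of the sum defining $Q_{\mathbf{x},\mathbf{y}}$ (Definition~\ref{def:rec poly}) with an explicit family of translates of $\Inv_{m,n}^0$, and then tracking how the primed statistics $\area'$, $\codinv'$, the set $\cogennn$, and the exponents $\xi_k$ behave under translation by a nonnegative integer. First I would unwind Definition~\ref{def:trinaryseq} to identify the binary words behind the two trinary pairs in the statement (read as $\mathbf{x}=0^m,\ \mathbf{y}=0^n$, the all-zero sequences of lengths $m$ and $n$, and $\mathbf{x}=0^{m-1}1,\ \mathbf{y}=0^{n-1}1$). The all-zero pair is produced by $\mathbf{w}=0^{m+n}$, so $\Inv_{0^m,0^n}=\Inv_{0^{m+n}}=\{\Delta\in\Inv_{m,n}:\min(\Delta)\ge m+n\}$; and direct inspection of Definition~\ref{def:trinaryseq} (the absence of any $\bullet$ forces $w_k=0$ for $k<\min(m,n)$) shows that $(0^{m-1}1,0^{n-1}1)$ comes only from $\mathbf{w}=0^{m+n-1}1$, so $\Inv_{0^{m-1}1,0^{n-1}1}=\{\Delta\in\Inv_{m,n}:\Delta\cap[0,m+n-1]=\{m+n-1\}\}$. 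In both cases $\Delta\mapsto(\min(\Delta),\Delta-\min(\Delta))$ is a bijection onto the family $\{\Delta_0+c:\Delta_0\in\Inv_{m,n}^0\}$, with $c=m+n-1$ fixed in the second case and $c$ ranging over all integers $\ge m+n$ in the first.

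The technical core is a translation lemma: for $\Delta_0\in\Inv_{m,n}^0$ and any integer $c\ge m+n-1$, writing $\Delta=\Delta_0+c$, one has $\ngen(\Delta)=\ngen(\Delta_0)+c$, $\cogen(\Delta)=\cogen(\Delta_0)+c$, and $\xi_{p+c}(\Delta)=\xi_p(\Delta_0)$ for all $p$ (all immediate from translation-equivariance of the defining conditions); moreover $\Delta$ has no negative co-generator, since $\min(\Delta)=c\ge m+n-1\ge\min(m,n)$ (the remark following Definition~\ref{def:rec poly}), so $\cogennn(\Delta)=\cogen(\Delta)$. The substantive points are $\codinv'(\Delta)=\codinv(\Delta_0)$ and $\area'(\Delta)=\max(c-m-n,0)+\area(\Delta_0)$. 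For $c\ge m+n$ these follow at once from the two identities recorded just after Definition~\ref{def:area' codinv'} together with translation-invariance of the unprimed $\area$ and $\codinv$; for the boundary value $c=m+n-1$ one checks by hand that the truncation to $\Z_{\ge m+n}$ in Definition~\ref{def:area' codinv'} removes exactly the gaps and length-$m$ windows sitting in $[0,m+n-1]$, that $\overline{\Delta}\cap\Z_{\ge m+n}=\overline{\Delta_0}+(m+n-1)$, and that $\xi_{-1}(\Delta)=1$, so the correction $-\binom{\xi_{-1}(\Delta)}{2}$ in $\codinv'$ still vanishes.

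Granting the translation lemma, the two identities are pure bookkeeping. Set $\widetilde{S}(q,t,a):=\sum_{\Delta_0\in\Inv_{m,n}^0}q^{\area(\Delta_0)}t^{-\codinv(\Delta_0)}\prod_{k\in\cogen(\Delta_0)}(1+at^{-\xi_k(\Delta_0)})$. Since $\dinv(\Delta_0)=\delta(m,n)-\codinv(\Delta_0)$ (Definition~\ref{def:area-dinv}), comparison with the definition of $S_{m,n}$ gives $S_{m,n}(q,t,a)=t^{\delta(m,n)}\widetilde{S}(q,t,a)$. Reindexing the sum over $\Inv_{0^{m-1}1,0^{n-1}1}$ by $\Delta_0$ and applying the translation lemma with $c=m+n-1$ (where the $\area'$-shift is $0$) yields $Q_{0^{m-1}1,0^{n-1}1}(q,t,a)=\widetilde{S}(q,t,a)$, hence the second equality. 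Reindexing the sum over $\Inv_{0^{m+n}}$ by $(j,\Delta_0)$ with $c=m+n+j$, $j\ge0$, and using $\area'(\Delta_0+c)=j+\area(\Delta_0)$ yields $Q_{0^m,0^n}(q,t,a)=\bigl(\sum_{j\ge0}q^j\bigr)\widetilde{S}(q,t,a)=\tfrac{1}{1-q}\widetilde{S}(q,t,a)$, hence the first. (Once the second equality is in hand, the first is equivalent to $(1-q)Q_{0^m,0^n}=Q_{0^{m-1}1,0^{n-1}1}$, which is also the specialization $\mathbf{u}=0^{m-1}$, $\mathbf{v}=0^{n-1}$ of the recursion $R_{0\mathbf{u},0\mathbf{v}}=t^{-|\mathbf{u}|}R_{\mathbf{u}1,\mathbf{v}1}+qt^{-|\mathbf{u}|}R_{\mathbf{u}0,\mathbf{v}0}$ of Definition~\ref{def:Rrecursion}, transported to $Q$ since neither trinary pair contains a $\bullet$.)

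The step I expect to be the main obstacle is the boundary case $c=m+n-1$ of the translation lemma. There one must simultaneously check that $\Delta_0\mapsto\Delta_0+(m+n-1)$ converts every co-generator of $\Delta_0$ — including possibly negative ones — into a genuine nonnegative co-generator of $\Delta$, matching $\cogennn(\Delta)$ with $\cogen(\Delta_0)$ term by term with the correct $\xi$-exponents, and that $\codinv'$ is unchanged despite the $-\binom{\xi_{-1}}{2}$ correction, which hinges on the numerical identity $\xi_{-1}(\Delta)=1$. Everything else — identifying the binary words, translation-equivariance of $\ngen$, $\cogen$, $\xi$, and the geometric series $\sum_{j\ge0}q^j=(1-q)^{-1}$ — is routine.
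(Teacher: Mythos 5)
Your proof is correct and fills in exactly the details behind the paper's citation to \cite[Rem.~5.6]{GMV20} and the paragraph preceding the lemma: translating $\Inv_{0^{m+n}}$ and $\Inv_{0^{m+n-1}1}$ back to $\Inv_{m,n}^0$, using the identities for $\area'$ and $\codinv'$ recorded after Definition~\ref{def:area' codinv'}, checking $\binom{\xi_{-1}(\Delta)}{2}=0$ in the boundary case $c=m+n-1$, and summing the geometric series in $q$ to produce $(1-q)^{-1}$. One small inaccuracy in the bookkeeping that pins down $\mathbf{w}$: you attribute $w_k=0$ for $k<\min(m,n)-1$ to ``the absence of any $\bullet$,'' but Definition~\ref{def:trinaryseq} places no constraint on such $w_k$ once $w_{n+k}=w_{m+k}=0$ are already known, so the no-$\bullet$ condition is silent on those positions; the correct reason is admissibility, since $w_k=1$ for some $k<\min(m,n)-1$ would force $k+\min(m,n)\in\Delta\cap[\min(m,n),m+n-2]$, contradicting the zeros already determined, so $\Inv_{\mathbf{w}}=\emptyset$. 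The resulting identification $\mathbf{w}=0^{m+n}$ (resp.\ $0^{m+n-1}1$) is nevertheless correct, and the rest of the argument is sound.
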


For any sequence ${\bf x}$ of length $\ell$ and $\star \in \{0,1,\bullet\}$, let ${\bf x} \star = (x_0,\dots, x_\ell, \star)$ and $ \star {\bf x}= (\star, x_0,\dots, x_\ell )$.

\begin{theorem}\cite[Thm. 4.10 and 4.13]{GMV20} \label{thm:Qrecursion}
The following recursions hold:
\begin{align*}
Q_{0{\bf x},0{\bf y}}&=t^{-|{\bf x}|}Q_{{\bf x}1,{\bf y}1}+qt^{-|{\bf x}|}Q_{{\bf x}0,{\bf y}0}, &
Q_{1{\bf x},0{\bf y}}&=Q_{{\bf x}1,{\bf y}\bullet},&
Q_{\bullet{\bf x},\bullet{\bf y}}&=Q_{{\bf x}\bullet,{\bf y}\bullet},\\
Q_{1{\bf x},1{\bf y}}&=(t^{|{\bf x}|}+a)Q_{{\bf x}\bullet,{\bf y}\bullet},&
Q_{0{\bf x},1{\bf y}}&=Q_{{\bf x}\bullet,{\bf y}1},&
Q_{\emptyset,\emptyset}&=1.
\end{align*}
Thus, letting $(\bf{u,v})$ be the sequences obtained from $(\bf{x,y})$ by omitting all $\bullet$'s we have, 
\begin{equation*}
R_{{\bf u},{\bf v}}(q,t,a)=Q_{{\bf x},{\bf y}}(q,t,a).
\end{equation*}
\end{theorem}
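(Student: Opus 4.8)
The plan is to prove the six recursions for $Q_{{\bf x},{\bf y}}$ directly from its definition as a sum over $\Inv_{{\bf x},{\bf y}}=\Inv_{{\bf w}}$, and then to obtain $R_{{\bf u},{\bf v}}=Q_{{\bf x},{\bf y}}$ by matching these against Definition~\ref{def:Rrecursion}. Throughout, write $m,n$ for the lengths of ${\bf x},{\bf y}$, recover the window ${\bf w}\in\{0,1\}^{m+n}$ from $({\bf x},{\bf y})$ (noting that $x_i=0$ forces $w_i=0$, since $i\in\Delta$ would force $i+n\in\Delta$), and regard each $\Delta\in\Inv_{{\bf x},{\bf y}}$ as an $(m,n)$-invariant subset whose intersection with the window $[0,m+n-1]$ is prescribed by ${\bf w}$. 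The first point is that admissibility constrains the leading pair $(x_0,y_0)$: since $0\in\Delta$ forces both $m\in\Delta$ and $n\in\Delta$, a leading $\bullet$ on one side cannot be paired with a leading $0$ or $1$ on the other, and one checks that the only surviving possibilities are $(0,0)$, $(1,0)$, $(1,1)$, $(0,1)$, $(\bullet,\bullet)$, plus the empty pair -- precisely the six cases of the theorem. For each non-empty case I would produce an explicit ``window-rotation'' bijection between $\Inv_{{\bf x},{\bf y}}$ and the invariant-subset set(s) indexing the right-hand side: drop the constraint coming from the leading letter and record the constraint that $(m,n)$-invariance together with the remaining window data now forces at the newly exposed position, which accounts for the letter appended on the right. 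In the $(0,0)$ case the membership of that position is genuinely free, so $\Inv_{{\bf x},{\bf y}}$ splits into two parts according to whether the position lies in $\Delta$ or is a new gap, producing the two summands $t^{-|{\bf x}|}Q_{{\bf x}1,{\bf y}1}+q\,t^{-|{\bf x}|}Q_{{\bf x}0,{\bf y}0}$, with the extra $q$ weighting the new gap.

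The technical heart -- and the step I expect to cost the most -- is verifying, for each of these bijections, how the statistics $\area'$, $\codinv'$, the set $\cogennn(\Delta)$, and the exponents $\xi_k(\Delta)$ transform, so that the stated prefactors $t^{-|{\bf x}|}$, $q\,t^{-|{\bf x}|}$, $t^{|{\bf x}|}+a$ and $1$ come out on the nose. The delicate ingredients are the $-\binom{\xi_{-1}(\Delta)}{2}$ correction built into $\codinv'$ and the careful accounting of which co-generators pass between negative and non-negative index as the window rotates. In the $(1,1)$ case, for instance, one should check that $0$ is a co-generator of every $\Delta\in\Inv_{{\bf x},{\bf y}}$, contributing the factor $1+a\,t^{-\xi_0(\Delta)}$ to the product; that $\xi_0(\Delta)$ coincides with the exponent $|{\bf x}|$ in the prefactor; and that the factor $t^{-\xi_0(\Delta)}$ is exactly cancelled by the shift in $\codinv'$, leaving the clean prefactor $t^{|{\bf x}|}+a$. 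These are all computations local to the window once the bijections are written down, but keeping every power of $t$ straight is where the care is needed.

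Granting the recursions, the equality $R_{{\bf u},{\bf v}}=Q_{{\bf x},{\bf y}}$ follows formally. Deleting all $\bullet$'s from $({\bf x},{\bf y})$ yields $({\bf u},{\bf v})$ and leaves the number of $1$'s unchanged, so the exponent $|{\bf x}|$ equals $|{\bf u}|$; a direct comparison then shows each of the six $Q$-recursions becomes the correspondingly-indexed recursion of Definition~\ref{def:Rrecursion} (for instance $Q_{1{\bf x},0{\bf y}}=Q_{{\bf x}1,{\bf y}\bullet}$ becomes $R_{1{\bf u},0{\bf v}}=R_{{\bf u}1,{\bf v}}$, and $Q_{1{\bf x},1{\bf y}}=(t^{|{\bf x}|}+a)Q_{{\bf x}\bullet,{\bf y}\bullet}$ becomes $R_{1{\bf u},1{\bf v}}=(t^{|{\bf u}|}+a)R_{{\bf u},{\bf v}}$). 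Applied to admissible $({\bf x},{\bf y})$ the recursions are exhaustive, and iterating them -- resolving algebraically the self-referential chains in the $(0,0)$ case, which is what introduces the $(1-q)^{-1}$ factors -- reduces $Q_{{\bf x},{\bf y}}$ to a combination of the all-$\bullet$ values $Q_{\bullet^m,\bullet^n}$; since $\Inv_{\bullet^m,\bullet^n}=\{\Z_{\geq 0}\}$ one computes directly that $Q_{\bullet^m,\bullet^n}=1=R_{\emptyset,\emptyset}$, so the parallel reductions on the two sides (which, along these paths, never reach the one-sided base cases of Definition~\ref{def:Rrecursion}, as those do not arise from admissible window data) yield $R_{{\bf u},{\bf v}}=Q_{{\bf x},{\bf y}}$.
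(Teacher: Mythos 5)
The paper does not include its own proof of this theorem: it is cited verbatim from Gorsky--Mazin--Vazirani \cite[Thms.~4.10 and 4.13]{GMV20}, so there is no in-paper argument to compare against. Your sketch, however, is a faithful reconstruction of the strategy actually used in GMV20: classify admissible leading pairs $(x_0,y_0)$ via the $(m,n)$-invariance constraint (your observation that $0\in\Delta$ forces both $m,n\in\Delta$ is exactly what rules out a $\bullet$ paired with a non-$\bullet$, leaving the five non-empty cases of the theorem); build window-rotation bijections that shift the interval $[0,m+n-1]$ by one and record the newly exposed membership; and track $\area'$, $\codinv'$, $\cogennn$, and $\xi_k$ through the shift -- including the subtle $-\binom{\xi_{-1}}{2}$ correction in $\codinv'$ -- to produce the prefactors. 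Your check in the $(1,1)$ case that $0\in\cogennn(\Delta)$ for every $\Delta$, that $\xi_0(\Delta)=|{\bf x}|$ (number of $1$'s), and that factoring $(1+at^{-|{\bf x}|})$ out and absorbing $t^{|{\bf x}|}$ gives the clean $(t^{|{\bf x}|}+a)$ prefactor, is correct. The closing paragraph is also right: since $x_0=\bullet$ forces $y_0=\bullet$ (and propagates around the window), once either sequence is all $\bullet$'s both are, so the parallel $R$-reduction reaches only $R_{\emptyset,\emptyset}=1$ and never the one-sided bases $R_{\emptyset,0^n}$, $R_{0^m,\emptyset}$ of Definition~\ref{def:Rrecursion}. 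As you acknowledge, the outline delegates the bulk of the work -- verifying the statistic transformations under each bijection -- which is precisely what GMV20 carries out; but the skeleton is sound and matches the source.
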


\begin{corollary}[\cite{GMV20,Mellit-Homology}]
\label{cor:R vs catalan}
For any $m,n$ coprime, the KR series of the $(m,n)$-torus knot is given by
\begin{align*}
\PC_{T(m,n)} = \frac{(at^{1/2}q^{-1/2})^{\delta(m,n)}} {1-q} Q_{0^{m-1}1 , 0^{n-1}1}(q,t,a)=\frac{(at^{-1/2}q^{-1/2})^{\delta(m,n)}} {1-q} S_{m,n}(q,t,a).
\end{align*}
\end{corollary}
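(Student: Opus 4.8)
The plan is to obtain this corollary purely by chaining together results already recorded above, so the argument is short. First I would pin down the relevant binary knot: taking $\mathbf{u}=0^{m-1}1\in\{0,1\}^m$ and $\mathbf{v}=0^{n-1}1\in\{0,1\}^n$, Lemma~\ref{lemma:binary braids} gives
\[
\b(0^{m-1}1,0^{n-1}1)=(\sigma_1\cdots\sigma_{m-1})^n=\cox_m^n,
\]
whose closure is $T(m,n)$ by Example~\ref{ex:Tmn braid}; hence $K_{0^{m-1}1,0^{n-1}1}=T(m,n)$. Since $\gcd(m,n)=1$ this is a knot, so the component count is $c=1$, and the normalization formula \eqref{eq:binary normalization} of Lemma~\ref{lemma:binary braids} evaluates to $\d(\b(0^{m-1}1,0^{n-1}1))=\tfrac12(m-1)(n-1)=\delta(m,n)$, in agreement with Definition~\ref{def:delta MN} and the computation in Example~\ref{ex:delta MN}.

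Next I would assemble the identities. Theorem~\ref{thm:RHHH} (in the form \eqref{eq:Luv Poinc}) yields
\[
\PC^{\KR}_{T(m,n)}(q,t,a)=\frac{(at^{1/2}q^{-1/2})^{\delta(m,n)}}{1-q}\,R_{0^{m-1}1,\,0^{n-1}1}(q,t,a).
\]
Because the sequences $0^{m-1}1$ and $0^{n-1}1$ contain no $\bullet$, Theorem~\ref{thm:Qrecursion} identifies $R_{0^{m-1}1,0^{n-1}1}=Q_{0^{m-1}1,0^{n-1}1}$, and the Lemma recalled above from \cite[Rem.~5.6]{GMV20} gives $Q_{0^{m-1}1,0^{n-1}1}=t^{-\delta(m,n)}S_{m,n}(q,t,a)$. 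Substituting these in and using the elementary identity $(at^{1/2}q^{-1/2})^{\delta}\,t^{-\delta}=(at^{-1/2}q^{-1/2})^{\delta}$ produces both displayed equalities of the corollary simultaneously.

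No step presents a genuine obstacle: this is an assembly of the preceding theorems rather than a new argument. The only points requiring care are verifying that $\b(0^{m-1}1,0^{n-1}1)$ really is the $n$-th power of the Coxeter braid on $m$ strands (so that its closure is $T(m,n)$), confirming that the component count $c=1$ is inherited correctly so that the braid's normalizing exponent coincides with $\delta(m,n)$, and tracking the half-integer powers of $t$ through the final substitution.
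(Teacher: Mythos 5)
Your proof is correct and follows exactly the chain of results the paper intends: it identifies $K_{0^{m-1}1,0^{n-1}1}$ as $T(m,n)$ via Lemma~\ref{lemma:binary braids} and Example~\ref{ex:Tmn braid}, computes the normalizing exponent, then combines \eqref{eq:Luv Poinc}, Theorem~\ref{thm:Qrecursion}, and the cited specialization $S_{m,n}=t^{\delta(m,n)}Q_{0^{m-1}1,0^{n-1}1}$ from \cite{GMV20}. The paper gives no proof because it regards this as the same routine assembly, so your argument matches the intended one.
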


\section{Schr\"oder Polynomials for Triangular Partitions}\label{sec:Schroder} 

Generalized $(q,t)$-Catalan polynomials for triangular partitions were defined \cite{BHMPS}, with more general combinatorial properties of such partitions studied by Bergeron-Mazin \cite{BM}.  In this section, we extend many of the combinatorial results from  \S\ref{sec:GMV-Recursions} and define the Schr\"oder polynomial of an arbitrary triangular partition.  

\subsection{Triangular Partitions Revisited}\label{subsec:m,n-triangular}
In order to define the Schr\"oder polynomial for a triangular partition $\tau$, we need to relate $\tau$-Dyck paths to some invariant subsets in $\mathbb{Z}_{>0}.$ To achieve this, we first embed $\tau$ inside an appropriate partition $\tau_{m,n}$ for relatively prime integers $m$ and $n.$

\begin{definition}
Let $\tau$ be a triangular partition and let $m,n\in\mathbb{Z}_{>0}$ be positive relatively prime integers. We say that $\tau$ is \newword{$(m,n)$-triangular} if there exists a pair of positive real numbers $(r,s)$ such that $\tau=\tau_{r,s},$ $s/r=n/m,$ and $s<n.$
\end{definition}

\begin{lemma}\label{lemma:triangular}
    For any triangular partition $\tau$, there exists a pair of positive relatively prime integers $(m,n)$ such that $\tau$ is $(m,n)$-triangular.
\end{lemma}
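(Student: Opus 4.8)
The plan is to show that the collection of lines cutting out a fixed triangular $\tau$ forms a genuinely two–dimensional (open) family in parameter space, so that among them there is one of rational slope, and then to use the remaining freedom in that family to also arrange $s<n$.

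First I would pin down an open set of valid parameters. Writing the outer corners of $\tau$ (the cells $\Box\in\tau$ with $\tau\setminus\Box$ still a partition) as $(x_1,y_1),\dots,(x_p,y_p)$ and the addable cells of $\tau$ (Definition~\ref{def:addable}) as $(x_1',y_1'),\dots,(x_{p+1}',y_{p+1}')$, I would use that a partition is the down-closure of its outer corners and that its complement in $\Z_{>0}^2$ is the up-closure of its addable cells, together with the inequalities in Definition~\ref{def:box on line}, to verify that $\tau_{r,s}=\tau$ holds as soon as every outer corner lies strictly below $L_{r,s}$ and every addable cell lies strictly above it. Consequently the set
\[
V_\tau:=\Big\{(r,s)\in\R_{>0}^2\ :\ \tfrac{x_i}{r}+\tfrac{y_i}{s}<1\ \ (1\le i\le p),\qquad \tfrac{x_j'}{r}+\tfrac{y_j'}{s}>1\ \ (1\le j\le p+1)\Big\}
\]
is contained in $\{(r,s):\tau_{r,s}=\tau\}$, and it is open, being a finite intersection of open conditions. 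To see $V_\tau\neq\emptyset$, I would start from any $(r_0,s_0)$ with $\tau_{r_0,s_0}=\tau$ (such a pair exists because $\tau$ is triangular): there the addable cells already satisfy $x_j'/r_0+y_j'/s_0>1$, and passing to $((1+\epsilon)r_0,(1+\epsilon)s_0)$ multiplies every $x/r+y/s$ by $1/(1+\epsilon)<1$, which forces all outer corners strictly below the line while, for $\epsilon$ small, keeping the finitely many addable cells strictly above it. Thus $V_\tau$ is a nonempty open subset of $\R_{>0}^2$.

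Next I would choose a rational slope inside $V_\tau$ compatible with the normalization $s<n$. Fix $(r_0,s_0)\in V_\tau$ and a small open ball $B$ about it with $\overline{B}\subseteq V_\tau$ and radius $<1$, so that on $B$ the coordinates $r,s$ are positive and bounded and $s<s_0+1$. Since $(r,s)\mapsto s/r$ is continuous and open, the image of $B$ is an open interval $(\theta_1,\theta_2)$ with $0<\theta_1<\theta_2<\infty$. Here I would invoke the elementary fact that any such interval contains reduced fractions of arbitrarily large numerator: for a prime $q$ the interval $(\theta_1 q,\theta_2 q)$ has length $(\theta_2-\theta_1)q$, so for $q$ large it contains an integer $k$ with $q\nmid k$; then $\gcd(k,q)=1$, $k/q\in(\theta_1,\theta_2)$, and $k>\theta_1 q$, which can be made $>s_0+1$ by enlarging $q$. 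Setting $(m,n):=(q,k)$ and choosing $(r,s)\in B$ with $s/r=n/m$ (possible since $n/m\in(\theta_1,\theta_2)$), I obtain $\tau_{r,s}=\tau$, $\gcd(m,n)=1$, $s/r=n/m$, and $s<s_0+1<n$; that is, $\tau$ is $(m,n)$-triangular.

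The only two points that genuinely require an argument are (a) that $\tau_{r,s}=\tau$ persists on an \emph{open} set of parameters, supplied by the scaling trick above, and (b) the mildly delicate number-theoretic claim that an arbitrary open interval of positive reals contains reduced fractions with large numerator --- the obvious candidate $\lfloor\theta q\rfloor/q$ need not be in lowest terms, which is why I force $q$ prime with $q\nmid k$. Everything else is routine bookkeeping with the corner inequalities, so I expect (b) to be the main place where care is needed in the write-up.
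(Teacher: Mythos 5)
Your proof is correct and follows the same core strategy as the paper's: establish that the set of parameters $(r,s)$ with $\tau_{r,s}=\tau$ contains a nonempty open set, then use density of rationals to find a reduced fraction $s/r=n/m$ with $n$ large enough that $s<n$. The paper outsources the openness to a cited observation in Bergeron--Mazin (existence of a cutting line avoiding lattice points) and simply asserts density without detailing the coprimality/size bookkeeping, whereas you reprove openness directly via the scaling trick on the finitely many corner/addable-cell inequalities and nail down the number theory with the prime-denominator argument---a more self-contained rendering of the same idea.
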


\begin{proof}
    It is easy to observe (see page 2 in \cite{BM}) that for any triangular partition $\tau$, there exists a line $L_{\tilde{r},\tilde{s}}=\{\tilde{r}y+\tilde{s}x= \tilde{s}\tilde{r}\}$ such that $\tau=\tau_{\tilde{r},\tilde{s}}$ and $L_{\tilde{r},\tilde{s}}$ does not pass through any integer points. Then for any $(r,s)$ sufficiently close to $(\tilde{r},\tilde{s})$, one also gets $\tau=\tau_{\tilde{r},\tilde{s}}=\tau_{r,s}.$ In particular, one can additionally require that $s/r$ is rational, and if $s/r=n/m$  is the reduced form, i.e. $n$ and $m$ are positive relatively prime integers, then $s<n.$ Indeed, the set of such pairs $(r,s)$ is everywhere dense. We conclude that $\tau$ is $(m,n)$-triangular.
\end{proof}

\begin{remark} \label{rem:notation}
Equivalently, one could flip the perspective and begin with a pair of fixed positive relatively prime integers $(m,n)$. Then all the $(m,n)$-triangular partitions can be obtained by the following procedure. Consider the $(m,n)$-diagonal line $L_{m,n}=\{nx+my=nm\},$ shift it down to $\{nx+my=z\}$ for some $0<z<mn,$ and take $(r,s)=(z/n,z/m),$ so that $\{nx+my=z\}=L_{r,s}$ and $\tau_{r,s}$ is an $(m,n)$-triangular partition. By varying $0<z<mn$, one obtains all $(m,n)$-triangular partitions.
\end{remark}

\begin{definition}\label{def:shifted diagonal}
In the context of the proof of Lemma \ref{lemma:triangular} above, we call the diagonal line $L_{r,s}= \{sx+ry = rs\}$ the \newword{shifted diagonal} and the diagonal line $L_{m,n}=\{mx+ny = mn\}$ the \newword{main diagonal}.
\end{definition}

\begin{lemma}
Let $0<z<mn$ and let $\ell=\lfloor z\rfloor.$ Then $\tau_{z/n,z/m}=\tau_{\ell/n,\ell/m}.$    
\end{lemma}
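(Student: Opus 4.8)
The statement is an immediate consequence of unwinding the definition of $\tau_{r,s}$ for the two specific parameter choices, so the plan is essentially computational. Recall that $\tau_{r,s}$ consists of exactly those cells $\Box^{x,y}$ with northeast corner $(x,y)\in\Z_{>0}\times\Z_{>0}$ satisfying $ry+sx\leq rs$. The first step is to rewrite this defining inequality for $(r,s)=(z/n,z/m)$ and for $(r,s)=(\ell/n,\ell/m)$ and observe that in both cases it becomes an inequality with \emph{integer} left-hand side.

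Concretely, for $(r,s)=(z/n,z/m)$ we have $ry+sx=\tfrac{z}{mn}(my+nx)$ and $rs=\tfrac{z^2}{mn}$, so, after multiplying through by the positive constant $mn/z$, the condition $ry+sx\leq rs$ is equivalent to $nx+my\leq z$. Hence
\[
\tau_{z/n,z/m}=\{(x,y)\in\Z_{>0}\times\Z_{>0}\ :\ nx+my\leq z\}.
\]
Running the identical computation with $z$ replaced by $\ell$ gives
\[
\tau_{\ell/n,\ell/m}=\{(x,y)\in\Z_{>0}\times\Z_{>0}\ :\ nx+my\leq \ell\}.
\]

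Finally, since $x,y,n,m$ are integers, the quantity $nx+my$ is an integer for every $(x,y)\in\Z_{>0}\times\Z_{>0}$; therefore $nx+my\leq z$ holds if and only if $nx+my\leq\lfloor z\rfloor=\ell$. This shows the two displayed sets coincide, proving $\tau_{z/n,z/m}=\tau_{\ell/n,\ell/m}$. There is no real obstacle here; the only point requiring a small amount of care is bookkeeping the orientation/sign convention in the definition of $\tau_{r,s}$ so that the inequality comes out as $nx+my\leq z$ rather than its reverse, and noting that nothing in the argument uses that $m$ and $n$ are relatively prime.
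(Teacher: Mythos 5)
Your proof is correct and follows essentially the same line as the paper's: both reduce the defining condition $ry+sx\leq rs$ to the integer inequality $nx+my\leq z$ and then observe that, since the left-hand side is an integer, this is equivalent to $nx+my\leq\lfloor z\rfloor=\ell$. The paper states the reduction more tersely, but the content is identical; your aside that coprimality of $m,n$ plays no role here is accurate.
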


\begin{proof}
    It suffices to show that a box $\Box=(i,j)$ lies below $L_{z/n,z/m}$ if and only if it lies below $L_{\ell/n,\ell/m}.$ Indeed, since $i,j,m,n\in\mathbb{Z},$
\begin{equation*}
    in+jm\le z \Leftrightarrow in+jm\le \lfloor z\rfloor=\ell.
\end{equation*}
\end{proof}

\begin{corollary}\label{cor:(m,n,l)}
    For every triangular partition $\tau$, there exists a triple of positive integers $(m,n,\ell),$ such that $m$ and $n$ are relatively prime, $\ell<mn,$ and $\tau=\tau_{\ell/n,\ell/m}$ is $(m,n)$-triangular.
\end{corollary}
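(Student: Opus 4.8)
The plan is to combine Lemma~\ref{lemma:triangular} with the floor lemma stated immediately above this corollary. First, apply Lemma~\ref{lemma:triangular} to produce coprime positive integers $m,n$ together with positive reals $(r,s)$ such that $\tau = \tau_{r,s}$, $s/r = n/m$, and $s < n$. The condition $s/r = n/m$ is the same as $sm = rn$, so I would set $z := rn = sm$; then $(r,s) = (z/n, z/m)$ with $z > 0$, and the inequality $s < n$ reads $z = sm < mn$. Hence $0 < z < mn$, and the floor lemma applies with $\ell := \lfloor z \rfloor$, giving $\tau = \tau_{z/n,z/m} = \tau_{\ell/n, \ell/m}$. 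Since the pair $(r,s)$ already witnesses that $\tau$ is $(m,n)$-triangular, the triple $(m,n,\ell)$ carries all the required structural properties.

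Second, I would check the numerical constraints on $\ell$. From $\ell = \lfloor z \rfloor \le z < mn$ we immediately get $\ell < mn$. For positivity, note that whenever $\tau$ is nonempty the box $(1,1)$ lies in $\tau = \tau_{r,s}$, so $1/r + 1/s \le 1$, that is, $(m+n)/z \le 1$; thus $z \ge m + n \ge 2$ and $\ell \ge 2 \ge 1$. This covers every nonempty triangular partition. The degenerate case $\tau = \emptyset$ (the only case where the construction could yield $z < 1$, and hence where $\tau_{\ell/n,\ell/m}$ would fall outside the defined range of the notation) is handled by an explicit choice: take $(m,n,\ell) = (1,2,1)$, for which $\tau_{1/2, 1} = \emptyset$ because the minimal box $(1,1)$ satisfies $2\cdot 1 + 1 > 1$, while clearly $0 < \ell = 1 < mn = 2$.

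This is essentially a bookkeeping consequence of the preceding two lemmas, so I do not expect a genuine obstacle. The only point demanding slight care is the translation from the real parametrization $(r,s)$ delivered by Lemma~\ref{lemma:triangular} to the normalized integer parameter $\ell = \lfloor z\rfloor$, and verifying that the strict bound $\ell < mn$ survives the floor operation while $\ell$ remains positive — both of which become transparent once $z = rn = sm$ is identified.
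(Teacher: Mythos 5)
Your proof is correct and matches the paper's intended route: apply Lemma~\ref{lemma:triangular} to obtain coprime $m,n$ and $(r,s)$ with $\tau=\tau_{r,s}$, $s/r=n/m$, $s<n$, set $z=rn=sm\in(0,mn)$, and invoke the preceding floor lemma with $\ell=\lfloor z\rfloor$. The explicit positivity check for $\ell$ (via the box $(1,1)$ forcing $z\ge m+n$) and the separate treatment of $\tau=\emptyset$ are welcome details that the paper leaves implicit.
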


\noindent \underline{\textbf{Notation:}} In what follows, the choice of main diagonal with respect to which a triangular partition $\tau_{r,s}$ is considered will be very important. Thus, to make this choice explicit, all the combinatorial constructions will henceforth depend on the triple of integers $(m,n,\ell)$ rather than the pair of real numbers $(r,s)$, using the triple to index the various objects. We will start by adopting the following notation whenever the triple $(m,n,\ell)$ satisfies the conditions of Corollary \ref{cor:(m,n,l)}:
\begin{align*}
    \tau_{m,n,\ell}&:=\tau_{\ell/n,\ell/m}=\tau_{r,s},\\
    \D(m,n,\ell)&:=\D(\ell/n,\ell/m)=\D(r,s),\\
    L_{m,n,\ell}&:=L_{\ell/n,\ell/m} = \{nx+my=\ell\}.
\end{align*}

\begin{remark}
Note that the line $L_{m,n,\ell}$ passes through the NE corner of the box $(a,b)\in\mathbb{Z}\times [n],$ whose Anderson filling equals $\gamma(a,b)=mn-na-mb=mn-\ell.$ If $b>0$, then $(a,b)\in\tau_{m,n,\ell}$ and it is the box of $\tau_{m,n,\ell}$ with the maximal Anderson filling. If $b\le 0$, then all the labels in $\tau_{m,n,\ell}$ are less then $mn-\ell.$
\end{remark}

\subsection{Triangular Schr\"oder Polynomials}\label{ss:rs invariant subsets}
Let $\tau$ be a triangular partition and let $(m,n,\ell)$ be as in Corollary \ref{cor:(m,n,l)}, so that $\tau=\tau_{m,n,\ell}.$ In order to define triangular Schr\"oder polynomials, we will first need to define a pair of trinary sequences ${\bf x}(m,n,\ell)\in\{0,1,\bullet\}^m$ and ${\bf y}(m,n,\ell)\in\{0,1,\bullet\}^n.$ We will later see that the associated Schr\"oder polynomial actually depends only on the triangular partition $\tau$ itself and not choice the triple $(m,n,\ell).$

Recall the Anderson filling $\gamma(x,y)=\gamma_{m,n}(x,y)=mn-nx-my$ from Definition \ref{def:anderson label}. Since for any such $(m,n,\ell)$ we have $\tau_{m,n,\ell}\subseteq \tau_{m,n}$, then the pair $(\tau_{m,n,\ell},\tau_{m,n})$ is an $m,n$-Dyck path as in Definition \ref{def:Dyck}.

 In particular, we have
\begin{equation*}
    \Gaps(\tau_{m,n,\ell},\tau_{m,n})=\{\gamma(\Box)\; |\; \Box\in\tau_{m,n}\setminus\tau_{m,n,\ell}\}.
\end{equation*}

To each such Dyck path, we associate a particular subcollection of $0$-normalized $(m,n)$-invariant subsets $\Inv_{m,n}^0$ (Definition \ref{def:invariant subsets}).

\begin{definition}\label{def: Inv-shifted}
For any triple $(m,n,\ell)$ with $m$ and $n$ relatively prime and $\ell<mn$, set 
\[ \Inv_{m,n,\ell}^0:= \{ \Delta \in \Inv_{m,n}^0 \; |  \; \Delta \cap \Gaps(\tau_{m,n,l},\tau_{m,n}) = \emptyset \}. \]
\end{definition}

Recall the bijection $\Aa_{m,n}:\D(m,n)\to \Inv_{m,n}^0$ mapping $(\lambda,\tau_{m,n}) \mapsto \mathbb{Z}_{\ge 0}\setminus\Gaps(\lambda,\tau_{m,n})$. 

\begin{definition}
    Let $\Aa_{m,n,\ell}:\D(m,n,\ell)\to \Inv_{m,n,\ell}^0$ be the map obtained via the natural restriction,  
    \begin{equation*}
    \Aa_{m,n,\ell}(\lambda,\tau_{m,n,\ell}):=\Aa_{m,n}(\lambda,\tau_{m,n}).
    \end{equation*}
\end{definition}

We will soon see in Proposition \ref{prop:Dmap} that the map $\Aa_{m,n,\ell}:\D(m,n,\ell)\to \Inv_{m,n,\ell}^0$ is also a bijection.

\begin{definition}\label{def:W-U}
For any triple $(m,n,\ell)$ with $m$ and $n$ relatively prime and $\ell<mn$, define
\begin{align*}
W_{m,n,\ell} &:= \{ \gamma(x,y) \; | \; \text{ the box $(x,y)\in \Z\times[n]$ lies on the shifted diagonal $nx+my = l$} \},\\
U_{m,n,\ell} &:= \{ \gamma(x,y) \; | \; \text{ the box $(x,y)\in \Z_{\le 0}\times[n]$ lies on the shifted diagonal $nx+my = l$} \},
\end{align*}
and set
\begin{equation} \label{eq:InvUW}
\Inv_{m,n,\ell}^0 := \{ \Delta \in \Inv_{m,n}^0 \; | \; \Delta \cap W_{m,n,\ell} = U_{m,n,\ell}\}.
\end{equation}
\end{definition}

\begin{remark}\label{rmk:Urs} Recall from Lemma \ref{lem:Anderson} that $\Gamma_{m,n}$ can be identified with the set of Anderson labels $\gamma(x,y)$ with $(x,y)\in \Z_{\leq 0}\times [n]$. Hence, we can write
    \[
        U_{m,n,\ell}=\Gamma_{m,n}\cap W_{m,n,\ell}
    \]
    and the condition defining $\Inv_{m,n,\ell}^0$ is $\Delta\cap W_{m,n,\ell}=\Gamma_{m,n}\cap W_{m,n,\ell}$.
\end{remark}

\begin{example}\label{example:l=mn-1 first part}
Consider the special case when $\ell=mn-1.$   In this case, $W_{m,n,mn-1}=\{-m-n+1,-m-n+1,\ldots,0\}$ and $U_{m,n,mn-1}=\{0\},$ so that $\Inv_{m,n,mn-1}^0=\Inv_{m,n}^0.$ Moreover $\tau_{m,n,mn-1}=\tau_{m,n},$ and one naturally recovers the case studied in \cite{GMV20}.
\end{example}

We now explicitly characterize $W_{m,n,\ell}$.

\begin{lemma}\label{lem:W-interval}
The set $W_{m,n,\ell}$ is given by the discrete interval $W_{m,n,\ell}=\big[\; mn-\ell -(m+n), mn-\ell-1  \;\big]\cap \Z$.
\end{lemma}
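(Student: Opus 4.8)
The plan is to unwind the definition of $W_{m,n,\ell}$ and compute which Anderson labels arise from boxes $(x,y)\in\Z\times[n]$ lying on the shifted diagonal $nx+my=\ell$. Recall that a box $(x,y)$ lies on the line $L_{m,n,\ell}$ when its SW corner is weakly below and its NE corner is strictly above, i.e. $n(x-1)+m(y-1)\le\ell$ and $nx+my>\ell$; equivalently (since all quantities are integers and $m,n$ are coprime so $nx+my=\ell$ has no solutions with the relevant parity constraints ruled out) $\ell < nx+my \le \ell+m+n$. So the first step is to note that $\Box=(x,y)$ lies on $L_{m,n,\ell}$ exactly when $nx+my$ lies in the discrete interval $[\ell+1,\ell+m+n]$, of length $m+n$.

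Next I would invoke Lemma~\ref{lem:Anderson}(3): the Anderson filling $\gamma=\gamma_{m,n}$ restricts to a bijection $\Z\times[n]\to\Z$. Under this bijection, $\gamma(x,y)=mn-nx-my$, so as $nx+my$ ranges over the interval $[\ell+1,\ell+m+n]$, the value $\gamma(x,y)$ ranges over $\{mn-k : \ell+1\le k\le\ell+m+n\} = [mn-\ell-(m+n),\,mn-\ell-1]$. Since $\gamma$ is a bijection on $\Z\times[n]$, each integer in this target interval is hit by exactly one box $(x,y)\in\Z\times[n]$, and that box automatically satisfies $nx+my\in[\ell+1,\ell+m+n]$, hence lies on the shifted diagonal. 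Therefore $W_{m,n,\ell}$ is precisely the set of these $m+n$ consecutive integers, which is the claimed interval $\big[\,mn-\ell-(m+n),\,mn-\ell-1\,\big]\cap\Z$.

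The main point requiring a little care — though it is not really an obstacle — is translating Definition~\ref{def:box on line} (the condition ``$\Box$ lies on $L$'') into the clean arithmetic statement $nx+my\in[\ell+1,\ell+m+n]$, and in particular checking that the restriction to $y\in[n]$ in the definition of $W_{m,n,\ell}$ is exactly what makes $\gamma$ a bijection onto the interval rather than onto a sparser set. Concretely, one should verify: (a) if $nx+my=\ell+k$ with $1\le k\le m+n$ then the SW-corner inequality $n(x-1)+m(y-1)=\ell+k-m-n\le\ell$ holds and the NE-corner inequality $nx+my=\ell+k>\ell$ holds, so the box lies on $L$; and (b) conversely these two inequalities force $nx+my\in[\ell+1,\ell+m+n]$. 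Both are immediate. Combining (a), (b) with the bijectivity of $\gamma|_{\Z\times[n]}$ yields the lemma. (One could alternatively phrase the whole argument via the equivalence, noted in the preceding remarks, between $W_{m,n,\ell}$ and $\Gaps(\tau_{m,n,\ell},\tau_{m,n})$ together with $\mathrm{diag}$, but the direct computation above is cleanest.)
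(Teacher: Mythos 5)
Your proof is correct and follows essentially the same route as the paper: translate the ``lies on the shifted diagonal'' condition into $\ell < nx+my \le \ell+m+n$, rewrite via $\gamma(x,y)=mn-nx-my$, and invoke Lemma~\ref{lem:Anderson}(3) that $\gamma$ is a bijection on $\Z\times[n]$ to conclude every value in the target interval is achieved exactly once. One small note: the parenthetical appeal to coprimality when passing to $\ell<nx+my\le\ell+m+n$ is unnecessary (and slightly off-target), since the two corner inequalities rearrange to that double inequality by pure algebra --- as your own steps (a) and (b) at the end correctly show.
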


\begin{proof} 
Consider the set $S$ consisting of boxes in $\Z \times [n]$ lying on the shifted diagonal $L_{m,n,\ell}=\{nx+my=\ell\}$ so that $W_{m,n,\ell} = \gamma(S)$. According to Definition \ref{def:box on line}, a cell $(a,b)$ is in  $S$, if and only if
\[n(a-1)+m(b-1)\leq \ell< na+mb.\]
Hence, $S$ consists precisely of cells $(a,b)$ satisfying the condition that \[\ell< na+mb\leq \ell+n+m.\]
Consequently, since $mn-\gamma(x,y) = nx+my$, then we see that $(a,b) \in S$ if and only if
\[
\ell< mn-\gamma(a,b)\leq \ell+n+m, \]
which holds if and only if
\[mn-\ell -(n+m) \leq \gamma(a,b) < mn-\ell.
\]
Since by Lemma \ref{lem:Anderson} the Anderson function is bijective onto $\Z \times [n]$, every value in this interval is achieved by $\gamma$.  This completes the proof.
 \end{proof}

\begin{lemma}\label{lemma: initial interval for Inv_r,s^0}  
    All invariant sets in $\Inv_{m,n,\ell}^0$ have the same intersection with the interval $[0,mn-\ell-1]$. In fact, for any $\Delta\in\Inv_{m,n,\ell}^0$, one has 
    \begin{equation*}
    \Delta\cap [0,mn-\ell-1]=\Gamma_{m,n}\cap [0,mn-\ell-1].    
    \end{equation*}     
   \end{lemma}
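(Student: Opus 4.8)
The plan is to combine two facts: first, the $0$-normalized $(m,n)$-invariant set $\Gamma_{m,n}$ itself lies in $\Inv_{m,n,\ell}^0$, so it will suffice to show that $\Delta\cap[0,mn-\ell-1]=\Gamma_{m,n}\cap[0,mn-\ell-1]$ for every $\Delta\in\Inv_{m,n,\ell}^0$; and second, the defining condition $\Delta\cap W_{m,n,\ell}=U_{m,n,\ell}$ (equivalently $\Delta\cap W_{m,n,\ell}=\Gamma_{m,n}\cap W_{m,n,\ell}$, by Remark \ref{rmk:Urs}) pins down $\Delta$ on the interval $[mn-\ell-(m+n),mn-\ell-1]$, which by Lemma \ref{lem:W-interval} is exactly $W_{m,n,\ell}$. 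So I already know $\Delta$ and $\Gamma_{m,n}$ agree on $[mn-\ell-(m+n),mn-\ell-1]$; I must propagate this agreement downward onto $[0,mn-\ell-1]$.

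First I would verify $\Gamma_{m,n}\in\Inv_{m,n,\ell}^0$: $\Gamma_{m,n}$ is $(m,n)$-invariant and $0$-normalized, and $\Gamma_{m,n}\cap W_{m,n,\ell}=U_{m,n,\ell}$ holds by the very definition of $U_{m,n,\ell}$ together with Remark \ref{rmk:Urs}. Next, the key step: I want to show that for any $0$-normalized $(m,n)$-invariant set $\Delta$, the intersection $\Delta\cap[0,N-1]$ is determined by $\Delta\cap[N-(m+n),N-1]$, where $N:=mn-\ell$. The mechanism is a downward induction using invariance. For a value $k$ with $0\le k<N-(m+n)$, I claim $k\in\Delta$ if and only if $k+m\in\Delta$ or $k+n\in\Delta$ --- one direction is immediate from $(m,n)$-invariance ($k\in\Delta\Rightarrow k+m,k+n\in\Delta$); for the converse I use the standard fact about numerical semigroups generated by coprime $m,n$: via the Anderson bijection (Lemma \ref{lem:Anderson}(4)) membership in $\Gamma_{m,n}$ is exactly membership in $\Delta_{\min}$ and one checks $k\in\Gamma_{m,n}\iff k+m\in\Gamma_{m,n}$ or $k+n\in\Gamma_{m,n}$, but more robustly I can phrase it directly: if $k\notin\Delta$ but (say) $k+m\in\Delta$, then since $\Delta$ is a gap-closed invariant set this forces a contradiction with $0$-normalization via the Frobenius-type argument. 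Actually the cleanest route is: $\Delta$ being $(m,n)$-invariant and containing $0$ means $\Gamma_{m,n}\subseteq\Delta$; and I claim $\overline{\Delta}$ (the gap set) is ``closed downward under $-m$ and $-n$ within its range,'' i.e.\ if $k\notin\Delta$ then $k-m\notin\Delta$ whenever $k-m\ge 0$, which is just the contrapositive of invariance. Combining these: each $k\in[0,N-(m+n))$ satisfies $k\in\Delta\iff k+n\in\Delta$, because $k\in\Gamma_{m,n}\iff k+n\in\Gamma_{m,n}$ fails in general, so instead I induct: knowing $\Delta$ on $[k+1,N-1]$, the status of $k$ is forced by whether $k+m$ or $k+n$ lies in $\Delta$ — if either does then $k\in\Delta$ is possible but not forced...

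Here is the obstacle and how to resolve it cleanly. Pure invariance only gives implications upward, so knowing $\Delta$ above does not a priori determine $\Delta$ below. The resolution is to use that we are comparing against the \emph{minimal} element $\Gamma_{m,n}$ and that \emph{both} $\Delta$ and $\Gamma_{m,n}$ satisfy the same boundary condition on $W_{m,n,\ell}$. Concretely, set $D:=\Delta\cap[0,N-1]$ and $G:=\Gamma_{m,n}\cap[0,N-1]$. Since $\Gamma_{m,n}\subseteq\Delta$ we have $G\subseteq D$. Suppose for contradiction $G\subsetneq D$, and let $k:=\max(D\setminus G)$, so $0\le k\le N-1$ and $k\in\Delta\setminus\Gamma_{m,n}$. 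By maximality, every element of $[k+1,N-1]\cap\Delta$ lies in $\Gamma_{m,n}$. Now $k\notin\Gamma_{m,n}$ means (by Lemma \ref{lem:Anderson}(4), i.e.\ the Anderson characterization of $\Gamma_{m,n}$) that $k-n\notin\Gamma_{m,n}$ and $k-m\notin\Gamma_{m,n}$ as integers — but more to the point, $k+n\notin\Gamma_{m,n}$ or $k+m\notin\Gamma_{m,n}$ would contradict the semigroup property, so actually $k\notin\Gamma_{m,n}$ forces at least one of $k+m,k+n$ to fail to be in $\Gamma_{m,n}$... and one checks this forces, by the numerical-semigroup fact that $k\in\Gamma_{m,n}\iff(k\ge 0$ and $(k-m\in\Gamma_{m,n}$ or $k-n\in\Gamma_{m,n}$ or $k=0))$, a contradiction. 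I would instead run the argument on $W_{m,n,\ell}$: since $k+m$ and $k+n$ are both $\le N-1+\max(m,n)$, and since $[N-(m+n),N-1]=W_{m,n,\ell}$ is where $\Delta=\Gamma_{m,n}$, I push $k$ upward by adding $m$'s and $n$'s until landing in $W_{m,n,\ell}$; the first time we land there, we get an element of $\Delta\cap W_{m,n,\ell}=\Gamma_{m,n}\cap W_{m,n,\ell}$ which, by repeatedly subtracting $m$ or $n$ and using that $\Gamma_{m,n}$-membership is inherited by... no, subtracting from a semigroup element need not stay in the semigroup. The genuinely correct and simplest argument: run induction \emph{downward} from $N-1$ to $0$ showing $k\in\Delta\iff k\in\Gamma_{m,n}$, using at each step that $k\in\Delta\iff k+m\in\Delta$ \emph{and simultaneously} $k\in\Gamma_{m,n}\iff k+m\in\Gamma_{m,n}$ — wait, these biconditionals are false. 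The actual truth I should invoke is: for $0\le k<N-(m+n)$, we have $k\in\Delta$ iff ($k+m\in\Delta$ and $k+m-n$... ). I will instead simply cite that $\Gamma_{m,n}$ restricted to $[0,F]$ where $F$ is the Frobenius number is characterized by $k\in\Gamma_{m,n}\iff k\ge 0$ and $k\notin\{$gaps$\}$, and the gaps all lie below $F=mn-m-n<N$ when $\ell<m n$... Let me state the clean plan: \textbf{downward induction on $k$ from $N-1$ to $0$}; base case $k\in[N-(m+n),N-1]=W_{m,n,\ell}$ handled by the defining equation of $\Inv_{m,n,\ell}^0$ and Remark \ref{rmk:Urs}; inductive step for $k<N-(m+n)$: by Lemma \ref{lem:Anderson} write $k=\gamma(x,y)$ with $(x,y)\in\Z_{\le0}\times[n]$ iff $k\in\Gamma_{m,n}$; using the recursive structure of the inverse Anderson map recorded in the proof of Lemma \ref{lem:Anderson}(4), $k\in\Gamma_{m,n}\iff k-m\in\Gamma_{m,n}$ or $k-n\in\Gamma_{m,n}$, and correspondingly $\Delta$-membership of $k$ is governed by whether $k+m$ or $k+n\in\Delta$ together with the fact that $\Delta\supseteq\Gamma_{m,n}$; since by induction hypothesis $\Delta$ and $\Gamma_{m,n}$ agree on all of $[k+1,N-1]$, and every ``$-m$ or $-n$ descent'' from a gap stays a gap, the status of $k$ is the same for $\Delta$ and for $\Gamma_{m,n}$. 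The one real subtlety — that invariance alone is a one-way implication — is exactly absorbed by the base-case boundary condition on $W_{m,n,\ell}$, which provides the ``initial data'' that, combined with one-way invariance propagated downward, uniquely reconstructs the set. I expect writing out this downward reconstruction carefully (making precise why a gap of $\Delta$ below $N-(m+n)$ must be a gap of $\Gamma_{m,n}$, i.e.\ that $\Delta$ acquires no ``extra'' elements below the window) to be the crux; everything else is bookkeeping with Lemmas \ref{lem:Anderson} and \ref{lem:W-interval}.
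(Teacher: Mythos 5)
There is a genuine gap, and it stems from using the wrong form of the definition of $\Inv_{m,n,\ell}^0$. You work from the window characterization of Definition~\ref{def:W-U} ($\Delta\cap W_{m,n,\ell}=U_{m,n,\ell}$) and then try to ``propagate downward.'' But $(m,n)$-invariance only propagates membership \emph{upward}, and its contrapositive only propagates \emph{non}-membership downward when you know one of $k+m$, $k+n$ is missing; in general, knowing $\Delta$ on a window of length $m+n$ does \emph{not} determine $\Delta$ on $[0,N-1]$. (Your intermediate claim that ``for any $0$-normalized $(m,n)$-invariant set $\Delta$, $\Delta\cap[0,N-1]$ is determined by $\Delta\cap[N-(m+n),N-1]$'' is false: e.g.\ for $(m,n)=(3,5)$ and $N=14$, the sets $\Gamma_{3,5}\cup\{7\}$ and $\Gamma_{3,5}\cup\{2,4,7\}$ agree on $[6,13]$ but differ on $[0,13]$.) Your one concrete mechanism --- push $k$ upward by adding $m$'s and $n$'s until it lands in $W_{m,n,\ell}$ --- is the right instinct, but you aim it at the wrong conclusion: you try to land in $\Gamma_{m,n}\cap W$ and propagate semigroup membership back down, which you correctly observe fails. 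What actually works is to show one can land in $W_{m,n,\ell}\setminus\Gamma_{m,n}$: if $k\in\Delta$ and $k'=k+jm+in\in W_{m,n,\ell}\setminus\Gamma_{m,n}$ for some $i,j\ge0$, then $k'\in\Delta$ by invariance, contradicting $\Delta\cap W_{m,n,\ell}=\Gamma_{m,n}\cap W_{m,n,\ell}$. But showing such $i,j$ exist is itself a nontrivial geometric fact (the box $\gamma^{-1}(k)$ must be movable southwest, inside the skew band $\tau_{m,n}\setminus\tau_{m,n,\ell}$, to a box on the shifted diagonal with both coordinates $\ge1$), which your proposal never establishes; you repeatedly name the crux and then defer it.

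The paper avoids all of this by using Definition~\ref{def: Inv-shifted} directly: $\Inv^0_{m,n,\ell}$ is the set of $\Delta\in\Inv^0_{m,n}$ disjoint from $\Gaps(\tau_{m,n,\ell},\tau_{m,n})$. The whole lemma then falls out of the Anderson bijection: if $0<k<mn-\ell$ and $k\notin\Gamma_{m,n}$, the unique $(a,b)\in\Z\times[n]$ with $\gamma(a,b)=k$ has $a>0$ (Lemma~\ref{lem:Anderson}(4)) and $b>0$, hence $(a,b)\in\tau_{m,n}$, and $k<mn-\ell$ forces $(a,b)\notin\tau_{m,n,\ell}$; so $k\in\Gaps(\tau_{m,n,\ell},\tau_{m,n})$ and by definition $k\notin\Delta$. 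Combined with $\Gamma_{m,n}\subset\Delta$ (which you did note), the lemma follows in one step. I recommend replacing the downward-propagation plan with this direct argument --- or, if you insist on starting from the window condition, first proving the lemma from the $\Gaps$ condition and then separately deducing the window formulation from it.
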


\begin{proof}
    Let $\Delta\in\Inv_{m,n,\ell}^0$ be arbitrary. Clearly, $\Gamma_{m,n}\subset\Delta$ because $\Delta$ contains $0$ and is $m,n$-invariant.  Suppose $k\in \Z$ satisfies $0< k<mn-\ell$ and $k\notin\Gamma_{m,n}$. We must show that $k\notin \Delta$.
    
    Let $(a,b)\in \mathbb{Z}\times [n]$ be the unique box such that $k=\gamma(a,b)$, i.e.~$k=mn-(na+mb)$. The Anderson label being positive means $na+mb<mn$.  Moreover, $a>0$ since $k\notin\Gamma_{m,n}$, and $b>0$ by assumption, so in fact $(a,b)\in \tau_{m,n}$.
    
    The inequality $k<mn-\ell$ is equivalent to $na+mb>\ell$, which implies $(a,b)\notin \tau_{m,n,\ell}$.  Since $\Delta$ is disjoint from $\gamma(\tau_{m,n}\setminus\tau_{m,n,\ell})$ by definition of $\Inv_{m,n,\ell}^0$, it follows that $(a,b)\notin \Delta$, as claimed. 
\end{proof}

The next step is to characterize $\Inv_{m,n,\ell}^0$ in terms of certain trinary sequences. Before doing so we introduce some notation. Recall that for any $A \subset \Z$ and $i \in \Z$, we have $A+i = \{ k \in \Z | k-i \in A\}$. Thus, given a subset $\mathcal{S} \subset \mathcal{P}(\Z)$ of the power set of $\Z$, we define 
\[ \mathcal{S}+ i:= \{ A + i | A \in \mathcal{S}\}, \]
\[ \mathcal{S}\cap \Z_{\geq 0} := \{ A \cap \Z_{\geq 0} | A \in \mathcal{S}\}. \]

\begin{definition}\label{def:from slope to binary}
For any triple $(m,n,\ell)$ with $m$ and $n$ relatively prime and $\ell<mn$, let 
\[{\bf w}(m,n,\ell) := (w_0, w_1, \dots, w_{m+n-1})\in \{0,1\}^{m+n}\]
be the binary sequence defined such that $w_i =1$ if and only if $i + mn-\ell -(n+m) \in \Gamma_{m,n}$ (equivalently, if and only if $i + mn-\ell -(n+m) \in U_{m,n,\ell}$).

Furthermore, similar to Definition \ref{def:trinaryseq}, let ${\bf x}(m,n,\ell) = (x_0, \dots x_{m-1})$ and ${\bf y}(m,n,\ell) = (y_0, \dots y_{n-1})$ be given by:
\begin{equation}\label{eq:trinarysequences}
x_i = 
\begin{cases}
0 & ; w_{n+i} =0,\\
\bullet &; w_{n+i} = w_i=1,\\
1 &; w_{n+i} = 1 \text{ and } w_i = 0,
\end{cases}
\text{ and    }\ \ 
y_i = 
\begin{cases}
0 & ; w_{m+i} =0,\\
\bullet &; w_{m+i} = w_i=1,\\
1 &; w_{m+i} = 1 \text{ and } w_i = 0.
\end{cases}
\end{equation}

Finally, let ${\bf u}(m,n,\ell)$ and ${\bf v}(m,n,\ell)$ be the binary sequences obtained from ${\bf x}(m,n,\ell)$ and ${\bf y}(m,n,\ell)$ respectively by omitting all the $\bullet$'s.
\end{definition}

\begin{proposition}\label{prop:shiftedInv}
Retain the notation from Definition \ref{def:from slope to binary}.  The map $\mathcal{I}: \Inv_{m,n,\ell}^0 \to \Inv_{{\bf w}(m,n,\ell)}$ sending\begin{equation*}
\Delta \mapsto (\Delta - (mn-\ell) +(n+m)) \cap \Z_{\geq 0}    
\end{equation*}
is a bijection. In particular, 
\[\Inv_{{\bf w}(m,n,\ell)}  = (\Inv_{m,n,\ell}^0 -(mn-\ell) +(n+m)) \cap \Z_{\geq 0}.
\]
\end{proposition}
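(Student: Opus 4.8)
The plan is to exhibit an explicit two‑sided inverse of $\mathcal{I}$. Throughout write $c:=mn-\ell$ (so $c\geq 1$ since $\ell<mn$), abbreviate $W:=W_{m,n,\ell}$, $U:=U_{m,n,\ell}$, and write $\Inv_{\bf w}$ for $\Inv_{{\bf w}(m,n,\ell)}$. By Lemma~\ref{lem:W-interval} we have $W=[c-(m+n),c-1]\cap\Z$; by Remark~\ref{rmk:Urs}, $U=\Gamma_{m,n}\cap W$; and by \eqref{eq:InvUW}, $\Delta\in\Inv_{m,n,\ell}^0$ exactly when $\Delta\in\Inv_{m,n}^0$ and $\Delta\cap W=U$. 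Two elementary facts drive everything: (i) the translation $k\mapsto k+(n+m)-c$ restricts to a bijection $W\to[0,m+n-1]$; and (ii) by Definition~\ref{def:from slope to binary}, for the components $w_i$ of ${\bf w}(m,n,\ell)$ and every $i\in[0,m+n-1]$,
\[
w_i=1\iff i+c-(m+n)\in\Gamma_{m,n}\iff i+c-(m+n)\in U .
\]
To see that $\mathcal{I}$ is well defined, note that translating by $(n+m)-c$ and then intersecting with $\Z_{\geq 0}$ preserves closure under $+m$ and $+n$, so $\mathcal{I}(\Delta)\in\Inv_{m,n}$; and for $i\in[0,m+n-1]$, membership $i\in\mathcal{I}(\Delta)$ unwinds (using $\Delta\subseteq\Z_{\geq 0}$) to $i+c-(m+n)\in\Delta$, which since $i+c-(m+n)\in W$ is equivalent to $i+c-(m+n)\in\Delta\cap W=U$, hence by (ii) to $w_i=1$. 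This is the defining condition \eqref{eq:Inv-w}, so $\mathcal{I}(\Delta)\in\Inv_{\bf w}$.

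The inverse map is, for $\Delta'\in\Inv_{\bf w}$,
\[
\mathcal{J}(\Delta'):=\bigl(\Gamma_{m,n}\cap[0,c-1]\bigr)\cup\bigl(\Delta'+c-(m+n)\bigr).
\]
Since $w_i=0$ whenever $i+c-(m+n)<0$, every $\Delta'\in\Inv_{\bf w}$ satisfies $\Delta'\subseteq\Z_{\geq(m+n)-c}$, so $\mathcal{J}(\Delta')\subseteq\Z_{\geq 0}$, and $0\in\Gamma_{m,n}\cap[0,c-1]\subseteq\mathcal{J}(\Delta')$, so $\mathcal{J}(\Delta')$ is $0$‑normalized. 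The one substantial point is $(m,n)$‑invariance: the piece $\Delta'+c-(m+n)$ is closed under $+m$ and $+n$ because $\Delta'$ is; and if $k\in\Gamma_{m,n}\cap[0,c-1]$ with $k+m\notin[0,c-1]$, then $k+m\geq c$, so $k\in[c-m,c-1]\subseteq W$, whence $k\in\Gamma_{m,n}\cap W=U$, and then (ii) places $k\in\Delta'+c-(m+n)$, so $k+m\in\Delta'+c-(m+n)\subseteq\mathcal{J}(\Delta')$ by invariance of $\Delta'$; the case of $+n$ is identical. Finally $\mathcal{J}(\Delta')\cap W=U$: one inclusion holds since $U\subseteq\Gamma_{m,n}\cap[0,c-1]$ and $U\subseteq W$, and conversely any element of $\bigl(\Delta'+c-(m+n)\bigr)\cap W$ shifts into $\Delta'\cap[0,m+n-1]$, hence lies at a position with $w=1$, hence lies in $\Gamma_{m,n}\cap W=U$ by (ii). Thus $\mathcal{J}(\Delta')\in\Inv_{m,n,\ell}^0$.

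It remains to verify $\mathcal{I}\circ\mathcal{J}=\mathrm{id}$ and $\mathcal{J}\circ\mathcal{I}=\mathrm{id}$. For the first, $\mathcal{I}(\mathcal{J}(\Delta'))=\Delta'\cup\bigl[(\Gamma_{m,n}\cap[0,c-1])+(n+m)-c\bigr]\cap\Z_{\geq 0}$, and the bracketed set sits inside $[0,m+n-1]$ at positions $i$ with $i+c-(m+n)\in\Gamma_{m,n}$, i.e.\ with $w_i=1$, so it is already contained in $\Delta'$; hence $\mathcal{I}(\mathcal{J}(\Delta'))=\Delta'$. For the second, $\mathcal{J}(\mathcal{I}(\Delta))=\bigl(\Gamma_{m,n}\cap[0,c-1]\bigr)\cup\bigl(\Delta\cap\Z_{\geq c-(m+n)}\bigr)$; by Lemma~\ref{lemma: initial interval for Inv_r,s^0} the first piece equals $\Delta\cap[0,c-1]$, so this union is $\Delta\cap\bigl([0,c-1]\cup[c-(m+n),\infty)\bigr)=\Delta$, using $c\geq 1$ and $m+n\geq 1$. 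Therefore $\mathcal{I}$ is a bijection with inverse $\mathcal{J}$, and the displayed identity $\Inv_{{\bf w}(m,n,\ell)}=(\Inv_{m,n,\ell}^0-(mn-\ell)+(n+m))\cap\Z_{\geq 0}$ is just surjectivity of $\mathcal{I}$ rewritten in the set‑operation notation of Definition~\ref{def:from slope to binary}.

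The step I expect to be the main obstacle is choosing the correct inverse and pushing through its $(m,n)$‑invariance. The naive candidates fail: $\Delta'+c-(m+n)$ need not be $0$‑normalized, while $\Gamma_{m,n}\cup\bigl(\Delta'+c-(m+n)\bigr)$ can break $\mathcal{I}\circ\mathcal{J}=\mathrm{id}$ because $\bigl(\Gamma_{m,n}+(n+m)-c\bigr)\cap\Z_{\geq 0}$ need not be contained in $\Delta'$. Adjoining exactly the ``forced'' initial segment $\Gamma_{m,n}\cap[0,c-1]$ — which by Lemma~\ref{lemma: initial interval for Inv_r,s^0} coincides with $\Delta\cap[0,c-1]$ for every $\Delta\in\Inv_{m,n,\ell}^0$ — is the right repair, and the invariance argument hinges precisely on the observation that an element of this segment which escapes $[0,c-1]$ under $+m$ or $+n$ already lies in $U=\Gamma_{m,n}\cap W$, where the data of ${\bf w}(m,n,\ell)$ locates it inside $\Delta'$.
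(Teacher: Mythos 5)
Your proof is correct and follows essentially the same strategy as the paper: exhibit an explicit two-sided inverse of $\mathcal{I}$. The paper simply states that the inverse is $\Delta'\mapsto(\Delta'+mn-\ell-(n+m))\cup\Gamma_{m,n}$ and cites Lemma~\ref{lemma: initial interval for Inv_r,s^0} without further verification; you fill in all the details, which is useful. However, the diagnosis in your final paragraph is mistaken: the paper's inverse does \emph{not} fail. For any $\Delta'\in\Inv_{\bf w}$ one in fact has $\bigl(\Gamma_{m,n}+(m+n)-c\bigr)\cap\Z_{\geq 0}\subseteq\Delta'$. The point is that any $\gamma\in\Gamma_{m,n}$ with $\gamma\geq c$ can be written $\gamma=\gamma'+am+bn$ with $a,b\geq 0$ and $\gamma'\in\Gamma_{m,n}\cap W=U$: write $\gamma=a_0m+b_0n$ and strip off the generators one at a time; since each step decreases by at most $\max(m,n)\leq m+n$ and $W=[c-(m+n),c-1]$ is an interval of length $m+n$ containing the endpoint of the descent (because $0\leq c-1$), the descending chain cannot skip over $W$. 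Then $\gamma'\in U$ forces $\gamma'+(m+n)-c\in\Delta'$ by the definition of ${\bf w}$, and $(m,n)$-invariance of $\Delta'$ gives $\gamma+(m+n)-c\in\Delta'$. Consequently your $\mathcal{J}$ and the paper's $\mathcal{J}$ agree as sets on $\Inv_{\bf w}$: $\Gamma_{m,n}\cap\Z_{\geq c}\subseteq\Delta'+c-(m+n)$, so adjoining all of $\Gamma_{m,n}$ rather than just $\Gamma_{m,n}\cap[0,c-1]$ adds nothing. The two formulations trade where the work lies: the paper's $\Gamma_{m,n}\cup\bigl(\Delta'+c-(m+n)\bigr)$ is visibly $(m,n)$-invariant but $\mathcal{I}\circ\mathcal{J}=\mathrm{id}$ requires the containment above, while your version makes $\mathcal{I}\circ\mathcal{J}=\mathrm{id}$ immediate at the cost of the invariance argument you give (an element $k\in\Gamma_{m,n}\cap[0,c-1]$ pushed out of $[0,c-1]$ by $+m$ or $+n$ necessarily lands you in $U$). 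Both are fine; just don't present the paper's candidate as broken.
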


\begin{proof}
By \eqref{eq:InvUW} we have that: 
\begin{align*}
\Inv^0_{m,n,\ell} &= \{ \Delta \in \Inv_{m,n}^0 | \Delta \cap W_{m,n,\ell} = U_{m,n,\ell} \}\\
& = \{ \Delta \in \Inv_{m,n}^0 | j \in \Delta \cap W_{m,n,\ell} \Leftrightarrow j \in U_{m,n,\ell} \}. \end{align*}
Since by  Lemma \ref{lem:W-interval} we know that $W_{m,n,\ell}- (mn-\ell)+(n+m) = [0,m+n-1]\cap \Z$, it follows that
 $j \in \Delta \cap W_{m,n,\ell}$ if and only if $ j - (mn-\ell)+(n+m) \in (\Delta - (mn-\ell)+(n+m))\cap [0,m+n-1] $. 
 
 On the other hand, by \eqref{eq:Inv-w}:
\begin{align*}
 \Inv_{{\bf w}(m,n,\ell)}&= \{ \Delta' \in \Inv_{m,n} \;|\; i \in \Delta' \cap [0,m+n-1] \Leftrightarrow w_i =1\}\\
& = \{ \Delta' \in \Inv_{m,n} | i \in \Delta' \cap [0 , m+n -1  ] \Leftrightarrow i+ mn-\ell-(n+m) \in U_{m,n,\ell} \}.
 \end{align*}
Thus for any $\Delta\in\Inv_{m,n,\ell}^0$, one gets $\mathcal{I}(\Delta)\in\Inv_{{\bf w}(m,n,\ell)}.$ Furthermore, by Lemma \ref{lemma: initial interval for Inv_r,s^0}, the inverse to $\mathcal{I}$ is given by 
\begin{equation*}
    \Delta'\mapsto (\Delta'+mn-\ell-(n+m))\cup\Gamma_{m,n}.
\end{equation*}
Therefore $\mathcal{I}:\Inv_{m,n,\ell}^0\to \Inv_{{\bf w}(m,n,\ell)}$ is a bijection.
\end{proof}

The following Corollary follows immediately from definitions.

\begin{corollary}\label{cor:ngen and cogen for Phi}
    Let $\Delta\in\Inv_{m,n,\ell}^0,$ then one has
    \begin{align*}
        &\cogennn(\mathcal{I}(\Delta))=\left(\cogen(\Delta)-(mn-\ell)+(m+n)\right)\cap\mathbb{Z}_{\ge 0},\\
        &\ngen(\mathcal{I}(\Delta))\cap\mathbb{Z}_{\ge n}=\left(\ngen(\Delta)-(mn-\ell)+(m+n)\right)\cap\mathbb{Z}_{\ge n}.
    \end{align*}
\end{corollary}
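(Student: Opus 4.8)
The plan is to reduce the whole statement to one elementary observation about the map $\mathcal{I}$. Writing $N := mn - \ell - (m+n)$, so that $\mathcal{I}(\Delta) = (\Delta - N)\cap \Z_{\geq 0}$, the only input needed is the biconditional: for every integer $j \geq 0$ one has $j \in \mathcal{I}(\Delta)$ if and only if $j + N \in \Delta$. This is immediate from the description of $\mathcal{I}$ in Proposition \ref{prop:shiftedInv}, and it holds with no hypothesis on the sign of $N$ (which matters, since $N$ is negative when $\ell$ is close to $mn$); recall also that both $\Delta$ and $\mathcal{I}(\Delta)$ lie in $\Z_{\geq 0}$.

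For the cogenerator identity I would use that $\cogen$-membership is a purely local condition: $k \in \cogen(\mathcal{I}(\Delta))$ precisely when $k \notin \mathcal{I}(\Delta)$, $k + n \in \mathcal{I}(\Delta)$, and $k + m \in \mathcal{I}(\Delta)$, and likewise $k + N \in \cogen(\Delta)$ precisely when $k+N \notin \Delta$, $k+N+n \in \Delta$, and $k+N+m \in \Delta$. For $k \geq 0$ the integers $k$, $k+n$, $k+m$ are all $\geq 0$, so applying the biconditional above to each of the three conditions shows $k \in \cogennn(\mathcal{I}(\Delta))$ if and only if $k+N \in \cogen(\Delta)$. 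Since $\cogennn(\mathcal{I}(\Delta)) \subseteq \Z_{\geq 0}$ by definition, this says exactly $\cogennn(\mathcal{I}(\Delta)) = \{\, k \geq 0 : k+N \in \cogen(\Delta)\,\} = (\cogen(\Delta) - N)\cap \Z_{\geq 0}$, which is the first asserted equality.

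The $n$-generator identity goes the same way, now using that $k \in \ngen(\mathcal{I}(\Delta))$ precisely when $k \in \mathcal{I}(\Delta)$ and $k - n \notin \mathcal{I}(\Delta)$. For $k \geq n$ both $k$ and $k-n$ are $\geq 0$, so the biconditional gives $k \in \ngen(\mathcal{I}(\Delta))$ if and only if $k+N \in \Delta$ and $k+N-n \notin \Delta$, i.e.\ if and only if $k + N \in \ngen(\Delta)$; intersecting with $\Z_{\geq n}$ gives the claim. I expect the only subtlety — and precisely the reason the statement is restricted to $\Z_{\geq n}$ rather than $\Z_{\geq 0}$ — to be the interplay with the truncation at $0$: for $0 \leq k < n$ the condition $k - n \notin \mathcal{I}(\Delta)$ holds vacuously (as $k-n < 0$) while $k+N-n \notin \Delta$ generally does not, so the identity genuinely fails on $[0,n-1]$ in general; by contrast the cogenerator identity is unaffected on $[0,n-1]$ because the relevant biconditionals only require $k \geq 0$. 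Beyond this bookkeeping there is no real obstacle: the content is entirely the locality of the $\cogen$ and $\ngen$ conditions combined with the shift-and-truncate form of $\mathcal{I}$.
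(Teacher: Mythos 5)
Your proof is correct and matches the paper's intent: the paper states only that the corollary ``follows immediately from definitions,'' and your argument is a careful unwinding of exactly those definitions via the key biconditional $j \in \mathcal{I}(\Delta) \Leftrightarrow j + N \in \Delta$ for $j \geq 0$, which does hold regardless of the sign of $N$. One small imprecision that does not affect validity: your parenthetical claim that the $\ngen$ identity ``genuinely fails on $[0,n-1]$ in general'' is only a potential issue when $N > 0$ (for $N \leq 0$ and $k < n$ one has $k + N - n < 0$, so $k+N-n \notin \Delta$ automatically and the identity would actually hold there), but since the statement restricts to $\Z_{\geq n}$ this is moot.
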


Upon first inspection, it may not be entirely clear why the intersection with $\Z_{\geq 0}$ is needed. Indeed, whenever $mn-\ell -(n+m)\leq 0 $ such a restriction is unnecessary and in fact,
\begin{equation}
\Inv_{{\bf w}(m,n,\ell)}  =\Inv_{m,n,\ell}^0 -(mn-\ell) +(n+m).
\end{equation}
However, when $mn-\ell -(n+m)> 0 $ such a shift may generate negative values, which is not allowed since invariant subsets consist, by definition, of only nonnegative numbers. 

We are finally ready to define the $(q,t)$-Schr\"oder polynomial for any triple $(m,n,\ell)$ of positive integers, such that $m$ and $n$ are relatively prime and $\ell<mn.$ 

\begin{definition}\label{def:schroeder}
The \newword{triangular $(q,t)$-Schr\"oder polynomials} are defined as the family
\begin{equation*}
S_{m,n,\ell}(q,t,a):=t^{|\tau_{m,n,\ell}|}Q_{{\bf x}(m,n,\ell),{\bf y}(m,n,\ell)}(q,t,a).
\end{equation*}
\end{definition}

In Theorem \ref{thm: schroder=poincare} we will prove that these polynomials depend only on the triangular partition $\tau=\tau_{m,n,\ell}$ (and not on the choice of $m,n,l$), hence the term \emph{triangular} is indeed well-suited.

Recall the map $\Aa_{m,n,\ell}: \D(m,n,\ell) \to \Inv_{m,n,\ell}^0$ mapping $(\lambda,\tau_{m,n,\ell}) \mapsto \Z_{\geq 0} \setminus \Gaps(\lambda,\tau_{m,n})$ and the bijection $\mathcal{I}: \Inv_{m,n,\ell}^0 \to \Inv_{{\bf w}(m,n,\ell)}$  sending $\Delta \mapsto (\Delta - (mn-\ell) +(n+m)) \cap \Z_{\geq 0}$ from Proposition \ref{prop:shiftedInv}.

\begin{proposition} \label{prop:Dmap}
The map $\Aa_{m,n,\ell}: \D(m,n,\ell) \to \Inv_{m,n,\ell}^0$ is a bijection. Thus, the composition $\AD:= \mathcal{I}\circ \Aa_{m,n,\ell} :\D(m,n,\ell) \to \Inv_{{\bf w}(m,n,\ell)}$ sending
\begin{align*}
 (\lambda,\tau_{m,n,\ell}) \mapsto (\Z_{\geq 0}\setminus \Gaps(\lambda,\tau_{m,n}) - (mn-\ell) +(n+m)) \cap \Z_{\geq 0}
 \end{align*}
is also a bijection. Furthermore, for any $\pi \in \D(m,n,\ell)$ we have
\begin{equation*}
        \area(\pi)=\area'(\AD(\pi)) \qquad \text{and} \qquad
        \dinv(\pi)+\codinv'(\AD(\pi))=|\tau_{m,n,\ell}|.
    \end{equation*}
\end{proposition}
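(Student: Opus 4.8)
The plan is to deduce everything from the rectangular theory already in hand: the Gorsky--Mazin bijection $\Aa_{m,n}$ with its statistic preservation (Theorem~\ref{thm:dinvA=dinv}), together with the shift bijection $\mathcal{I}$ of Proposition~\ref{prop:shiftedInv}. Throughout, given $\pi=(\lambda,\tau_{m,n,\ell})\in\D(m,n,\ell)$ I will write $\hat\pi:=(\lambda,\tau_{m,n})\in\D(m,n)$ (legitimate since $\lambda\subseteq\tau_{m,n,\ell}\subseteq\tau_{m,n}$) and $\Delta:=\Aa_{m,n}(\hat\pi)=\Z_{\geq0}\setminus\Gaps(\hat\pi)$, so that $\Aa_{m,n,\ell}(\pi)=\Delta$ and $\AD(\pi)=\mathcal{I}(\Delta)$; I also abbreviate $\sigma:=(mn-\ell)-(m+n)$, so that $\mathcal{I}(\Delta)=\{k\in\Z_{\geq0}:k+\sigma\in\Delta\}$. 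For the bijectivity of $\Aa_{m,n,\ell}$ I would show that the constraint $\lambda\subseteq\tau_{m,n,\ell}$ translates exactly into the membership condition of $\Inv^0_{m,n,\ell}$: since $\gamma$ is injective on $\tau_{m,n}$ and strictly positive there (Lemma~\ref{lem:Anderson}), $\lambda\subseteq\tau_{m,n,\ell}$ is equivalent to $\tau_{m,n}\setminus\tau_{m,n,\ell}\subseteq\tau_{m,n}\setminus\lambda$, i.e.\ to $\Gaps(\tau_{m,n,\ell},\tau_{m,n})\subseteq\Gaps(\hat\pi)$, i.e.\ to $\Delta\cap\Gaps(\tau_{m,n,\ell},\tau_{m,n})=\emptyset$. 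Hence $\Aa_{m,n}$ restricts to a bijection from $\{(\lambda,\tau_{m,n}):\lambda\subseteq\tau_{m,n,\ell}\}$ onto $\Inv^0_{m,n,\ell}$, and composing with $\mathcal{I}$ shows $\AD$ is a bijection.

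For the area identity I expect a short argument: for $k\geq m+n$ one has $k+\sigma\geq mn-\ell\geq1$, and $k\notin\AD(\pi)$ iff $k+\sigma\notin\Delta$, so $\overline{\AD(\pi)}\cap\Z_{\geq m+n}=(\overline{\Delta}\cap\Z_{\geq mn-\ell})-\sigma$ and therefore $\area'(\AD(\pi))=|\overline{\Delta}\cap\Z_{\geq mn-\ell}|$. Since $\overline{\Delta}=\gamma(\tau_{m,n}\setminus\lambda)$ and $\gamma(x,y)\geq mn-\ell\iff nx+my\leq\ell\iff(x,y)\in\tau_{m,n,\ell}$, we get $\overline{\Delta}\cap\Z_{\geq mn-\ell}=\gamma(\tau_{m,n,\ell}\setminus\lambda)$, of cardinality $|\tau_{m,n,\ell}|-|\lambda|=\area(\pi)$ by injectivity of $\gamma$. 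For the $\dinv$ identity, note first that $\dinv(\pi)=\dinv_{n/m}(\lambda)=\dinv_{n/m}(\hat\pi)$, so Theorem~\ref{thm:dinvA=dinv} and Definition~\ref{def:area-dinv} give $\dinv(\pi)=\dinv(\Delta)=\delta(m,n)-\codinv(\Delta)$. Writing $G:=|\tau_{m,n}\setminus\tau_{m,n,\ell}|=\delta(m,n)-|\tau_{m,n,\ell}|$, the desired $\dinv(\pi)+\codinv'(\AD(\pi))=|\tau_{m,n,\ell}|$ is then equivalent to the purely combinatorial identity $\codinv'(\AD(\pi))=\codinv(\Delta)-G$.

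To establish that identity I would expand both sides as counts of pairs in length-$m$ windows: $\codinv(\Delta)$ counts pairs $(g,h)$ with $g\in\ngen(\Delta)$, $h\in\overline{\Delta}$, $g\leq h\leq g+m-1$, while the leading sum defining $\codinv'(\AD(\pi))$ counts pairs $(g',h')$ with $g'\in\ngen(\AD(\pi))$, $h'\in\overline{\AD(\pi)}\cap\Z_{\geq m+n}$, $g'\leq h'\leq g'+m-1$. Using the gap dictionary from the area step together with Corollary~\ref{cor:ngen and cogen for Phi} for $n$-generators, the shift $(g,h)\mapsto(g-\sigma,h-\sigma)$ identifies the second family with $\{(g,h):g\in\ngen(\Delta),\,h\in\overline{\Delta},\,g\leq h\leq g+m-1,\,h\geq mn-\ell\}$. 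Consequently $\codinv(\Delta)-\codinv'(\AD(\pi))$ equals the number of pairs $(g,h)$ with $g\in\ngen(\Delta)$, $h\in\overline{\Delta}\cap[0,mn-\ell-1]$ and $g\leq h\leq g+m-1$, plus $\binom{\xi_{-1}(\AD(\pi))}{2}$. By Lemma~\ref{lemma: initial interval for Inv_r,s^0}, $\Delta$ coincides with the numerical semigroup $\Gamma_{m,n}$ on $[0,mn-\ell-1]$, and the same lemma makes $\xi_{-1}(\AD(\pi))$ depend only on $\ngen(\Gamma_{m,n})\cap[mn-\ell-m,mn-\ell-1]$; hence the whole right-hand side is independent of $\lambda$. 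It therefore suffices to verify $\codinv'(\AD(\pi))=\codinv(\Delta)-G$ for the single path $\pi_0=(\emptyset,\tau_{m,n,\ell})$, where $\Delta=\Gamma_{m,n}$ and $\codinv(\Gamma_{m,n})=\delta(m,n)$ (since $\dinv(\Gamma_{m,n})=\dinv_{n/m}(\emptyset)=0$), so that the claim collapses to the explicit computation $\codinv'(\mathcal{I}(\Gamma_{m,n}))=|\tau_{m,n,\ell}|$.

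The main obstacle is precisely this last computation, i.e.\ the internal bookkeeping of the $\codinv'$ identity: one must control the boundary pairs with $h<mn-\ell$ and reconcile them with the correction term $\binom{\xi_{-1}}{2}$, and in the base case $\pi_0$ one must evaluate $\codinv'(\mathcal{I}(\Gamma_{m,n}))$ by explicitly describing $\overline{\mathcal{I}(\Gamma_{m,n})}$ and $\ngen(\mathcal{I}(\Gamma_{m,n}))$ --- which forces one to separate the cases $\sigma\geq0$ and $\sigma<0$ and to use that $\gamma$ carries $\tau_{m,n}\setminus\tau_{m,n,\ell}$ bijectively onto $\overline{\Gamma_{m,n}}\cap[0,mn-\ell-1]$ (a consequence of Lemma~\ref{lem:Anderson}). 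Everything else --- the two bijectivity statements and the area identity --- is routine once the dictionary between boxes, Anderson labels, and invariant subsets has been set up.
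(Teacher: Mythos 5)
Your framework is correct and parallels the paper's: the bijectivity argument (translating $\lambda\subseteq\tau_{m,n,\ell}$ into disjointness from $\Gaps(\tau_{m,n,\ell},\tau_{m,n})$), the area identity via the gap dictionary, and the translation of the $\dinv$ claim into $\codinv'(\AD(\pi))=\codinv(\Delta)-G$ with $G=|\tau_{m,n}|-|\tau_{m,n,\ell}|$ are all right. But the proposal then explicitly defers what you call the ``main obstacle,'' and that obstacle is precisely the nontrivial content of the Proposition: you do not show that $\xi_{-1}(\AD(\pi))=1$ (so that $\tbinom{\xi_{-1}}{2}=0$), and you do not evaluate the boundary pair count. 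The reduction to the base path $\pi_0=(\emptyset,\tau_{m,n,\ell})$ is valid but buys nothing --- computing $\codinv'(\mathcal{I}(\Gamma_{m,n}))$ directly requires exactly the same bookkeeping you were trying to avoid. Your anticipated case split on the sign of $\sigma=(mn-\ell)-(m+n)$ is also a misreading of the difficulty; the actual argument is uniform.

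The single observation you are missing closes everything at once: by Lemma~\ref{lemma: initial interval for Inv_r,s^0}, $\Aa_{m,n,\ell}(\pi)$ agrees with $\Gamma_{m,n}$ on $[0,mn-\ell-1]$, and since $\ngen(\Gamma_{m,n})=\{0,m,2m,\dots,(n-1)m\}$ one gets
$\ngen(\Aa_{m,n,\ell}(\pi))\cap[0,mn-\ell-1]=\{0,m,2m,\dots\}\cap[0,mn-\ell-1]$.
These multiples of $m$ partition $[0,mn-\ell-1]$ into disjoint length-$m$ windows; consequently any length-$m$ window inside $[0,mn-\ell-1]$ contains exactly one $n$-generator of $\Aa(\pi)$ --- which under the shift of Corollary~\ref{cor:ngen and cogen for Phi} gives $\xi_{-1}(\AD(\pi))=1$ and kills the correction term --- and every $h\in\overline{\Aa(\pi)}\cap[0,mn-\ell-1]$ lies in exactly one window $[x,x+m-1]$ with $x\in\ngen(\Aa(\pi))$. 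Your boundary pair count therefore equals $|\overline{\Gamma_{m,n}}\cap[0,mn-\ell-1]|=G$ on the nose, proving $\codinv'(\AD(\pi))=\codinv(\Delta)-G$ for \emph{every} $\pi$ simultaneously. Until that computation is actually carried out, the proof of the $\dinv$ identity is incomplete.
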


\begin{proof}
Suppose $\pi =(\lambda,\tau_{m,n,\ell}) \in \D(m,n,\ell)$ and let $\hat{\pi}:=(\lambda,\tau_{m,n})\in\D(m,n)$.  Then $\Gaps(\tau_{m,n,\ell},\tau_{m,n}) \subseteq \Gaps(\hat{\pi})$.  Hence, 
\begin{equation*}
\Aa_{m,n,\ell}(\pi) \cap \Gaps(\tau_{m,n,\ell},\tau_{m,n})= (\Z_{\geq 0}) \setminus \Gaps(\hat{\pi})\cap \Gaps(\tau_{m,n,\ell},\tau_{m,n}) = \emptyset.
\end{equation*}

Thus, $\Aa_{m,n,\ell}(\pi) \in \Inv_{m,n,\ell}^0$. Since all the statements are reversible, then evidently $\Aa_{m,n,\ell}:\D(m,n,\ell) \to \Inv_{m,n,\ell}^0$ is indeed a bijection. Hence, combined with Proposition \ref{prop:shiftedInv}, the second claim follows. 

 Now, by Definition \ref{def:area' codinv'}, one gets
    \begin{align*}
        \area'(\AD(\pi))
        =&|\{k\in\mathbb{Z}_{\ge n+m} \;|\; k\notin \AD(\pi)\}|\\ 
        =&|\{k\in\mathbb{Z}_{\ge mn-\ell} \;|\; k\notin \Aa_{m,n,\ell}(\pi)\}|\\
        =&\left|\{k\in\mathbb{Z}_{\ge mn-\ell} \;|\; k\notin \Gamma_{m,n}\}\right|-\left|\{k\in\Aa_{m,n,\ell}(\pi) \;|\; k\notin \Gamma_{m,n}\}\right|\\
        =&|\tau_{m,n,\ell}|-|\lambda|=\area(\pi).
    \end{align*}
Similarly, by Definition \ref{def:area' codinv'} one gets
    \begin{equation*}
      \codinv'(\AD(\pi))= \sum_{x \; \in \; \ngen(\AD(\pi))}  \left| \overline{\AD(\pi)} \cap \Z_{\geq m+n} \cap [x,x+m-1]\right| - \frac{\xi_{-1}(\AD(\pi)) (\xi_{-1}(\AD(\pi))-1)}{2},  
    \end{equation*}
    where 
    \begin{align*}
    \xi_{-1}(\AD(\pi))=&\left|\{ x \in \ngen(\AD(\pi)) | n\leq x < n+m\}\right|\\
    =&\left|\{ x \in \ngen(\Aa_{m,n,\ell}(\pi)) | mn-\ell-m\leq x < mn-\ell\}\right|=1,
    \end{align*}
   since by Lemma \ref{lemma: initial interval for Inv_r,s^0}, $$\Aa_{m,n,\ell}(\pi)\cap[0,mn-\ell-1]=\Gamma_{m,n}\cap [0,mn-\ell-1],$$ and $\ngen(\Gamma_{m,n})=\{0,m,2m,\ldots,(n-1)m\}.$ Hence,
    \begin{align*}
      \codinv'(\AD(\pi))&= \sum_{x \in \ngen(\AD(\pi))}  \left| \overline{\AD(\pi)} \cap \Z_{\geq m+n} \cap [x,x+m-1]\right|\\
      &=\sum_{x \in \ngen(\Aa_{m,n,\ell}(\pi))}  \left| \overline{\Aa_{m,n,\ell}(\pi)} \cap \Z_{\geq mn-\ell} \cap [x,x+m-1]\right|.
    \end{align*}
    Lemma \ref{lemma: initial interval for Inv_r,s^0} also implies that for every number $k\in[0,mn-\ell-1]$, there exists a unique $x\in\ngen(\Aa_{m,n,\ell}(\pi))$ such that $k\in [x,x+m-1].$ Hence, we obtain
    \begin{align*}
      \codinv'(\AD(\pi))&= \sum_{x \in \ngen(\Aa_{m,n,\ell}(\pi))}  \left| \overline{\Aa_{m,n,\ell}(\pi)} \cap \Z_{\geq 0} \cap [x,x+m-1]\right|-\left|\overline{\Aa_{m,n,\ell}(\pi)}\cap [0,mn-\ell-1]\right|\\
      &= \codinv(\Aa_{m,n,\ell}(\pi))-\left|\overline{\Gamma_{m,n}}\cap [0,mn-\ell-1]\right|\\
      &=\codinv(\Aa_{m,n,\ell}(\pi))-\left(\left|\overline{\Gamma_{m,n}}\right|-\left|\overline{\Gamma_{m,n}}\cap \mathbb{Z}_{\ge mn-\ell}\right|\right)\\
      &=\codinv(\Aa_{m,n,\ell}(\pi))-|\tau_{m,n}|+|\tau_{m,n,\ell}|.
    \end{align*}
    Concluding the proof, since from Definition \ref{def:area-dinv} we have the relation
 \begin{equation*}
        \dinv(\Aa_{m,n,\ell}(\pi))+\codinv(\Aa_{m,n,\ell}(\pi))=\delta(m,n) = |\tau_{m,n}|,
            \end{equation*}
then combined with Theorem \ref{thm:dinvA=dinv} we obtain the desired identity:
    \begin{equation*}
        \dinv(\pi)+\codinv'(\AD(\pi))=\dinv(\Aa_{m,n,\ell}(\pi))+\codinv(\Aa_{m,n,\ell}(\pi))-|\tau_{m,n}|+|\tau_{m,n,\ell}|=|\tau_{m,n,\ell}|.
    \end{equation*}
\end{proof}

One can use Proposition \ref{prop:Dmap} to rewrite the triangular $(q,t)$-Schr\"oder polynomial $S_{m,n,\ell}(q,t,a)$ in term of the partition $\tau_{m,n,\ell}$, as follows:

\begin{align}\label{eq:Schroder2}
    S_{m,n,\ell}(q,t,a)&=t^{|\tau|}Q_{{\bf x}(m,n,\ell),{\bf y}(m,n,\ell)}(q,t,a) \nonumber \\
    &=t^{|\tau|}\sum_{\Delta\in\Inv_{{\bf x}(m,n,\ell),{\bf y}(m,n,\ell)}} q^{\area'(\Delta)}t^{-\codinv(\Delta)}\prod_{k\in\cogennn(\Delta)}(1+at^{-\xi_k(\Delta)}) \nonumber \\
    &=\sum_{\pi\in\D(m,n,\ell)} q^{\area(\pi)}t^{\dinv(\pi)}\prod_{k\in\cogennn(\AD(\pi)))}(1+at^{-\xi_k(\AD(\pi))}).
\end{align}

\begin{corollary}\label{cor:Catalan}
The triangular $(q,t)$-Schr\"oder polynomial $S_{m,n,\ell}(q,t,a)$ specializes to the triangular $(q,t)$-Catalan polynomial of \cite{BM,BHMPS} at $a=0$, 
\begin{equation}\label{eq:catalan triang}
    S_{m,n,\ell}(q,t,0)=\sum_{\pi\in\D(m,n,\ell)} q^{\area(\pi)}t^{\dinv(\pi)}=:C_{m,n,\ell}(q,t).
\end{equation}
\end{corollary}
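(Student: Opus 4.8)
The plan is to read the result off the path-sum reformulation \eqref{eq:Schroder2} of the triangular Schr\"oder polynomial, which was obtained just above by transporting $Q_{{\bf x}(m,n,\ell),{\bf y}(m,n,\ell)}$ along the bijection $\AD$ of Proposition \ref{prop:Dmap}. Recall that formula states
\[
S_{m,n,\ell}(q,t,a)=\sum_{\pi\in\D(m,n,\ell)} q^{\area(\pi)}t^{\dinv(\pi)}\prod_{k\in\cogennn(\AD(\pi))}\bigl(1+at^{-\xi_k(\AD(\pi))}\bigr).
\]
First I would set $a=0$: every factor $1+a\,t^{-\xi_k(\AD(\pi))}$ becomes $1$, so for each $\pi$ the inner product over $k\in\cogennn(\AD(\pi))$ is $1$ (an empty product when that index set is empty, which is likewise $1$). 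This yields $S_{m,n,\ell}(q,t,0)=\sum_{\pi\in\D(m,n,\ell)} q^{\area(\pi)}t^{\dinv(\pi)}$.

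The remaining point is to recognize the right-hand side as the triangular $(q,t)$-Catalan polynomial of \cite{BM,BHMPS}. Here I would simply check that the conventions line up: $\D(m,n,\ell)=\D(\ell/n,\ell/m)$ is the set of Dyck paths under $L_{\ell/n,\ell/m}$ from Definition \ref{def:Dyck}, $\area(\pi)=|\tau_{m,n,\ell}|-|\lambda|$ counts the boxes strictly between the path and that line, and $\dinv(\pi)=\dinv_{\ell/m}(\lambda)$ is the slope-$\ell/m$ statistic of \S\ref{def:dinv}; these are exactly the ingredients of the Blasiak--Haiman--Morse--Pun--Seelinger generating function. Hence the displayed sum is by definition $C_{m,n,\ell}(q,t)$, which is the asserted identity.

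The hard part here is essentially nil --- all the substance was placed into \eqref{eq:Schroder2} via Proposition \ref{prop:Dmap} (itself resting on Theorem \ref{thm:dinvA=dinv} and the compatibility of $\Aa_{m,n,\ell}$ with $\area$ and $\dinv$). The only thing that demands a sentence of care is confirming that the statistics $\area$ and $\dinv$ appearing in this paper's Dyck-path formalism agree with those in \cite{BM,BHMPS}, which was arranged deliberately in \S\ref{sec:GMV-Recursions}--\S\ref{sec:Schroder}; with that in hand, no further computation is needed.
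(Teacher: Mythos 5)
Your argument is exactly the paper's: the Corollary is read off \eqref{eq:Schroder2} by setting $a=0$, which collapses every factor in the product to $1$ and leaves the plain generating function over $\D(m,n,\ell)$, matching the BHMPS/BM definition of $C_{m,n,\ell}(q,t)$. One small slip in the write-up: the dinv statistic is $\dinv_{s/r}=\dinv_{n/m}$ (the slope is $n/m$, not $\ell/m$; $\ell/m$ is the intercept $s$), but this does not affect the substance.
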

\begin{remark}\label{rem:Catalan}
The triangular $(q,t)$-Catalan polynomials of \cite{BHMPS} arise from the shuffle theorem under any line and thus depend only on the triangular partition $\tau$ and not the choice of triple. Indeed, we will show both directly in Theorem \ref{thm: schroder=poincare} and as a consequence of Corollary \ref{cor:Schroder-Shuffle} that $S_{m,n,\ell}(q,t,a)$ depends only on the partition and not the choice of triple, so that Corollary \ref{cor:Catalan} is indeed consistent with \cite{BM,BHMPS}. 
\end{remark}

\subsection{A Schr\"oder Path Reformulation}
There is also a way to reformulate the elements of $\cogennn(\AD(\pi))$ and the statistics $\xi_k$ in terms of the Dyck paths $\pi\in\D(m,n,\ell)$, which allows us to define $S_{m,n,\ell}$ in terms of certain lattice paths with diagonal steps classically known as \emph{Schr\"oder paths}. 

\begin{definition}
For an $m,n,\ell$-Dyck path $\pi=(\lambda,\tau_{m,n,\ell})$, the \newword{associated lattice path} $\tilde{\pi}$ is the same as the associated lattice path of the $m,n$-Dyck path $(\lambda,\tau_{m,n})$ (Definition \ref{def:Dyck}). In other words, $\tilde{\pi}$ is the south-east lattice path from $(0,n)$ to $(m,0)$ that follows the boundary of $\lambda.$    
\end{definition}

\begin{definition}
    Let $\pi=(\lambda,\tau_{m,n,\ell})\in\D(m,n,\ell)$ be a Dyck path and $\tilde{\pi}$ be the associated lattice path. The \newword{east boundary} of $\pi$ is the set $\EB(\pi)$ of boxes $\Box$ in $\mathbb{Z}\times (0,n]$, such that the east boundary of $\Box$ overlaps a vertical step of the path $\tilde{\pi}.$ Let also $\AB(\pi)$ denote the set of addable boxes of $\lambda$ (Definition \ref{def:addable}). Finally, define the \newword{shifted Anderson filling} $\hat{\gamma}:(a,b)\mapsto \gamma(a,b)-(mn-\ell)+(n+m).$
\end{definition}

\begin{lemma} \label{lem:cogen}
    Let $\pi=(\lambda,\tau_{m,n,\ell})\in\D(m,n,\ell)$ be an $m,n,\ell$-Dyck path. The shifted Anderson filling $\hat{\gamma}$ provides a bijection between $\AB(\pi)$ and $\cogennn(\AD(\pi)).$ Furthermore, if $k\in\cogennn(\AD(\pi))$ then 
    \begin{equation*}
        \xi_k(\AD(\pi))=\left|\{(a,b)\in\EB(\pi)|k+n<\hat{\gamma}(a,b)\le k+n+m\}\right|.
    \end{equation*}
\end{lemma}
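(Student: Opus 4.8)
The plan is to translate everything through the dictionary between the $m,n$-Dyck path $(\lambda,\tau_{m,n})$ and its invariant set, followed by the shift map $\mathcal{I}$. Write $\pi=(\lambda,\tau_{m,n,\ell})$ and set $\Delta:=\Aa_{m,n,\ell}(\pi)=\Aa_{m,n}(\lambda,\tau_{m,n})\in\Inv_{m,n,\ell}^0$, so that $\AD(\pi)=\mathcal{I}(\Delta)$; note that $\hat\gamma(a,b)=\gamma(a,b)-(mn-\ell)+(n+m)=\ell+n+m-na-mb$ differs from $\gamma$ only by an additive constant. Applying the box--generator correspondences recalled just after Definition \ref{def:addable} to the $m,n$-Dyck path $(\lambda,\tau_{m,n})$ (whose associated lattice path is again $\tilde\pi$ and whose invariant set is $\Delta$), $\gamma$ sends $\AB(\pi)$ onto $\cogen(\Delta)$ and $\EB(\pi)$ onto $\ngen(\Delta)$; both restrictions are injective because the boxes involved lie in $\Z\times[n]$ (for $\EB(\pi)$ since $\tilde\pi$ stays in $[0,m]\times[0,n]$, so every vertical step bounds a box with $y$-coordinate in $\{1,\dots,n\}$), where $\gamma$ is injective by Lemma \ref{lem:Anderson}(3). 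Composing with the translation by $-(mn-\ell)+(n+m)$, the map $\hat\gamma$ gives bijections of $\AB(\pi)$ with $\cogen(\Delta)-(mn-\ell)+(n+m)$ and of $\EB(\pi)$ with $\ngen(\Delta)-(mn-\ell)+(n+m)$.

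The one step with actual content is a positivity estimate: every addable box $(a,b)$ of $\lambda$ satisfies $\hat\gamma(a,b)\ge 0$, equivalently $n(a-1)+m(b-1)\le\ell$, i.e. the SW corner of the box lies weakly below $L_{m,n,\ell}$. I would verify this by a short case analysis using $\lambda\subseteq\tau_{m,n,\ell}$: if $a\ge 2$ then $(a-1,b)\in\lambda$, so $n(a-1)+mb\le\ell$ and hence $n(a-1)+m(b-1)<\ell$; if $a=1$ and $b\ge 2$ then $(1,b-1)\in\lambda$, so $m(b-1)<n+m(b-1)\le\ell$; and if $a=b=1$ the inequality reads $0\le\ell$. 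The same estimate forces $b\le n$, which also confirms $\AB(\pi)\subseteq\Z_{>0}\times[n]$ as used above.

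Granting this, the bijection claim is immediate: $\cogen(\Delta)-(mn-\ell)+(n+m)=\hat\gamma(\AB(\pi))$ lies in $\Z_{\ge 0}$ by positivity, so it equals its own intersection with $\Z_{\ge 0}$, which by Corollary \ref{cor:ngen and cogen for Phi} is $\cogennn(\mathcal{I}(\Delta))=\cogennn(\AD(\pi))$; thus $\hat\gamma\colon\AB(\pi)\to\cogennn(\AD(\pi))$ is a bijection. For the $\xi_k$ formula, fix $k\in\cogennn(\AD(\pi))$, so $k\ge 0$. By \eqref{eq:xi}, $\xi_k(\AD(\pi))$ is the number of $x\in\ngen(\AD(\pi))$ with $k+n<x\le k+n+m$; since $k\ge 0$ every such $x$ lies in $\Z_{\ge n}$, and Corollary \ref{cor:ngen and cogen for Phi} gives $\ngen(\AD(\pi))\cap\Z_{\ge n}=\bigl(\ngen(\Delta)-(mn-\ell)+(n+m)\bigr)\cap\Z_{\ge n}=\hat\gamma(\EB(\pi))\cap\Z_{\ge n}$. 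Intersecting with the interval $(k+n,\,k+n+m]$ and using injectivity of $\hat\gamma$ on $\EB(\pi)$ yields $\xi_k(\AD(\pi))=\bigl|\{(a,b)\in\EB(\pi)\ :\ k+n<\hat\gamma(a,b)\le k+n+m\}\bigr|$, which is the assertion. The only delicate point is the positivity estimate — in particular its boundary cases involving boxes against the coordinate axes and the degenerate case $\tau=\emptyset$ — while the remainder is bookkeeping with an affine shift together with the cited correspondences.
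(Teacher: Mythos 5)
Your proof is correct and follows essentially the same path as the paper: identify $\Delta=\Aa_{m,n,\ell}(\pi)$, invoke the dictionary $\gamma\colon\AB(\pi)\leftrightarrow\cogen(\Delta)$ and $\gamma\colon\EB(\pi)\leftrightarrow\ngen(\Delta)$, establish $\hat\gamma\ge 0$ on addable boxes, and transport through Corollary \ref{cor:ngen and cogen for Phi}. The one place you go beyond the paper is in spelling out the positivity estimate: where the paper simply asserts that the SW corner of an addable box of $\lambda\subseteq\tau_{m,n,\ell}$ lies weakly below $L_{m,n,\ell}$, you give the (short but necessary) three-way case analysis $a\ge 2$; $a=1,\ b\ge 2$; $a=b=1$, which is a welcome clarification. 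Your organization is also slightly cleaner — you obtain the bijection in one stroke from ``$\hat\gamma(\AB(\pi))$ is non-negative, hence equals $\cogennn(\AD(\pi))$'' rather than checking two containments box by box — but the underlying ingredients are identical, so this should be classed as the same proof, not a different one.
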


\begin{proof}
    Let $k\in\cogennn(\AD(\pi))$ be a non-negative co-generator and $(a,b)\in\mathbb{Z}\times [n]$ be the unique box such that $\hat{\gamma}(a,b)=k$ (similar to $\gamma,$ $\hat{\gamma}$ also defines a bijection between $\mathbb{Z}\times [n]$ and $\mathbb{Z}$). 
    
    Since $k\ge 0$ and $k\notin\AD(\pi)$ it follows that 
    $$
    k-(n+m)+(mn-\ell)=\gamma(a,b)\notin\Aa_{m,n,\ell}(\pi).
    $$ 
    Similarly, since $k+n=\hat{\gamma}(a-1,b)\in\AD(\pi)$ and $k+m=\hat{\gamma}(a,b-1)\in\AD(\pi),$ it follows that 
    $$
    \gamma(a,b)+n=\gamma(a-1,b)\in\Aa_{m,n,\ell}(\pi)\text{ and }\gamma(a,b)+m=\gamma(a,b-1)\in\Aa_{m,n,\ell}(\pi).
    $$ 
    Therefore, $\gamma(a,b)\in\cogen(\Aa_{m,n,\ell}(\pi)).$ Recall that if $\pi=(\lambda,\tau_{m,n,\ell})$ then $\Aa_{m,n,\ell}(\pi)=\Aa_{m,n}(\lambda,\tau_{m,n})$ and $\gamma$ provides a bijection between $\cogen(\Aa_{m,n}(\lambda,\tau_{m,n}))$ and $\AB(\pi).$ 

    Reversing the argument, let $(a,b)\in\AB(\pi).$ Then $\gamma(a,b)\in\cogen(\Aa_{m,n,\ell}(\pi)).$ Furthermore, since the southwest corner of the box $(a,b)$ lies below the shifted diagonal $L_{m,n,\ell}=\{nx+my=\ell\},$ it follows that
    \begin{align*}
        &n(a-1)+m(b-1)\le\ell\\
        \Leftrightarrow\ \  & mn-na-mb+(m+n)-(mn-\ell)\ge 0\\
        \Leftrightarrow\ \  & \hat{\gamma}(a,b)\ge 0.
    \end{align*}
    Hence, using Corollary \ref{cor:ngen and cogen for Phi}, we get 
    $$    \hat{\gamma}(a,b)\in\cogennn(\AD(\pi))=\left(\cogen(\Aa_{m,n,\ell}(\pi))-(mn-l)+(m+n)\right)\cap\mathbb{Z}_{\ge 0}.
    $$
    By definition, for a non-negative co-generator $k\in\cogennn(\AD(\pi))$, one obtains
    \begin{align*}
        \xi_k(\AD(\pi))=|\{ x \in \ngen(\AD(\pi)) | n+k+1\leq x \leq k+n+m\}|.
    \end{align*}
    Since we only need to count $x\ge n+k+1\ge n,$ applying Corollary \ref{cor:ngen and cogen for Phi} one gets
    \begin{align*}
        \xi_k(\AD(\pi))&=|\{ x \in \ngen(\AD(\pi))\cap\mathbb{Z}_{\ge n} | n+k+1\leq x \leq k+n+m\}|\\
        &=|\{ x \in \left(\ngen(\Aa_{m,n,\ell}(\pi))-(mn-\ell)+(m+n)\right)\cap\mathbb{Z}_{\ge n} | n+k+1\leq x \leq k+n+m\}|\\
        &=|\{ x+(mn-\ell)-(m+n) \in \ngen(\Aa_{m,n,\ell}(\pi)) | n+k+1\leq x \leq k+n+m\}|.
    \end{align*}
    Recall that $\gamma$ provides a bijection between $\EB(\pi)$ and $\ngen(\Aa_{m,n,\ell}(\pi))$, and that $x+(mn-\ell)-(m+n)=\gamma(a,b)$ if and only if $x=\hat{\gamma}(a,b).$ Hence
    \begin{align*}
        \xi_k(\AD(\pi))&=|\{ x+(mn-\ell)-(m+n) \in \ngen(\Aa_{m,n,\ell}(\pi)) | n+k+1\leq x \leq k+n+m\}|\\
        &=|\{(a,b)\in\EB(\pi)|k+n<\hat{\gamma}(a,b)\le k+n+m\}|.
    \end{align*}
\end{proof}
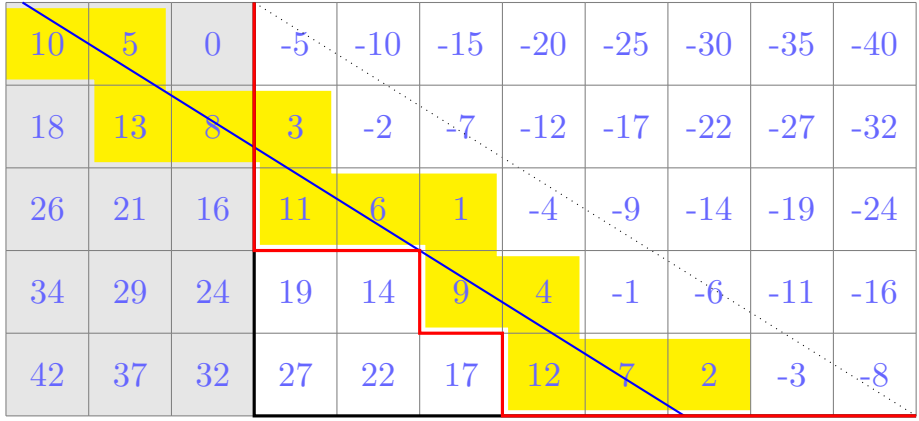
\begin{figure}
\center
\begin{tikzpicture}[scale=1.1]
\fill [fill=gray!20!white] (-3,0) rectangle (0,5);
\draw[line width = 27pt, yellow](-3,4.5)--(-1.5,4.5)--(-1.5,3.5)--(.5,3.5)--(.5,2.5)--(2.5,2.5)--(2.5,1.5)--(3.5,1.5)--(3.5,0.5)--(6,0.5);\
%
% pick an m and n HERE
 \pgfmathtruncatemacro{\m}{5} 
  \pgfmathtruncatemacro{\n}{8} 
%%%%%%% DONT TOUCH THIS PART %%%%%%%%%%%%%%%%
 \pgfmathtruncatemacro{\mi}{\m-1} %\mi=m-1 
    \pgfmathtruncatemacro{\ni}{\n-1} %\ni=n-1
  \foreach \x in {-2,...,\n} % n=5
    \foreach \y in {1,...,\m}     % m=8  
       {\pgfmathtruncatemacro{\label}{ - \m * \x - \n *  \y +\m*\n} % - m * \x - n *  \y +mn
      % \node   (\x\y) at (\x,\y){};
       \node  at (\x-.5,\y-.5) [blue!60!white, scale=1.2]{\label};} 
%generates the grid and anderson labels
\draw[dotted] (0,\m)--(\n,0); %draws the diagonal from (0,m) -- (n,0)
  \foreach \x in {-3,...,\n} % n=5
    \foreach \y  [count=\yi] in {0,...,\mi}   % m-1=6  
    %\draw (\x\y)--(\x\yi);
      \draw[gray] (\x,\y)--(\x,\yi);
        \foreach \y in {0,...,\m}     % m=8  
       \foreach \x  [count=\xi] in {-3,...,\ni}   % n-1=4
        %\draw (\x\y)--(\xi\y);
         \draw[gray] (\x,\y)--(\x+1,\y);
%%%%%%%%%%%%%%%%%%%%%%%%%%%%%%%%%%%%%%%%%%%%%%%%
 %% Draw dyck path by hand below
\draw[ thick, blue] (-2.8,5)--(5.2,0); %shifted diagonal
\draw [very thick] (8,0)--(0,0)--(0,5); %lower boundary
\draw[ very thick,red](0,5)--(0,2)--(2,2)--(2,1)--(3,1)--(3,0)--(8,0); %Dyck path
\end{tikzpicture}
\caption{Displayed are the lattice path associated with the triangular partition $\lambda=(3,2)=\tau_{8,5,26}=\tau_{5.2,3.25}$ in red, the Anderson labels with respect to $(m,n)=(8,5)$ denoted in blue, the (dotted) main diagonal, and the (solid) shifted diagonal.}\label{fig:5-8}
\end{figure}

The above lemma motivates the following definition. 

\begin{definition}\label{def:xi-AB}
For any $\pi\in\D(m,n,\ell)$ and addable box $\Box\in\AB(\pi)$, define:
    \begin{align*}
        \xi(\pi,\Box):=&\left|\{\Box'\in\EB(\pi)|\hat{\gamma}(\Box)+n<\hat{\gamma}(\Box')\le \hat{\gamma}(\Box)+n+m\}\right|\\
        =&\left|\{\Box'\in\EB(\pi)|\gamma(\Box)+n<\gamma(\Box')\le \gamma(\Box)+n+m\}\right|.
    \end{align*}
\end{definition}

We can now given an expression for $S_{m,n,\ell}(q,t,a)$ in the language of $\tau_{m,n,\ell}$-Dyck paths.

\begin{theorem}\label{thm:Schroder-dyckpaths}
We have
\begin{equation}\label{eq:Schroder-dyckpaths}
S_{m,n,\ell}(q,t,a)= \sum_{k\geq 0} a^k \left(\sum_{\pi\in\D(m,n,\ell)}
q^{\area(\pi)}t^{\dinv(\pi)} \sum_{\substack{
L\subset\AB(\pi)\\ |L|=k}}
t^{-\sum_{\Box\in L} \xi(\pi,\Box)}\right).
\end{equation}
\end{theorem}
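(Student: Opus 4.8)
The plan is to unwind Definition~\ref{def:schroeder} using the combinatorial dictionary established in Proposition~\ref{prop:Dmap} and Lemma~\ref{lem:cogen}, which together translate the invariant-subset side into the Dyck-path side. Concretely, I would start from the reformulation \eqref{eq:Schroder2} of $S_{m,n,\ell}(q,t,a)$ as a sum over $\pi\in\D(m,n,\ell)$ of $q^{\area(\pi)}t^{\dinv(\pi)}\prod_{k\in\cogennn(\AD(\pi))}\bigl(1+at^{-\xi_k(\AD(\pi))}\bigr)$, which already has the correct monomial weight $q^{\area(\pi)}t^{\dinv(\pi)}$ thanks to the two statistic identities $\area(\pi)=\area'(\AD(\pi))$ and $\dinv(\pi)+\codinv'(\AD(\pi))=|\tau_{m,n,\ell}|$ proven there. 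So the only thing left is to expand the product over co-generators into the desired sum over subsets $L$ of addable boxes.

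The key step is to expand $\prod_{k\in\cogennn(\AD(\pi))}(1+at^{-\xi_k(\AD(\pi))})=\sum_{S\subseteq\cogennn(\AD(\pi))}a^{|S|}\prod_{k\in S}t^{-\xi_k(\AD(\pi))}$, and then push this sum through the bijection of Lemma~\ref{lem:cogen}. That lemma says $\hat\gamma$ is a bijection $\AB(\pi)\xrightarrow{\sim}\cogennn(\AD(\pi))$, and moreover $\xi_{\hat\gamma(\Box)}(\AD(\pi))=\xi(\pi,\Box)$ by Definition~\ref{def:xi-AB}. Hence a subset $S\subseteq\cogennn(\AD(\pi))$ corresponds bijectively to a subset $L=\hat\gamma^{-1}(S)\subseteq\AB(\pi)$ with $|L|=|S|$, and $\prod_{k\in S}t^{-\xi_k(\AD(\pi))}=\prod_{\Box\in L}t^{-\xi(\pi,\Box)}=t^{-\sum_{\Box\in L}\xi(\pi,\Box)}$. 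Substituting and grouping the resulting sum by $k=|L|$ gives exactly \eqref{eq:Schroder-dyckpaths}; the outer sum over $k\ge0$ collects the $a^k$ coefficients.

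I would organize the proof as: (1) invoke \eqref{eq:Schroder2} to reduce to expanding the co-generator product; (2) apply the binomial-type expansion of the product into a sum over subsets $S$ of $\cogennn(\AD(\pi))$; (3) apply Lemma~\ref{lem:cogen}, transporting $S\leftrightarrow L$ via $\hat\gamma$ and rewriting the exponent using Definition~\ref{def:xi-AB}; (4) reindex by $k=|L|$ and pull the $a^k$ outside, matching \eqref{eq:Schroder-dyckpaths} term by term. Each of these is essentially bookkeeping once the earlier results are in hand, so there is no genuinely hard analytic step.

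The main obstacle — really the only point requiring care — is making sure the translation of $\xi_k$ is applied with the right indexing conventions: $\xi_k(\AD(\pi))$ is defined (in \eqref{eq:xi}) for the \emph{shifted} invariant subset $\AD(\pi)=\mathcal{I}(\Aa_{m,n,\ell}(\pi))$ and counts $n$-generators in the window $[n+k+1,k+n+m]$, whereas $\xi(\pi,\Box)$ in Definition~\ref{def:xi-AB} is phrased via the east boundary $\EB(\pi)$ and either of the fillings $\gamma$ or $\hat\gamma$. Lemma~\ref{lem:cogen} already does precisely this reconciliation (it is where Corollary~\ref{cor:ngen and cogen for Phi} is used to restrict to $\mathbb{Z}_{\ge n}$ and pass between $\ngen(\AD(\pi))$ and $\EB(\pi)$), so in the write-up I would simply cite it rather than re-deriving the window shift. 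With that citation in place, the theorem follows immediately from the expansion of the product.
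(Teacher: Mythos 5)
Your proposal is correct and is essentially identical to the paper's own proof: both start from \eqref{eq:Schroder2}, invoke Lemma~\ref{lem:cogen} to transport the product over $\cogennn(\AD(\pi))$ into a product over $\AB(\pi)$ via $\hat\gamma$, and then expand the product and collect terms by $k=|L|$. The paper's write-up is just a compressed version of the four-step outline you gave.
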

\begin{proof}
Combining \eqref{eq:Schroder2} and Lemma \ref{lem:cogen} we obtain, 
\begin{align*}
    S_{m,n,\ell}(q,t,a)&=\sum_{\pi\in\D(m,n,\ell)} q^{\area(\pi)}t^{\dinv(\pi)}\prod_{k\in\cogennn(\AD(\pi)))}\left(1+at^{-\xi_k(\AD(\pi))}\right)\\
    &=\sum_{\pi\in\D(m,n,\ell)} q^{\area(\pi)}t^{\dinv(\pi)}\prod_{\Box\in\AB(\pi)}(1+at^{-\xi(\pi,\Box)}).
\end{align*}
Expanding the product, we get the following expression.
\[
    S_{m,n,\ell}(q,t,a)=\sum_{\pi\in\D(m,n,\ell)} q^{\area(\pi)}t^{\dinv(\pi)} \sum_{L\subset\AB(\pi)}a^{|L|} t^{-\sum_{\Box\in L} \xi(\pi,\Box)} \nonumber
\]
Rearranging proves the Theorem.
\end{proof}

For each fixed $k$, the summation over the pairs $(\pi,L)$ in \eqref{eq:Schroder-dyckpaths} can be thought of as the summation over paths $\pi\in\D(m,n,\ell)$ with exactly $k$ marked addable boxes. One can also modify the marked addable boxes by replacing these with northwest diagonal steps. Lattice paths under the main diagonal consisting of north, west, and northwest diagonal steps are known as \newword{Schr\"oder paths} \cite{S1870}, hence the name of the polynomials $S_{m,n,\ell}(q,t,a).$ Schr\"oder polynomials in the classical square case were introduced in \cite{EHKK03} and \cite{Ha04}.

\begin{figure}[ht]
\[
\begin{array}{l|l|l}
%\hline
 \pi\in\D(8,5,26) & \Aa(\pi)\in\Inv_{8,5,26}^0\ \ \ \ \  &\AD(\pi)\in\Inv_{0000100101001}\ \ \ \ \ \\
\hline
\emptyset & \Gamma_{8,5} & \{4,7,9,12\}\cup \mathbb{Z}_{\ge 13}\setminus \{13,16,18,21,26\}\\
\hline
(1) & \Gamma_{8,5}\cup \{27\} & \{4,7,9,12\}\cup \mathbb{Z}_{\ge 13}\setminus \{13,16,18,21\}\\
\hline
(1,1) & \Gamma_{8,5}\cup \{19,27\} & \{4,7,9,12\}\cup \mathbb{Z}_{\ge 13}\setminus \{13,16,21\}\\
\hline
(2) & \Gamma_{8,5}\cup \{22,27\} & \{4,7,9,12\}\cup \mathbb{Z}_{\ge 13}\setminus \{13,16,18\}\\
\hline
(2,1) & \Gamma_{8,5}\cup \{19,22,27\} & \{4,7,9,12\}\cup \mathbb{Z}_{\ge 13}\setminus \{13,16\}\\
\hline
(3) & \Gamma_{8,5}\cup \{17,22,27\} & \{4,7,9,12\}\cup \mathbb{Z}_{\ge 13}\setminus \{13,18\}\\
\hline
(2,2) & \Gamma_{8,5}\cup \{14,19,22,27\} & \{4,7,9,12\}\cup \mathbb{Z}_{\ge 13}\setminus \{16\}\\
\hline
(3,1) & \Gamma_{8,5}\cup \{17,19,22,27\} & \{4,7,9,12\}\cup \mathbb{Z}_{\ge 13}\setminus \{13\}\\
\hline
(3,2) & \Gamma_{8,5}\cup \{14,17,19,22,27\} & \{4,7,9,12\}\cup \mathbb{Z}_{\ge 13}\\
%\hline
\end{array}
\]
\caption{The nine subparitions of $\tau_{8,5,26}$ and their combinatorial data.} \label{ex:8,5,26}
\end{figure}

\begin{example}\label{ex: (5.28,3.3)} Let $(m,n,\ell) = (8,5,26)$ so that $\tau_{8,5,26} = \tau_{5.2,3.25}=(3,2)$. The partition $\tau_{8,5,26}$, Anderson labels, and shifted (solid) and main (dotted) diagonals can be seen in Figure \ref{fig:5-8}. 

The set $W_{8,5,26} =[1,13] $ (shaded yellow in Figure \ref{fig:5-8}) consists of the window of all boxes that lie on the shifted diagonal. Notice that since $U_{8,5,26} = \{ 5,8,10,13\}$ and the gaps of $(\tau_{8,5,26},\tau_{8,5})$ within $W_{8,5,26}$ are precisely $\{ 1,2,3,4,6,7,9,11,12\}$, then: 
\begin{align*}
\Inv_{8,5,26}^0=&\{\Delta\in \Inv_{8,5}^0: \{1,2,3,4,6,7,9,11,12\}\cap\Delta=\emptyset\}\\
=&\{\Delta\in \Inv_{8,5}^0: \Delta\cap [1,13]=\{5,8,10,13\}\}.
\end{align*}
By Proposition \ref{prop:shiftedInv}, we obtain $${\bf w}(8,5,26) = 0000100101001,\ {\bf x}(8,5,26)=0010\bullet00\bullet,\ \text{and}\ {\bf y}(8,5,26)=0100\bullet.$$ Hence, 
\[
\Inv_{0000100101001} = \Inv_{0010\bullet00\bullet,0100\bullet} = \{ \Delta \in \Inv_{8,5} \; |\; \Delta \cap [0,12] = \{ 4,7,9,12\} \}. 
\]
In particular, by directly comparing all the subsets in $I_{8,5,26}^0$ and $I_{0000100101001}$ corresponding to each of the nine subpartitions of $\tau_{8,5,26}=(3,2)$: $\emptyset,(1),(1,1),(2),(2,1),(3),(2,2),(3,1),$ and $(3,2)$, we can see that indeed, $I_{0000100101001}$ can be obtained from $I_{8,5,26}^0$ by shifting each set down by one and then intersecting with $\mathbf{Z}_{\ge 0}$ (thus discarding $-1$). See Figure \ref{ex:8,5,26}.

See Figure \ref{figure: triangular Schroder 8-5} for the computation of the contributions of the nine elements of $I_{0010\bullet00\bullet,0100\bullet}$ towards $Q_{0010\bullet00\bullet,0100\bullet}(q,t,a)$. Putting it all together, we obtain
\begin{equation}\label{formula: schroder via R, example}
S_{8,5,26}(q,t,a)=t^5Q_{0010\bullet00\bullet,0100\bullet}(q,t,a). 
\end{equation}
Expanded, this yields:
\begin{align*}
S_{8,5,26}(q,t,a)=&q^5(1+a)+q^4t(1+a)(1+at^{-1})+q^3t^2(1+a)(1+at^{-1})\\
+&q^3t(1+a)(1+at^{-1})+q^2t^3(1+a)(1+at^{-1})(1+at^{-2})\\
+&q^2t^2(1+a)(1+at^{-1})+qt^3(1+a)(1+at^{-1})\\
+&qt^4(1+a)(1+at^{-1})(1+at^{-2})+t^5(1+a)(1+at^{-1})(1+at^{-2}).    
\end{align*}
\end{example}

\begin{figure}[ht]
    \centering
\begin{tabular}{|l|l|l|}
\hline
$\Gaps\cap\mathbb{Z}_{\ge 13}=\{13,16,18,21,26\}$ & $5$-$\gen=\{4,7,15,23,31\}$& $\cogennn=\{26\}$  \\
$\area'=5$&$\codinv'=5,$ $\dinv=0$& $1+a$  \\
\hline
$\Gaps\cap\mathbb{Z}_{\ge 13}=\{13,16,18,21\}$ & $5$-$\gen=\{4,7,15,23,26\}$& $\cogennn=\{18,21\}$  \\
$\area'=4$&$\codinv'=4,$ $\dinv=1$& $(1+a)(1+at^{-1})$  \\
\hline
$\Gaps\cap\mathbb{Z}_{\ge 13}=\{13,16,21\}$ & $5$-$\gen=\{4,7,15,18,26\}$& $\cogennn=\{10,21\}$  \\
$\area'=3$&$\codinv'=3,$ $\dinv=2$& $(1+a)(1+at^{-1})$  \\
\hline
$\Gaps\cap\mathbb{Z}_{\ge 13}=\{13,16,18\}$ & $5$-$\gen=\{4,7,15,21,23\}$& $\cogennn=\{16,18\}$  \\
$\area'=3$&$\codinv'=4,$ $\dinv=1$& $(1+a)(1+at^{-1})$  \\
\hline
$\Gaps\cap\mathbb{Z}_{\ge 13}=\{13,16\}$ & $5$-$\gen=\{4,7,15,18,21\}$& $\cogennn=\{10,13,16\}$  \\
$\area'=2$&$\codinv'=2,$ $\dinv=3$& $(1+a)(1+at^{-1})(1+at^{-2})$  \\
\hline
$\Gaps\cap\mathbb{Z}_{\ge 13}=\{13,18\}$ & $5$-$\gen=\{4,7,15,16,23\}$& $\cogennn=\{11,18\}$  \\
$\area'=2$&$\codinv'=3,$ $\dinv=2$& $(1+a)(1+at^{-1})$  \\
\hline
$\Gaps\cap\mathbb{Z}_{\ge 13}=\{16\}$ & $5$-$\gen=\{4,7,13,15,21\}$& $\cogennn=\{10,16\}$  \\
$\area'=1$&$\codinv'=2,$ $\dinv=3$& $(1+a)(1+at^{-1})$  \\
\hline
$\Gaps\cap\mathbb{Z}_{\ge 13}=\{13\}$ & $5$-$\gen=\{4,7,15,16,18\}$& $\cogennn=\{10,11,13\}$  \\
$\area'=1$&$\codinv'=1,$ $\dinv=4$& $(1+a)(1+at^{-1})(1+at^{-2})$  \\
\hline
$\Gaps\cap\mathbb{Z}_{\ge 13}=\emptyset$ & $5$-$\gen=\{4,7,13,15,16\}$& $\cogennn=\{8,10,11\}$  \\
$\area'=0$&$\codinv'=0,$ $\dinv=5$& $(1+a)(1+at^{-1})(1+at^{-2})$  \\
\hline
\end{tabular}
    \caption{For the nine elements of $I_{0010\bullet00\bullet,0100\bullet}$ we record the gaps that are greater than or equal to $13,$ since only those contribute to $\area'$ and $\codinv'$. We also record the non-negative co-generators and the corresponding Schr\"oder factors.}
    \label{figure: triangular Schroder 8-5}
\end{figure}

\section{The Knot Associated to a Triangular Partition}\label{sec:trian knot}

As we shall see, for $(m,n,\ell)$ as in Section \ref{subsec:m,n-triangular}, the Schr\"oder polynomial $S_{m,n,\ell}(q,t,a)$ depends only on the triangular partition $\tau=\tau_{m,n,\ell}$ cut by the diagonal line and not on the choice of triple. From a combinatorial perspective this is far from obvious (although it will also follow directly from the shuffle theorem under any path, see \S\ref{sec:qtSchroderTheorem}). In this section, we provide an independent and purely topological proof; namely, we associate a knot to each triple $(m,n,\ell)$ and prove the following:
\begin{enumerate}
    \item The KR series of $K_{\ell/n,\ell/m}$ equals $S_{m,n,\ell}$ (up to normalization).
    \item Up to isotopy, the knot $K_{\ell/n,\ell/m}$ depends only on the triangular partition $\tau=\tau_{m,n,\ell}$.
\end{enumerate}
The construction of $K_{\ell/n,\ell/m}$ uses a construction from Galashin-Lam \cite{GL23}, and we refer the reader to that article for further details.

\subsection{Knots from Young Diagrams}
\label{ss:knots and partitions}

\begin{figure}[ht]
\definecolor{shade}{rgb}{.9,1,1}
\begin{tikzpicture}[scale=.8,anchorbase]
\draw[fill=shade] (0,0) rectangle (2,1);
\draw[fill=shade] (0,1) rectangle (1,2);
\draw[fill=shade] (0,2) rectangle (1,3);
\foreach\j in {0,1,...,5}{\draw[black] (0,\j)--(4,\j);}
\foreach\i in {0,1,...,4}{\draw[black] (\i,0)--(\i,5);}
\draw[blue, thick] (0,5)--({2/23},{4+11/23})--({3+7/37},{5/37})--(4,0);
\end{tikzpicture}
\ \ \ $\mapsto$ \ \ \ 
\begin{tikzpicture}[scale=3,anchorbase]
% u=(0010b)
% v=(0100b0b)
\def\m{5}
\def\n{7}
\def\r{2} %n-m
\def\I{3} %the one in u
\def\J{2} %the one in v
%
%\foreach \i in {1,2,4,5,7}{
%\draw[purp,  thick, dotted] ({(\i-.5)/\n},0)--({(\i-.5)/\n},1);
%}
\draw[white,line width=2mm] ({2/23},{11/23})--({(3-.1*\m)/\n},.1);
\draw[blue, thick] (0,{(\m)/\m})--({2/23},{11/23})--({(3)/\n},0);
% slope is -27/5 equation is 5*y+27*x=5
%left edge to bottom edge
\foreach \i in {1,4}{ % range is i=1,...,m such that u_i=0 or u_i=1
\draw[white, line width=2mm] (0,{(\i)/\m}) --  ({(\i-.1*\m)/\n},.1);
\draw[blue,  thick] (0,{(\i)/\m})--  ({(\i)/\n},0);  % nx+my=i+.5
}
%top edge to bottom edge
\foreach \j in {1}{ % range is j=1,..,n such that v_j=0 and j\leq n-m
\draw[white, line width=2mm] ({(\j+.1*\m)/\n},1-.1) -- ({(\j+\m-.1*\m)/\n},0.1); 
\draw[blue,  thick] ({(\j)/\n},1) -- ({(\j+\m)/\n},0); %nx+my=m+j+.5
}
%top edge to right edge
\foreach \j in {1,2,4}{ % range is i=1,...,n such that v_i and n-m<i\leq n (with i=n-m+j)
\draw[white, line width=2mm]  (1,{(\j)/\m}) -- ({(\j+\r+.1*\m)/\n},1-.1);
\draw[blue,  thick]  (1,{(\j)/\m}) -- ({(\j+\r)/\n},1);
}
\draw[white,line width=2mm] ({(\n)/\n},0)--({7/37},{5/37})--(0,{(2)/\m}); 
\draw[blue, thick] ({(\n)/\n},0)--({7/37},{5/37})--(0,{(2)/\m});
% slope is -7/55 equation is 55*x+7*y=55
%bounding rectangle
\draw[gray] (0,0) rectangle (1,1);
\end{tikzpicture}
\caption{A primitive $(4,5)$ curve  $C$ (left) and the square diagram representing the corresponding knot $\KT_C$ (right).}
\label{fig:monotone curve}
\end{figure}
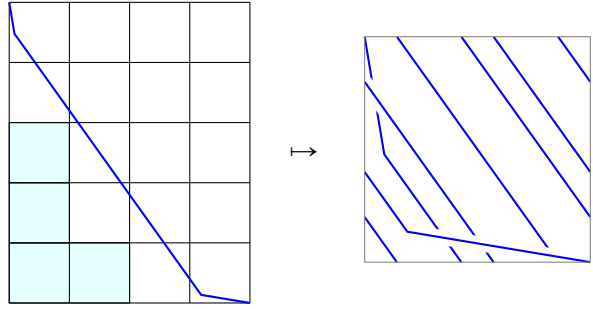

In this section we recall the notion of Coxeter knots.  These generalize torus knots, and are particularly well-suited to connections with the flag Hilbert scheme \cite{GNR21, OR}.   For our purposes, it is most convenient to regard the family of (positive) Coxeter knots as parametrized by Young diagrams.

\begin{definition}\label{def:partition to coxeter}
Let $\tau$ be a partition, and let $M$ be an integer with $M>\tau_1$.  Define a sequence $N>\mu_1\geq \cdots \geq \mu_M=0$ by letting $\mu_j$ be the number of indices $i$ with $\tau\geq j$ (note that $\mu$ is the transpose partition of $\tau$, extended by zero to obtain a sequence of length $M$).

Associated to the pair $(M,\tau)$, we have the braid $\beta^{cox}_{M,\tau}\in \Br_M$ defined by
\begin{equation}\label{eq:JM coxeter braid}
\beta^{cox}_{M,\tau} := (\one_{M-2}\sqcup \JM_2)^{\mu_1-\mu_2}(\one_{M-3}\sqcup \JM_3)^{\mu_2-\mu_3}\dots \JM_{M}^{\mu_{M-1}-\mu_M} \sigma_1\sigma_2\cdots\sigma_{M-1},
\end{equation}
where $\JM_i\in \Br_i$ is the \newword{Jucys-Murphy} braid $\JM_i=(\sigma_1\sigma_2\cdots\sigma_{i-1})(\sigma_{i-1}\cdots\sigma_2\sigma_1)$.
Knots which are closures of braids of this form are called \newword{(positive) Coxeter knots}.  
\end{definition}

\begin{proposition}\label{prop:Ktau}
The closure of $\b^{\cox}_{M,\tau}$ depends only on $\tau$ up to isotopy.  Furthermore, the normalizing exponent \eqref{eq:normalizing exponent} is given by $\d(\b^{\cox}_{M,\tau})=|\tau|$.
\end{proposition}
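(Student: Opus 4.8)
\textbf{Proof plan for Proposition \ref{prop:Ktau}.}

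The plan is to prove the two assertions separately, starting with the easier computation of the normalizing exponent. For the braid $\b^{\cox}_{M,\tau}$, all crossings are positive (it is a positive braid), so by \eqref{eq:normalizing exponent} we have $\d(\b^{\cox}_{M,\tau}) = \frac12(e - M + c)$, where $e$ is the total number of crossings, $c$ is the number of link components, and we have already assumed (or will verify, since the closure turns out to be a knot for the cases of interest, but in general $c$ can be computed directly) what $c$ is. The number of crossings is easy to tally: each factor $(\one_{M-i}\sqcup \JM_i)$ contributes $2(i-1)$ crossings, the Coxeter tail $\sigma_1\cdots\sigma_{M-1}$ contributes $M-1$, so $e = \sum_{i=2}^{M} 2(i-1)(\mu_{i-1}-\mu_i) + (M-1)$. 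Using Abel summation (summation by parts) on the telescoping differences $\mu_{i-1}-\mu_i$, this collapses to $e = 2\sum_{j=1}^{M-1}\mu_j + (M-1) = 2|\tau| + (M-1)$, since $\sum \mu_j = |\mu| = |\tau|$ and $\mu_M = 0$. Then $\d = \frac12(2|\tau| + (M-1) - M + c) = |\tau| + \frac{c-1}{2}$; for a knot $c=1$ and this gives exactly $|\tau|$. I expect the paper intends $c=1$ here (or more precisely $c$ equals the number of components of $K_\tau$, in which case the formula should read $|\tau| + \tfrac{c-1}{2}$, but for knots it is $|\tau|$); I would state this carefully.

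For the isotopy-invariance claim, the key point is that enlarging $M$ to $M+1$ does not change the closure up to isotopy. When we pass from $(M,\tau)$ to $(M+1,\tau)$, the transpose sequence $\mu$ gains one extra zero at the end: $\mu_{M+1}=0$, and since $\mu_M=0$ already, the new factor $(\one_{M-1}\sqcup \JM_{M+1})^{\mu_M - \mu_{M+1}} = (\one_{M-1}\sqcup\JM_{M+1})^0 = \one_{M+1}$ is trivial. The remaining factors $(\one_{M+1-i}\sqcup\JM_i)^{\mu_{i-1}-\mu_i}$ for $2\le i\le M$ are the old factors with one extra trivial strand added on top, and the Coxeter tail becomes $\sigma_1\cdots\sigma_M$ instead of $\sigma_1\cdots\sigma_{M-1}$. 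So $\b^{\cox}_{M+1,\tau} = (\b^{\cox}_{M,\tau}\sqcup \one_1)\cdot \sigma_M$, i.e. we have added one strand that runs straight up alongside the braid and then is swept once over by the extra $\sigma_M$ at the end. Upon taking the braid closure, this extra strand together with the single crossing $\sigma_M$ is a Markov stabilization (destabilization), so $\widehat{\b^{\cox}_{M+1,\tau}} \cong \widehat{\b^{\cox}_{M,\tau}}$ as links. Iterating, the closure is independent of the choice of $M > \tau_1$.

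The main obstacle I anticipate is the bookkeeping in identifying $\b^{\cox}_{M+1,\tau}$ with a Markov stabilization of $\b^{\cox}_{M,\tau}$: one must be careful about which end of the braid the extra strand is adjoined to and verify that the lone crossing introduced at the bottom really is in "destabilization position" (the new strand $\sigma_M$ appears exactly once, involving the topmost two strands, and is separated from the rest of the braid word since all earlier factors live in $\Br_M\subset \Br_{M+1}$). I would verify this either by a direct picture (the Jucys–Murphy braids $\JM_i$ for $i\le M$ use only generators $\sigma_1,\dots,\sigma_{M-1}$, so commute past the strand in position $M+1$) or, if one prefers an algebraic argument, by checking that in $\Br_{M+1}$ the word $\b^{\cox}_{M+1,\tau}$ is conjugate to $w\sigma_M$ with $w\in\Br_M$ not involving $\sigma_M$, which is precisely the hypothesis of the Markov destabilization move. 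Once that is in place the proposition follows; I would include the small picture analogous to Figure \ref{fig:monotone curve} to make the stabilization transparent.
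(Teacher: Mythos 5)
Your computation of the normalizing exponent is correct and matches the paper's: the telescoping sum $\sum_{i=2}^{M} 2(i-1)(\mu_{i-1}-\mu_i)$ collapses by Abel summation to $2\sum_j \mu_j = 2|\tau|$ (using $\mu_M=0$), giving $e = 2|\tau|+M-1$ and hence $\d=|\tau|$ for a knot. Your extra care in noting the formula $\d=|\tau|+\tfrac{c-1}{2}$ for general $c$ is a small improvement on the paper's terse ``the number of components is~$1$.''

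Your approach to isotopy invariance (Markov stabilization when $M\mapsto M+1$) is the same as the paper's, but you have the direction of the stabilization backwards, and this is a genuine error rather than a convention choice. In the factor $\one_{M-i}\sqcup\JM_i$, the Jucys–Murphy braid $\JM_i$ sits on top by the paper's $\sqcup$ convention, occupying strands $M-i+1,\ldots,M$ and using generators $\sigma_{M-i+1},\ldots,\sigma_{M-1}$. When you pass from $M$ to $M+1$, these factors shift \emph{up} by one strand (they become $\one_{M+1-i}\sqcup\JM_i$, using $\sigma_{M+2-i},\ldots,\sigma_M$), so the extra trivial strand is adjoined at the \emph{bottom}, not the top. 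Consequently, your proposed identity $\b^{\cox}_{M+1,\tau}=(\b^{\cox}_{M,\tau}\sqcup\one_1)\sigma_M$ is false as an equation of braids: already for $\tau=(2)$ and $M=3$ the two sides have different exponent vectors (the left side uses $\sigma_3$ three times, the right side only once). Notice also that your proposed sanity check—that the earlier factors ``live in $\Br_M\subset\Br_{M+1}$''—is precisely where the error surfaces, since the shifted factors $\one_{M+1-i}\sqcup\JM_i$ do involve $\sigma_M$. The correct statement, which is what the paper uses, is that $\b^{\cox}_{M+1,\tau}$ is cyclically conjugate to $(\one_1\sqcup\b^{\cox}_{M,\tau})\sigma_1$, where $\one_1\sqcup\b^{\cox}_{M,\tau}$ is a word in $\sigma_2,\ldots,\sigma_M$ only; Markov destabilization at $\sigma_1$ then strips the bottom strand, recovering a conjugate of $\b^{\cox}_{M,\tau}$. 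With the stabilization moved to the bottom strand and a cyclic conjugation inserted, the rest of your argument goes through.
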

\begin{proof}
Increasing $M$ by one has the following effect: $\b^{\cox}_{M+1,\tau} =  (\one_1\sqcup \b^{\cox}_{M,\tau})\sigma_1$, which represents the same knot as $\b^{\cox}_{M,\tau}$.  This proves the first statement.

For the second statement, note that the number of crossings in the braid \eqref{eq:JM coxeter braid} is
\begin{align*}
    e
    &=2(\mu_1-\mu_2)+4(\mu_2-\mu_3)+\cdots+2M(\mu_{M-1}-0) +M-1\\
    &=2\mu_1+\cdots+2\mu_{M-1}+M-1\\
    &=2|\tau|+M-1,
\end{align*}
since $\mu_M=0$.
The number of components is 1, so by definition we have
\[
\d(\b^{\cox}_{M,\tau}) = \frac{e+1-M}{2} = |\tau|,
\]
as claimed.
\end{proof}

This proposition justifies the following definition.

\begin{definition}\label{def:Ktau}
Given any partition $\tau$ and any integers $M,N\geq 1$ with $\tau\subset [M-1]\times[N-1]$, we we will let $K_\tau$ denote the Coxeter knot obtained as the closure of $\b^{\cox}_{M,\tau}$, which is well-defined by Proposition \ref{prop:Ktau}. In the special case when $\tau=\tau_{m,n,\ell}$ is a triangular partition, we will write $K_{m,n,\ell}:=K_{\tau_{m,n,\ell}}$.  Such knots will be called \newword{broken torus knots}.
\end{definition}
\begin{remark}
We chose the name \emph{broken torus knot} to emphasize that $K_\tau$ is obtained from a torus knot by removing an arc and re-gluing a different arc, as in Figures \ref{fig:monotone curve} and \ref{fig:binary knot}.
\end{remark}

As mentioned already, one of our main goals is to show that the Schr\"oder polynomial $S_{m,n,\ell}(q,t,a)$ coincides with the the KR series of $K_{{m,n,\ell}}$.  In order to prove this, we will need a more topologically friendly description of $K_{\tau}$ as a monotone knot.

\subsection{Monotone Curves and Knots} 
\label{ss:monotone}

Galashin-Lam \cite{GL23} introduced the notion of a monotone knot and showed that the set of monotone knots (up to isotopy in $\R^3$) coincides with the set of positive Coxeter knots.   In this section we recall their definition, taking some liberties.

\begin{remark}
The work of Galashin-Lam holds for links as well, but we will restrict to the case of knots for simplicity.
\end{remark}

Let $M,N$ be positive integers. A primitive $(M,N)$-\newword{monotone curve} $C$ is the graph of a strictly decreasing continuous function $f:[0,M] \to [0,N]$ satisfying $f(0)=N$, $f(M)= 0$, and such that $C\cap \Z\times \Z = \{(0,M),(N,0)\}$.

\begin{definition}\label{def:C link}
Let $\T= \R^2 / \Z^2$ denote the torus and $\pi:\R^2 \to \T$ be the quotient map. Then the \newword{knot $\KT_C$ associated to a primitive monotone curve} $C$ is the knot $\KT_C\subset \T\times [0,1]$ defined to be concatenation of the curve $\{(\pi(x,f(x)), x/M) \: :\:  x \in [0,M]\}$ and the vertical line segment from $((0,0),1)$ to $((0,0),0)$. 

Let $\KnR_C$ denote the image of $\KT_C$ under the standard embedding $\T\times [0,1]\hookrightarrow \R^3$.  A knot $K\subset \R^3$ is \newword{monotone} if it is isotopic to $\KnR_C$ for some primitive monotone curve $C$.
\end{definition}

\begin{example}
If $m,n\in \Z_{\geq 1}$ are coprime and $C$ is the graph of $\{nx+my=mn\}$ for $0\leq x\leq m$, then $\KnR_C$ is the $(m,n)$ torus knot. Thus, positive torus knots are monotone.
\end{example}

Below we setup some language for discussings knots in the thickened torus and the diagrams we use to represent them.  In the definition below, note that for a surface $\Sigma$ we will distinguish between knots in $\Sigma\times [0,1]$ and their diagrams in $\Sigma$.  Strictly speaking, a \emph{knot diagram} for $K\subset \Sigma\times [0,1]$ is an immersed curve $G$ in $\Sigma$ with double point singularities, together with overcrossing / undercrossing data at the double points.  By a (mostly harmless) abuse, will write $G\subset \Sigma$ and regard $G$ as nothing more than a subset of $\Sigma$.

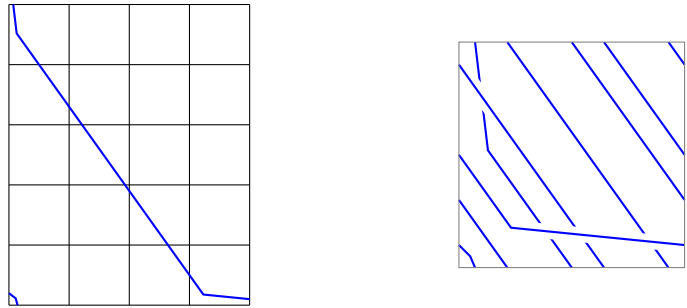
\begin{figure}[ht]
\begin{tikzpicture}[scale=.8,anchorbase]
\foreach\j in {0,1,...,5}{\draw[black] (0,\j)--(4,\j);}
\foreach\i in {0,1,...,4}{\draw[black] (\i,0)--(\i,5);}
\draw[blue, thick] ({.5/7},5)--({2/23+1/24},{4+11/23+1/24})--({3+7/37+1/24},{5/37+1/24})--(4,{.5/5});
%\draw[red, thick] (.5/7,0)--(1.2/24,1.2/24)--(0,.5/5);
\draw[blue, thick] (1/7,0)--(2.7/24,2.7/24)--(0,1/5);
\end{tikzpicture}
\hskip 1in
\begin{tikzpicture}[scale=3,anchorbase]
% u=(0010b)
% v=(0100b0b)
\def\m{5}
\def\n{7}
\def\r{2} %n-m
\def\I{3} %the one in u
\def\J{2} %the one in v
%
%\foreach \i in {1,2,4,5,7}{
%\draw[purp,  thick, dotted] ({(\i-.5)/\n},0)--({(\i-.5)/\n},1);
%}
\draw[white,line width=2mm] ({2/23+1/24},{11/23+1/24})--({(3+.5-.1*\m)/\n},.1);
\draw[blue, thick] ({(.5)/\n},{(\m)/\m})--({2/23+1/24},{11/23+1/24})--({(3+.5)/\n},0);
% slope is -27/5 equation is 5*y+27*x=5
%left edge to bottom edge
\foreach \i in {1,4}{ % range is i=1,...,m such that u_i=0 or u_i=1
\draw[white, line width=2mm] (0,{(\i+.5)/\m}) --  ({(\i+.5-.1*\m)/\n},.1);
\draw[blue,  thick] (0,{(\i+.5)/\m})--  ({(\i+.5)/\n},0);  % nx+my=i+.5
}
%top edge to bottom edge
\foreach \j in {1}{ % range is j=1,..,n such that v_j=0 and j\leq n-m
\draw[white, line width=2mm] ({(\j+.5+.1*\m)/\n},1-.1) -- ({(\j+\m+.5-.1*\m)/\n},0.1); 
\draw[blue,  thick] ({(\j+.5)/\n},1) -- ({(\j+\m+.5)/\n},0); %nx+my=m+j+.5
}
%top edge to right edge
\foreach \j in {1,2,4}{ % range is i=1,...,n such that v_i and n-m<i\leq n (with i=n-m+j)
\draw[white, line width=2mm]  (1,{(\j+.5)/\m}) -- ({(\j+\r+.5+.1*\m)/\n},1-.1);
\draw[blue,  thick]  (1,{(\j+.5)/\m}) -- ({(\j+\r+.5)/\n},1);
}
\draw[white,line width=2mm] ({(\n)/\n},{.5/\m})--({7/37+1/24},{5/37+1/24})--(0,{(2+.5)/\m}); 
\draw[blue, thick] ({(\n)/\n},{.5/\m})--({7/37+1/24},{5/37+1/24})--(0,{(2+.5)/\m});
% slope is -7/55 equation is 55*x+7*y=55
%bounding rectangle
\draw[blue, thick] (.5/\n,0)--(1.2/24,1.2/24)--(0,.5/\m);  %(.0563,.0906)
\draw[gray] (0,0) rectangle (1,1);
\end{tikzpicture}
\caption{A shift by $+(\e,\e)$ of the curve $C$ from Figure \ref{fig:monotone curve}, drawn in $\R^2/(4\Z\times 5\Z)$ (left), and the corresponding shifted square diagram $G^{I\times I}_+$ (right).}
\label{fig:shiftedcurve}
\end{figure}

\begin{definition}\label{def:torus diagrams}
Throughout this definition, let $I:=[0,1]$.  Recall that we identify $\T$ as $\R^2/\Z^2$ or, equivalently, as the unit square $[0,1]^2$ with opposite edges identified. The image of $\Z^2$ (respectively $\{0,1\}^2$) in $\T$ will be referred to as the \newword{basepoint of $\T$}.

Let $\KT$ be a knot (or link) in $\T\times I$.  A \newword{square diagram} for $\KT$ is a diagram (with boundary) $G^{I\times I}$ in the unit square $I\times I$ which becomes a knot diagram for $\KT$ after identifying opposite edges. 
\end{definition}
Note that all printed diagrams of knots in the thickened torus are square diagrams in the above sense.
\begin{remark}\label{rmk:monotone diagram}
To obtain a square diagram for a monotone knot $\KT_C$, we draw $G^{I\times I}$ so that for two arcs $\a=C\cap ([i-1,i]\times [j-1,j])$ and $\a'=C\cap([i'-1,i']\times [j'-1,j'])$ with $i<i'$, the image of $\a'$ in $[0,1]^2$ is drawn \emph{above} the image of $\a$. See Figure \ref{fig:monotone curve} for an example.
\end{remark}

\begin{construction}\label{const:KT to KR}
Let $\KT\subset \T\times I$ be a knot (or link) in the thickened torus, and let $\KnR\subset \R^3$ denote its image  under the standard embedding $\T\times I\hookrightarrow \R^3$.  Let $G^\T\subset \T$ be a knot diagram for $\KT$, and let $G^{I\times I}$ be the preimage of $G^\T$ under the projection $I^2\to \T$, so that $G^{I\times I}$ is a square diagram for $\KT$.  We describe how to obtain a diagram for $\KnR$ from $G^{I\times I}$

Assume that $G^\T$ is disjoint from the basepoint (if not, see below), so that $G^{I\times I}$ is disjoint from the corners $\{0,1\}\times \{0,1\}$.  The boundary of $G^{I\times I}$ will consist of pairs of matching points of the form $(X\times \{0,1\}) \ \sqcup (\{0,1\}\times Y)$, where $X,Y$ are finite subsets of $(0,1)$.  To obtain a diagram of $\KnR$ from this $G$, we connect matching points $\{x\}\times\{0,1\}$ by arcs below the rest of the diagram, and we connect matching points $\{0,1\}\times \{y\}$ by any disjoint family of arcs in $\R^2\setminus [0,1]^2$.  The result is a knot diagram, denoted $G^{\R\times \R}$, which represents $\KnR$. See Figure \ref{fig:KT to KR} for an example.

If $G^\T$ meets the basepoint, then we let $G^\T_{\pm}$ denote the diagrams obtained by adding $\pm(\e,\e)$ for $\e>0$ small, we let $G^{I\times I}_{\pm}$ denote their lifts to the unit square, and we let $G^{\R\times \R}_{\pm}$ the diagrams for $\KnR$ obtained as above. 
\end{construction}

\begin{figure}[ht]
\begin{tikzpicture}[scale=3,anchorbase]
% u=(0010b)
% v=(0100b0b)
\def\m{5}
\def\n{7}
\def\r{2} %n-m
\def\I{3} %the one in u
\def\J{2} %the one in v
\foreach \i in {2,4,5,7}{
\draw[blue,  thick] ({(\i-.5)/\n},0)--({(\i-.5)/\n},1);
}
{\def\i{1}
\draw[orn,  thick] ({(\i-.5)/\n},0)--({(\i-.5)/\n},1);
}
% horizontal strands
\foreach \i in {2,3,5}{  % range is i=1,..,m such that u_i=0
\draw[white, line width=2mm]
(0,{(\i-.5)/\m})..controls++(-.3,0)and++(0,{.1*(\i+1)})..
({-.3-\i/(1.5*\m)},-.2)..controls++(0,{-.1*(\i+1)})and++(-.2,0)..
(.5,{-\i/(2*\m)-.3})..controls++(.2,0)and++(0,{-.1*(\i+1)})..
({1.3+\i/(1.5*\m)},-.2)..controls++(0,{.1*(\i+1)})and++(.3,0)..
(1,{(\i-.5)/\m});
\draw[blue,  thick]
(0,{(\i-.5)/\m})..controls++(-.3,0)and++(0,{.1*(\i+1)})..
({-.3-\i/(1.5*\m)},-.2)..controls++(0,{-.1*(\i+1)})and++(-.2,0)..
(.5,{-\i/(2*\m)-.3})..controls++(.2,0)and++(0,{-.1*(\i+1)})..
({1.3+\i/(1.5*\m)},-.2)..controls++(0,{.1*(\i+1)})and++(.3,0)..
(1,{(\i-.5)/\m});
}
{
\def\i{1}
\draw[white, line width=2mm]
(0,{(\i-.5)/\m})..controls++(-.3,0)and++(0,{.1*(\i+1)})..
({-.3-\i/(1.5*\m)},-.2)..controls++(0,{-.1*(\i+1)})and++(-.2,0)..
(.5,{-\i/(2*\m)-.3})..controls++(.2,0)and++(0,{-.1*(\i+1)})..
({1.3+\i/(1.5*\m)},-.2)..controls++(0,{.1*(\i+1)})and++(.3,0)..
(1,{(\i-.5)/\m});
\draw[orn,  thick]
(0,{(\i-.5)/\m})..controls++(-.3,0)and++(0,{.1*(\i+1)})..
({-.3-\i/(1.5*\m)},-.2)..controls++(0,{-.1*(\i+1)})and++(-.2,0)..
(.5,{-\i/(2*\m)-.3})..controls++(.2,0)and++(0,{-.1*(\i+1)})..
({1.3+\i/(1.5*\m)},-.2)..controls++(0,{.1*(\i+1)})and++(.3,0)..
(1,{(\i-.5)/\m});
}
\draw[white,line width=2mm] ({2/23+1/24},{11/23+1/24})--({(3+.5-.1*\m)/\n},.1);
\draw[orn,very thick] ({(.5)/\n},{(\m)/\m})--({2/23+1/24},{11/23+1/24});
\draw[blue, thick] ({2/23+1/24},{11/23+1/24})--({(3+.5)/\n},0);
% slope is -27/5 equation is 5*y+27*x=5
%
%left edge to bottom edge
\foreach \i in {1,4}{ % range is i=1,...,m such that u_i=0 or u_i=1
\draw[white, line width=2mm] (0,{(\i+.5)/\m}) --  ({(\i+.5-.1*\m)/\n},.1);
\draw[blue,  thick] (0,{(\i+.5)/\m})--  ({(\i+.5)/\n},0);  % nx+my=i+.5
}
{\def\i{0}
\draw[orn,  thick] (0,{(\i+.5)/\m})--  ({(\i+.5)/\n},0);  % nx+my=i+.5
}
%top edge to bottom edge
\foreach \j in {1}{ % range is j=1,..,n such that v_j=0 and j\leq n-m
\draw[white, line width=2mm] ({(\j+.5+.1*\m)/\n},1-.1) -- ({(\j+\m+.5-.1*\m)/\n},0.1); 
\draw[blue,  thick] ({(\j+.5)/\n},1) -- ({(\j+\m+.5)/\n},0); %nx+my=m+j+.5
}
%top edge to right edge
\foreach \j in {1,2,4}{ % range is i=1,...,n such that v_i and n-m<i\leq n (with i=n-m+j)
\draw[white, line width=2mm]  (1,{(\j+.5)/\m}) -- ({(\j+\r+.5+.1*\m)/\n},1-.1);
\draw[blue,  thick]  (1,{(\j+.5)/\m}) -- ({(\j+\r+.5)/\n},1);
}
\draw[white,line width=2mm] ({(\n)/\n},{.5/\m})--({7/37+1/24},{5/37+1/24})--(0,{(2+.5)/\m}); 
\draw[orn, thick] ({(\n)/\n},{.5/\m})--({7/37+1/24},{5/37+1/24});
\draw[blue, thick] ({7/37+1/24},{5/37+1/24})--(0,{(2+.5)/\m});
% slope is -7/55 equation is 55*x+7*y=55
%bounding rectangle
%\draw[gray] (0,0) rectangle (1,1);
\end{tikzpicture}
\caption{The knot diagram $G^{\R\times \R}_+$ obtained by closing up the shifted square diagram $G^{I\times I}_+$ from Figure \ref{fig:shiftedcurve}, as in Construction \ref{const:KT to KR}. The orange coloring will play a role when we compare to Figure \ref{fig:binary knot}, and may be ignored for now.}
\label{fig:KT to KR}
\end{figure}
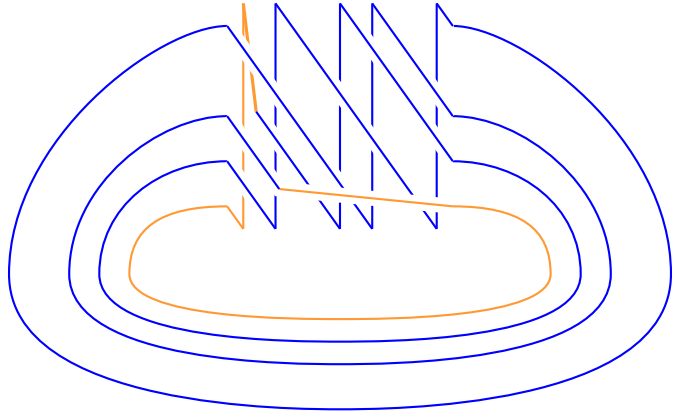
\begin{definition}\label{def:monotone curve to partition}
Let $C$ be a primitive $(M,N)$-monotone curve $C$.  Let $\tau_C$ denote the maximal partition
contained in $\{0\leq y\leq f(x)\:|\: 0\leq x\leq M\}$.

\end{definition}
In other words, a box $(x,y)\in [m]\times[n]$ is in $\tau_C$ if and only if $y<f(x)$.  The connection between monotone knots and Coxeter knots is provided by the following. 

We refer the reader to \cite[Prop. 7.5]{GL23} for a restatement and proof of the following fact.
\begin{proposition} \label{prop:CoxConjugate}
We have $\KnR_C=K_{\tau_C}$, where $K_{\tau_C}$ is as in Definition \ref{def:partition to coxeter}.
In particular, all monotone knots are Coxeter knots, and conversely.
\end{proposition}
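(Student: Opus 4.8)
The plan is to reconstruct \cite[Prop.~7.5]{GL23}: reduce the statement to a braid computation, and then check that the braid one reads off from a primitive monotone curve is, up to conjugation in the braid group, exactly the Jucys--Murphy Coxeter braid $\b^{\cox}_{M,\tau}$ of Definition~\ref{def:partition to coxeter}.

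\emph{Step 1: $\KnR_C$ depends only on $\tau_C$.} If $C$ and $C'$ are primitive monotone curves with $\tau_C=\tau_{C'}$, then they traverse the same sequence of unit cells and meet the grid lines in the same pattern, so there is an ambient isotopy of $\T$ carrying the torus diagram of $C$ to that of $C'$. Since the passage from a torus diagram to $\KnR$ in Construction~\ref{const:KT to KR} — the closing-up procedure together with the standard embedding $\T\times[0,1]\hookrightarrow\R^3$ — is canonical up to isotopy in $\R^3$, this yields $\KnR_C\simeq\KnR_{C'}$. Hence it suffices, for each partition $\tau$ with $\tau\subset[M-1]\times[N-1]$, to choose one convenient curve $C$ with $\tau_C=\tau$ and identify $\KnR_C$.

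\emph{Step 2: read off a braid and match it with $\b^{\cox}_{M,\tau}$.} Using the shifted square diagram of Remark~\ref{rmk:monotone diagram} and closing it up as in Construction~\ref{const:KT to KR} (see Figure~\ref{fig:KT to KR}), the knot $\KnR_C$ is presented as the closure of an explicit braid $\b_C$ on $M$ strands, indexed by the arcs of $C$ lying in the vertical strips $[i-1,i]\times[0,N]$. Its crossings come from two sources: the mutual crossings of the arcs of the square diagram, which are governed by the boxes of $\tau_C$ and, once grouped by rows, assemble into the powers of the Jucys--Murphy braids $\JM_{M,i}$; and the crossings created by the closing arcs $H_i$ and $B$, which produce the trailing Coxeter factor $\sigma_1\sigma_2\cdots\sigma_{M-1}$ together with the conjugation by the half-twist that converts the ``standard'' Jucys--Murphy elements into the $\JM_{M,i}$ appearing in Definition~\ref{def:partition to coxeter} (compare the arcs of Construction~\ref{constr:Kuv}). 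A direct inspection then shows $\b_C$ is conjugate in $\Br_M$ to $\b^{\cox}_{M,\tau_C}$; as a consistency check, the total crossing number equals $2|\tau_C|+M-1=e(\b^{\cox}_{M,\tau_C})$, in agreement with Proposition~\ref{prop:Ktau}. Conjugate braids have isotopic closures, so $\KnR_C=K_{\tau_C}$, and Proposition~\ref{prop:Ktau} gives independence of $M$. Running the argument backwards — every $\b^{\cox}_{M,\tau}$ is realized by the bounding lattice path of $\tau$, which is in turn bounded by some primitive monotone curve — yields the converse, so the two families of knots coincide.

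\emph{Alternative via induction, and the main obstacle.} Step~2 may instead be done by induction on $|\tau|$ (equivalently on the number of cells $C$ visits): the base case $\tau=\emptyset$ is the unknot on both sides, and deleting an outer corner box of $\tau$ corresponds, on the curve side, to isotoping $C$ across one cell and, on the braid side, to a single braid relation or a Markov destabilization, so one only needs that these operations match at each step. In either approach the genuinely delicate point — and the reason we ultimately defer to \cite[Prop.~7.5]{GL23} — is the precise dictionary in Step~2: aligning the edge-identification and strand-labelling conventions of the Galashin--Lam square diagram with the particular Jucys--Murphy presentation of Definition~\ref{def:partition to coxeter}, and in particular verifying that the wrap-around arcs contribute exactly the trailing Coxeter word and the half-twist conjugation, and nothing more.
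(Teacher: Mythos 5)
The paper does not actually prove this proposition: it simply states it and refers the reader to \cite[Prop.\ 7.5]{GL23} for both the restatement and the proof. Your proposal is therefore more detailed than what the paper itself offers. Your Step 1 (well-definedness of $\KnR_C$ given $\tau_C$) is a correct and worthwhile observation: two primitive monotone curves with the same underlying partition cross the horizontal and vertical grid lines in the same order, since whether $x=i$ is crossed before $y=j$ is equivalent to the box $(i,j)$ belonging to $\tau_C$, so their square diagrams are combinatorially identical and the resulting knots in $\R^3$ agree. Your Step 2 correctly identifies the content of the Galashin--Lam computation, namely reading a braid from the square diagram and matching it with the Jucys--Murphy form of $\b^{\cox}_{M,\tau}$ after the half-twist conjugation; the consistency check on crossing number against Proposition~\ref{prop:Ktau} is a nice sanity check. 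One small imprecision: the exponents $\mu_{i-1}-\mu_i$ in Definition~\ref{def:partition to coxeter} are differences of \emph{column} lengths of $\tau$ (i.e.\ parts of the transpose $\mu$), so the grouping should be by columns of $\tau$ rather than ``rows'' as you write, though this is only a matter of naming. You correctly flag the genuinely delicate point, the precise dictionary between the square-diagram conventions and the braid word, and you defer to \cite{GL23} exactly where the paper does; in that sense your approach is the same as the paper's (cite Galashin--Lam), with the added value of an explanatory sketch that the paper omits.
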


We note a few very important special cases of $K_\tau$.

\begin{example}\label{ex:torus knot as Ktau}
Let $m,n\geq 1$ be coprime integers, and set $\tau=\tau_{m,n}$.  Then $K_\tau=T(m,n)$ is the $(m,n)$-torus knot.  We may present $T(m,n)$ in the standard way, as the monotone knot associated to the straight line segment $C$ from $(0,n)$ to $(m,0)$.  Alternatively, we may deform $C$ by an isotopy in $[0,m]\times[0,n]\setminus \Z\times \Z$, obtaining various non-standard presentations.

The associated sequence $N>\mu_1\geq \cdots \geq \mu_M=0$ is defined by $\mu_i = \lfloor n-\frac{in}{m}\rfloor=\lfloor \frac{(m-i)n}{m}\rfloor$, and from Proposition \ref{prop:CoxConjugate} one obtains a description of $T(m,n)$ as the closure of the braid
\[
\b^{\cox}_{m,\tau} = \JM_2^{\lfloor \frac{(m-1)n}{m}\rfloor-\lfloor \frac{(m-2)n}{m}\rfloor}\JM_3^{\lfloor \frac{(m-2)n}{m}\rfloor-\lfloor \frac{(m-3)n}{m}\rfloor}\cdots \JM_m^{\lfloor \frac{n}{m}\rfloor-0}\sigma_1\cdots\sigma_{m-1},
\]
(where we are writing $\JM_i=\one_{M-i}\sqcup \JM_i$, by a mild abuse of notation).  This braid representing $T(m,n)$ also appears, for instance, in \cite[Conjecture 3.28]{GNR21}.
\end{example}

\begin{example}\label{ex:iterated torus knot as Ktau}
Let $m,n,d\geq 1$ be integers with $m,n$ coprime, and set $\tau=\tau_{md,nd}$.  Then $K_\tau$ is the $(d,mnd+1)$ cable of $T(m,n)$ torus knot.  This was proven by Galashin and Lam (\cite[Lemma 8.1]{GL23}). We give an alternate proof in Appendix \ref{sec:AppendixB}.
\end{example}

\subsection{Knots from Binary Sequences; Redux}
\label{ss:KR of Krs}
In this section we show that every knot $K_{\tau}$ with $\tau$ a triangular partition is of the form $K_{\mathbf{u},\mathbf{v}}$, for some pair of binary sequences (and conversely).  The notion of monotone knots (and the flexibility afforded by being able to perform isotopies in $\R^2\setminus \Z^2$) will help facilitate this connection. To do so, we begin by first reconstructing the trinary sequences ${\bf x}(m,n,\ell)$ and ${\bf y}(m,n,\ell)$ from \eqref{eq:trinarysequences}. 

\begin{figure}[ht]
\begin{tikzpicture}[scale=.9,anchorbase]
\def\m{4}
\def\n{5}
\dRarc{4}{5}{0}{4}
\dLarc{4}{5}{0}{3}
\dRarc{4}{5}{1}{3}
\dVarc{4}{5}{1}{2}
\dLarc{4}{5}{1}{1}
\dRarc{4}{5}{2}{1}
\dLarc{4}{5}{2}{0}
\dRarc{4}{5}{3}{0}
\dFarc{4}{5}
\foreach \i in {0,1,...,3}{
	\foreach \j in {0,1,...,4}{
	\begin{scope}[shift={(\i,\j)}]
		\draw (0,0) rectangle (1,1);
		\foreach \p in {0,1,...,3}{
			\draw[thick] (-.05,{(\p+.5)/\m})--(.05,{(\p+.5)/\m})
				(.95,{(\p+.5)/\m})--(1.05,{(\p+.5)/\m});
		}
		\foreach \l in {0,1,...,4}{
			\draw[thick] ({(\l+.5)/\n},-.05)--({(\l+.5)/\n},.05)
				({(\l+.5)/\n},.95)--({(\l+.5)/\n},1.05);
		}
	\end{scope}
	}
}
\end{tikzpicture}
\hskip 1in
\begin{tikzpicture}[scale=3,anchorbase]
\def\m{4}
\def\n{5}
\dRaarc{4}{5}{0}{4}
\dLaarc{4}{5}{0}{3}
\dRaarc{4}{5}{1}{3}
\dVaarc{4}{5}{1}{2}
\dLaarc{4}{5}{1}{1}
\dRaarc{4}{5}{2}{1}
\dLaarc{4}{5}{2}{0}
\dRaarc{4}{5}{3}{0}
\dFaarc{4}{5}
%\foreach \i in {0,1,...,3}{
%	\foreach \j in {0,1,...,4}{
%	\begin{scope}[shift={(\i,\j)}]
		\draw (0,0) rectangle (1,1);
%		\foreach \k in {0,1,...,3}{
%			\filldraw[fill=blue] (0,{(\k+.5)/\m}) circle (.03);
%		}
%		\foreach \l in {0,1,...,4}{
%			\filldraw[fill=blue] ({(\l+.5)/\n},1) circle (.03);
%		}
%	\end{scope}
%	}
%}
\end{tikzpicture}
\caption{  We continue the example depicted in Figures \ref{fig:monotone curve} and \ref{fig:shiftedcurve}. In order to facilitate the Coxeter knot description $\KT_C$ we perform an isotopy and shift by $-(\e,\e)$ (instead of $+(\e,\e)$ as we did in Figure \ref{fig:shiftedcurve}).  Closing up as in Construction \ref{const:KT to KR} yields a diagram which is readily seen to be equivalent to the the closure of $\b^\cox_{(2,1,1)}$.} \label{fig:isotopy}
\end{figure}
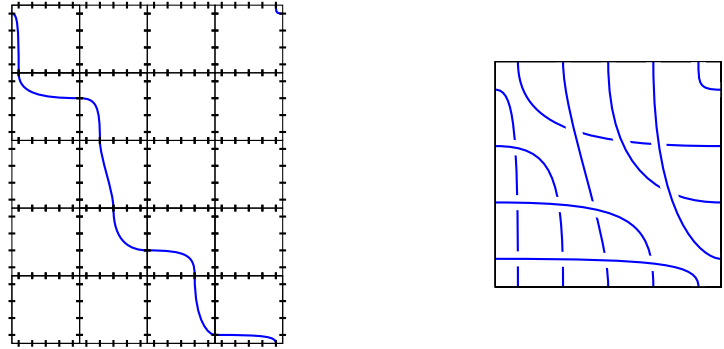

\begin{lemma}\label{lemma: x and y from m,n,l}
Let $(m,n)$ be positive relatively prime integers and $0<\ell<mn,$ $\ell\in\mathbb{Z}.$ Consider the corresponding trinary sequences ${\bf x}(m,n,\ell)=(x_0,\ldots,x_{m-1})$ and ${\bf y}(m,n,\ell)=(y_0,\ldots,y_{n-1}).$ Then,  
\begin{align*}
x_i &= \begin{cases}
    1 & \text{ if } i\equiv \ell \text{ modulo } m \\
    0 & \text{ if } i\equiv \ell - nx \text{ modulo } m \text{ for some } x\in \{1,\ldots,\lfloor \ell/n\rfloor\}\\
    \bullet & \text{ otherwise}
    \end{cases}\\
    y_j &= \begin{cases}
    1 & \text{ if } j\equiv \ell \text{ modulo } n \\
    0 & \text{ if } j\equiv \ell - my \text{ modulo } n \text{ for some } y\in \{1,\ldots,\lfloor \ell/m\rfloor\}\\
    \bullet & \text{ otherwise}
    \end{cases}
\end{align*}

\end{lemma}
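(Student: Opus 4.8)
The plan is to unwind the definitions in Definition~\ref{def:from slope to binary} and translate the condition ``$w_i = 1$'' into the stated congruences. Recall that by Definition~\ref{def:from slope to binary}, $w_i = 1$ if and only if $i + mn - \ell - (n+m) \in \Gamma_{m,n}$. Let me abbreviate $c := mn - \ell - (n+m)$, so that $w_i = 1 \iff i + c \in \Gamma_{m,n}$. I would first observe, via Lemma~\ref{lem:Anderson}(4), that $i + c \in \Gamma_{m,n}$ precisely when the unique box $(a,b) \in \Z \times [n]$ with $\gamma_{m,n}(a,b) = i + c$ satisfies $a \leq 0$. By Lemma~\ref{lem:W-interval}, as $i$ ranges over $\{0, \dots, m+n-1\}$ the value $i + c$ ranges over $W_{m,n,\ell} = [\,mn - \ell - (m+n),\, mn-\ell-1\,] \cap \Z$, i.e.~exactly the Anderson labels of boxes lying on the shifted diagonal $L_{m,n,\ell} = \{nx + my = \ell\}$; so these are the boxes being tested.

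The second step is to translate the definition of $x_i$ and $y_i$ from \eqref{eq:trinarysequences}. We have $x_i$ determined by the pair $(w_{n+i}, w_i)$: it is $1$ when $w_{n+i}=1$ and $w_i = 0$, it is $\bullet$ when $w_{n+i} = w_i = 1$, and it is $0$ when $w_{n+i} = 0$. Using the first step, $w_{n+i} = 1$ means the box with Anderson label $n + i + c = mn - \ell - m + i$ lies in $\Z_{\leq 0}\times[n]$, i.e.~lies on the shifted diagonal and in the left half-plane. Writing this label as $\gamma(a,b) = mn - na - mb$, the condition $na + mb = \ell - i + \text{(multiple of $m$ absorbed)}$... more carefully: $mn - na - mb = mn - \ell - m + i$ gives $na + mb = \ell + m - i$, and reducing modulo $m$ gives $na \equiv \ell - i \pmod m$, i.e.~$i \equiv \ell - na \pmod m$ for some $a$ with $a \leq 0$ \emph{and} $b \in [n]$. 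One then checks that for $w_{n+i}$ vs.\ $w_i$: the box for $w_i$ has Anderson label $i + c = mn - \ell - n - m + i$, which is the label for $w_{n+i}$ shifted by $+n$, hence corresponds to the box $(a+1, b)$ (since $\gamma(a+1,b) = \gamma(a,b) - n$... wait, $\gamma(a,b) = mn - na - mb$ so $\gamma(a+1,b) = \gamma(a,b) - n$, and we want label smaller by $n$, so yes it is $(a+1,b)$). Thus $w_i = 1$ iff $a + 1 \leq 0$, i.e.~$a \leq -1$; and $w_i = 0$ with $w_{n+i}=1$ iff $a = 0$. The case $a = 0$ gives $i \equiv \ell \pmod m$ (the ``$1$'' case for $x_i$); the cases $a = -1, -2, \dots$ give $i \equiv \ell - n|a| \pmod m$ with $|a| \geq 1$, and the constraint $b \in [n]$ together with the box lying on the shifted diagonal bounds $|a| \le \lfloor \ell/n \rfloor$ (this is the ``$0$'' case). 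Everything else is $\bullet$. The argument for $y_j$ is identical with the roles of $m,n$ swapped.

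The main technical point to get right is the precise range of the auxiliary index $x$ (resp.\ $y$): I need to verify that the condition that the relevant box $(a,b)$ both lies on the shifted diagonal $L_{m,n,\ell}$ and has $b \in [n]$, $b \geq 1$, translates exactly to $|a| = x \in \{1, \dots, \lfloor \ell/n\rfloor\}$. Concretely, if the box $(-x, b)$ lies on the shifted diagonal then $n(-x) + m(b) \leq \ell < n(-x) + mb + \text{(one more box worth)}$; combined with $1 \leq b$, the leftmost constraint $mb \leq \ell + nx$ is automatic for small $x$, while the requirement that such a box actually exist on the diagonal for a \emph{positive} value of $b$ forces $nx < \ell$ roughly, i.e.~$x \leq \lfloor \ell/n \rfloor$ (the boundary case $x = \lfloor \ell/n\rfloor$ needs a small check that $b \geq 1$ is still attainable). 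I expect this boundary bookkeeping — making sure no off-by-one error creeps in at $x = \lfloor \ell/n \rfloor$ and that the congruence classes hit by the ``$0$'' values are exactly $\{\ell - n, \ell - 2n, \dots, \ell - n\lfloor \ell/n\rfloor\} \bmod m$ with no collisions against the ``$1$'' class $\ell \bmod m$ — to be the only real obstacle; once the dictionary between Anderson labels on the shifted diagonal and residues mod $m$ (resp.\ mod $n$) is set up cleanly using Lemma~\ref{lem:Anderson} and Lemma~\ref{lem:W-interval}, the rest is a direct verification.
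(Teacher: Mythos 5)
Your translation of $w_i=1$ into Anderson-label data via Lemma~\ref{lem:Anderson}(4) is fine, but the case analysis for $x_i$ goes wrong at a crucial step. You work throughout under the assumption $w_{n+i}=1$, so that the box $(a,b)\in\Z\times[n]$ with $\gamma(a,b)=n+i+c$ has $a\le 0$, and then split into $a=0$ versus $a\le -1$, declaring the latter to be ``the $0$ case.'' But by \eqref{eq:trinarysequences}, $x_i=0$ means $w_{n+i}=0$, which is exactly $a\ge 1$---the case your assumption excludes. When $a\le -1$, the box $(a+1,b)$ for $w_i$ still has nonpositive first coordinate, so $w_i=1$ and $x_i=\bullet$, not $0$. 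There is also a compounding sign slip: under the assumption $a\le -1$ the congruence is $i\equiv\ell-na=\ell+n|a|\pmod m$, not $\ell-n|a|$; since $\gcd(m,n)=1$ these are genuinely different residue classes, so the two errors do not cancel. The bound you cite, $|a|\le\lfloor\ell/n\rfloor$, is likewise not what the shifted-diagonal constraint gives for $a\le -1$ (it gives $n|a|<mn-\ell$ instead; e.g.~for $(m,n,\ell)=(5,4,18)$ there are no boxes on the shifted diagonal with $a\le -1$, consistent with ${\bf x}=00010$ having no $\bullet$).

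To repair the box argument one should condition on $a\ge 1$ for the $0$ case; then $na+mb=\ell+m-i$ gives $i\equiv\ell-na\pmod m$ with $a\ge 1$, and $na=\ell+m-i-mb\le\ell$ (from $b\ge 1$, $i\ge 0$) yields $a\le\lfloor\ell/n\rfloor$. The paper's own proof sidesteps the boxes entirely: it rewrites $w_{n+i}=0$, i.e.~$i+mn-\ell-m\notin\Gamma_{m,n}$, using the semigroup symmetry $\Z\setminus\Gamma_{m,n}=mn-(m+n)-\Gamma_{m,n}$ to obtain $\ell-n-i\in\Gamma_{m,n}$; writing $\ell-n-i=n\alpha+m\beta$ and setting $x=\alpha+1$ gives the stated congruence with $1\le x\le\lfloor\ell/n\rfloor$ at once, while for $x_i=1$ it observes that $i+mn-\ell-m$ must be an $n$-generator of $\Gamma_{m,n}$, hence a multiple of $m$ in the admissible range, forcing $i\equiv\ell\pmod m$.
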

\begin{proof}
Recall that the binary sequence ${\bf w}(m,n,\ell)$ from Definition \ref{def:from slope to binary} is given by the condition that $w_i(m,n,\ell)=1$ if and only if
$$
i+mn-\ell-(n+m)\in \Gamma_{m,n}.
$$

By the definition of ${\bf x}(m,n,\ell)$ one gets $x_i=1$ if and only if $w_{i+n}=1$ and $w_i=0,$ which is equivalent to $i+mn-\ell-(n+m)\notin\Gamma_{m,n}$ and $i+mn-\ell-m\in\Gamma_{m,n}.$ In other words, $i+mn-\ell-m$ is an $n$-generator of $\Gamma_{m,n}.$ It is not hard to see that the set of $n$-generators of $\Gamma_{m,n}$ is $\{0,m,2m,\ldots,(n-1)m\}.$ Since for $i\in\{0,1,\ldots,m-1\}$ and $\ell\in\{0,1,\ldots,mn-1\}$ one gets $-m<i+mn-\ell-m<mn,$ we conclude that $x_i=1$ if and only if $i+mn-\ell-m$ is divisible by $m,$ which is equivalent to $i\equiv \ell$ modulo $m.$ 

Similarly, $x_i=0$ if and only if $w_{i+n}=0,$ or $i+mn-\ell-m\notin\Gamma_{m,n}.$ It is well known that the complement to the semigroup $\Gamma_{m,n}$ is given by
\begin{equation*}
\mathbb{Z}\setminus\Gamma_{m,n}=mn-(m+n)-\Gamma_{m,n}.    
\end{equation*}
Therefore, one obtains that $i+mn-\ell-m\notin\Gamma_{m,n}$ if and only if
\begin{equation*}
mn-(m+n)-i-mn+\ell+m=\ell-n-i\in\Gamma_{m,n}.
\end{equation*}
Equivalently, $x_i=0$ if and only if $i$ is the remainder modulo $m$ of $\ell-nx$, for some $x\in\{1,\ldots,\lfloor \ell/n\rfloor\}.$

The case for ${\bf y}(m,n,\ell)$ follows directly by exchanging $m$ and $n$ in the argument above. 

\end{proof}

\begin{example}
Recall Example \ref{example:l=mn-1 first part}. Then in that case, for $\ell=mn-1,$ we have:
$$
\mathbf{x}(m,n,mn-1)=(0^{m-1}1)\text{ and }\mathbf{y}(m,n,mn-1)=(0^{n-1}1).
$$ 
\end{example}

\begin{construction}\label{constr:triang to binary seq}
Given any triangular partition $\tau$, choose a triple $(m,n,\ell)$ with $\tau=\tau_{m,n,\ell}$. As in \S\ref{ss:rs invariant subsets} let $\mathbf{u}(m,n,\ell)\in\{0,1\}^{m'}$ and $\mathbf{v}(m,n,\ell)\in \{0,1\}^{n'}$ be the binary sequences obtained from $\mathbf{x}(m,n,\ell)\in \{0,1,\bullet\}^m$ and $\mathbf{y}(m,n,\ell)\in \{0,1,\bullet\}^n$ in Lemma \ref{lemma: x and y from m,n,l} by deleting the occurences of $\bullet$, where $m' = \lfloor \ell/n\rfloor +1$ and $n'=\lfloor \ell/m\rfloor +1$.
\end{construction}

\begin{lemma}\label{lem:K-L isotopic}
For any triangular partition $\tau=\tau_{m,n,\ell}$, we have
\[
K_{\tau} = K_{\mathbf{u}(m,n,\ell),\mathbf{v}(m,n,\ell)}.
\]
Conversely, given $\mathbf{u}\in \{0,1\}^{m'}$, $\mathbf{v}\in \{0,1\}^{n'}$ with $|\mathbf{u}|=1=|\mathbf{v}|$ such that $K_{\mathbf{u},\mathbf{v}}$ is a knot, we have $K_{\mathbf{u},\mathbf{v}}=K_{\tau}$, for some triangular partition.
\end{lemma}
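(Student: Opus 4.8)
The plan is to prove both directions by directly analyzing the braid words on each side and recognizing them as the same knot up to isotopy, using the monotone-curve picture of Galashin--Lam (Proposition~\ref{prop:CoxConjugate}) as the bridge. For the forward direction, fix $\tau = \tau_{m,n,\ell}$ and set $m' = \lfloor \ell/n\rfloor + 1$, $n' = \lfloor \ell/m\rfloor + 1$. First I would observe that, by Lemma~\ref{lemma: x and y from m,n,l}, the sequences $\mathbf{u}(m,n,\ell)$ and $\mathbf{v}(m,n,\ell)$ each have exactly one $1$ (coming from the unique index $\equiv \ell$ modulo $m$, resp.\ modulo $n$), so $K_{\mathbf{u}(m,n,\ell),\mathbf{v}(m,n,\ell)}$ is indeed a single-$1$ binary knot of the type in Definition~\ref{def:Cvw}, and its braid is the explicit word from Lemma~\ref{lemma:binary braids}. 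On the other hand, $K_\tau$ is the closure of the Coxeter braid $\beta^{cox}_{M,\tau}$ from Definition~\ref{def:partition to coxeter}. The key step is to realize both knots as $\KnR_C$ for a common primitive monotone curve $C$: take the shifted diagonal line $L_{m,n,\ell} = \{nx+my=\ell\}$ and isotope it (within $\R^2 \setminus \Z^2$, which does not change $\KnR_C$) so that it becomes a primitive $(m',n')$-monotone curve whose associated partition $\tau_C$ equals $\tau$ — this is possible precisely because $\tau$ is the set of boxes strictly below $L_{m,n,\ell}$ and because $m' = \lceil \ell/n \rceil$ boxes in the $x$-direction, $n'$ in the $y$-direction, suffice to contain $\tau$ together with one box on the curve at each end. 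Then Proposition~\ref{prop:CoxConjugate} gives $\KnR_C = K_{\tau_C} = K_\tau$, while closing up the same square diagram as in Construction~\ref{const:KT to KR} (shifting by $-(\e,\e)$ rather than $+(\e,\e)$, as illustrated in Figure~\ref{fig:isotopy}) and reading off the crossings yields exactly the braid $\b(\mathbf{u}(m,n,\ell),\mathbf{v}(m,n,\ell))$, whose closure is $K_{\mathbf{u}(m,n,\ell),\mathbf{v}(m,n,\ell)}$.

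For the converse, suppose $\mathbf{u} \in \{0,1\}^{m'}$, $\mathbf{v} \in \{0,1\}^{n'}$ with $|\mathbf{u}| = |\mathbf{v}| = 1$ and $K_{\mathbf{u},\mathbf{v}}$ a knot. Write $\mathbf{u} = 0^i 1 0^j$ and $\mathbf{v} = 0^k 1 0^l$ with $i+j = m'-1$, $k+l = n'-1$. I would run the previous paragraph in reverse: the monotone curve obtained from the square diagram of $K_{\mathbf{u},\mathbf{v}}$ (Construction~\ref{constr:Kuv}, viewed inside a torus of circumference $m'$ in one direction and $n'$ in the other) determines a partition $\tau_{K_{\mathbf{u},\mathbf{v}}}$ via Definition~\ref{def:monotone curve to partition}, and by Proposition~\ref{prop:CoxConjugate} we get $K_{\mathbf{u},\mathbf{v}} = K_{\tau_{K_{\mathbf{u},\mathbf{v}}}}$. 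It remains to check that this partition is triangular; the natural claim is that one can reconstruct a triple $(m,n,\ell)$ with $\tau_{K_{\mathbf{u},\mathbf{v}}} = \tau_{m,n,\ell}$ directly from $(i,j,k,l)$ by inverting the recipe of Lemma~\ref{lemma: x and y from m,n,l} — essentially, the positions of the $1$'s and the gap structure of $\mathbf{u},\mathbf{v}$ encode $\ell \bmod m$ and $\ell \bmod n$ together with the residues $\ell - n x$ and $\ell - m y$, which (by the Chinese Remainder Theorem, since $\gcd(m,n)=1$) pin down a valid $\ell$ once compatible coprime $m,n$ with $m > m'$, $n > n'$ lying in the correct congruence classes are chosen. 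That such $(m,n,\ell)$ exist reduces to a density/CRT argument of the same flavor as the proof of Lemma~\ref{lemma:triangular}.

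The main obstacle I anticipate is the bookkeeping in matching the Coxeter braid $\beta^{cox}_{M,\tau}$ with the binary braid $\b(\mathbf{u},\mathbf{v}) = (\sigma_1\cdots\sigma_i)(\sigma_1\cdots\sigma_{m'-1})^k(\sigma_2\cdots\sigma_{m'-1})^l$: a priori these are words in different braid groups (with different numbers of strands, and with Jucys--Murphy factors versus Coxeter-power factors), so the identification cannot be done at the level of braid words but must go through the geometry of the monotone/square diagram, carefully tracking how arcs of the isotoped shifted diagonal in successive unit cells stack to produce crossings. Establishing rigorously that the diagram produced by Construction~\ref{const:KT to KR} is isotopic to the standard closure of $\b(\mathbf{u},\mathbf{v})$ — and that the shift direction $-(\e,\e)$ versus $+(\e,\e)$ gives the Coxeter presentation rather than some braid-conjugate of it — is the delicate point, and is exactly where Figures~\ref{fig:binary knot}, \ref{fig:KT to KR}, and~\ref{fig:isotopy} are doing the real work; I would expect to need a careful case analysis of the arcs $A_k, H_i, V_j, B$ in Construction~\ref{constr:Kuv} against the monotone-curve arcs, together with an appeal to Remark~\ref{rmk:monotone diagram} for the over/under conventions. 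Once that geometric dictionary is in place, the algebraic verification that $\mathbf{u}(m,n,\ell),\mathbf{v}(m,n,\ell)$ from Lemma~\ref{lemma: x and y from m,n,l} are the sequences read off from this diagram is routine.
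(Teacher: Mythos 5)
Your forward direction is essentially the paper's approach: fix $(m,n,\ell)$, build a primitive monotone curve $C$ from the shifted diagonal $L_{m,n,\ell}$ (the paper uses the piecewise-linear curve $C_\infty \cup C_{n/m} \cup C_0$ from $(0,N)$ to $(0,s)$ to $(r,0)$ to $(M,0)$ with $r = (\ell+\e)/n$, $s = (\ell+\e)/m$), use Lemma~\ref{lemma: x and y from m,n,l} to match the intersection data of $C_{n/m}$ with the lattice to the sequences $\mathbf{u}(m,n,\ell),\mathbf{v}(m,n,\ell)$, and compare Construction~\ref{constr:Kuv} to the square-diagram closure of $\KT_C$ arc by arc. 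That all lines up with what the paper does, and Proposition~\ref{prop:CoxConjugate} supplies the identification with $K_\tau$.

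The converse is where your plan diverges and has a genuine gap. The thing you must establish is that the partition read off from $K_{\mathbf{u},\mathbf{v}}$ is \emph{triangular}, and your proposed route — reconstructing a triple $(m,n,\ell)$ arithmetically by ``inverting the recipe of Lemma~\ref{lemma: x and y from m,n,l}'' via CRT — does not do what you suggest. The data $(i,j,k,l)$ are read off after the bullets are deleted from the trinary sequences $\mathbf{x},\mathbf{y}$, so $i$ is not a residue of $\ell$; it is the number of $x \in \{1,\dots,\lfloor \ell/n\rfloor\}$ for which $\ell - nx \bmod m$ lands in a particular half of $\{0,\dots,m-1\}$. Inverting that is a counting condition on a whole set of residues, not a CRT system, and there is no reason to expect a clean congruence solution. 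Relatedly, reading off a monotone curve ``from the square diagram of $K_{\mathbf{u},\mathbf{v}}$'' is not automatic: the union of arcs $A_k, H_i, V_j$ becomes a graph of a decreasing function, and a fortiori one cut by a \emph{straight} line, only after an isotopy. The paper handles both points at once by the purely topological observation that $C'$, being an embedded arc in $\T$ (and a single arc precisely because $K_{\mathbf{u},\mathbf{v}}$ is a knot rather than a link — this is where that hypothesis enters), can be isotoped in $\T \setminus \{\text{basepoint}\}$ to the image of a straight segment from $(0,s)$ to $(r,0)$ with $\lfloor r\rfloor+1 = m'$, $\lfloor s\rfloor+1 = n'$; since such an isotopy does not change $\KnR_C$ and a straight cutting line yields a triangular partition by definition, triangularity is immediate. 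You should replace the CRT step with this isotopy-to-geodesic argument.
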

\begin{proof}
Throughout the proof, we will denote the image of $x\in \R$ under the quotient $\R\twoheadrightarrow \R/\Z$ by $x+\Z$.  Then points in the 2d torus $\T$ are pairs $(x+\Z,y+\Z)$.

Fix a triple $(m,n,\ell)$ with $\tau=\tau_{m,n,\ell}$, and let $M=\left\lceil\frac{\ell}{n}\right\rceil$ and $N=\left\lceil\frac{\ell}{m}\right\rceil$.  Choose $\e\in \R_{>0}$ to be as small and generic as needed (in a sense that will be made precise in a moment) and let $r:=\frac{\ell+\e}{n}$ and $s:=\frac{\ell+\e}{m}$.  Let $C$ be the piecewise linear curve $C=C_{\infty}\cup C_{n/m}\cup C_{0}$ from $(0,N)$ to $(0,s)$ to $(r,0)$ to $(M,0)$.  Assume $\e$ is generic and small enough so that $C$ is primitive, and $\tau=\tau_C$.

Let
\begin{align*}
    X&:=\Big\{x\in (0,1)\:\Big|\: x\equiv r-\frac{jm}{n} \text{ (mod $\Z$)} \text{ with } 0\leq j\leq \ell/m\Big\},\\
    Y&:=\Big\{y\in (0,1)\:\Big|\: y\equiv s-\frac{in}{m} \text{ (mod $\Z$)}  \text{ with } 0\leq i\leq \ell/n\Big\}.
\end{align*}
Note that each element of $X$ is the $x$-coordinate (mod $\Z$) of a point of intersection between $C_{n/m}$ and $\R\times\Z$. Similarly, each element of $Y$ is the $y$-coordinate (mod $\Z$) of a point of intersection between $C_{n/m}$ and $\Z\times \R$.

The number of elements of $X$ is the number of integers $j$ with $0\leq mj\leq \ell+\e$.  Since $\e$ is arbitrarily small, this is equivalent to $0\leq mj\leq \ell$.  Thus, $|X|=n'=\lfloor \ell/m\rfloor +1$.  Similarly, $|Y|=m':=\lfloor \ell/n\rfloor +1$.

Using the natural order on the interval $(0,1)$, we order the points in $X$ and $Y$, writing them as $X=\{x_1<\cdots<x_{n'}\}$ and $Y=\{y_1<\cdots<y_{m'}\}$.

Let $a\in \{1,\ldots,m'\}$ and $b\in\{1,\ldots,n'\}$ be the indices such that $y_a\equiv s$ (mod $\Z$) and $x_b\equiv r$ (mod $\Z$).  Now, define binary sequences $\mathbf{u}=(u_1,\ldots,u_{m'})\in \{0,1\}^{m'}$ and $\mathbf{v}=(v_1,\ldots,v_{m'})\in \{0,1\}^{n'}$ by $u_a=1=v_b$, and $u_i=0=v_j$ for all indices $i\neq a$, $j\neq b$.  Lemma \ref{lemma: x and y from m,n,l} tells us that $\mathbf{u}=\mathbf{u}(m,n,\ell)$ and $\mathbf{v}=\mathbf{v}(m,n,\ell)$.

Now, consider the Construction \ref{constr:Kuv}, which tells us how to draw a diagram for $K_{\mathbf{u},\mathbf{v}}$.  In that construction, with the above choices of $x_j$ and $y_j$, the union of all arcs of the form $A_k,H_i,V_j$ coincides exactly with the contribution of $C_{n/m}$ to the monotone knot $K_C=K_{C_{\infty}\cup C_{n/m}\cup C_0}$.

It remains only to observe that $B=B_-\cup B_+$ (shown in orange in Figure \ref{fig:binary knot}) is isotopic to the contribution of $C_\infty$ and $C_0$ to $K_C$ (shown in orange in Figure \ref{fig:KT to KR}).

For the converse, let $K=K_{\mathbf{u},\mathbf{v}}$ be given, with $\mathbf{u}\in \{0,1\}^{m'}$, $\mathbf{v}\in \{0,1\}^{n'}$. Let $C'$ denote the union of all arcs of the form $A_k,H_i,V_j$ from Construction \ref{constr:Kuv}, so that $K=C'\cup B$.  We may regard $C'$ as embedded in the standard 2d torus $\T$.  Since $C'$ is embedded in $\T$, we may regard $C'$ as a piece of a torus knot $T(m,n)$, for some $m,n$ coprime.  More precisely, up to an isotopy we can arrange that $C'$ be the image of a straight line segment from $(0,s)$ to $(r,0)$, for some real numbers $r,s$ with $s/r = n/m$ and $\lfloor r\rfloor +1 = m'$, and $\lfloor s\rfloor+1= n'$ (these last two equations come from counting the number of intersections of $C'$ with the meridian and longitude of $\T$).  The remaining arc $B$ is isotopic to the image of line segments from $(0,n')$ to $(0,s)$ and from $(r,0)$ to $(m',0)$, as we saw above. 
Thus $K$ is a broken torus knot.
\end{proof}

\subsection{The Invariance Theorem for $S_{m,n,\ell}(q,t,a)$}
\label{ss:invariance thm}

We are finally ready to state and prove our theorem on the independence of the Schr\"oder polynomials $S_{m,n,\ell}(q,t,a)$ on the choice of triple.

\begin{theorem}\label{thm: schroder=poincare}
For any triangular partition $\tau=\tau_{m,n,\ell}$, the corresponding Schr\"oder polynomial $S_{m,n,\ell}(q,t,a)$ equals the KR series of the knot $K_\tau$, up to normalization:
\[ S_{m,n,\ell}(q,t,a)= (1-q)(at^{-1/2}q^{-1/2})^{-|\tau|} P_{K_{\tau}}^{\KR}(q,t,a).\]
In particular, $S_{m,n,\ell}(q,t,a)$ depends only on the triangular partition $\tau$.
\end{theorem}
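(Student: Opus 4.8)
The plan is to assemble the theorem from results already established in the excerpt: the topological identification of $K_\tau$ with a binary knot (Lemma \ref{lem:K-L isotopic}), the Hogancamp--Mellit computation of $P^{\KR}$ (Theorem \ref{thm:RHHH}), and the Gorsky--Mazin--Vazirani recursion identifying $R$ with $Q$ (Theorem \ref{thm:Qrecursion}). The only genuine work is to keep the normalizing exponents and grading shifts consistent. First I would fix a triple $(m,n,\ell)$ with $\tau=\tau_{m,n,\ell}$ and set $(\mathbf{u},\mathbf{v}):=(\mathbf{u}(m,n,\ell),\mathbf{v}(m,n,\ell))$ from Construction \ref{constr:triang to binary seq}. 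By Lemma \ref{lemma: x and y from m,n,l} each of $\mathbf{x}(m,n,\ell)$ and $\mathbf{y}(m,n,\ell)$ has exactly one entry equal to $1$ (the index congruent to $\ell$ modulo $m$, respectively $n$), so after deleting the $\bullet$'s we still have $|\mathbf{u}|=|\mathbf{v}|=1$; in particular $K_{\mathbf{u},\mathbf{v}}$ is a knot, and by Lemma \ref{lem:K-L isotopic} we have $K_\tau=K_{\mathbf{u},\mathbf{v}}$.

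Next I would invoke \eqref{eq:Luv Poinc} of Theorem \ref{thm:RHHH}, giving
\[
P^{\KR}_{K_\tau}(q,t,a)=P^{\KR}_{K_{\mathbf{u},\mathbf{v}}}(q,t,a)=\frac{(at^{1/2}q^{-1/2})^{\delta(\b(\mathbf{u},\mathbf{v}))}}{1-q}\,R_{\mathbf{u},\mathbf{v}}(q,t,a),
\]
together with Theorem \ref{thm:Qrecursion}, which yields $R_{\mathbf{u},\mathbf{v}}=Q_{\mathbf{x}(m,n,\ell),\mathbf{y}(m,n,\ell)}$ since $(\mathbf{u},\mathbf{v})$ is obtained from $(\mathbf{x},\mathbf{y})$ by erasing bullets. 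Recalling Definition \ref{def:schroeder}, namely $S_{m,n,\ell}=t^{|\tau|}Q_{\mathbf{x}(m,n,\ell),\mathbf{y}(m,n,\ell)}$, the displayed identity of the theorem reduces to the exponent equality $\delta(\b(\mathbf{u},\mathbf{v}))=|\tau|$ together with the monomial cancellation $(1-q)(at^{-1/2}q^{-1/2})^{-|\tau|}\cdot\frac{(at^{1/2}q^{-1/2})^{|\tau|}}{1-q}=t^{|\tau|}$, which is an immediate computation.

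For the exponent equality I would argue that $\b(\mathbf{u},\mathbf{v})$ is a positive braid whose closure is $K_\tau$, and so is $\b^{\cox}_{M,\tau}$ for $M$ large; by Remark \ref{rmk:normalizing exponent and genus} the normalizing exponent of a positive braid depends only on its closure, hence $\delta(\b(\mathbf{u},\mathbf{v}))=\delta(\b^{\cox}_{M,\tau})=|\tau|$ by Proposition \ref{prop:Ktau}. (Alternatively, one can verify $\delta(\b(\mathbf{u},\mathbf{v}))=|\tau|$ directly from the closed formula in Lemma \ref{lemma:binary braids} with $c=1$ and lengths $m'=\lfloor\ell/n\rfloor+1$, $n'=\lfloor\ell/m\rfloor+1$; this is an elementary arithmetic check involving $|\tau_{m,n,\ell}|$.) This completes the proof of the displayed equality, and the final sentence is then automatic: the right-hand side $(1-q)(at^{-1/2}q^{-1/2})^{-|\tau|}P^{\KR}_{K_\tau}(q,t,a)$ depends only on $\tau$, since $|\tau|$ manifestly does and $K_\tau$ is well-defined up to isotopy by Proposition \ref{prop:Ktau}, so that $P^{\KR}_{K_\tau}$ is a knot invariant by Theorem \ref{thm:KhR isotopic}; hence $S_{m,n,\ell}(q,t,a)$ is independent of the chosen triple, justifying the notation $S_\tau(q,t,a)$.

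I do not expect a serious obstacle: the conceptual content already lives in Lemma \ref{lem:K-L isotopic} and the earlier combinatorial and topological work. The one point requiring genuine care is the normalization bookkeeping — pinning down $\delta(\b(\mathbf{u},\mathbf{v}))=|\tau|$ and checking that the normalization monomial $(at^{-1/2}q^{-1/2})^{-|\tau|}(at^{1/2}q^{-1/2})^{|\tau|}$ collapses to $t^{|\tau|}$ and cancels exactly against the prefactor of Definition \ref{def:schroeder} — together with applying the sign conventions for the $a,t,q$ shifts of Section \ref{sec:KR and HM} consistently. Everything else is a formal chain of substitutions.
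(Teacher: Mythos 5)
Your proposal is correct and follows essentially the same route as the paper's own proof: Lemma~\ref{lem:K-L isotopic} to identify $K_\tau$ with $K_{\mathbf{u},\mathbf{v}}$, Theorem~\ref{thm:RHHH} to compute the KR series via $R_{\mathbf{u},\mathbf{v}}$, Theorem~\ref{thm:Qrecursion} to replace $R_{\mathbf{u},\mathbf{v}}$ by $Q_{\mathbf{x},\mathbf{y}}$, and Definition~\ref{def:schroeder} to identify the result with $S_{m,n,\ell}$. The only place you go beyond the paper's terse wording is in justifying $\d(\b(\mathbf{u},\mathbf{v}))=|\tau|$: the paper simply cites Proposition~\ref{prop:Ktau}, which computes $\d(\b^{\cox}_{M,\tau})$, leaving implicit the transfer to $\b(\mathbf{u},\mathbf{v})$; your appeal to Remark~\ref{rmk:normalizing exponent and genus} (positive braids representing the same knot have the same $\d$) makes that step explicit and is exactly the right way to close the gap.
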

\begin{proof}
Let $m,n,\ell$ and $\tau=\tau_{m,n,\ell}$ be as in the statement.  Abbreviate by writing $\mathbf{u}=\mathbf{u}(m,n,\ell)$ and $\mathbf{v}=\mathbf{v}(m,n,\ell)$.  Lemma \ref{lem:K-L isotopic} gives us that
\[
\KRH(K_{\tau}) = \KRH(K_{\mathbf{u},\mathbf{v}}).
\] 
Using Proposition \ref{prop:Ktau} to compute the appropriate normalizing factor $\d(\b)$, we observe the following
\[
\PC_{K_{\tau}}^{\KR}(q,t,a)= \PC_{K_{\mathbf{u},\mathbf{v}}}^{\KR}(q,t,a) = \frac{(at^{-1/2}q^{-1/2})^{|\tau|}}{1-q} t^{|\tau|} R_{\mathbf{u},\mathbf{v}}(q,t,a)= \frac{(at^{-1/2}q^{-1/2})^{|\tau|}}{1-q} S_{m,n,\ell}(q,t,a),
\]
as claimed. In the first equality we used Lemma \ref{lem:K-L isotopic}, in the second we used Theorem \ref{thm:RHHH}, and in the final equality we used Theorem \ref{thm:Qrecursion}.
\end{proof}

The above theorem justifies the following definition.

\begin{definition}\label{def:S tau}
For $\tau$ a triangular partition, we define 
\begin{equation}
S_\tau(q,t,a) := S_{m,n,\ell}(q,t,a), 
\end{equation}
where $(m,n,\ell)$ is any triple as in Section \ref{subsec:m,n-triangular} for which $\tau_{m,n,\ell} = \tau$.
\end{definition}
Thus, as discussed in Remark \ref{rem:Catalan}, specializing at $a=0$ yields the triangular $(q,t)$-Catalan polynomials of \cite{BM,BHMPS}:
\[S_{\tau}(q,t,0)=C_\tau(q,t).\]
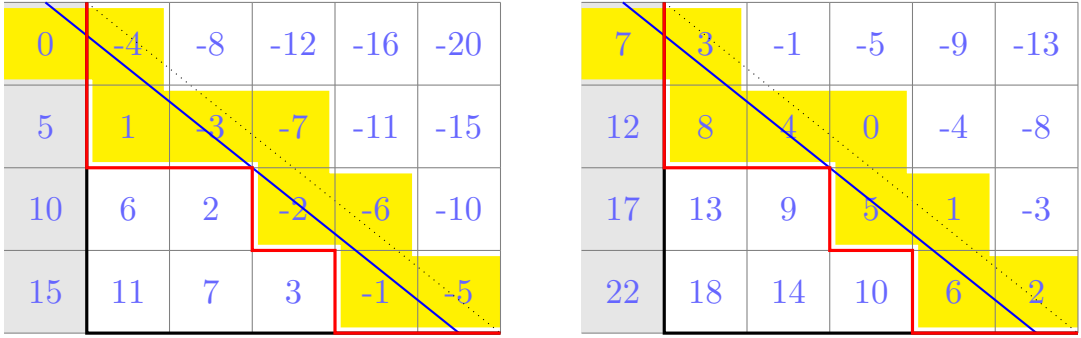
\begin{figure}[ht]
\center
\begin{tikzpicture}[scale=1.1]
\fill [fill=gray!20!white] (-1,0) rectangle (0,4);
%\fill [fill=gray!20!white] (-1,-1) rectangle (7,0);
\draw[line width = 27pt, yellow](-1,3.5)--(.5,3.5)--(.5,2.5)--(2.5,2.5)--(2.5,1.5)--(3.5,1.5)--(3.5,.5)--(5,.5);\
%
% pick an m and n HERE
 \pgfmathtruncatemacro{\m}{4} 
  \pgfmathtruncatemacro{\n}{5} 
%%%%%%% DONT TOUCH THIS PART %%%%%%%%%%%%%%%%
 \pgfmathtruncatemacro{\mi}{\m-1} %\mi=m-1 
    \pgfmathtruncatemacro{\ni}{\n-1} %\ni=n-1
  \foreach \x in {0,...,\n} % n=5
    \foreach \y in {1,...,\m}     % m=7  
       {\pgfmathtruncatemacro{\label}{ - \m * \x - \n *  \y +\m*\n} % - m * \x - n *  \y +mn
      % \node   (\x\y) at (\x,\y){};
       \node  at (\x-.5,\y-.5) [blue!60!white, scale=1.2]{\label};} 
%generates the grid and anderson labels
\draw[dotted] (0,\m)--(\n,0); %draws the diagonal from (0,m) -- (n,0)
  \foreach \x in {0,...,\n} % n=5
    \foreach \y  [count=\yi] in {0,...,\mi}   % m-1=6  
    %\draw (\x\y)--(\x\yi);
      \draw[gray] (\x,\y)--(\x,\yi);
        \foreach \y in {0,...,\m}     % m=7  
       \foreach \x  [count=\xi] in {-1,...,\ni}   % n-1=4
        %\draw (\x\y)--(\xi\y);
         \draw[gray] (\x,\y)--(\xi-1,\y);
%%%%%%%%%%%%%%%%%%%%%%%%%%%%%%%%%%%%%%%%%%%%%%%%
 %% Draw dyck path by hand below
\draw[ thick, blue] (-.5,4)--(4.5,0); %shifted diagonal
\draw [very thick] (5,0)--(0,0)--(0,4); %lower boundary
\draw[ very thick,red](0,4)--(0,2)--(2,2)--(2,1)--(3,1)--(3,0)--(5,0); %Dyck path
\end{tikzpicture}
\begin{tikzpicture}[scale=1.1]
\fill [fill=gray!20!white] (-1,0) rectangle (0,4);
%\fill [fill=gray!20!white] (-1,-1) rectangle (7,0);
\draw[line width = 27pt, yellow](-1,3.5)--(.5,3.5)--(.5,2.5)--(2.5,2.5)--(2.5,1.5)--(3.5,1.5)--(3.5,.5)--(5,.5);\
%
% pick an m and n HERE
 \pgfmathtruncatemacro{\m}{4} 
  \pgfmathtruncatemacro{\n}{5} 
%%%%%%% DONT TOUCH THIS PART %%%%%%%%%%%%%%%%
 \pgfmathtruncatemacro{\mi}{\m-1} %\mi=m-1 
    \pgfmathtruncatemacro{\ni}{\n-1} %\ni=n-1
  \foreach \x in {0,...,\n} % n=5
    \foreach \y in {1,...,\m}     % m=7  
       {\pgfmathtruncatemacro{\label}{ - \m * \x - \n *  \y +\m*\n+7} % - m * \x - n *  \y +mn
      % \node   (\x\y) at (\x,\y){};
       \node  at (\x-.5,\y-.5) [blue!60!white, scale=1.2]{\label};} 
%generates the grid and anderson labels
\draw[dotted] (0,\m)--(\n,0); %draws the diagonal from (0,m) -- (n,0)
  \foreach \x in {0,...,\n} % n=5
    \foreach \y  [count=\yi] in {0,...,\mi}   % m-1=6  
    %\draw (\x\y)--(\x\yi);
      \draw[gray] (\x,\y)--(\x,\yi);
        \foreach \y in {0,...,\m}     % m=7  
       \foreach \x  [count=\xi] in {-1,...,\ni}   % n-1=4
        %\draw (\x\y)--(\xi\y);
         \draw[gray] (\x,\y)--(\xi-1,\y);
%%%%%%%%%%%%%%%%%%%%%%%%%%%%%%%%%%%%%%%%%%%%%%%%
 %% Draw dyck path by hand below
\draw[ thick, blue] (-.5,4)--(4.5,0); %shifted diagonal
\draw [very thick] (5,0)--(0,0)--(0,4); %lower boundary
\draw[ very thick,red](0,4)--(0,2)--(2,2)--(2,1)--(3,1)--(3,0)--(5,0); %Dyck path
\end{tikzpicture}
\caption{ The triangular partition $\tau=(3,2)=\tau_{5,4,18}=\tau_{18/4,18/5}=\tau_{8,5,26}$ and the Anderson labels denoted in blue (left) and the same diagram with the labels shifted up by $7$ (right).}\label{fig:5-4}
\end{figure}

\begin{example}
Consider $(m,n,\ell)=(5,4,18)$ and notice that $\tau_{5,4,18}=(3,2) = \tau_{8,5,26}$ (See Example \ref{ex: (5.28,3.3)}). Then, $W_{5,4,18} = [-7,1]$ so that ${\bf w}(5,4,18) = 000000010$ and ${\bf x}(5,4,18)=00010$, ${\bf y}(5,4,18)=0010$. As pictured in Figure \ref{fig:5-4}, shifting the sets in $\Inv_{5,4,18}$ does not generate negative numbers, hence:
\begin{align*}
\Inv_{00010,0010} &= \{ \Delta \in \Inv_{5,4} \; | \; \Delta \in (\Inv_{5,4,18}^0 +7) \cap \Z_{\geq 0} \} \\
&= \{ \Delta \in \Inv_{5,4} \; | \; \Delta \in \Inv_{5,4,18}^0 +7 \}\\
&= \Inv_{5,4,18}^0 +7.
\end{align*}
Hence, the nine subdiagrams of $\tau_{5,4,18}$ give rise to the following sets in $\Inv_{00010,0010} $:
\[
\begin{array}{lllccl}
& \underline{\cogennn} & \underline{4-\gen} & \underline{\area'} &\underline{\codinv'} & \\
\Z_{\geq7} \setminus \{8,9,10,13,14,18\} & \{18\} & \{7,12,17,22\}& 5 &5 &(1+a) \\
 \Z_{\geq7} \setminus \{8,9,10,13,14\} &\{13,14\} & \{7,12,17,18\}&4&4 &(1+a)(1+at^{-1}) \\
  \Z_{\geq7} \setminus \{8,9,10,13\} &\{10,13\} & \{7,12,14,17\}&3&3&(1+a)(1+at^{-1})\\
 \Z_{\geq7} \setminus \{8,9,10,14\} &\{8,14\} & \{7,12,13,18\}&3&4&(1+a)(1+at^{-1})\\
  \Z_{\geq7} \setminus \{8,9,10\} &\{8,9,10\}& \{7,12,13,14\}&2&2&(1+a)(1+at^{-1})(1+at^{-2})\\
   \Z_{\geq7} \setminus \{8,9,13\}&\{6,13\}& \{7,10,12,17\}&2&3&(1+a)(1+at^{-1})\\
\Z_{\geq7} \setminus \{8,10\} &\{ 8,10\}& \{7,9,12,14\}&1&2&(1+a)(1+at^{-1})\\
 \Z_{\geq7} \setminus \{8,9\} &\{6,8,9\}& \{7,10,12,13\}&1&1&(1+a)(1+at^{-1})(1+at^{-2})\\
   \Z_{\geq7} \setminus \{8\} & \{5,6,8\}  & \{7,9,10,12\}&0&0&(1+a)(1+at^{-1})(1+at^{-2}).
   \end{array}
\]    
A straightforward computation confirms that $Q_{00010,0010} (q,t,a) = Q_{01000\bullet 0, 0010}(q,t,a)$. Thus, comparing with Example \ref{ex: (5.28,3.3)}, we indeed obtain that 
\[S_{(3,2)}(q,t,a)=S_{5,4,18}(q,t,a) = S_{8,5,26} (q,t,a).\]
In particular, specializing at $a=0$ yields the triangular $(q,t)$-Catalan polynomial:
\begin{align*}
C_{(3,2)}(q,t)=S_{(3,2)}(q,t,0)=q^5+q^4t+q^3t^2
+q^3t+q^2t^3
+q^2t^2+qt^3
+qt^4+t^5.    
\end{align*}
\end{example}

\subsection{Compactified Jacobians and the Oblomkov-Rasmussen-Shende Conjecture}\label{subsec:ORS}

Let $Z\subset\mathbf{C}^2$ be a complex curve given by an analytic equation $f(x,y)=0.$ Assume that $(0,0)\in Z.$ It is well known that for a small enough $\epsilon>0$, the sphere $S_\epsilon:=\{\|(x,y)\|=\epsilon\}$  intersects $Z$ transversely. The intersection is a smooth compact real curve inside the three dimensional sphere $S_\epsilon,$ in other words, a link. The links that appear this way are called \newword{algebraic links}. The study of algebraic links goes back almost a century to the works of Brauner \cite{B28} and K\"ahler \cite{K29}. For a more modern treatment of the subject, see \cite{EN85}. 

In the last several decades, there has been considerable interest in relating various invariants of the planar curve singularities to the invariants of the corresponding algebraic links. In \cite{ORS} Oblomkov, Rasmussen, and Shende conjectured a formula expressing the Poincar\'e  series of the Khovanov--Rozansky homology of an algebraic link in terms of the weight polynomials of the Hilbert schemes of points of the singularity. Migliorini and Shende \cite{MS13}, and independently Maulik and Yun \cite{MY14}, related the cohomology of the Hilbert scheme of a singular complex curve to the cohomology of its \newword{compactified Jacobian} equipped with a certain canonical filtration, called the \newword{perverse filtration}. Combined with these results, the \newword{Oblomkov-Rasmussen-Shende conjecture} predicts that the minimal $a$-degree part of the Poincar\'e  series of the Khovanov--Rozansky homology of a link of a singularity can be explicitly recovered from the cohomology of the compactified Jacobian of the singularity, equipped with the perverse filtration. The perverse filtration, which is responsible for the `$q$' degree, is notoriously hard to compute.  For this reason, it is simpler to forget the perverse filtration and specialize $q=1$.  However, Khovanov--Rozansky series have denominators of the form $\frac{1}{1-q}$, which must be cleared before specializing.  The modified version of the Oblomkov-Rasmussen-Shende conjecture is stated below.

\begin{conjecture}\label{conj:ors at q=1}
Let $Z=\{f(x,y)=0\}\subset \C^2$ be a curve with a unibranch singularity at $0$, and let $K=K_Z$ denote its link (actually a knot since $0\in Z$ is a unibranch singularity).  Let $\d$ be the Milnor number of $0\in Z$, $\overline{JC}$ denote its compactified Jacobian, and let $\PC_{\overline{JC}}(u)$ be its Poincar\'e  polynomial.  Then
\[
\Big((1-q)\PC^{\KR}_K(q,t,a)\Big)|_{\substack{q=1\\ t=u^{-2}}} = a^\d u^{-\d} \PC_{\overline{JC}}(u) + (\text{terms with $a$-degree $>\d$}).
\]
\end{conjecture}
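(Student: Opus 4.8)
\textbf{Proof proposal for Conjecture \ref{conj:ors at q=1} restricted to the knots $K_\tau$ with $\tau = \tau_{md,nd}$.}

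The plan is to combine three ingredients already assembled in the excerpt: (1) the fact, recorded in \S\ref{ss:intro knot families}(2) and proved in Appendix \ref{sec:AppendixB}, that for $\tau = \tau_{md,nd}$ the Coxeter knot $K_\tau$ is the $(d,mnd+1)$-cable $T(m,n)(d,mnd+1)$, which is an algebraic knot --- indeed the link of the unibranch plane curve singularity $(x(t),y(t)) = (t^{nd}, t^{md} + \lambda t^{md+1} + \cdots)$; (2) Theorem \ref{thm:intro3}, which identifies $(1-q)(at^{-1/2}q^{-1/2})^{-|\tau|}\PC^{\KR}_{K_\tau}(q,t,a)$ with the triangular Schr\"oder polynomial $S_\tau(q,t,a)$; and (3) the result of Gorsky--Mazin--Oblomkov \cite{GMO} quoted before Theorem \ref{intro4}, that the compactified Jacobian $\overline{JC}$ of that singularity admits an affine paving with cells indexed by $\tau_{md,nd}$-Dyck paths, with Poincar\'e polynomial equal to a one-variable specialization of the triangular $(q,t)$-Catalan polynomial $C_\tau(q,t) = S_\tau(q,t,0)$. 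First I would unwind all three into the language of Poincar\'e series so they can be compared on the nose.

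The core of the argument is then a specialization computation. Start from the Schr\"oder expansion \eqref{eq:Schroder-dyckpaths}, write $S_{\tau}(q,t,a) = \sum_{k\ge 0} a^k\, S_\tau^{(k)}(q,t)$ where $S_\tau^{(0)}(q,t) = C_\tau(q,t)$ is the $k=0$ term, and rearrange Theorem \ref{thm:intro3} to obtain
\[
(1-q)\PC^{\KR}_{K_\tau}(q,t,a) = (at^{-1/2}q^{-1/2})^{|\tau|}\sum_{k\ge 0} a^k\, S_\tau^{(k)}(q,t).
\]
Setting $q=1$, $t = u^{-2}$: the prefactor $(at^{-1/2}q^{-1/2})^{|\tau|}$ becomes $a^{|\tau|} u^{|\tau|}$, and the lowest $a$-degree term of the right-hand side is $a^{|\tau|} u^{|\tau|}\, S_\tau^{(0)}(1,u^{-2}) = a^{|\tau|} u^{|\tau|}\, C_\tau(1,u^{-2})$, all higher terms carrying strictly larger $a$-degree. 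It remains to identify $|\tau|$ with the Milnor number $\d$ of the singularity and $u^{|\tau|} C_\tau(1,u^{-2})$ with $u^{-\d} u^{2\delta} S_\tau(q,t,0)|_{q=1,t=u^{-2}} = \PC_{\overline{JC}}(u)$ in the normalization of \cite{GMO}. For the first: for the curve $(t^{nd}, t^{md}+\cdots)$ with $\gcd(m,n)=1$ the $\delta$-invariant (= Milnor number, since the singularity is unibranch and planar, so $\mu = 2\delta$) equals $\delta(md,nd) = \tfrac{(md-1)(nd-1)}{2}$ by the standard semigroup count, which is exactly $|\tau_{md,nd}|$ as computed in \S\ref{sec:GMV-Recursions}; one should double-check here whether the relevant exponent in Conjecture \ref{conj:ors at q=1} is $\delta$ or $\mu=2\delta$, and reconcile the $u^{-\d} u^{2\delta}$ bookkeeping in the statement of Theorem \ref{intro4} accordingly. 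For the second: match the grading conventions --- \cite{GMO} paves $\overline{JC}$ by cells of real dimension $2\,\dinv(\pi)$ (or $2\,\area(\pi)$, depending on their choice), and one checks that the resulting $\sum_\pi u^{2\dinv(\pi)}$ agrees with $C_\tau(q,t)|_{q=1,t=u^2}$, up to the overall shift by the top cell needed to center the perverse/weight grading, which accounts for the $u^{|\tau|} = u^{\delta}$ versus $u^{2\delta - \delta}$ discrepancy.

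The main obstacle I anticipate is not the link-homology side --- that is handed to us cleanly by Theorem \ref{thm:intro3} --- but the precise matching of normalizations between the combinatorial Poincar\'e polynomial of \cite{GMO} and the specialization of $C_\tau$, together with confirming that the $(d,mnd+1)$-cable is genuinely the link of the curve singularity $(t^{nd}, t^{md}+\cdots)$ (as opposed to some other cabling parameter), since an off-by-one in the framing would change the knot. Both are ``known'' in the sense of being in the cited literature, so the write-up should consist of: (i) citing \cite{GMO} and Appendix \ref{sec:AppendixB} to pin down the curve and its link; (ii) citing the $\delta$-invariant / Milnor number formula for $(t^{nd}, t^{md}+\cdots)$ and equating it with $|\tau|$; (iii) the one-line specialization of Theorem \ref{thm:intro3} displayed above; and (iv) a short lemma reconciling the gradings so that $\bigl(u^{|\tau|} C_\tau(q,t)\bigr)\big|_{q=1, t=u^{-2}} = u^{-\delta}\PC_{\overline{JC}}(u)$, at which point the identity in Theorem \ref{intro4} (hence the $q=1$ ORS2 statement, via the Migliorini--Shende / Maulik--Yun equivalence with ORS1) follows by reading off the lowest $a$-degree term.
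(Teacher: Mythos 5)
Your overall approach is identical to the paper's: the proof of Theorem~\ref{thm: ORS for gen curves} combines the identification of $K_{\tau_{md,nd}}$ with the cable (and hence with the link of the singularity), the identity of Theorem~\ref{thm: schroder=poincare}, and the GMO affine-paving computation of $\PC_{\overline{JC}}(u)$, then reads off the lowest $a$-degree term after specializing $q=1$, $t=u^{-2}$. The structure of your argument matches, but two of the ``bookkeeping'' items you defer are handled incorrectly. First, the formula you write for the $\delta$-invariant of the curve $(t^{nd},t^{md}+\cdots)$, namely $\tfrac{(md-1)(nd-1)}{2}$, is wrong for $d>1$: the correct value is $\d(md,nd)=\tfrac{(md)(nd)-md-nd+d}{2}$ (Definition~\ref{def:delta MN}, Example~\ref{ex:delta mnd}), which differs from your expression by $\tfrac{d-1}{2}$. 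This is exactly the quantity that equals $|\tau_{md,nd}|$ and makes the specialization of $(at^{-1/2}q^{-1/2})^{|\tau|}$ come out to $(au)^{\d}$, so the discrepancy is not cosmetic. Second, the grading reconciliation you leave vague (``shift by the top cell'') is handled cleanly in the paper via the GMO identity $\PC_{\overline{JC}}(u)=\sum_{\pi}u^{2\codinv(\pi)}=u^{2\delta}C_\tau(1,u^{-2})$, which follows directly from $\codinv(\pi)+\dinv(\pi)=\delta$; no extra ``centering'' shift is needed once this is noted. With those two points corrected, your proposal reproduces the paper's proof.
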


We refer the reader to \cite{GM13,MS13,MY14,ORS} and the references therein for the definitions of compactified Jacobians, Hilbert schemes, perverse filtrations, and all the relevant constructions.

Recently, in \cite{GMO}, the fourth author together with Gorsky and Oblomkov studied the compactified Jacobians of the planar curves that can be parametrized by
\begin{equation}\label{eq:parametrization}
 (x(t),y(t))=  (t^{nd}, t^{md}+ At^{md+1}+ \dots),
\end{equation}
where $n$ and $m$ are positive relatively prime integers, $d$ is an integer greater than $1,$ and $A\neq 0$ is a non-zero complex number. They proved that the compactified Jacobian in this case admits an affine paving with the cells enumerated by the $dm,dn$-Dyck paths.  Assembling their computation with some well-known facts in conjunction with our Theorem \ref{thm: schroder=poincare}, establishes the following.

\begin{theorem}\label{thm: ORS for gen curves}
    Given integers $m,n,d\geq 1$ with $m,n$ coprime, let $Z$ be the curve with parametrization as in \eqref{eq:parametrization}, and let $\overline{JC}$ be the compactified Jacobian of the singularity $0\in Z$, and let $\PC_{\overline{JC}}(u)$ be its Poincar\'e  polynomial.  Then:
    \begin{enumerate}
        \item the Milnor number of $0\in Z$ is $\d=\d(md,nd)$,
        \item the link of $0\in Z$ is $K_{\tau_{md,nd}}$, which equals the $(d,mnd+1)$-cable of $T(m,n)$,
        \item Conjecture \ref{conj:ors at q=1} is true for the $(d,mnd+1)$-cable of $T(m,n)$.
    \end{enumerate} 
\end{theorem}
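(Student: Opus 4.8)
The plan is to assemble Theorem~\ref{thm: ORS for gen curves} from three largely independent ingredients, two of which are already available in the literature and one of which we have just proved. For part (1), I would recall that the Milnor number $\d$ of a unibranch plane curve singularity equals the genus of the link, and invoke Remark~\ref{rmk:normalizing exponent and genus} together with Proposition~\ref{prop:Ktau}: since the link of $0\in Z$ is a positive Coxeter knot whose normalizing exponent is $|\tau_{md,nd}|=\d(md,nd)$, we obtain $\d=\d(md,nd)=\tfrac{(md)(nd)-md-nd+d}{2}$. Alternatively, one can compute the $\delta$-invariant directly from the semigroup of the parametrization $(t^{nd},t^{md}+At^{md+1}+\cdots)$ using the conductor formula; both routes give the same number and I would simply cite the standard reference (e.g.~\cite{EN85}) and note the match with Definition~\ref{def:delta MN}.

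For part (2), I would use the known identification of the link of the singularity $(t^{nd},t^{md}+\cdots)$. When the Puiseux expansion has the single characteristic pair governed by $(md,nd)$ after one blow-up, the resulting iterated torus knot is precisely the $(d,mnd+1)$-cable of $T(m,n)$; this is exactly the computation carried out in \cite{GMO} and is also consistent with the observation in Example~\ref{ex:iterated torus knot as Ktau} (and Appendix~\ref{sec:AppendixB}) that $K_{\tau_{md,nd}}=T(m,n)(d,mnd+1)$. So part (2) reduces to citing \cite{GMO} for the topology of the link and Lemma~\ref{lem:K-L isotopic} (or Appendix~\ref{sec:AppendixB}) for the identification $K_{\tau_{md,nd}}=T(m,n)(d,mnd+1)$.

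Part (3) is the heart of the matter and is where I would spend the real effort. The chain of reasoning is: by \cite{GMO}, $\overline{JC}$ for the curve \eqref{eq:parametrization} admits an affine paving with cells indexed by $\tau_{md,nd}$-Dyck paths, and the resulting Poincar\'e polynomial is $\PC_{\overline{JC}}(u)=u^{2\delta}C_{md,nd,(md)(nd)-1}(u^{-2},u^{-2})$ — that is, a specialization of the triangular $(q,t)$-Catalan polynomial with $q=t$; here I would carefully track the cohomological degree of each cell against the $\area$ statistic so the exponent of $u$ comes out right (the $u^{2\delta}$ shift reconciling the $\dinv$-free specialization with the cohomological grading). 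On the other side, Theorem~\ref{thm: schroder=poincare} gives $(1-q)\PC^{\KR}_{K_\tau}(q,t,a)=(at^{-1/2}q^{-1/2})^{|\tau|}S_{md,nd}(q,t,a)$, and Corollary~\ref{cor:Catalan} identifies the $a^0$-coefficient of $S_{md,nd}(q,t,a)$ with $C_{md,nd}(q,t)$. Setting $q=1$, $t=u^{-2}$ and extracting the lowest power of $a$ (which is $a^{|\tau|}=a^\delta$, since every addable box contributes a factor $1+at^{-\xi}$ with nonnegative exponent so the $a$-degree of $S$ ranges from $0$ up) yields $a^\delta u^{-\delta}$ times $S_{md,nd}(1,u^{-2},0)=C_{md,nd}(1,u^{-2})$. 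Matching $u^{2\delta}C_{md,nd}(1,u^{-2})$ against $\PC_{\overline{JC}}(u)$ then closes the loop, and the remaining terms all have $a$-degree $>\delta$ by construction.

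The main obstacle I anticipate is purely bookkeeping of normalizations: reconciling the three different gradings in play — the cohomological degree on $\overline{JC}$ in \cite{GMO}, the $(q,t)$-bigrading on the combinatorial side, and the triple grading $(q,t,a)$ on $\PC^{\KR}$ together with the prefactor $(at^{-1/2}q^{-1/2})^{|\tau|}$ and the $(1-q)$ denominator. Concretely, one must verify that the substitution $q=1$, $t=u^{-2}$ sends $t^{\dinv(\pi)}\mapsto u^{-2\dinv(\pi)}$ and that after multiplying by $u^{2\delta}$ the exponent on each cell matches twice its complex dimension in the affine paving of \cite{GMO}; and that the $a$-prefactor, once $q$ is cleared from the denominator and set to $1$, contributes exactly $a^\delta u^{-\delta}$ and nothing of lower $a$-degree. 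None of this is deep, but it is the step most prone to sign and shift errors, so I would state the needed specialization of the \cite{GMO} result as a lemma with an explicit formula for $\PC_{\overline{JC}}(u)$ in terms of $C_{md,nd}(q,t)$ before combining everything.
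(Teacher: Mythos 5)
Your plan is essentially the paper's own proof: parts (1) and (2) are cited as known (the paper references \cite{EN85}, \cite{GL23}, and Lemma~\ref{lem:K-L isotopic}), and part (3) combines Theorem~\ref{thm: schroder=poincare} with the affine paving computation of \cite{GMO}. The decomposition, the key inputs, and the final matching against $\PC_{\overline{JC}}(u)=u^{2\delta}C_\tau(1,u^{-2})$ all agree.

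However, the bookkeeping step you flagged as the ``main obstacle'' is in fact where you slipped. You claim that the prefactor $(at^{-1/2}q^{-1/2})^{|\tau|}$, after setting $q=1$ and $t=u^{-2}$, ``contributes exactly $a^\delta u^{-\delta}$.'' But $t^{-1/2}\big|_{t=u^{-2}}=(u^{-2})^{-1/2}=u$, so the prefactor specializes to $(au)^{|\tau|}=a^\delta u^{+\delta}$, not $a^\delta u^{-\delta}$. As written, your chain gives lowest-$a$ term $a^\delta u^{-\delta}C_\tau(1,u^{-2})$ on the left, while the conjecture's right-hand side is $a^\delta u^{-\delta}\PC_{\overline{JC}}(u)=a^\delta u^{-\delta}\cdot u^{2\delta}C_\tau(1,u^{-2})=a^\delta u^{\delta}C_\tau(1,u^{-2})$; these differ by $u^{2\delta}$, so the loop does not close under your stated exponents. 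With the corrected prefactor $a^\delta u^{\delta}$, the lowest-$a$ term of the left side is $a^\delta u^{\delta}C_\tau(1,u^{-2})$, which matches $a^\delta u^{-\delta}\PC_{\overline{JC}}(u)$ exactly once one substitutes the \cite{GMO} formula $\PC_{\overline{JC}}(u)=u^{2\delta}C_\tau(1,u^{-2})$. So the argument is right up to a single exponent sign; fix $u^{-\delta}\mapsto u^{\delta}$ in the specialized prefactor and the proof goes through.
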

\begin{proof}
Statement (1) is well-known.  The fact that the link of $0\in Z$ is the $(d,dnm+1)$-cable of the torus knot $T(m,n)$ is well-known (see \cite{EN85}).  The fact that this knot coincides with $\tau_{md,nd}$ was first established by Galashin-Lam \cite{GL23}, but it also follows from Lemma \ref{lem:K-L isotopic}. 

From statement (2), we have the computation (letting $\tau=\tau_{md,nd}$) from Theorem \ref{thm: schroder=poincare}
\[
\Big((1-q)\PC_{K_{\tau}}(q,t,a)\Big)|_{\substack{q=1\\t=u^{-2}}}
= (au)^\d C_{\tau}(1,u^{-2}) +(\text{h.o.t.}),
\]
where $C_\tau(q,t)=S_\tau(q,t,0)$ as in Corollary \ref{cor:Catalan}.  Comparing this with the computation in \cite[Theorem 1.4]{GMO}
\begin{equation*}
        \PC_{\overline{JC}}(u)=\sum_{\pi\in \D(md,nd)}u^{2\codinv(\pi)}=u^{2\delta}C_{\tau}(1,u^{-2}),
    \end{equation*}
proves statement (3).
\end{proof}

\section{A $(q,t)$-Schr\"oder Theorem for Paths Under Any Line}\label{sec:qtSchroderTheorem}

The \emph{classical shuffle theorem}, conjectured by Haglund et al \cite{HHLRU} and proven by Carlsson and Mellit \cite{CM18}, gives a combinatorial formula in terms of $(n,n+1)$-Dyck paths for the action of the operator $\nabla$ on the $n^{th}$ elementary symmetric function $e_n$, which had previously been proven by Haiman \cite{Haiman, Haiman-Vanishing} to equal the Frobenius character of the ring of diagonal coinvariants. It was conjectured by Egge et. al. \cite{EHKK03} and proven by Haglund \cite{Ha04} that the \newword{$(q,t)$-Schr\"oder theorem} holds, 
\[
S_{n,n+1}(q,t,a)= \sum_{k\geq 0} S_{n,k}(q,t) a^k= \sum_{k\geq 0} \langle \nabla e_n, h_ke_{n-k} \rangle \; a^k, 
\]
where $S_{n,k}$ are the classical Schr\"oder polynomials defined in terms of so-called Schr\"oder paths \cite{S1870, EHKK03}. 

More recently, building upon the proof of the shuffle theorem and its rational generalization \cite{GorskyNegut, Mellit-Shuffle}, Blasiak-Haiman-Morse-Pun-Seelinger \cite{BHMPS} proved a vast generalization of the shuffle theorem for Dyck paths under lines of any real slope. In what follows we prove a complete generalization of the $(q,t)$-Schr\"oder Theorem by showing that our triangular Schr\"oder polynomial $S_{\tau}(q,t,a)$ arises as the sum of the hook components of the shuffle theorem under any line. Namely, we will prove that 
\begin{equation}\label{eq:qt-Schroder}
S_{\tau}(q,t,a)= \sum_{k\geq 0} \langle \Hik_{\tau}(X;q,t), h_ke_{\lfloor s \rfloor-k} \rangle \;a^k,  
\end{equation}
where $\Hik_\tau(X;q,t)$ is the symmetric function obtained by the action of particular elliptic Hall algebra elements indexed by a triangular partition $\tau$ on $1$.

\subsection{The Shuffle Theorem Under Any Line} We begin by recalling some essential definitions from \cite{BHMPS} and refer the reader to this source for additional details. 

Let $\nu=(\nu_{1}, \dots,\nu_{\ell})$ be a $\ell$-tuple of skew diagrams. For a fixed $\epsilon >0$ such that $\ell\epsilon <1$, we define the \emph{content} of a cell $\Box \in \nu_{j}$ with northeast corner $(x,y)$ to be $c(\Box) = x-y+j\epsilon$. A \emph{reading order} on $\nu$ is any total ordering of the boxes of $\nu$ in which $c$ is weakly increasing. Two cells $\Box_1$ and $\Box_2$ form an \newword{attacking pair} if $0 < |c(\Box_1) - c(\Box_2)|<1$. Let 
\[
\I(T):= \# \text{attacking pairs of } \nu.
\]
As usual, we denote by $\SSYT(\nu)$ and $\SYT(\nu)$ the sets of semistandard and standard Young tableaux of shape $\nu$. 

Given a semistandard filling $T \in \SSYT(\nu)$ with entries in $\Z_{\geq 0}$, we say an attacking pair $\Box_1 \in \nu_i$ and $\Box_2 \in \nu_j$ form an \newword{attacking inversion} if $\Box_1$ preceeds $\Box_2$ in the reading order and $T(\Box_1)>T(\Box_2)$. For $T \in \SSYT(\nu)$, set
\[
\inv(T):= \# \text{attacking inversions of } T.
\]
\begin{definition}
The \newword{LLT-polynomial} indexed by a tuple of skew shapes $\nu$ is the generating function, 
\[
\G_\nu(X;t):= \sum_{T \in \SSYT(\nu)} t^{\inv(T)}X^T. 
\]
\end{definition}
It was originally shown in \cite{LLT97} that the LLT polynomials are symmetric functions. Later, a more elementary proof was provided in \cite{HHL05}. 

\noindent\underline{\textbf{Notation:}} Given a triangular partition, we have so far chosen to index the associated objects by triples $(m,n,\ell)$. This was relevant given the importance of embedding our triangular partitions inside larger ones. In this section this choice of embedding plays no role, thus in order to facilitate comparisons with the work of the existing combinatorial literature \cite{Ha04,HHL05, HHLRU, BM, BHMPS}, we revert to the initial notation of \S\ref{sec:GMV-Recursions} and label our triangular partitions and associated objects by tuples of real numbers $(r,s)$ where $r=z/n$ and $s= z/m$, for some $z$ with $\lfloor z \rfloor = \ell$. 
\medskip

So then, for any $\pi = (\lambda, \tau_{r,s}) \in \D(r,s)$ with $\tr = \lfloor r \rfloor +1$, %and $\ts= \lfloor s \rfloor$, 
one can associate an $\tr$-tuple of skew-rows $\nu(\pi)$ as follows. 

Let $\tau_{r,s}' = (\tau'_1, \dots, \tau'_{\tr-1})$ and $\lambda'=(\lambda'_1 , \dots , \lambda'_{\tr-1})$ denote the transpose partitions of $\tau_{r,s}$ and $\lambda$ respectively. Set:
\begin{align*}
\tilde{\tau}=(\tilde{\tau}_1,\ldots,\tilde{\tau}_{\tilde{r}})&:= (\ts,\tau'_1,\dots,\tau'_{\tr-1}) \\
\tilde{\lambda}=(\tilde{\lambda}_1,\ldots,\tilde{\lambda}_{\tilde{r}})&:=(\ts,\lambda'_1,\dots,\lambda'_{\tr-1})
\end{align*}
and for each $1 \leq i \leq \tr$, let $h_{i} = s - \frac{s}{r}(i-1)- \tilde{\tau}_{i}$ and set $\gamma$ to be the $\tr$-tuple of skew \emph{rows} with parts, 
\[
 \gamma_i:= (\tilde{\tau}_i-\tilde{\lambda}_{i+1})\setminus (\tilde{\tau}_i-\tilde{\lambda}_{i}) \qquad  \text{and} \qquad \gamma_\tr= (\tilde{\tau}_\tr)\setminus (\tilde{\tau}_\tr-\tilde{\lambda}_{\tr})\qquad \text{ for } \qquad 1\leq i \leq \tr-1, 
\]
where $(\ell)$ denotes the row partition of size $\ell$. 

Let $\sigma$ be the permutation in $S_\tr$ that sorts $(h_\tr, \dots, h_1)$ into an increasing sequence, and define 
\[\nu(\pi):= \sigma\omega_0 (\gamma)= (\gamma_{\omega_0 \sigma^{-1}(1)}, \dots , \gamma_{\omega_0 \sigma^{-1}(\tr)}), \]
where $\omega_0$ is the longest word in $S_\tr$. That is, $\nu(\pi)$ is a tuple of (potentially overlapping) skew-rows all at height one obtained by taking the parts $\gamma_i$ of $\gamma$ and sorting them according to the relative size of $h_i$.  

\begin{remark}
Notice that the permutation $\sigma$ depends only on the triangular partition $\tau_{r,s}$ and the cutting line $L_{r,s},$ and not on the choice of subdiagram $\lambda \subseteq \tau_{r,s}$.
\end{remark}

\begin{example}
Consider $\tau_{4.5,3.6} =(3,2,0)$ from Figure \ref{fig:5-4} and suppose $\lambda = (3,1,0)$. Then, $\tilde{\tau} = (3,2,2,1,0)$ and $\tilde\lambda=(3,2,1,1,0)$. Hence, $\gamma=((1)\setminus (0),(1)\setminus (0),(1)\setminus (1),(1)\setminus (0),(0)\setminus (0))=((1),(1),(0),(1),(0))$ and $h=(0.6,0.8,0,0.2,0.4)$.
Consequently, (in one-line notation) $\sigma=(3\;2 \;1\;5 \;4)$ and $\nu(\pi)=
(4\;5\;1\;2\;3)((1),(1),(0),(1),(0)) = ((0),(1),(0),(1),(1))$.
\end{example}

Blasiak et al. also introduced the rotated tuple $\nu(\pi)^R$ obtained by taking each skew-row $\nu(\pi)_i$, transposing it, then rotating it by $180$ degrees, and finally shifting it so that for each box $\Box$ in the column the content $c(\Box)$ is the same as it was in $\nu(\pi)_i= \gamma_{\omega_0\sigma^{-1}(i)}$. Thus, $\nu(\pi)^R$ is a $\tr$-tuple of \emph{columns} and $\nu(\pi)$ a $\tr$-tuple of \emph{rows}. It follows from the definition that $I(\nu(\pi))=I(\nu(\pi)^R)$.

The \newword{shuffle theorem under any line} \cite[Thm. 5.5.1]{BHMPS} gives a combinatorial formula in terms of $\tau$-Dyck paths, with $\tau$ a triangular partition, for the action of a family of operators $\mathsf{D}_{\tau}$ in the elliptic Hall algebra on the element $1 \in \C[q,t][x_1,x_2,\dots]^{S_\infty}$. Combined with \cite[Prop. 4.1.6]{BHMPS} the shuffle theorem states:
\begin{equation}\label{eq:shuffletheorem}
\mathsf{D}_{\tau_{r,s}}(1) = \sum_{\pi \in \D(r,s)} q^{\area(\pi)} t^{\dinv(\pi)} \omega(\G_{\nu(\pi)}(X;t^{-1}))
= \sum_{\pi \in \D(r,s)} q^{\area(\pi)} t^{\dinv(\pi)} t^{-I(\nu(\pi))}\G_{\nu(\pi)^R}(X;t),
\end{equation}
where $\omega$ is the standard Weyl involution on symmetric functions.

\begin{definition}\label{def:Qsym}
For a subset $J\subseteq \{1,\ldots,n-1\}$, Gessel's \newword{fundamental quasisymmetric function} is given by
\begin{equation}
    Q_{n,J}(X)=\sum_{\substack{a_1\le\ldots\le a_n\in\mathbb{Z}_{\ge 0}\\ a_i=a_{i+1}\Rightarrow i\notin J}} x_{a_1}x_{a_2}\ldots x_{a_n}.
\end{equation}

\end{definition}
It was shown in the Appendix of \cite{HHL05} that for $S \in \SYT(\nu)$ with \emph{descent set} 
\[
d(S):= \{ i \in \Z_{\geq 0} \;|\; S(\Box_1) = i+1,\; S(\Box_2)=i\; \text{ and } \Box_1 \text{ preceeds } \Box_2 \text{ in reading order}\}
\]
we can write the following:
\begin{equation}\label{eq:llt-gessel}
\G_{\nu}(X;t)= \sum_{S \in \SYT(\nu)} q^{\inv(S)} Q_{n, d(S)}(X). 
\end{equation}
Thus, denoting by $\Hik_{\tau_{r,s}}(X;q,t)$ the combinatorial side of the shuffle theorem \eqref{eq:shuffletheorem}, from \eqref{eq:llt-gessel} we have, 
\begin{align}\label{eq:H-Gessel}
\Hik_{\tau_{r,s}}(X;q,t) = \sum_{\pi \in \D(r,s)} q^{\area(\pi)} t^{\dinv(\pi)} t^{-I(\nu(\pi))} \sum_{S \in \SYT(\nu(\pi)^R)}t^{\inv(S)}Q_{n,d(S)}(X).
\end{align}

\subsection{Superization}
Consider the alphabet $\aA=\aA_X\sqcup \aA_Y$ consisting of two copies of the set of positive integers: $\aA_X=\{1_X<2_X<\ldots\}$ and $\aA_Y=\{1_Y>2_Y>\ldots\}$, ordered so that $k_X<k_Y$ for all $k \in \Z_{>0}$.  
\begin{definition}
A \newword{semistandard super tableaux} $T \in \SSYT_{\pm}(\nu)$ is a filling of $\nu$ with entries in $\aA$, such that the elements of $\aA_X$ are strictly increasing in columns and the elements of $\aA_Y$ are strictly increasing in rows, with the entries weakly increasing in rows and columns otherwise. Denote by $\SSYT_{\pm}(\nu; \mu,\eta)$ the set of all semistandard super tableaux of shape $\nu$, with $\aA_X$-weight equal to $\mu$ and $\aA_Y$-weight equal to $\eta$. 
\end{definition}

\begin{remark}
    When $\mu=(a)$ and $\eta=(b)$ are one part partitions, we will omit the parenthesis and write $\SSYT_{\pm}(\nu; a,b)$ instead of $\SSYT_{\pm}(\nu; (a),(b))$ to avoid cumbersome notation. 
\end{remark}

Two attacking cells $\Box_1$ and $\Box_2$ in $\nu$ form an \newword{attacking inversion} in $T \in \SSYT_{\pm}(\nu)$ if $\Box_1$ preceeds $\Box_2$ in reading order and either $T(\Box_1)>T(\Box_2)$ or $T(\Box_1)=T(\Box_2) \in \aA_Y$. As before $\inv(T)$ counts the number of attacking inversions of $T$. 
\smallskip

In the usual way, to every super tableaux $T\in\SSYT(\nu;\mu,\eta)$ we associate a monomial $Z^T:=X^\mu Y^\eta$. Given any symmetric function $f(X)$, its \newword{superization} is the following function, 
\begin{equation}
     \widetilde{f}(Z)=\widetilde{f}(X,Y):=\omega_Y f[X+Y],
\end{equation}
where $f[X+Y]$ denotes the plethystic evaluation of $f$ on the formal series $X+Y$ and $\omega_Y$ is Weyl involution on $\C[Y]^{S_\infty}$. While we omit all details of plethystic substitution in this article, we refer the reader to \cite{HaglundCatalan} for an introductory treatment of the subject.  

The superization of the LLT-polynomial is given by (see the Appendix of \cite{HHL05}):
\begin{equation}\label{eq:superLLT}
\widetilde{\G}_\nu(Z;t):= \sum_{T \in \SSYT_\pm(\nu)} t^{\inv(T)}Z^T. 
\end{equation}
Similarly, given $J\subseteq\{1,\ldots,n-1\}$, the \emph{super fundamental quasisymmetric functions} are defined as, 
\begin{equation}\label{equation:super Gessel}
    \widetilde{Q}_{n,J}(Z):=\sum_{\substack{a_1\leq\ldots\leq a_n\in\aA\\ a_k=a_{k+1}\in\aA_X\Rightarrow i\notin J\\
    a_k=a_{k+1}\in\aA_Y\Rightarrow i\in J}} z_{a_1}z_{a_2}\ldots z_{a_n}.
\end{equation}

In \cite[eq. (17)]{HHLRU}, Haglund et al. derive the following identity, 
\begin{equation}\label{eq:scalar prod to coef of sup}
\langle f(X) , h_\mu e_\eta \rangle = \widetilde{f}(X,Y)|_{X^\mu Y^\eta}.
\end{equation}
Hence, in order to compute the right hand side of \eqref{eq:qt-Schroder}, we must first compute $\widetilde{\Hik}_{\tau_{r,s}}(X;q,t)$.

\begin{proposition}\cite[Cor. 2.4.3]{HHLRU} \label{prop:super}
Let $f$ be any degree $n$ homogeneous symmetric function with expansion into fundamental quasisymmetric functions given by
\begin{equation}
    f(X)=\sum_J f_J Q_{n,J}(X).
\end{equation}
Then, its superization $\widetilde{f}$ expands into super fundamental quasisymmetric functions as follows: 
\begin{equation}
    \widetilde{f}(Z)=\sum_J f_J \widetilde{Q}_{n,J}(Z).
\end{equation}
\end{proposition}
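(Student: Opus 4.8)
\textbf{Proof proposal for Proposition~\ref{prop:super}.}

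The plan is to prove this by a direct combinatorial argument using the quasisymmetric expansion of the superization, building on the identity~\eqref{eq:superLLT} and Definition~\ref{equation:super Gessel}. The key input is that superization is a linear operation on symmetric functions (since $f\mapsto f[X+Y]$ and $\omega_Y$ are both linear), so it suffices to prove the statement on a basis, or better, to prove it termwise once we have a quasisymmetric analogue of the plethystic substitution rule. First I would recall (or re-derive) the precise compatibility between the fundamental quasisymmetric functions and the plethystic sum of alphabets: namely that $Q_{n,J}[X+Y]$, after applying $\omega_Y$, equals $\widetilde{Q}_{n,J}(Z)$. This is essentially the content of \cite[\S2]{HHLRU} and \cite[Appendix]{HHL05}, but since the whole proposition reduces to it, I would state it as a lemma and give the short combinatorial proof: expanding $Q_{n,J}[X+Y]$ as a sum over weakly increasing words in the combined alphabet $\aA_X\sqcup\aA_Y$ subject to the $X$-descent constraints coming from $J$, and then tracking how $\omega_Y$ converts the weak-increase condition on the $Y$-letters into a strict-increase condition (equivalently, forces the complementary descent condition $a_k=a_{k+1}\in\aA_Y\Rightarrow k\in J$), which is exactly the summation range in~\eqref{equation:super Gessel}.

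Once that lemma is in hand, the proposition follows immediately by linearity: write $f(X)=\sum_J f_J\, Q_{n,J}(X)$, apply $[X+Y]$ and then $\omega_Y$ to both sides, and use the lemma term by term to get $\widetilde f(Z)=\sum_J f_J\,\widetilde Q_{n,J}(Z)$. I would also remark that the expansion coefficients $f_J$ are uniquely determined because the $Q_{n,J}$ are linearly independent in the ring of quasisymmetric functions of degree $n$, so there is no ambiguity in the statement. In the write-up the only subtlety to handle carefully is the convention for $\omega_Y$: one must check that the Weyl involution acting only on the $Y$-variables sends the monomial quasisymmetric pieces in the $Y$-alphabet to the "signed/reversed" versions matching the ordering $1_Y>2_Y>\cdots$ fixed in the definition of $\aA_Y$, and that the relative order $k_X<k_Y$ is respected. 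This is a bookkeeping check rather than a real difficulty.

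The main obstacle, such as it is, is purely expository: making the plethystic-substitution lemma precise without developing the full machinery of plethysm, which the paper has deliberately chosen to omit (it refers the reader to \cite{HaglundCatalan}). I would sidestep this by taking the combinatorial description of $Q_{n,J}[X+Y]$ — as a generating function over weakly increasing words in $\aA_X\sqcup\aA_Y$ with the prescribed descent restrictions — as the working definition, which is standard and is exactly what is used in \cite[Appendix]{HHL05}; then the argument is entirely elementary. Alternatively, and perhaps cleanest for the paper, I would simply cite \cite[Cor.~2.4.3]{HHLRU} directly as the statement of the proposition (as the excerpt already does) and include the one-paragraph sketch above only for the reader's convenience; given that Appendix~\ref{sec:AppendixA} is promised to contain a self-contained derivation of the relevant standardization procedure, the detailed combinatorial proof of the $Q_{n,J}\mapsto\widetilde Q_{n,J}$ lemma can be deferred there, and here we need only invoke linearity of superization together with that lemma.
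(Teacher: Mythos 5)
Your instinct that everything should come down to linearity plus a single lemma is correct, but the lemma you isolate is not well-posed, and this is where the argument has a genuine gap. The claim ``$\omega_Y\,Q_{n,J}[X+Y]=\widetilde Q_{n,J}(Z)$'' cannot be taken as the starting point because $Q_{n,J}$ is only quasisymmetric: both the plethystic substitution $(-)[X+Y]$ and the involution $\omega_Y$ are defined via power sums (equivalently, via the $\Lambda$-ring structure on symmetric functions), and neither operation extends to the quasisymmetric ring. Taking the combinatorial sum over super words ``as a working definition'' of the left-hand side does not remove the difficulty; it merely renames the thing to be proven, since you would still owe an argument that this ad hoc termwise recipe, applied to the $Q$-expansion of a genuinely symmetric $f$, reproduces $\omega_Y f[X+Y]$. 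That agreement is precisely the content of Proposition~\ref{prop:super}, so the ``short combinatorial proof'' you sketch is circular: it describes what $\widetilde Q_{n,J}$ is, not why summing it against the $f_J$ yields $\widetilde f$.

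The paper's actual route (relegated to Appendix~\ref{sec:AppendixA}, following \cite{HHLRU}) avoids this by running the linearity argument over a basis of \emph{symmetric} functions rather than quasisymmetric ones. One proves Gessel's expansion $s_\lambda=\sum_{S\in\SYT(\lambda)}Q_{n,R(S)}$ and its super analogue $\widetilde s_\lambda=\sum_{S\in\SYT(\lambda)}\widetilde Q_{n,R(S)}$ directly via (super) standardization maps on tableaux --- here the operations $[X+Y]$ and $\omega_Y$ are applied only to the honest symmetric function $s_\lambda$, so there is no definitional problem. Writing an arbitrary homogeneous $f$ as $\sum_\lambda c_\lambda s_\lambda$, the uniqueness of the fundamental quasisymmetric expansion gives $f_J=\sum_\lambda c_\lambda\,|\{S\in\SYT(\lambda):R(S)=J\}|$, and linearity of superization then yields $\widetilde f=\sum_\lambda c_\lambda\widetilde s_\lambda=\sum_J f_J\widetilde Q_{n,J}$. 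So the ``key lemma'' is really the super-Schur (or, in the paper's Appendix, super-standardization) statement, not a plethysm rule for individual $Q_{n,J}$'s. Your fall-back suggestion --- cite \cite[Cor.~2.4.3]{HHLRU} in the statement and defer the detailed argument to Appendix~\ref{sec:AppendixA} --- does match what the paper does, but your primary proposal as written would not compile into a correct proof without replacing the termwise lemma by the reduction to the Schur basis.
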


Consequently, applying the previous Proposition to both \eqref{eq:H-Gessel} and \eqref{eq:llt-gessel}, we derive that, 
\begin{align*}
\widetilde{\Hik}_{\tau_{r,s}}(X;q,t) & = \sum_{\pi\in \D(r,s)} q^{\area(\pi)} t^{\dinv(\pi)} t^{-I(\nu(\pi))} \sum_{S \in \SYT(\nu(\pi)^R)}t^{\inv(S)}\widetilde{Q}_{n,d(S)}(X)
\\
&\overset{\eqref{eq:llt-gessel}}{=} \sum_{\pi\in \D(r,s)} q^{\area(\pi)} t^{\dinv(\pi)} t^{-I(\nu(\pi))}\widetilde{\G}_{\nu(\pi)^R}(Z;t)\\
\\
&\overset{\eqref{eq:superLLT}}{=} \sum_{\pi\in \D(r,s)} q^{\area(\pi)} t^{\dinv(\pi)} t^{-I(\nu(\pi))} \sum_{T \in \SSYT_{\pm}(\nu(\pi)^R)}t^{\inv(T)}Z^T.
\end{align*}
Thus, from \eqref{eq:scalar prod to coef of sup} we have,
\[
\langle \Hik_{\tau_{r,s}}(X;q,t), h_{\mu}e_{\eta}\rangle = \widetilde{\Hik}_{\tau_{r,s}}(X;q,t)|_{X^\mu Y^\eta} = \sum_{\pi\in \D(r,s)} q^{\area(\pi)} t^{\dinv(\pi)} t^{-I(\nu(\pi))} \sum_{T \in \SSYT_{\pm}(\nu(\pi)^R;\mu,\eta)}t^{\inv(T)},
\]
which specialized at $\mu = (k)$ and $\eta = (\ts-k)$ yields,
\begin{equation}\label{eq:RHS}
\langle \Hik_{\tau_{r,s}}(X;q,t), h_{k}e_{\ts-k}\rangle = \sum_{\pi\in \D(r,s)} q^{\area(\pi)} t^{\dinv(\pi)} \sum_{T \in \SSYT_{\pm}(\nu(\pi)^R;k,\ts-k)}t^{\inv(T)-I(\nu(\pi))}.
\end{equation}

\subsection{The $(q,t)$-Schr\"oder Theorem}
In order to relate equality \eqref{eq:RHS} to our Schr\"oder polynomials, we need to express it in terms of fillings of $\rho_\pi:=(\lambda+1^{\ts})\setminus \lambda$, where $\pi=(\lambda, \tau_{r,s})$. To do so, we recall the bijection \cite[\S 6]{BHMPS}: 
\begin{align}
\Psi: \SSYT_{\pm}(\nu(\pi)^R) &\to \SSYT_{\pm}(\rho_\pi) \label{eq:bijSSYT}\\
T &\mapsto P_T, \nonumber
\end{align}
which is obtained by mapping the $i^{th}$ component $\nu(\pi)^R_i$ of $\nu(\pi)^R$ to the $\sigma\omega_0(i)^{th}$ column of $\rho_\pi$ and preserving the corresponding labels. 

To understand how the associated statistics are modified under this bijection, we introduce the following definitions.

\begin{definition}\label{def:phi}
Given a cell $\Box \in \rho_\pi$ with \emph{northwest} coordinate $(a,b)$, define $\phi(\Box)$ to be the vertical distance between $(a,b)$ and the line $L_{r,s}$, so that 
\[
\phi(\Box) := s-s/r(a)-b.
\]
\end{definition}
\begin{definition} \label{def:invSSTY}
We say two cells $\Box_1$ and $\Box_2$ in $\rho_\pi$ are an \newword{attacking pair} if $0<|\phi(\Box_1)-\phi(\Box_2)|<1$, and denote by $\I(\pi)$ the number of attacking pairs in $\rho_\pi$.  

Similarly, given $P \in \SSYT_\pm(\rho_\pi)$, we say an attacking pair $\Box_1, \Box_2 \in \rho_\pi$ is an \newword{attacking inversion} if $\phi(\Box_1)<\phi(\Box_2)$ and either $P(\Box_1)>P(\Box_2)$ or $P(\Box_1)=P(\Box_2) \in \aA_Y$. Denote by $\inv(P)$ the number of attacking inversions of $P$. 
\end{definition}

\begin{lemma}\label{lem:inv}
For any $T \in \SSYT_{\pm}(\nu(\pi)^R)$ and $P_T = \Psi(T) \in \SSYT_{\pm}(\rho_\pi)$, we have the following: 
\[
\I(\nu(\pi)^R) = \I(\pi)\qquad \text{and} \qquad \inv(T) = \inv(P_T).
\]
\end{lemma}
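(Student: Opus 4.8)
\textbf{Plan of proof for Lemma \ref{lem:inv}.}
The statement asserts that the bijection $\Psi$ of \eqref{eq:bijSSYT} transports the ``attacking'' combinatorics from the rotated tuple of columns $\nu(\pi)^R$ to the skew strip $\rho_\pi = (\lambda + 1^{\ts}) \setminus \lambda$, preserving both the number of attacking pairs and, filling by filling, the number of attacking inversions. Since $\Psi$ is defined column by column, sending $\nu(\pi)^R_i$ to the $\sigma\omega_0(i)^{\text{th}}$ column of $\rho_\pi$ and preserving all labels, the strategy is to show that the content function $c$ governing attacking pairs in $\nu(\pi)^R$ and the function $\phi$ of Definition \ref{def:phi} governing attacking pairs in $\rho_\pi$ agree (up to an additive constant that is harmless, since attacking only depends on differences, and up to the $\epsilon$-perturbation used to break ties within each tuple component). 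Concretely, I would first fix a cell $\Box$ in the $i^{\text{th}}$ component $\nu(\pi)^R_i = \gamma_{\omega_0\sigma^{-1}(i)}$ and track what happens to it under $\Psi$: it lands in a prescribed column of $\rho_\pi$, at a row determined by the shape of $\gamma$. Using the recipe for $\gamma$ in terms of $\tilde\tau, \tilde\lambda$ and the quantities $h_i = s - \tfrac{s}{r}(i-1) - \tilde\tau_i$, I would compute the content $c(\Box)$ of $\Box$ inside $\nu(\pi)^R$ and the value $\phi(\Psi(\Box))$ in $\rho_\pi$, and verify they coincide up to a global shift; the sorting permutation $\sigma$ was introduced precisely so that the ``vertical distance to the line'' ordering of the columns of $\rho_\pi$ matches the content ordering of the components of $\nu(\pi)^R$, so this should work out.

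Once the identification $c(\Box) \equiv \phi(\Psi(\Box)) + \text{const}$ is established (again modulo the $\epsilon$ used to resolve boxes in the same component, which corresponds on the $\rho_\pi$ side to boxes in the same column — note $\phi$ restricted to a fixed column is strictly decreasing down the column, matching the reading order within a component), the equality $\I(\nu(\pi)^R) = \I(\pi)$ is immediate: a pair of cells is attacking iff $0 < |c(\Box_1) - c(\Box_2)| < 1$ iff $0 < |\phi(\Psi(\Box_1)) - \phi(\Psi(\Box_2))| < 1$. For the second equality $\inv(T) = \inv(P_T)$, I would check that the reading order on $\nu(\pi)^R$ is carried by $\Psi$ to the order ``$\phi$ increasing'' on $\rho_\pi$ (the convention in Definition \ref{def:invSSTY} is $\phi(\Box_1) < \phi(\Box_2)$), so that for an attacking pair, ``$\Box_1$ precedes $\Box_2$ in reading order'' on the left matches ``$\phi(\Box_1) < \phi(\Box_2)$'' on the right. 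Since $\Psi$ preserves labels, the inversion condition ``$T(\Box_1) > T(\Box_2)$ or $T(\Box_1) = T(\Box_2) \in \aA_Y$'' transfers verbatim, giving a bijection between attacking inversions of $T$ and of $P_T$, hence $\inv(T) = \inv(P_T)$.

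The main obstacle, I expect, is the bookkeeping in the first step: carefully unwinding the definitions of $\tilde\tau$, $\tilde\lambda$, $\gamma$, $h_i$, $\sigma$, the rotation producing $\nu(\pi)^R$ from $\nu(\pi)$, and the column-placement rule for $\Psi$, and checking that all the index shifts (the prepended $\ts$, the transpose conventions, the $\omega_0$ versus $\omega_0\sigma^{-1}$) line up so that $c$ on $\nu(\pi)^R$ and $\phi$ on $\rho_\pi$ genuinely match. This is essentially a coordinate computation with no conceptual difficulty, but it is easy to get an off-by-one error; the cleanest route is probably to verify the claim first on a single column (one component $\gamma_j$) in isolation, matching its box contents to the $\phi$-values along the corresponding column of $\rho_\pi$, and then to observe that the inter-column comparisons are exactly what the definition of $\sigma$ (sorting the $h_i$) guarantees. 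I would also double-check that the $\epsilon$-perturbation in the content and the tie-breaking within a $\rho_\pi$-column never create or destroy an attacking relation between boxes in \emph{different} columns — this holds because distinct $h_i$ differ by more than $\ell\epsilon$, so the perturbation is too small to cross an integer gap. With these checks in place, both equalities follow, and the lemma is proved.
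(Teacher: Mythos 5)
Your plan is correct, and the first half of it (the count $\I(\nu(\pi)^R)=\I(\pi)$ of attacking pairs) is essentially the argument in the paper: one checks that for corresponding boxes, $c(\Box)$ and $\phi(\Psi(\Box))$ differ by an integer plus a small ``tie-breaking'' term — $j\epsilon$ on one side, $h_{\omega_0\sigma^{-1}(j)}$ on the other — and that these tie-breakers have the same sign and magnitude less than one when comparing two components, precisely because $\sigma$ is defined to sort the $h_i$. For the second equality, however, your route diverges from the paper's in an interesting way. The paper does \emph{not} compare $\nu(\pi)^R$ to $\rho_\pi$ directly: it invokes the BHMPS bijection $U\mapsto U^R$ from $\SSYT_\pm(\nu(\pi))$ to $\SSYT_\pm(\nu(\pi)^R)$, together with the known relation $\inv(U^R)=I(\nu(\pi))-\inv(U)$, and then proves that attacking \emph{non}-inversions of $U$ in $\nu(\pi)$ correspond to attacking inversions of $P_{U^R}$ in $\rho_\pi$, so that the counts match after taking complements. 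You instead argue directly on $\nu(\pi)^R$: since the rotation preserves contents (by the paper's own definition of $\nu^R$, each rotated column is shifted so its contents agree with the original row), the same $c\leftrightarrow\phi$ identification shows the reading order on $\nu(\pi)^R$ is carried by $\Psi$ to the $\phi$-increasing order on $\rho_\pi$, and since $\Psi$ preserves labels the inversion condition transfers verbatim. Your route is arguably cleaner — it avoids the complement bookkeeping and the appeal to the external BHMPS identity — at the cost of having to state explicitly that $\nu^R$ is content-preserving (which the paper's proof sidesteps by working on the $\nu(\pi)$ side throughout). Both are valid; to turn your sketch into a proof you would need to actually carry out the coordinate check that $c(\Box)=(x-1)+j\epsilon$ corresponds to $\phi(\Psi(\Box))=(x-1)+h_{\omega_0\sigma^{-1}(j)}$ and cite the monotonicity of $j\mapsto h_{\omega_0\sigma^{-1}(j)}$, but those are exactly the lines of computation the paper records, so there is no gap.
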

\begin{proof}
Since $I(\nu(\pi))=I(\nu(\pi)^R)$ it suffices to consider attacking pairs in $I(\nu(\pi))$.

We begin by noting that from the bijection \ref{eq:bijSSYT} it follows that to any box $\Box$ in $\nu(\pi)_{j}$ with NE corner $(x,1)$ the corresponding box $\widetilde{\Box}$ in $\rho_\pi$ will have $\phi(\widetilde{\Box}) = x-1+h_{\omega_0\sigma^{-1}(j)}$. Moreover, given any two components $\nu_i$ and $\nu_j$ then $i<j$ if and only if $h_{\omega_0\sigma^{-1}(i)}<h_{\omega_0\sigma^{-1}(j)}$. 

Let $\Box_1$ and $\Box_2$ be a pair of boxes in $\nu(\pi)$ and let $\widetilde{\Box}_1$ and $\widetilde{\Box}_2$ be the corresponding boxes in $\rho_\pi.$ Let $(x_1,1)$ be the NE corner of $\Box_1 \in \nu(\pi)_{j_1}$ and $(x_2,1)$ be the NE corner of $\Box_2 \in \nu(\pi)_{j_2}.$ Boxes $\Box_1$ and $\Box_2$ form an attacking pair if and only if
\begin{equation*}\label{eq:XX}
|c(\Box_1)-c(\Box_2)|=|(x_1-x_2)+(j_1-j_2)\epsilon| <1, 
\end{equation*}
while boxes $\widetilde{\Box}_1$ and $\widetilde{\Box}_2$ for an attacking pair if and only if
\begin{equation*}\label{eq:attacking Y}
|\phi(\widetilde{\Box}_1)-\phi(\widetilde{\Box}_2)|=|(x_1-x_2)+(h_{\omega_0\sigma^{-1}(j_1)}-h_{\omega_0\sigma^{-1}(j_2)})| <1. 
\end{equation*}
To show that $\I(\nu(\pi)^R) = \I(\pi)$ it remains to notice that the two conditions are equivalent. Indeed, $(x_1-x_2)$ is an integer, and the correction terms $(j_1-j_2)\epsilon$ and $(h_{\omega_0\sigma^{-1}(j_1)}-h_{\omega_0\sigma^{-1}(j_2)})$ always have the same sign and are less than $1$ by absolute value.

For the second claim we recall the bijection in Proposition 4.1.6 in \cite{BHMPS}:
\[ \SSYT_\pm(\nu(\pi)) \to \SSYT_\pm(\nu(\pi)^R) \qquad\qquad U \mapsto U^R\]
obtained by reflecting a tableau and exchanging positive entries in $\aA_X$ with negative entries $\aA_Y$. In particular, Blasiak et. al. show that under this bijection $\inv(U^R) = I(\nu(\pi)) - \inv(U)$. Thus, it suffices to prove that attacking inversions in $\SSYT_\pm(\rho_\pi)$ correspond to attacking non-inversions of $\SSYT_\pm(\nu(\pi))$. 

So then, given a pair of attacking cells $\Box_1$ and $\Box_2$ in $\nu(\pi)$ notice that the attacking condition ensures that $c(\Box_1) \neq c(\Box_2)$. Hence, these cells form an attacking non-inversion if $c(\Box_1)<c(\Box_2)$  with either $U(\Box_1)<U(\Box_2)$ or $U(\Box_1)=U(\Box_2) \in \aA_X$. Since by definition the alphabets are ordered such that $\{1_X<2_X<\dots\}$ and $\{1_Y>2_Y>\dots\}$ with $ k_X<k_Y$ for any $k \in \N$, then under under $\Psi$, these conditions correspond to $\phi(\widetilde{\Box}_1)<\phi(\widetilde{\Box}_2)$  with either $P_{U^R}(\widetilde{\Box}_1)>P_{U^R}(\widetilde{\Box}_2)$ or $P_{U^R}(\widetilde{\Box}_1)=P_{U^R}(\widetilde{\Box}_2) \in \aA_Y$, as desired.

\end{proof}
Thus, in light of Lemma \ref{lem:inv}, we may rewrite \eqref{eq:RHS} as follows:
\begin{equation}\label{eq:hook}
\langle \Hik_{\tau_{r,s}}(X;q,t), h_{k}e_{\ts-k}\rangle = \sum_{\pi \in \D(r,s)} q^{\area(\pi)} t^{\dinv(\pi)} \sum_{P \in \SSYT_{\pm}(\rho_\pi;k,\ts-k)}t^{\inv(P)-I(\pi)} .
\end{equation}
Now, recall from \eqref{eq:Schroder-dyckpaths} that
\begin{align*}
S_{\tau_{r,s}}(q,t,a)&=\sum_{\pi \in \D(r,s)}
q^{\area(\pi)}t^{\dinv(\pi)} \sum_{L\subset\AB(\pi)}
a^{|L|} t^{-\sum_{\Box\in L} \xi(\pi,\Box)}\\
&= \sum_{k\geq 0} a^k \left(\sum_{\pi \in \D(r,s)}
q^{\area(\pi)}t^{\dinv(\pi)} \sum_{\substack{
L\subset\AB(\pi)\\ |L|=k}}
t^{-\sum_{\Box\in L} \xi(\pi,\Box)}\right).
\end{align*}
Thus, \eqref{eq:qt-Schroder} is an immediate consequence of the subsequent result. 
\begin{theorem} The following equality holds:
\[
 \sum_{P \in \SSYT_{\pm}(\rho_\pi;k,\ts-k)}t^{\inv(P)-I(\pi)} = 
\sum_{\substack{
L\subset\AB(\pi)\\ |L|=k}}
t^{-\sum_{\Box\in L} \xi(\pi,\Box)}.
 \]
\end{theorem}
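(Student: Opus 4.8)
The plan is to establish the identity by exhibiting an explicit weight-preserving bijection between super tableaux $P \in \SSYT_{\pm}(\rho_\pi; k, \ts-k)$ and $k$-subsets $L \subseteq \AB(\pi)$ of addable boxes, under which the statistic $\inv(P) - I(\pi)$ on the left matches $-\sum_{\Box \in L}\xi(\pi,\Box)$ on the right. The shape $\rho_\pi = (\lambda + 1^{\ts})\setminus\lambda$ is a vertical strip of $\ts$ cells, one in each of the first $\ts$ rows (row $i$ having its single cell in column $\lambda_i + 1$). A filling $P \in \SSYT_{\pm}(\rho_\pi; k, \ts-k)$ is, after unwinding the super-semistandardness conditions on a single-column-per-row shape, nothing more than a choice of which $k$ of the $\ts$ cells carry an entry from $\aA_X$ and which $\ts - k$ carry one from $\aA_Y$ — but crucially the super condition (strict increase of $\aA_X$ entries down columns, strict increase of $\aA_Y$ entries along rows, weak increase otherwise) forces, for each column of $\rho_\pi$, that the $\aA_X$-labeled cells appear and the relative order among cells sharing a column or an attacking relation is completely determined. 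I would first prove a normal-form lemma: there is a unique (up to the single repeated letter $1_X$ resp. $1_Y$, which is all one needs since the weight is $(k,\ts-k)$ with $\mu = (k)$, $\eta = (\ts-k)$ one-part) filling realizing each admissible assignment of $X$-type versus $Y$-type to the $\ts$ cells, and that the admissible assignments are exactly indexed by $k$-subsets of the set of cells of $\rho_\pi$ that are "$X$-eligible." I would then identify $X$-eligible cells of $\rho_\pi$ with $\AB(\pi)$ via the correspondence between addable boxes of $\lambda$ and the outer corners/cells of the vertical strip.

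\textbf{Key steps.} (1) Describe $\rho_\pi$ concretely as a vertical strip and compute $I(\pi)$: two cells $\Box_1,\Box_2 \in \rho_\pi$ attack iff $0 < |\phi(\Box_1) - \phi(\Box_2)| < 1$, where $\phi(\Box) = s - (s/r)a - b$ for NW coordinate $(a,b)$; since consecutive rows of the strip differ in $\phi$ by $s/r - (\lambda_i - \lambda_{i+1})$, this is a purely combinatorial count depending only on $\pi$. (2) Prove the normal-form lemma for $\SSYT_\pm(\rho_\pi; k, \ts-k)$: because each row contains exactly one cell, the weakly/strictly increasing conditions reduce to linear conditions on the sequence of entries read in $\phi$-order; show each $X/Y$-type vector that is compatible (an $\aA_Y$-cell cannot sit strictly $\phi$-below an $\aA_X$-cell in the same column with no separation, etc.) admits exactly one filling with content $(k,\ts-k)$. (3) Set up the bijection $P \leftrightarrow L$: let $L$ be the set of cells of $\rho_\pi$ labeled by $\aA_X$ in $P$, transported to $\AB(\pi)$; conversely, given $L \subset \AB(\pi)$ with $|L| = k$, build the unique normal-form filling with those cells $X$-typed. (4) Match the statistics: show that the contribution to $\inv(P) - I(\pi)$ coming from an $X$-typed cell $\Box$ equals $-\xi(\pi,\Box)$. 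Concretely, $I(\pi)$ counts all attacking pairs; in the normal-form filling the attacking pairs that are \emph{non-inversions} are exactly those involving at least one $Y$-typed cell positioned so it is \emph{not} below an $X$-typed partner, and one computes that each $X$-typed $\Box$ "loses" exactly $\xi(\pi,\Box)$ attacking pairs relative to the all-$Y$ configuration (which contributes $\inv = I(\pi)$, i.e.\ $k=0$ gives the empty product on the right). Here one uses Definition \ref{def:xi-AB}: $\xi(\pi,\Box) = |\{\Box' \in \EB(\pi) : \gamma(\Box) + n < \gamma(\Box') \le \gamma(\Box)+n+m\}|$, and the translation between $\gamma$-intervals of length $m$ on the east boundary and $\phi$-attacking windows in $\rho_\pi$.

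\textbf{Main obstacle.} The delicate point is step (4), the precise statistic match, in particular pinning down exactly which attacking pairs in the normal-form filling are inversions versus non-inversions and showing the bookkeeping collapses to the single-box quantity $\xi(\pi,\Box)$ summed over $L$ with no cross-terms between distinct $X$-typed cells. Since the $\aA_X$ entries are forced strictly increasing down each column and the $\aA_Y$ entries strictly increasing along rows, a pair of two $X$-typed cells in the same column is automatically an inversion or not in a way that must cancel against $I(\pi)$, and a pair consisting of one $X$-typed and one $Y$-typed cell contributes depending only on their relative $\phi$-position — so the claim is that the net change from flipping a cell from $Y$-type to $X$-type is $-\xi(\pi,\Box)$ independent of the other flips. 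Verifying this independence (i.e.\ that the map $L \mapsto \sum_{\Box\in L}\xi(\pi,\Box)$ correctly counts the lost attacking pairs) is where the real work lies; I would handle it by inducting on $|L|$, flipping one cell at a time from $Y$ to $X$ and checking that the incremental change in $\inv$ is $-\xi(\pi,\Box)$ using the $\gamma$-to-$\phi$ dictionary and the identification $\EB(\pi) \leftrightarrow \ngen(\Aa_{m,n,\ell}(\pi))$ from Lemma \ref{lem:cogen}. Everything else — the structure of $\rho_\pi$, the normal form, the bijection on underlying sets — is routine once the conventions are fixed.
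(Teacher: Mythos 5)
Your proposal follows essentially the same strategy as the paper's own proof. In particular, since only the two letters $1_X$ and $1_Y$ occur (one-part contents $\mu=(k)$, $\eta=(\ts-k)$), super-semistandardness on the vertical strip $\rho_\pi$ forces at most one $1_X$ per column, sitting at the bottom of that column, so a filling is nothing more than a choice of $k$ bottom-most column cells, which the paper identifies with $\AB(\pi)$; and the statistics identity $I(\pi)-\inv(P)=\sum_{\Box\in L_P}\xi(\pi,\Box)$ is verified by translating the $\gamma$-interval defining $\xi(\pi,\Box_1)$ (via the $\rho_\pi$-versus-$\EB(\pi)$ dictionary) into the attacking-window condition $0<\phi(\Box_2)-\phi(\Box_1)<1$. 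Your inductive framing of the last step is a cosmetic variant of the paper's direct argument: flipping a single cell $\Box_1$ from $1_Y$ to $1_X$ toggles exactly the inversion status of the pairs $(\Box_1,\Box_2)$ attacking from below $\Box_1$ and changes nothing else, so the decrements $\xi(\pi,\Box_1)$ add with no cross-terms; this is the same one-line observation the paper uses (a pair $(\Box_1,\Box_2)$ with $\phi(\Box_1)<\phi(\Box_2)$ is a non-inversion iff $P(\Box_1)=1_X$).

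One caveat applies equally to your proposal and the paper's proof, and is worth being aware of: the assertion that the bottom-most column cells of $\rho_\pi=(\lambda+1^{\ts})/\lambda$ are \emph{exactly} $\AB(\pi)$ needs that every addable box of $\lambda$ lies in a row $\le\ts$, i.e.\ $\ell(\lambda)<\ts$. If the line $L_{r,s}$ is chosen so that $\ell(\tau_{r,s})=\ts$ (equivalently $s/r<\{s\}$, i.e.\ $\ell\bmod m\ge n$), then there exist $\lambda\subseteq\tau_{r,s}$ with $\ell(\lambda)=\ts$ and an addable box $(1,\ts+1)\notin\rho_\pi$; the map $\SSYT_\pm(\rho_\pi;k,\ts-k)\to\{L\subseteq\AB(\pi):|L|=k\}$ is then injective but not surjective, and the stated identity fails (e.g.\ $(m,n,\ell)=(5,3,14)$, $\tau=(3,1)$, $\ts=2$, $\lambda=(3,1)$, $k=3$: the left side is $0$ while the right side is a single nonzero monomial $t^{-3}$). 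So the argument tacitly requires a normalization of the cutting slope with $s/r>\{s\}$, or equivalently a restriction of $L$ to addable boxes in rows at most $\ts$. Since the paper's proof has the identical unstated assumption, this is not a defect specific to your write-up, but it is a genuine gap in the bijection set-up that a careful version of the proof should address.
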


\begin{proof}
Consider any $P \in \SSYT_{\pm}(\rho_\pi; k , \ts -k)$. Then by definition, since entries in $\aA_X$ are strictly increasing on columns and $P$ has exactly $k$ boxes with entries $1_X$ and $\ts -k$ boxes with entries $1_Y$, then any column of $\rho_\pi$ can have at most one cell with value $1_X$. Thus, any choice of $P \in \SSYT_{\pm}(\rho_\pi; k , \ts -k)$ is equivalent to a choice of $k$ columns of $\rho_\pi$ on which the bottom-most entry is filled with $1_X$, with all remaining entries filled with $1_Y$. Evidently then, since the set bottom-most cells in $\rho_\pi$ are precisely the addable boxes of $\lambda$, there is an obvious bijection between 
\[
\SSYT_{\pm}(\rho_\pi; k , \ts -k) \to \mathcal{P}(\AB(\pi))
\]
that maps each tableaux $P$ to the subset $L_P$ of boxes of $\rho_\pi$ labeled by $1_X$ in $P$. 

Thus, it remains to show that for any such $P$ we have $I(\pi) - \inv(P) = \sum_{\Box \in L_P}\xi(\pi, \Box)$. Given the restriction on the fillings it follows that the set attacking non-inversions of $P$ corresponds to attacking cells $\Box_1, \Box_2 \in \rho_\pi$ satisfying $\phi(\Box_1) < \phi(\Box_2)$ with $P(\Box_1) = 1_X$. 

Fix a $P \in \SSYT_\pm(\rho_\pi)$ and recall from Definition \ref{def:xi-AB} that for any positive integers $m$ and $n>s$ with $n/m=s/r$ if $\Box_1 \in L_P \subset \AB(\pi)$, we have, 
\[
 \xi(\pi,\Box_1)=\left|\{\Box\in\EB(\pi)\;|\;\gamma(\Box_1)+n<\gamma(\Box)\le \gamma(\Box_1)+n+m\}\right|.
\]

However, since the cells of $\rho_\pi$ are in bijection with those in $\EB(\pi)$, i.e. for every cell $\Box \in \EB(\pi)$ with NE corner $(a,b)$ there is a cell $\Box' \in \rho_\pi$ with NW corner $(a,b)$ satisfying $\gamma(\Box) = \gamma(\Box')+n$. Then,
\[
 \xi(\pi,\Box_1)=\left|\{\Box_2\in\rho_\pi\;|\;\gamma(\Box_1)<\gamma(\Box_2) \le \gamma(\Box_1)+m\}\right|.
\]
So, suppose $\Box_1 \in L_P$ with NW corner $(x_1,y_1)$ and $\Box_2 \in \rho_\pi$ with NW corner $(x_2,y_2)$. Then, if $\Box_2 \in \rho_\pi$ satisfies $\gamma(\Box_1)<\gamma(\Box_2) \leq \gamma(\Box_1)+m$,
we have the following:
\[\begin{array}{rcccl}
mn-my_1-n(x_1+1)&<&mn-my_2-n(x_2+1) &\leq& mn-my_1-n(x_1+1)+m \\
-my_1-nx_1&<&-my_2-nx_2 &\leq&-my_1-nx_1+m\\
 0&<&m(y_1-y_2) + n(x_1-x_2)&\leq& m \\
 0&<&(y_1-y_2) + \frac{n}{m}(x_1-x_2)&\leq& 1\\
0&<& \phi(\Box_2) - \phi(\Box_1)&\leq& 1.
\end{array}\]
Note that the equality on the right can never occur. Indeed, since $n$ and $m$ are relatively prime and $|x_1-x_2|<m,$ that would require $x_1=x_2$ and $y_1=y_2+1,$ which means that $\Box_1$ lies directly on top of $\Box_2,$ but $\Box_2\in\rho_\pi$ and $\Box_1$ is an addable box. Thus, $\Box_1$ and $\Box_2$ are attacking with $\phi(\Box_1) < \phi(\Box_2)$. Since by construction $P(\Box_1)=1_X$, this completes the proof. 
\end{proof}

Consequently, we immediately obtain a $(q,t)$-Schr\"oder Theorem for paths under any line. 

\begin{corollary}\label{cor:Schroder-Shuffle}
The triangular $(q,t)$-Schr\"oder polynomial computes the hook components of the shuffle theorem for paths under any line. That is, 
\[
S_{\tau_{r,s}}(q,t,a)= \sum_{k\geq 0} \langle \Hik_{\tau_{r,s}}(X;q,t), h_ke_{\ts-k} \rangle \;a^k. 
\]
\end{corollary}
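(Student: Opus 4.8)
The plan is to deduce Corollary \ref{cor:Schroder-Shuffle} by assembling the two expansions that have already been derived in the excerpt. On the one hand, the Schr\"oder path reformulation of Theorem \ref{thm:Schroder-dyckpaths} (specialized to the real-number indexing via $S_{\tau_{r,s}}=S_{m,n,\ell}$ for any compatible triple $(m,n,\ell)$, as in Definition \ref{def:S tau}) gives
\[
S_{\tau_{r,s}}(q,t,a)=\sum_{k\geq 0}a^k\Bigl(\sum_{\pi\in\D(r,s)}q^{\area(\pi)}t^{\dinv(\pi)}\sum_{\substack{L\subset\AB(\pi)\\|L|=k}}t^{-\sum_{\Box\in L}\xi(\pi,\Box)}\Bigr).
\]
On the other hand, equation \eqref{eq:hook} expresses the hook pairing $\langle\Hik_{\tau_{r,s}}(X;q,t),h_ke_{\ts-k}\rangle$ as a sum over the same Dyck paths $\pi$ of $q^{\area(\pi)}t^{\dinv(\pi)}$ times a sum over super tableaux $P\in\SSYT_\pm(\rho_\pi;k,\ts-k)$ of $t^{\inv(P)-I(\pi)}$. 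So it suffices to prove, for each fixed $\pi\in\D(r,s)$ and each $k\geq 0$, the inner identity
\[
\sum_{P\in\SSYT_\pm(\rho_\pi;k,\ts-k)}t^{\inv(P)-I(\pi)}=\sum_{\substack{L\subset\AB(\pi)\\|L|=k}}t^{-\sum_{\Box\in L}\xi(\pi,\Box)},
\]
which is precisely the content of the theorem proved just above the corollary. Summing over $k$ with weight $a^k$ then matches the two generating functions term by term, and the corollary follows.

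Concretely, the steps in order are: (i) recall equation \eqref{eq:RHS}, the superization computation expressing the hook pairing in terms of $\SSYT_\pm(\nu(\pi)^R;k,\ts-k)$, which rests on Proposition \ref{prop:super} (the Haglund--Haiman--Loehr--Remmel--Ulyanov superization principle) together with the quasisymmetric expansion \eqref{eq:llt-gessel} of LLT polynomials; (ii) transport this sum to tableaux of shape $\rho_\pi=(\lambda+1^{\ts})\setminus\lambda$ via the bijection $\Psi$ of \eqref{eq:bijSSYT} from \cite{BHMPS}, using Lemma \ref{lem:inv} to see that $\Psi$ preserves $\inv$ and sends $I(\nu(\pi)^R)=I(\nu(\pi))$ to $I(\pi)$, yielding \eqref{eq:hook}; (iii) invoke the theorem immediately preceding the corollary, whose proof sets up the bijection $\SSYT_\pm(\rho_\pi;k,\ts-k)\to\mathcal P(\AB(\pi))$ sending $P$ to the set $L_P$ of cells labeled $1_X$, and identifies $I(\pi)-\inv(P)$ with $\sum_{\Box\in L_P}\xi(\pi,\Box)$; (iv) substitute into the Schr\"oder path formula and collect powers of $a$.

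The genuine mathematical work is all in step (iii), i.e. in the theorem that $I(\pi)-\inv(P)=\sum_{\Box\in L_P}\xi(\pi,\Box)$. The subtlety there is the chain of equivalences converting the attacking/non-attacking condition $0<\phi(\Box_2)-\phi(\Box_1)\le 1$ on cells of $\rho_\pi$ into the Anderson-label inequality $\gamma(\Box_1)<\gamma(\Box_2)\le\gamma(\Box_1)+m$ that defines $\xi(\pi,\Box_1)$ in Definition \ref{def:xi-AB}, together with the coprimality argument ruling out the boundary case (an addable box cannot sit directly atop a cell of $\rho_\pi$). Everything else, steps (i), (ii), and (iv), is bookkeeping: matching conventions between the $(r,s)$ notation of this section and the $(m,n,\ell)$ notation of the earlier sections, and keeping track of which alphabet ($\aA_X$ versus $\aA_Y$) carries the $h$ versus $e$ contribution under \eqref{eq:scalar prod to coef of sup}. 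In writing the corollary's proof I would keep it short: cite \eqref{eq:Schroder-dyckpaths}, cite \eqref{eq:hook}, invoke the preceding theorem, and conclude by comparing coefficients of $a^k$.
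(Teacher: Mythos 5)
Your proposal is correct and follows essentially the same route as the paper: both derive the corollary by comparing coefficients of $a^k$ in the Schr\"oder path formula \eqref{eq:Schroder-dyckpaths} and in the hook-pairing expansion \eqref{eq:hook}, with the inner equality for each fixed $\pi$ and $k$ being exactly the unnumbered theorem proved immediately above the corollary. The paper simply compresses your steps (i)--(iv) into the phrase ``consequently, we immediately obtain,'' and you correctly identify that all the substantive work lies in the preceding theorem relating $I(\pi)-\inv(P)$ to $\sum_{\Box\in L_P}\xi(\pi,\Box)$.
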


In particular, since $\Hik_{\tau_{r,s}}(X;q,t)$ depends solely on the choice of triangular partition and not the choice of slope, Corollary \ref{cor:Schroder-Shuffle} once again implies that the triangular Schr\"oder polynomial depends only on $\tau$.

\appendix

\section{An Alternative Derivation of the Hook Component}\label{sec:AppendixA}

In this section we provide an alternative derivation of \eqref{eq:hook} where the superization procedure is performed after the bijection from \eqref{eq:bijSSYT} has been applied, rather than before. This allows us to provide a self contained argument that does not rely on the proofs of \cite{HHLRU} or the appendix in \cite{HHL05}. 

Given a triangular partition $\tau = \tau_{r,s}$ with $\pi = (\lambda, \tau_{r,s}) \in \D(r,s)$ and $\rho_\pi=\left(\lambda+1^\ts\right)/\lambda$, let $\inv(T)$ with $T \in \SSYT(\rho_\pi)$ be given as in Definition \ref{def:invSSTY}. 

\begin{definition}
    The LLT polynomial corresponding to $\rho_\pi=\left(\lambda+1^\ts\right)/\lambda$ is given by the formula, 
    \begin{equation}
        \mathcal{L}_\pi(X,t):=\sum_{T\in\SSYT(\rho_\lambda)}t^{\inv(T)}X^T.
    \end{equation}
\end{definition}

It follows from Lemma \ref{lem:inv} that $\mathcal{L}_\pi(X,t) = \mathcal{G}_{\nu(\pi)^R}(X;t)$. Thus, we have:

\begin{align*}
\Hik_{\tau_{r,s}}(X;q,t):=&\sum_{\pi \in \D(r,s)} q^{\area(\pi)}t^{\dinv(\pi)}\omega(\mathcal{G}_{\nu(\pi)}(X;t^{-1}))\\
=&\sum_{\pi \in \D(r,s)} q^{\area(\pi)}t^{\dinv(\pi)-I(\nu(\pi))}\mathcal{G}_{\nu(\pi)^R}(X;t)\\
=&\sum_{\pi \in \D(r,s)} q^{\area(\pi)}t^{\dinv(\pi)-I(\pi)}\mathcal{L}_{\pi}(X;t)\\
=&\sum_{\pi \in \D(r,s)} q^{\area(\pi)}t^{\dinv(\pi)-I(\pi)}\sum_{T\in\SSYT(\rho_\pi)}t^{\inv(T)}X^T.
\end{align*}

\begin{remark}
    In the special case when $r=n$ is an integer and $s=n+\epsilon$, one obtains that $\dinv(\pi)=I(\pi)$, for all $\pi \in \D(n,n+\epsilon)$ and the formula above specializes into the right hand side of the classical Shuffle Theorem (see Formula (29) in \cite{HHLRU} and \cite{CM18}).
\end{remark}

\subsection{Superization}

Our goal now is to evaluate the Hall scalar product $\langle \Hik_{\tau_{r,s}}(X,q,t),h_ke_{\ts-k}\rangle$ using the formula above and the superization methods from \cite{HHLRU}. 

The Pieri rule immediately implies that for a triple of partitions $\lambda,\mu,\eta$, one obtains
\begin{equation}\label{formula: <s,eh>}
    \langle s_\lambda,h_\mu e_\eta\rangle=|\SSYT_\pm(\lambda;\mu,\eta)|.
\end{equation}

\begin{lemma}\label{lemma:<f,gh>}
    For any three symmetric functions $f_1,\ f_2,$ and $f_3$, one has
    \begin{equation*}
        \langle f_1,f_2 f_3\rangle=\langle\langle f_1[X+Y],f_2[X]\rangle_X, f_3[Y]\rangle_Y.
    \end{equation*}
\end{lemma}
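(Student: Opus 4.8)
The statement is the plethystic adjunction identity
\[
\langle f_1,\; f_2 f_3\rangle \;=\; \big\langle \langle f_1[X+Y],\, f_2[X]\rangle_X,\; f_3[Y]\big\rangle_Y,
\]
which expresses the standard fact that multiplication by $f_2$ is adjoint to the skewing operator $f_2^\perp$, rephrased plethystically by introducing a second alphabet. The plan is to reduce to a single, well-chosen family of test functions and then verify the identity there by a direct tableau/Pieri count, invoking bilinearity to conclude in general.

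First I would reduce to the case $f_1 = s_\lambda$, $f_2 = h_\mu$, $f_3 = e_\eta$ for partitions $\lambda,\mu,\eta$ (or, if one prefers to stay inside the degree-graded pieces, any basis of Schur functions together with the fact that products $h_\mu e_\eta$ span each graded component, e.g.\ because $\{h_\mu\}$ already spans and $e_\eta$ are polynomials in them). All three sides of the claimed equality are separately linear in $f_1$, $f_2$, and $f_3$, so it suffices to check it on these generators. Next I would compute each side. For the left side, $\langle s_\lambda, h_\mu e_\eta\rangle = |\SSYT_\pm(\lambda;\mu,\eta)|$ by \eqref{formula: <s,eh>} (the super-Pieri rule already recorded in the excerpt). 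For the right side, I would expand $s_\lambda[X+Y] = \sum_{\kappa\subseteq\lambda} s_\kappa[X]\, s_{\lambda/\kappa}[Y]$ using the coproduct/addition-of-alphabets formula for Schur functions; then $\langle s_\lambda[X+Y], h_\mu[X]\rangle_X = \sum_{\kappa} \langle s_\kappa, h_\mu\rangle\, s_{\lambda/\kappa}[Y]$, and since $\langle s_\kappa, h_\mu\rangle$ is the Kostka number counting $\SSYT$ of shape $\kappa$ and content $\mu$, this inner sum is $\sum_{\kappa} K_{\kappa\mu}\, s_{\lambda/\kappa}[Y]$. Pairing against $e_\eta[Y]$ and using $\langle s_{\lambda/\kappa}, e_\eta\rangle = $ (number of $\SSYT$ of shape $\lambda'/\kappa'$ with content $\eta$) $= $ (number of column-strict-transpose fillings), I would recognize the resulting double count $\sum_\kappa K_{\kappa\mu}\langle s_{\lambda/\kappa},e_\eta\rangle$ as exactly the number of super-semistandard fillings of $\lambda$ with $X$-content $\mu$ and $Y$-content $\eta$: the subtableau $\kappa$ records where the $\aA_X$-entries sit, its complement carries the $\aA_Y$-entries, the $X$-part is an ordinary $\SSYT$ (counted by $K_{\kappa\mu}$), and the $Y$-part is conjugate-semistandard (counted by $\langle s_{\lambda/\kappa},e_\eta\rangle$). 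Hence both sides equal $|\SSYT_\pm(\lambda;\mu,\eta)|$.

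Alternatively, and perhaps more cleanly, I would avoid the Schur expansion entirely and argue by adjunction: it is classical that $\langle fg, h\rangle = \langle g, f^\perp h\rangle$ where $f^\perp$ is the skewing operator, and that $f^\perp$ acts on $h[X+Y]$ in the $X$-variables as $\langle f[X+Y], f_2[X]\rangle_X$ applied to the $Y$-alphabet is precisely the definition of $f_2^\perp$ transported through the coproduct $\Delta h = h[X+Y]$. Concretely, $\langle f_1, f_2 f_3\rangle = \langle f_2^\perp f_1, f_3\rangle$, and $f_2^\perp f_1 = \langle f_1[X+Y], f_2[X]\rangle_X$ as an element of the $Y$-ring — this last equality is the standard description of skewing via the coproduct, valid over any base ring including $\C[q,t]$. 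Pairing with $f_3[Y]$ then gives exactly the right-hand side.

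The only mild subtlety — and the step I would treat most carefully — is the bookkeeping of the $\omega_Y$ (the Weyl involution on the $Y$-alphabet) that is implicit in the superization convention: in the applications of this lemma one uses $e_\eta[Y]$ rather than $h_\eta[Y]$, which is precisely what produces the $\SSYT_\pm$ count with conjugate-semistandard $Y$-part, so I must make sure the statement as written pairs $f_3[Y]$ with $f_1[X+Y]$ without an extra $\omega_Y$ and that this matches how the lemma is invoked downstream (in the alternative derivation of \eqref{eq:hook}). This is purely a matter of tracking definitions; the mathematical content is the coproduct/adjunction identity, which holds over an arbitrary commutative base ring since all the structure constants (Littlewood–Richardson and Kostka numbers) are integers independent of $q,t$.
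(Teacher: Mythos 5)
Your proposal is correct, and both routes you sketch do give valid proofs, but neither is the one the paper uses, and the paper's choice is substantially cheaper. The paper tests the trilinear identity on $f_1=m_\lambda$, $f_2=h_\mu$, $f_3=h_\eta$. Because $\langle m_\lambda,h_\mu\rangle=\delta_{\lambda\mu}$, the left side becomes $\langle m_\lambda,h_{\mu\sqcup\eta}\rangle=\delta_{\lambda,\mu\sqcup\eta}$, and on the right the coproduct $m_\lambda[X+Y]=\sum_{\nu}m_\nu[X]\,m_{\lambda\setminus\nu}[Y]$ together with the same duality collapses the nested scalar product to the identical Kronecker delta. No Pieri rule, no Kostka numbers, no super tableaux are needed; the proof is three lines. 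Your first route instead tests on $(s_\lambda,h_\mu,e_\eta)$, which forces you to invoke the super-Pieri identity $\langle s_\lambda,h_\mu e_\eta\rangle=|\SSYT_\pm(\lambda;\mu,\eta)|$ on the left, expand $s_\lambda[X+Y]$ into skew Schur functions on the right, and then run a nontrivial double count matching $\sum_\kappa K_{\kappa\mu}\langle s_{\lambda/\kappa},e_\eta\rangle$ against the super tableau count. That all works, but you are reproving a Pieri-type identity in order to establish a purely formal adjunction fact; you have inverted the logical economy of the paper, where this lemma is the cheap input that feeds into the superization machinery. Your second route (the skewing/coproduct adjunction $\langle f_1, f_2 f_3\rangle = \langle f_2^\perp f_1, f_3\rangle$ with $f_2^\perp f_1=\langle f_1[X+Y],f_2[X]\rangle_X$) is the structural version of the paper's computation and is just as clean; had you led with it, the argument would essentially match the paper in both length and spirit, with the paper unwinding it explicitly in the $m/h$ bases rather than invoking $f^\perp$. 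One further remark: the $\omega_Y$ worry at the end is a non-issue here. This lemma involves no $\omega_Y$; the Weyl involution enters only in the next lemma of the appendix, where it is inserted explicitly in the passage from $h_\eta$ to $e_\eta$, so there is no hidden sign or transpose to track in the statement as written.
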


\begin{proof}
    Both sides are linear with respect to all three functions, hence it is enough to check the equation for $f_1=m_\lambda,$ $f_2=h_\mu,$ and $f_3=h_\eta.$ Then for the left hand side, one gets
    \begin{equation*}
        \langle m_\lambda, h_\mu h_\eta\rangle=\langle m_\lambda, h_{\mu\sqcup\eta}\rangle=\delta_{\lambda,\mu\sqcup\eta},
    \end{equation*}
    where $\mu\sqcup\eta:=\text{sort}(\mu_1,\mu_2,\ldots,\eta_1,\eta_2,\ldots),$
    while for the right hand side, one also gets
    \begin{equation*}
        \left\langle\langle m_\lambda[X+Y],h_\mu[X]\rangle_X, h_\eta[Y]\right\rangle_Y=\left\langle \sum_{\nu\sqcup\mu=\lambda} m_\nu[Y],h_\eta[Y]\right\rangle_Y=\delta_{\lambda,\mu\sqcup\eta}.
    \end{equation*}
    Hence, the statement holds.
\end{proof}

\begin{lemma}\label{lemma: <f,eh>=coef}
    For any triple of partitions $\lambda,\mu,\eta$, the following holds: 
\begin{equation}
    \langle f,h_\mu e_\eta\rangle=\widetilde{f}(X,Y)\vert_{X^\mu Y^\eta}. 
\end{equation}
\end{lemma}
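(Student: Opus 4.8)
\textbf{Proof plan for Lemma \ref{lemma: <f,eh>=coef}.}

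The plan is to deduce the identity $\langle f, h_\mu e_\eta\rangle = \widetilde f(X,Y)|_{X^\mu Y^\eta}$ directly from Lemma \ref{lemma:<f,gh>} together with the basic plethystic description of superization. First I would apply Lemma \ref{lemma:<f,gh>} with $f_1 = f$, $f_2 = h_\mu$, and $f_3 = e_\eta$, which gives
\[
\langle f, h_\mu e_\eta\rangle = \bigl\langle \langle f[X+Y], h_\mu[X]\rangle_X,\ e_\eta[Y]\bigr\rangle_Y.
\]
The inner pairing $\langle f[X+Y], h_\mu[X]\rangle_X$ extracts, by the standard duality $\langle g, h_\mu\rangle = (\text{coefficient of } X^\mu \text{ in the monomial expansion of } g)$, the coefficient of $x_1^{\mu_1}x_2^{\mu_2}\cdots$ in $f[X+Y]$ regarded as a function of $X$ with $Y$ as parameters; call this $g_\mu(Y)$, a symmetric function in $Y$. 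Then the outer pairing $\langle g_\mu(Y), e_\eta[Y]\rangle_Y$ equals the coefficient of $Y^\eta$ in $\omega_Y(g_\mu(Y))$, since $\langle h, e_\eta\rangle = \langle \omega h, h_\eta\rangle$ and $\langle \cdot, h_\eta\rangle$ reads off the $Y^\eta$ monomial coefficient.

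The second step is to recognize that this two-stage monomial extraction is exactly the definition of $\widetilde f(X,Y)|_{X^\mu Y^\eta}$. Recall $\widetilde f(X,Y) = \omega_Y f[X+Y]$, so the coefficient of $X^\mu Y^\eta$ in $\widetilde f$ is obtained by first taking the coefficient of $X^\mu$ in $f[X+Y]$ (which commutes with $\omega_Y$, as $\omega_Y$ acts only on the $Y$ variables) — yielding $\omega_Y(g_\mu(Y))$ — and then taking the coefficient of $Y^\eta$. This matches the expression produced in the first step, completing the identification. To keep the argument fully self-contained, I would record as a preliminary the elementary facts that for symmetric $g$ one has $\langle g, h_\lambda\rangle = [X^\lambda]\, g$ and $\langle g, e_\lambda\rangle = [X^\lambda]\,(\omega g)$, both of which follow from the monomial–complete-homogeneous duality and the definition of $\omega$.

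I do not expect a serious obstacle here; the lemma is essentially a bookkeeping consequence of Lemma \ref{lemma:<f,gh>} and the definition of plethystic superization. The one point requiring a little care is the commutation of the partial monomial-coefficient operator $[X^\mu]$ with the $Y$-Weyl involution $\omega_Y$ — this is legitimate precisely because $\omega_Y$ is a ring homomorphism fixing the $X$ alphabet, so it acts coefficientwise on any expansion of $f[X+Y]$ in monomials of the $X$ variables. Once that is spelled out, the chain of equalities is immediate. (As a sanity check one may note that when $f = s_\lambda$ the statement reduces to \eqref{formula: <s,eh>}, since $\widetilde{s_\lambda}(X,Y) = \sum_{T\in\SSYT_\pm(\lambda)} Z^T$ and the coefficient of $X^\mu Y^\eta$ counts $|\SSYT_\pm(\lambda;\mu,\eta)|$.)
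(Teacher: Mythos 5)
Your proposal is correct and follows essentially the same route as the paper: both rely on Lemma~\ref{lemma:<f,gh>}, on the duality $\langle g,h_\mu\rangle = [X^\mu]\,g$, and on commuting $\omega_Y$ past the $X$-pairing to trade $h_\eta$ for $e_\eta$. The paper writes the chain of equalities starting from $\widetilde{f}(X,Y)|_{X^\mu Y^\eta}$ and ending at $\langle f,h_\mu e_\eta\rangle$; you run the same chain in the opposite direction, but the individual steps are identical.
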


\begin{proof}
Indeed,
\begin{align*}
    \widetilde{f}(X,Y)\vert_{X^\mu Y^\eta}&=\langle\langle \omega_Y f[X+Y],h_\mu[X]\rangle_X,h_\eta[Y]\rangle_Y\\
    &=\langle\omega_Y\langle f[X+Y],h_\mu[X]\rangle_X,h_\eta[Y]\rangle_Y\\
    &=\langle\langle f[X+Y],h_\mu[X]\rangle_X,e_\eta[Y]\rangle_Y\\
    &=\langle f,h_\mu e_\eta\rangle,
\end{align*}
where on the last step we use Lemma \ref{lemma:<f,gh>}.
\end{proof}

\begin{corollary}
    Formula \ref{formula: <s,eh>} and Lemma \ref{lemma: <f,eh>=coef} imply
    \begin{equation}
        \tilde{s}_\lambda(Z)=\sum_{T\in\SSYT_\pm(\lambda)} Z^T.
    \end{equation}
\end{corollary}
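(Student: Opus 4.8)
The plan is to unwind the superization procedure for $s_\lambda$ directly, using the two lemmas just proven. First I would recall that by definition the superization of $s_\lambda$ is $\widetilde{s}_\lambda(Z) = \widetilde{s}_\lambda(X,Y) = \omega_Y s_\lambda[X+Y]$, and by Lemma \ref{lemma: <f,eh>=coef} applied to $f = s_\lambda$, for any pair of partitions $\mu,\eta$ one has
\[
\widetilde{s}_\lambda(X,Y)\big|_{X^\mu Y^\eta} = \langle s_\lambda, h_\mu e_\eta\rangle.
\]
Next I would invoke Formula \eqref{formula: <s,eh>}, which is the Pieri-rule identity $\langle s_\lambda, h_\mu e_\eta\rangle = |\SSYT_\pm(\lambda;\mu,\eta)|$, to conclude that the coefficient of $X^\mu Y^\eta$ in $\widetilde{s}_\lambda(Z)$ is exactly the number of semistandard super tableaux of shape $\lambda$ with $\aA_X$-weight $\mu$ and $\aA_Y$-weight $\eta$.

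The remaining step is purely bookkeeping: since $\widetilde{s}_\lambda(Z)$ is a homogeneous symmetric-in-$X$, symmetric-in-$Y$ function, it is determined by its coefficients on monomials $X^\mu Y^\eta$ indexed by pairs of partitions $(\mu,\eta)$, and I would expand
\[
\widetilde{s}_\lambda(Z) = \sum_{\mu,\eta} |\SSYT_\pm(\lambda;\mu,\eta)| \, X^\mu Y^\eta = \sum_{\mu,\eta} \sum_{T \in \SSYT_\pm(\lambda;\mu,\eta)} Z^T = \sum_{T \in \SSYT_\pm(\lambda)} Z^T,
\]
where the last equality simply regroups the sum over all super tableaux according to their $X$- and $Y$-weights, using $Z^T = X^\mu Y^\eta$ when $T$ has $\aA_X$-weight $\mu$ and $\aA_Y$-weight $\eta$. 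This is exactly the claimed identity. Since the corollary is explicitly stated to follow from Formula \eqref{formula: <s,eh>} and Lemma \ref{lemma: <f,eh>=coef}, no further input is needed.

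The only point requiring a word of care — and the place I would expect a careful reader to pause — is the passage from "the coefficients determine the function" to the displayed expansion: one must check that $\{X^\mu Y^\eta : \mu,\eta \text{ partitions}\}$ (equivalently the monomial symmetric functions $m_\mu[X]m_\eta[Y]$) indeed form a basis in the relevant bidegree, so that matching coefficients against $h_\mu e_\eta$ pins down $\widetilde{s}_\lambda$ uniquely. This is standard, but it is the substantive content behind the otherwise routine calculation; everything else is a direct citation of the two preceding lemmas.
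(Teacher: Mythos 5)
Your proposal is correct, and it is exactly the argument the paper intends. The paper states this corollary without a proof, treating it as an immediate consequence of Formula \eqref{formula: <s,eh>} and Lemma \ref{lemma: <f,eh>=coef}; your chain of equalities --- extract coefficients via Lemma \ref{lemma: <f,eh>=coef}, identify them via \eqref{formula: <s,eh>}, then regroup the sum over all super tableaux by their $\aA_X$- and $\aA_Y$-weights --- is precisely that consequence made explicit. The closing remark about why it suffices to match coefficients on $X^\mu Y^\eta$ for $\mu,\eta$ partitions is the right thing to note: both sides are symmetric in $X$ and in $Y$ separately (the right side because $|\SSYT_\pm(\lambda;\mu,\eta)|$ is invariant under permuting the parts of $\mu$ and of $\eta$), so agreement on these monomials determines the identity.
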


\subsection{Quasisymmetric Functions}
Recall Gessel's fundamental quasisymmetric functions from Definition \ref{def:Qsym}. It is well known that Schur functions can be expressed in terms of these functions.

\begin{definition}
Let $S\in \SYT(\lambda)$. For every $k\in\{1,\ldots n\}$, let $\Box_k=(x_k,y_k)\in\lambda$ be the box of $\lambda$ such that $S(\Box_k)=k.$ We say that $k\in\{1,\ldots,n-1\}$ is a \textit{right step} if $\Box_{k+1}$ is to the right of $\Box_k$ (i.e. $x_{k+1}>x_k$), and we say that $k$ is an \textit{up step} if $\Box_{k+1}$ is higher than $\Box_k$ (i.e. $y_{k+1}>y_k$). Note that every $k\in\{1,\ldots,n-1\}$ is either a right step, or an up step, but not both. Then,
\begin{equation*}
    \{1,\ldots,n-1\}=R(S)\sqcup U(S),
\end{equation*}
where $R(S)$ (resp. $U(S)$) is the set of right (resp. up) steps.
\end{definition}

\begin{lemma}[\cite{G84}, Theorem $7$]\label{lemma: schur via Gessel}
For any partition $\lambda$,
$$
s_\lambda(X)=\sum_{S\in\SYT(\lambda)}Q_{|\lambda|,R(S)}(X).
$$
\end{lemma}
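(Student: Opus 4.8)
\textbf{Proof plan for Lemma \ref{lemma: schur via Gessel}.}

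The statement is the classical expansion of a Schur function in Gessel's fundamental quasisymmetric functions, indexed by the standard Young tableaux of shape $\lambda$ with $Q$ attached to the set of right steps. The plan is to prove it by standardization of semistandard tableaux, which is the standard argument and fits naturally after the machinery already set up in this section. First I would recall the combinatorial definition $s_\lambda(X) = \sum_{T \in \SSYT(\lambda)} X^T$, where the sum is over semistandard fillings of $\lambda$ with positive integer entries and $X^T = \prod_{\Box} x_{T(\Box)}$. The goal is to group these monomials according to a standardization map.

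The key step is to define, for each $T \in \SSYT(\lambda)$ of size $n$, its \emph{standardization} $\std(T) \in \SYT(\lambda)$: read the entries of $T$ and relabel them $1,2,\dots,n$ so that equal entries are relabeled in increasing order according to a fixed reading order (say, left to right within a row, bottom to top between rows, i.e.\ the reading order already used implicitly in this section), and smaller entries always receive smaller labels. One checks $\std(T)$ is indeed standard. Then I would verify the crucial compatibility: a semistandard $T$ with $\std(T) = S$ is exactly a filling obtained from $S$ by replacing label $k$ by some positive integer $a_k$ with $a_1 \le a_2 \le \cdots \le a_n$, subject to $a_k = a_{k+1} \Rightarrow k \notin R(S)$ — because if $k$ is a right step of $S$, the cells $\Box_k, \Box_{k+1}$ lie in the same row with $\Box_{k+1}$ to the right, and semistandardness forces strict increase along... wait, rows are weakly increasing, so actually the constraint is the opposite; I would take care here that $R(S)$ is the descent-type set forced by the \emph{column}-strictness: if $k$ is an up step then $\Box_{k+1}$ is strictly above $\Box_k$ in the same column, forcing $a_{k+1} > a_k$, hence $a_k = a_{k+1} \Rightarrow k \notin U(S)$, equivalently $k \in R(S)$. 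Combining, the monomials of $s_\lambda$ with $\std(T) = S$ sum precisely to $\sum_{a_1 \le \cdots \le a_n,\ a_k = a_{k+1} \Rightarrow k \in R(S)} x_{a_1}\cdots x_{a_n} = Q_{n, R(S)}(X)$, by Definition \ref{def:Qsym}. Summing over all $S \in \SYT(\lambda)$ gives the claim.

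The main obstacle — really the only subtle point — is getting the bookkeeping of the index set exactly right: reconciling the fact that $\SSYT$ has weakly increasing rows and strictly increasing columns with the convention in Definition \ref{def:Qsym} that $a_k = a_{k+1} \Rightarrow k \notin J$, so that $J$ must be the set of positions where strict increase is forced. Since strict increase is forced exactly at up steps, one needs $J = U(S)$ at first glance; resolving this to $R(S)$ requires the (harmless) observation that the reading order is chosen so that within a column higher cells come later, while the partition is drawn in French notation. I would simply fix the reading order consistently with the rest of the section and note that with that choice the forced-strictness positions are the up steps, and then either (i) cite that $Q_{n,J}$ with the paper's convention corresponds to the complement, giving $R(S)$, or (ii) equivalently re-examine: since $\{1,\dots,n-1\} = R(S) \sqcup U(S)$ and the condition "$a_k = a_{k+1}$ allowed" holds exactly for $k \in R(S)$, the definition of $Q_{n,R(S)}$ as written in Definition \ref{def:Qsym} (which \emph{forbids} equality precisely on $J$) indeed matches, so $J = U(S)$ is wrong and $J = R(S)$ is correct once one checks that equality is \emph{forbidden} on $U(S)$, i.e.\ \emph{allowed} on $R(S)$. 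This is a one-line sign check but it is exactly where an error would creep in, so I would state it carefully. Everything else (well-definedness of $\std$, the bijection between $\SSYT$ fillings and chains $a_1 \le \cdots \le a_n$ with the stated constraint) is routine.
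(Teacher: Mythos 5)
Your approach --- standardize semistandard tableaux and read off each fiber as a Gessel quasisymmetric function --- is the same as the paper's. But the ``one-line sign check'' you flag as the danger zone is exactly where the error occurs, and your option (ii) is self-contradictory. You correctly establish that for $T'$ in the fiber over $S$, equality $a_k=a_{k+1}$ is permitted precisely when $k\in R(S)$, i.e.\ strict increase is forced precisely on $U(S)$. Definition \ref{def:Qsym} as printed ($a_i=a_{i+1}\Rightarrow i\notin J$) forces strict increase exactly on $J$; matching the two gives $J=U(S)$, not $J=R(S)$. The sentence asserting ``the definition of $Q_{n,R(S)}\ldots$ indeed matches'' thus claims the opposite of what its own premises show. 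A concrete check: for $\lambda=(2)$ the unique $S\in\SYT(\lambda)$ has $R(S)=\{1\}$ and $U(S)=\emptyset$, and $Q_{2,\emptyset}=h_2=s_{(2)}$, while $Q_{2,\{1\}}=e_2\neq s_{(2)}$.

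What lies behind the trouble is that Lemma \ref{lemma: schur via Gessel} and Definition \ref{def:Qsym} are, as printed, off from one another by a complement; one of the two carries a typo (either the lemma should read $Q_{|\lambda|,U(S)}$, or the condition in Definition \ref{def:Qsym} should be $a_i=a_{i+1}\Rightarrow i\in J$, and the same mismatch recurs in the superized Lemma \ref{lemma:super schur viaa super Gessel}). The paper's own proof derives precisely your fiber condition, ``if $T'(\Box_k)=T'(\Box_{k+1})$ then $k\in R(S)$,'' and then writes $Q_{|\lambda|,R(S)}$ without comment --- the same unjustified jump. So your first-glance reading $J=U(S)$ was the correct one relative to the printed Definition \ref{def:Qsym}; the resolution is to fix the convention, not to re-derive your way to the index set as stated in the lemma. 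A smaller point: the tiebreak in $\St$ is by $x$-coordinate alone (among equal entries, the cell further left is standardized first), not the row-major ``left to right within a row, bottom to top between rows'' order you suggest --- the cells of a given value in a semistandard filling form a horizontal strip along which $y$ decreases as $x$ increases, so the two orders actually run in opposite directions.
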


We will prove Lemma \ref{lemma: schur via Gessel}, as we will need a modification of this argument later on. 

\begin{proof}
The \textit{standardization map} $\St:\SSYT(\lambda)\to \SYT(\lambda)$ is constructed as follows. Let $T\in\SSYT(\lambda)$ be a semistandard tableau. Then the standard tableau $S:=\St(T)$ is defined by the following conditions:
\begin{enumerate}
    \item If $T(\Box_1)<T(\Box_2)$, then $S(\Box_1)<S(\Box_2),$
    \item If $T(\Box_1)=T(\Box_2)$ and $\Box_1=(x_1,y_1)$ is to the left of $\Box_2=(x_2,y_2)$ (i.e. $x_1<x_2$), then $S(\Box_1)<S(\Box_2).$
\end{enumerate}

Indeed, since if $T(\Box_1)=T(\Box_2)$ then $\Box_1$ and $\Box_2$ cannot be in the same column, the above conditions determine a total order on the values of $S$ on the squares of $\lambda,$ such that the values in the rows increase left to right and values in the columns increase down to up, which uniquely determines the tableau $S=\St(T)\in\SYT(\lambda).$ 

Let us reverse the construction in order to understand the fiber $\St^{-1}(S)\subset\SSYT(\lambda).$ As before, let
$\Box_1,\ldots,\Box_n$ be the boxes of $\lambda$ such that $S(\Box_k)=k$, for all $k.$ Then $T'\in\St^{-1}(S)$ if and only if for all $k\in\{1,\ldots,n-1\}$, and therefore,
\begin{enumerate}
    \item $T'(\Box_k)\le T'(\Box_{k+1}),$
    \item If $T'(\Box_k)= T'(\Box_{k+1})$ then $k\in R(S).$
\end{enumerate}
Hence
\begin{align*}
    s_\lambda(X)=\sum_{T\in\SSYT(\lambda)} X^T=\sum_{S\in\SYT(\lambda)}\sum_{T\in\St^{-1}(S)} X^T=\sum_{S\in\SYT(\lambda)}Q_{|\lambda|,R(S)}(X).
\end{align*}
\end{proof}

Now, recall the super quasi-symmetric functions $\tilde{Q}_{n,J}(Z)$ from Definition \eqref{equation:super Gessel}. In \cite{BHMPS}, the following generalization of Lemma \ref{lemma: schur via Gessel} is proven.

\begin{lemma}[\cite{BHMPS}, Proposition $2.4.2$]\label{lemma:super schur viaa super Gessel}
    One can express the superization of Schur functions in terms of the superization of the Gessel's functions:
\begin{equation}\label{eq: schur via Gessel, super}
    \tilde{s}_\lambda(Z)=\sum_{S\in\SYT(\lambda)}\tilde{Q}_{|\lambda|,R(S)}(Z).
\end{equation}
\end{lemma}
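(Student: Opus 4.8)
The statement to be proved is Lemma~\ref{lemma:super schur viaa super Gessel}, the superized version of the classical expansion $s_\lambda = \sum_{S \in \SYT(\lambda)} Q_{|\lambda|, R(S)}$. The plan is to mimic the proof of Lemma~\ref{lemma: schur via Gessel} just given, but replacing the ordinary standardization map $\St : \SSYT(\lambda) \to \SYT(\lambda)$ with a \emph{super standardization} map $\St_\pm : \SSYT_\pm(\lambda) \to \SYT(\lambda)$ adapted to the ordered super-alphabet $\aA = \aA_X \sqcup \aA_Y$. Since we have already recorded that $\widetilde{s}_\lambda(Z) = \sum_{T \in \SSYT_\pm(\lambda)} Z^T$ (the Corollary following Lemma~\ref{lemma: <f,eh>=coef}), it suffices to partition $\SSYT_\pm(\lambda)$ into fibers of $\St_\pm$ indexed by standard tableaux $S$, and to show that the fiber over $S$ contributes exactly $\widetilde{Q}_{|\lambda|, R(S)}(Z)$ to the monomial generating function.

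\textbf{Key steps.} First I would define $\St_\pm(T) = S$ for $T \in \SSYT_\pm(\lambda)$ by the rules: (i) if $T(\Box_1) < T(\Box_2)$ in the order on $\aA$, then $S(\Box_1) < S(\Box_2)$; (ii) if $T(\Box_1) = T(\Box_2) \in \aA_X$ and $\Box_1$ is strictly left of $\Box_2$, then $S(\Box_1) < S(\Box_2)$; (iii) if $T(\Box_1) = T(\Box_2) \in \aA_Y$ and $\Box_1$ is strictly \emph{above} $\Box_2$, then $S(\Box_1) < S(\Box_2)$. I would check that these rules are consistent: equal $\aA_X$-entries cannot share a column (columns strictly increase in $\aA_X$), so they are totally ordered by column position within a row; equal $\aA_Y$-entries cannot share a row (rows strictly increase in $\aA_Y$), so they are totally ordered by row position within a column; and one must verify that the induced total order on cells yields a genuine standard Young tableau (rows increase left-to-right, columns increase bottom-to-top) — this is the one genuinely fiddly verification, using that within each ``block'' of equal entries the two tie-breaking conventions for $X$-type and $Y$-type entries are precisely the opposite ones, which is exactly what forces both the row-increase and column-increase conditions to hold after standardization. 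Second, I would reverse the construction to describe the fiber $\St_\pm^{-1}(S)$: writing $\Box_1, \dots, \Box_n$ for the cells with $S(\Box_k) = k$, a filling $T' \in \SSYT_\pm(\lambda)$ lies in $\St_\pm^{-1}(S)$ iff $T'(\Box_k) \le T'(\Box_{k+1})$ for all $k$, with the refinement that if $T'(\Box_k) = T'(\Box_{k+1}) \in \aA_X$ then $k \in R(S)$ (a right step), while if $T'(\Box_k) = T'(\Box_{k+1}) \in \aA_Y$ then $k \in U(S)$, equivalently $k \notin R(S)$. Comparing with Definition~\eqref{equation:super Gessel} (with $J = R(S)$), this says precisely that $\sum_{T' \in \St_\pm^{-1}(S)} Z^{T'} = \widetilde{Q}_{n, R(S)}(Z)$. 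Summing over $S \in \SYT(\lambda)$ gives the identity.

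\textbf{Main obstacle.} The only real content is the consistency check in step one: verifying that $\St_\pm(T)$ is a well-defined standard tableau. The subtlety is that equal entries of an $\aA_X$-letter get ordered by increasing column (left-to-right within a row), while equal entries of an $\aA_Y$-letter get ordered by increasing row (bottom-to-top within a column), and one must confirm that after assigning the distinct labels $1, \dots, n$ consistently with $T$'s order and these two opposite tie-breaks, the result still has strictly increasing rows and strictly increasing columns. For rows: cells in a common row with distinct $T$-values are fine; cells with equal $\aA_X$-value in a row get increasing $S$-labels left-to-right by rule (ii); cells with equal $\aA_Y$-value cannot both lie in a row. A symmetric argument handles columns. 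So the check goes through, but it is the place where the construction could fail if the tie-breaking conventions were chosen inconsistently, and it deserves to be written out carefully. Everything else — the fiber description and the match with $\widetilde{Q}_{n,J}$ — is a direct unwinding of definitions, exactly parallel to the non-super case already proved.
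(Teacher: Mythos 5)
Your approach is essentially the same as the paper's: define a super standardization map $\St_\pm\colon \SSYT_\pm(\lambda)\to\SYT(\lambda)$, describe its fibers, and identify the monomial sum over $\St_\pm^{-1}(S)$ with $\widetilde{Q}_{|\lambda|,R(S)}$. You even spell out the well-definedness check more carefully than the paper does.

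There is, however, an internal inconsistency in your rule~(iii). As written (``$\Box_1$ strictly \emph{above} $\Box_2$ implies $S(\Box_1)<S(\Box_2)$''), it would assign the \emph{smaller} label to the \emph{upper} of two equal $\aA_Y$-entries lying in a common column, which directly violates the column-increasing condition for a standard Young tableau --- i.e.\ it breaks precisely the ``one genuinely fiddly verification'' you single out. The correct rule, and the one the paper uses, is the opposite: if $T(\Box_1)=T(\Box_2)\in\aA_Y$ and $\Box_1$ lies strictly \emph{below} $\Box_2$, then $S(\Box_1)<S(\Box_2)$. Your fiber description ($\aA_Y$ ties force $k\in U(S)$, meaning $\Box_{k+1}$ sits above $\Box_k$) and your ``Main obstacle'' paragraph (equal $\aA_Y$-entries ``ordered by increasing row, bottom-to-top within a column'') both tacitly use this corrected version, so the slip reads as a transcription error rather than a conceptual gap; but as stated, rule~(iii) contradicts the rest of your argument and should be fixed.
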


One can prove Lemma \ref{lemma:super schur viaa super Gessel} by extending the standardization map $\St:\SSYT(\lambda)\to\SYT(\lambda)$ to the set of super Young tableaux $\SSYT_\pm(\lambda)$, as we now explain.

Let $T\in\SSYT_\pm(\lambda)$ be a super Young tableau. Then the standard tableau $S:=\St_\pm(T)$ is defined by the conditions:
\begin{enumerate}
    \item If $T(\Box_1)<T(\Box_2)$, then $S(\Box_1)<S(\Box_2),$
    \item If $T(\Box_1)=T(\Box_2)\in \aA_X$ and $\Box_1$ is to the left of $\Box_2$, then $S(\Box_1)<S(\Box_2).$    
    \item If $T(\Box_1)=T(\Box_2)\in \aA_Y$ and $\Box_1=(x_1,y_1)$ is lower than $\Box_2=(x_2,y_2)$ (i.e. $y_1<y_2$), then $S(\Box_1)<S(\Box_2).$
\end{enumerate}  
Similarly to before, these conditions uniquely determine the standard tableau $S:=\St_\pm(T).$ Furthermore, the fibers of the map $\St_\pm:\SSYT_\pm(\lambda)\to\SYT(\lambda)$ can be described as follows. Hence, $T'\in\St_\pm(\lambda)$ if and only if for all $k\in\{1,\ldots, n-1\}$ and
\begin{enumerate}
    \item $T'(\Box_k)\le T'(\Box_{k+1}),$
    \item If $T'(\Box_k)= T'(\Box_{k+1})\in\aA_X$, then $k\in R(S),$
    \item If $T'(\Box_k)= T'(\Box_{k+1})\in\aA_Y$, then $k\in U(S).$
\end{enumerate}
Hence
\begin{align*}
    \tilde{s}_\lambda(Z)=\sum_{T\in\SSYT_\pm(\lambda)} Z^T=\sum_{S\in\SYT(\lambda)}\sum_{T\in\St_\pm^{-1}(S)} Z^T=\sum_{S\in\SYT(\lambda)}\tilde{Q}_{|\lambda|,R(S)}(Z).
\end{align*}

Extending Lemmas \ref{lemma: schur via Gessel} and\ref{lemma:super schur viaa super Gessel} by linearity, we obtain Proposition \ref{prop:super}.

The proofs of Lemmas \ref{lemma: schur via Gessel} and \ref{lemma:super schur viaa super Gessel} also work for skew Schur functions, so that for any skew shape $\nu=\lambda/\mu$, the following equalities hold:
\begin{equation}
    s_\nu(X)=\sum_{T\in\SSYT(\nu)} X^T=\sum_{S\in\SYT(\nu)} Q_{n,R(S)}(X),
\end{equation}
and
\begin{equation}
    \tilde{s}_\nu(Z)=\sum_{S\in\SYT(\nu)} \widetilde{Q}_{n,R(S)}(Z)=\sum_{T\in\SSYT_\pm(\nu)} Z^T,
\end{equation}
where for the first step we use Proposition \ref{prop:super}.

\subsection{Standardization of LLT polynomials}
 Following \cite{HHLRU}, it is now our goal to apply the standardization construction to the polynomials $\mathcal{L}_\pi(X,t)=\sum_{T\in\SSYT(\rho_\pi)}t^{\inv(T)}X^T$ and express them as linear combinations of Gessel's quasi-symmetric functions. The challenge is to modify the standardization map $\St:\SSYT(\rho_\pi)\to \SYT(\rho_\lambda)$ so that it preserves the $\inv$ statistic. Luckily, the skew shape $\rho_\pi$ is a vertical strip, which enables us to do the required modifications.

Consider the map $\ST:\SSYT(\rho_\pi)\to\SYT(\rho_\pi)$ defined as follows: Given $T\in\SSYT(\rho_\pi)$, its standardization $S:=\ST(T)$ is uniquely defined by the conditions:
\begin{enumerate}
    \item If $T(\Box_1)<T(\Box_2)$, then $S(\Box_1)<S(\Box_2),$
    \item If $T(\Box_1)=T(\Box_2)$ and $\phi(\Box_1)<\phi(\Box_2)$, then $S(\Box_1)<S(\Box_2),$
\end{enumerate}
where $\phi$ is given as in Definition \ref{def:phi}. 

Note that the condition on the slope implies that $\phi(\Box_1)\neq\phi(\Box_2)$ for any two distinct squares of $\rho_\pi,$ therefore, the above conditions define a total order on the values of $S$ on the boxes of $\rho_\pi.$ Since $\rho_\pi$ is a vertical strip, to show that this order defines a standard tableau, one only needs to check that it is increasing in columns down to up. But $T$ is strictly increasing in columns, hence by condition ($1$) so does $S$.

Reversing the construction, one gets that for a standard tableau $S\in\SYT(\rho_\pi)$, the preimage $\ST^{-1}(S)\subset\SSYT(\rho_\pi)$ can be described as follows. Let $\Box_1,\ldots,\Box_n$ be the boxes of $\rho_\pi$ such that $S(\Box_k)=k$, for all $k.$ Then $T'\in\ST^{-1}(S)$ if and only if for all $k\in\{1,\ldots,n-1\}$
\begin{enumerate}
    \item $T'(\Box_k)\le T'(\Box_{k+1})$ and
    \item If $T'(\Box_k)= T'(\Box_{k+1})$ then $\phi(\Box_k)<\phi(\Box_{k+1}).$
\end{enumerate}

\begin{definition}
    The set $d_\phi(S)$ of \newword{$\phi$-descents} of a standard tableau $S\in\SYT(\rho_\pi)$ is given by
    \begin{equation*}
        d_\phi(S):=\left\{k\in\{1,\ldots,n-1\}|\phi(\Box_k)>\phi(\Box_{k+1})\right\}.
    \end{equation*}
\end{definition}

\begin{lemma}\label{lemma: st preserves inv}
    For any semistandard tableau $T\in \SSYT(\rho_\pi)$ one gets
    \begin{equation*}
        \inv(T)=\inv(\ST(T)).
    \end{equation*}
\end{lemma}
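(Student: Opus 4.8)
\textbf{Proof strategy for Lemma \ref{lemma: st preserves inv}.}
The plan is to show that the standardization map $\ST$ changes no attacking pair from inversion to non-inversion or vice versa, by a local (adjacent-transposition) argument along the reading order. First I would fix $T\in\SSYT(\rho_\pi)$, set $S=\ST(T)$, and recall that by Definition \ref{def:invSSTY} an attacking pair $\Box_1,\Box_2$ with $\phi(\Box_1)<\phi(\Box_2)$ contributes to $\inv$ exactly when $T(\Box_1)>T(\Box_2)$ (note that, since $\rho_\pi$ is a vertical strip, every column has a single box, so there are no equal-entry attacking pairs for the \emph{un}-superized statistic, and the $\aA_Y$-clause in Definition \ref{def:invSSTY} is vacuous here). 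Let $\Box_1,\Box_2$ be attacking, say with $\phi(\Box_1)<\phi(\Box_2)$. By property (1) of the standardization, $T(\Box_1)<T(\Box_2)$ forces $S(\Box_1)<S(\Box_2)$, and $T(\Box_1)>T(\Box_2)$ forces $S(\Box_1)>S(\Box_2)$; in the remaining case $T(\Box_1)=T(\Box_2)$, property (2) of $\ST$ uses precisely the ordering by $\phi$, so $\phi(\Box_1)<\phi(\Box_2)$ gives $S(\Box_1)<S(\Box_2)$. In all three cases the sign of $S(\Box_1)-S(\Box_2)$ agrees with the (weak) sign convention that makes the attacking pair count toward $\inv(S)$ exactly when it counts toward $\inv(T)$. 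Summing over attacking pairs yields $\inv(T)=\inv(S)$.

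The one subtlety to nail down is that the tie-breaking in the definition of $\ST$ is \emph{compatible} with the $\phi$-based inversion convention: when $T(\Box_1)=T(\Box_2)$ the pair is, by the $\SSYT$ (non-super) convention, a non-inversion, and indeed $\ST$ then assigns $S(\Box_1)<S(\Box_2)$ whenever $\phi(\Box_1)<\phi(\Box_2)$, i.e.\ it remains a non-inversion of $S$. Here I would use that the slope $s/r=n/m$ is irrational-free in the sense that $\phi(\Box_1)\neq\phi(\Box_2)$ for distinct boxes of $\rho_\pi$ (this is exactly the observation made just before the statement, coming from $\gcd(m,n)=1$ and the column positions differing by less than $m$), so the two conditions (1)--(2) really do define a total order on the values of $S$, and there is never an ambiguous case. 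The main obstacle, such as it is, is bookkeeping the sign conventions: one must check that the asymmetric ``$\phi(\Box_1)<\phi(\Box_2)$ and $S(\Box_1)>S(\Box_2)$'' description of an inversion of $S$ matches the ``$\phi(\Box_1)<\phi(\Box_2)$ and $T(\Box_1)>T(\Box_2)$'' description for $T$ on every attacking pair, and that attacking is a property of the pair of \emph{boxes} (via $\phi$) and hence unchanged by passing from $T$ to $S$.

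Finally I would remark that the argument is purely local to pairs of boxes and does not use any global structure of $\rho_\pi$ beyond its being a vertical strip; in particular the same reasoning will apply verbatim to the super standardization map $\ST_\pm$ once it is introduced, with the $\aA_Y$-equality clause handled by property (3) of $\ST_\pm$. This makes Lemma \ref{lemma: st preserves inv} the base case of the subsequent identification of $\mathcal L_\pi(X,t)$ (and its superization) with a positive sum of (super) Gessel quasisymmetric functions indexed by $\SYT(\rho_\pi)$ weighted by $t^{\inv(S)}$, in direct parallel with the treatment in \cite{HHLRU} and the appendix of \cite{HHL05}.
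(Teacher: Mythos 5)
Your argument is correct and is essentially the paper's own proof: one checks, for each attacking pair $(\Box_1,\Box_2)$ ordered so that $\phi(\Box_1)<\phi(\Box_2)$, that the comparison of $S$-entries has the same sign as the comparison of $T$-entries, using property~(1) of $\ST$ in the strict cases and property~(2) in the equal case, and then sums over attacking pairs. One factual slip worth noting: $\rho_\pi=(\lambda+1^{\ts})/\lambda$ is a skew shape with exactly one box per \emph{row}, not per column; two distinct rows with $\lambda_i=\lambda_{i+1}$ produce boxes of $\rho_\pi$ in the same column, and more importantly equal-entry attacking pairs between boxes in \emph{different} columns certainly can occur, so the parenthetical claim that ``there are no equal-entry attacking pairs'' is false. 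Since your main argument treats the case $T(\Box_1)=T(\Box_2)$ explicitly via property~(2) of $\ST$ (and the $\aA_Y$-clause is vacuous simply because entries of a non-super $T$ lie in $\Z_{\ge 0}$, not because of any shape constraint), the proof is unaffected; you should just delete the incorrect parenthetical justification.
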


\begin{proof}
    Denote $S:=\ST(T).$ Let $(\Box_1,\Box_2)$ be an attacking pair of boxes of $\rho_\pi$ (i.e. $|\phi(\Box_1)-\phi(\Box_2)|<1$). Without loss of generality, assume that $\phi(\Box_1)>\phi(\Box_2).$ The pair $(\Box_1,\Box_2)$ forms an attacking inversion of $T$ if and only if $T(\Box_1)<T(\Box_2),$ in which case one also has $S(\Box_1)<S(\Box_2),$ so the pair $(\Box_1,\Box_2)$ also forms an attacking inversion of $S.$

    Conversely, the pair $(\Box_1,\Box_2)$ forms an attacking inversion of $S$ if and only if $S(\Box_1)<S(\Box_2).$ Since $\phi(\Box_1)>\phi(\Box_2),$ this implies that $T(\Box_1)<T(\Box_2),$ so the pair $(\Box_1,\Box_2)$ also forms an attacking inversion for $T.$

    We conclude that the attacking inversions of $T$ and of $S=\ST(T)$ are in bijection, hence $\inv(T)=\inv(\ST(T)).$
\end{proof}

Lemma \ref{lemma: st preserves inv} and the above construction imply

\begin{lemma}\label{lemma:LLT via Gessel}
    For any $\pi \in \D(r,s)$,
    \begin{equation*}
        \mathcal{L}_\pi(X,t)=\sum_{S\in\SYT(\rho_\pi)} t^{\inv(S)}Q_{n,d_\phi(S)}(X).
    \end{equation*}
\end{lemma}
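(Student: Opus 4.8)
The plan is to prove the identity by a standardization argument: partition the summation set $\SSYT(\rho_\pi)$ into the fibers of the map $\ST\colon\SSYT(\rho_\pi)\to\SYT(\rho_\pi)$ introduced above, observe that $\inv$ is constant on each fiber, and recognize the monomial generating function of each fiber as a fundamental quasisymmetric function.

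Concretely, I would first write
$$
\mathcal{L}_\pi(X,t)=\sum_{T\in\SSYT(\rho_\pi)}t^{\inv(T)}X^T=\sum_{S\in\SYT(\rho_\pi)}\ \sum_{T\in\ST^{-1}(S)}t^{\inv(T)}X^T,
$$
which is valid because $\ST$ is a well-defined surjection onto $\SYT(\rho_\pi)$. By Lemma~\ref{lemma: st preserves inv}, $\inv(T)=\inv(\ST(T))=\inv(S)$ for every $T$ in the fiber $\ST^{-1}(S)$, so the inner sum factors as $t^{\inv(S)}\sum_{T\in\ST^{-1}(S)}X^T$. It then remains to evaluate $\sum_{T\in\ST^{-1}(S)}X^T$. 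Fix $S\in\SYT(\rho_\pi)$, let $\Box_1,\dots,\Box_n$ be the boxes of $\rho_\pi$ ordered so that $S(\Box_k)=k$, and for $T\in\SSYT(\rho_\pi)$ set $a_k:=T(\Box_k)$, so that $X^T=x_{a_1}\cdots x_{a_n}$. The description of the fiber $\ST^{-1}(S)$ recalled just before the lemma states that $T\in\ST^{-1}(S)$ precisely when $a_1\le a_2\le\cdots\le a_n$ and, whenever $a_k=a_{k+1}$, one has $\phi(\Box_k)<\phi(\Box_{k+1})$, i.e. $k\notin d_\phi(S)$. Comparing with Definition~\ref{def:Qsym}, this is exactly the defining condition on the monomials of $Q_{n,d_\phi(S)}(X)$, so $\sum_{T\in\ST^{-1}(S)}X^T=Q_{n,d_\phi(S)}(X)$. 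Substituting back yields
$$
\mathcal{L}_\pi(X,t)=\sum_{S\in\SYT(\rho_\pi)}t^{\inv(S)}Q_{n,d_\phi(S)}(X),
$$
as claimed.

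The argument is essentially bookkeeping once the map $\ST$, its fibers, and Lemma~\ref{lemma: st preserves inv} are in hand, so I do not anticipate a genuine obstacle. The one point that warrants explicit care is the precise dictionary between the fiber condition ``$a_k=a_{k+1}\Rightarrow\phi(\Box_k)<\phi(\Box_{k+1})$'' and the quasisymmetric condition ``$a_k=a_{k+1}\Rightarrow k\notin J$'' with $J=d_\phi(S)$; here one uses that the genericity of the slope of $L_{r,s}$ forces $\phi(\Box_k)\ne\phi(\Box_{k+1})$ for distinct boxes of the vertical strip $\rho_\pi$, so that the events ``$\phi(\Box_k)<\phi(\Box_{k+1})$'' and ``$k\in d_\phi(S)$'' are genuinely complementary and $d_\phi(S)$ is unambiguously defined.
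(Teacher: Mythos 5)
Your proof is correct and coincides with the paper's own argument: both decompose $\mathcal{L}_\pi$ over the fibers of the standardization map $\ST$, invoke Lemma~\ref{lemma: st preserves inv} to pull out $t^{\inv(S)}$, and identify the fiber generating function with $Q_{n,d_\phi(S)}(X)$ via the fiber description recalled just before the lemma. The only thing you make more explicit than the paper is the observation that genericity of the slope forces $\phi(\Box_k)\ne\phi(\Box_{k+1})$, which the paper uses silently when defining $d_\phi(S)$; this is a useful clarification but not a new idea.
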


Applying Proposition \ref{prop:super}, we also obtain expansions for the superizations of the LLT polynomials:
\begin{equation}\label{equation: super LLT via super Gessel}   \tilde{\mathcal{L}}_\pi(Z,t)=\sum_{S\in\SYT(\rho_\pi)} t^{\inv(S)}\tilde{Q}_{n,d_\phi(S)}(Z).
\end{equation}

Now, consider an extension of the map $\ST:\SSYT(\rho_\pi)\to\SYT(\rho_\pi)$ to the set $\SSYT_\pm(\rho_\pi)$ of super tableaux, defined as follows. Given a super tableau $T\in\SSYT_\pm(\rho_\pi)$, its standardization $S:=\ST_\pm(T)\in\SYT(\rho_\pi)$ is uniquely determined by the following conditions:

\begin{enumerate}
    \item If $T(\Box_1)<T(\Box_2)$, then $S(\Box_1)<S(\Box_2),$
    \item If $T(\Box_1)=T(\Box_2)\in \mathcal{A}_X$ and $\phi(\Box_1)<\phi(\Box_2)$, then $S(\Box_1)<S(\Box_2),$
    \item If $T(\Box_1)=T(\Box_2)\in \mathcal{A}_Y$ and $\phi(\Box_1)<\phi(\Box_2)$, then $S(\Box_1)>S(\Box_2).$
\end{enumerate}

Similar to before, these conditions uniquely define the standard tableau $S=\ST_\pm(T).$ Furthermore, one can reverse the construction to compute the fiber $\ST_\pm^{-1}(S).$ As before, let $\Box_1,\ldots,\Box_n$ be the boxes of $\rho_\pi,$ such that $S(\Box_k)=k$ for all $k.$ Then $T'\in\ST_\pm^{-1}(S)$ if and only if for all $k\in\{1,\ldots,n-1\}$, and
\begin{enumerate}
    \item $T'(\Box_k)\le T'(\Box_{k+1}),$
    \item If $T'(\Box_k)= T'(\Box_{k+1})\in\mathcal{A}_X$, then $\phi(\Box_k)<\phi(\Box_{k+1}),$ and
    \item If $T'(\Box_k)= T'(\Box_{k+1})\in\mathcal{A}_Y$, then $\phi(\Box_k)>\phi(\Box_{k+1}).$
\end{enumerate}

In particular, for any $S\in\SYT(\rho_\pi)$, it follows that

\begin{equation*}
    \sum_{T\in\ST_\pm^{-1}(S)} Z^T=\sum_{\substack{a_1\le\ldots\le a_n\in\aA\\ a_i=a_{i+1}\in\aA_+\Rightarrow \phi(\Box_k)<\phi(\Box_{k+1})\\
    a_i=a_{i+1}\in\aA_-\Rightarrow \phi(\Box_k)<\phi(\Box_{k+1})}} Z^T=\sum_{\substack{a_1\le\ldots\le a_n\in\aA\\ a_i=a_{i+1}\in\aA_+\Rightarrow k\notin d_\phi(S)\\
    a_i=a_{i+1}\in\aA_-\Rightarrow k\in d_\phi(S)}} Z^T=\widetilde{Q}_{n,d_\phi(S)}(Z).
\end{equation*}
The definition of $\inv$ statistic can be extended to the super tableaux using $\ST_\pm:$
\begin{definition}
    Let $T\in\SSYT_\pm(\rho_\pi)$ and set
    \begin{equation*}
        \inv(T):=\inv(\ST_\pm(T)).
    \end{equation*}
\end{definition}
Then the following holds:
\begin{equation*}
\tilde{\mathcal{L}}_\pi(Z,t)=\sum_{S\in\SYT(\rho_\pi)} t^{\inv(S)}\tilde{Q}_{n,d_\phi(S)}(Z)=\sum_{T\in\SSYT_\pm(\rho_\pi)} t^{\inv(T)} Z^T.    
\end{equation*}
Now, recall the notion of attacking inversions from Definition \ref{def:invSSTY}. Applying Lemma \ref{lemma:LLT via Gessel} to the RHS of the shuffle theorem under any line one implies:
\begin{align*}
\Hik_{\tau_{r,s}}(X;q,t)=&\sum_{\pi \in \D(r,s)} q^{\area(\pi)}t^{\dinv(\pi)-I(\pi)}\mathcal{L}_{\pi}(X;t)\\
=&\sum_{\pi \in \D(r,s)} q^{\area(\pi)}t^{\dinv(\pi)-I(\pi)}\sum_{S\in\SYT(\rho_\pi)}t^{\inv(S)}Q_{n,d_\phi(S)}(X).
\end{align*}
Applying Proposition \ref{prop:super} and \eqref{equation:super Gessel}, one can extend these formula to the superizations.
\begin{align*}
    \widetilde{\Hik}_{\tau_{r,s}}(Z;q,t)=&\sum_{\pi \in \D(r,s)} q^{\area(\pi)}t^{\dinv(\pi)-I(\pi)}\sum_{S\in\SYT(\rho_\pi)}t^{\inv(S)}\widetilde{Q}_{n,D_\phi(S)}(Z)\\
    =&\sum_{\pi \in \D(r,s)} q^{\area(\pi)}t^{\dinv(\pi)-I(\pi)}\sum_{T\in\SSYT_\pm(\rho_\pi)} t^{\inv(T)}Z^T.    
\end{align*}
We now can apply Lemma \ref{lemma: <f,eh>=coef} to get the following formula:
\begin{align*}
    \langle {\Hik}_{\tau_{r,s}}(X;q,t),h_\mu e_\eta\rangle=&\;\widetilde{\Hik}_{\tau_{r,s}}(X,Y;q,t) \vert_{X^\mu Y^\eta}\\
    =&\sum_{\pi \in \D(r,s)} q^{\area(\pi)}t^{\dinv(\pi)-I(\pi)}\sum_{T\in\SSYT_\pm(\rho_\pi;\mu,\eta)}t^{\inv(T)}.
\end{align*}
Specializing, the result now follows. 

\section{Sequences for Links}\label{sec:AppendixB}

Let $a$ and $b$ be positive relatively prime integers, and $g$ be an integer greater than $1.$ Consider $\tau=\tau_{ga,gb}.$ Let also $c$ and $d$ be the unique positive integers such that $c\le a,$ $d\le b,$ and $ad-bc=1.$ Set $m:=ga+c,$ $n:=gb+d,$ and $\ell=mn-dn-1=(gb+d)ga-1.$
Finally, set $(r,s):=(\ell/n,\ell/m)=\left(\frac{(gb+d)ga-1}{gb+d},\frac{(gb+d)ga-1}{ga+c}\right).$

\begin{lemma}\label{lemma: Kdmdn as Luv}
    In the notations as above one obtains
    \begin{enumerate}
        \item $\tau_{r,s}=\tau=\tau_{ga,gb},$
        \item ${\bf u}(m,n,\ell)=0^{g-1}10^{g(a-1)}$ and ${\bf v}(m,n,\ell)=0^{gb}1.$
    \end{enumerate}
\end{lemma}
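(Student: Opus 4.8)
### Proof proposal for Lemma B.2

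The plan is to verify both statements by direct computation, using the explicit formulas already established earlier in the paper. For statement (1), I first need to check that the line $L_{m,n,\ell} = \{nx + my = \ell\}$ with the given data cuts out precisely the partition $\tau_{ga,gb}$. Since $\ell = mn - dn - 1 = (gb+d)ga - 1$ and $n = gb+d$, $m = ga+c$, I would compute the $x$- and $y$-intercepts of $L_{m,n,\ell}$, namely $\ell/n = ga - 1/(gb+d)$ and $\ell/m = \frac{(gb+d)ga - 1}{ga+c}$, and show that a box $(i,j)$ lies weakly below $L_{m,n,\ell}$ if and only if it lies weakly below $L_{ga,gb} = \{(gb)x + (ga)y = (ga)(gb)\}$. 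The key arithmetic input is the relation $ad - bc = 1$, which controls how the shifted line sits relative to the main diagonal of the $(ga,gb)$ rectangle. Concretely, I would show $ni + mj \le \ell \iff (gb)i + (ga)j \le (ga)(gb)$ for all $(i,j) \in \Z_{>0}^2$; the forward direction uses that $ni + mj$ and $(gb)i + (ga)j$ differ by $ci + dj$ (after scaling), and one bounds $ci + dj$ using $i \le ga$, $j \le gb$ together with $ad = bc + 1$. This is the step I expect to require the most care, since it amounts to checking that the perturbation from $(gb,ga)$ to $(n,m) = (gb+d, ga+c)$ does not add or remove any boxes — essentially the assertion that $\tau_{ga,gb}$ is $(m,n)$-triangular with the stated $\ell$, which is exactly the kind of statement guaranteed to be possible by Lemma \ref{lemma:triangular} and Corollary \ref{cor:(m,n,l)}, but here one needs the specific witness.

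For statement (2), I would invoke Lemma \ref{lemma: x and y from m,n,l}, which gives closed forms for $\mathbf{x}(m,n,\ell)$ and $\mathbf{y}(m,n,\ell)$ in terms of residues modulo $m$ and $n$. For $\mathbf{v}(m,n,\ell) \in \{0,1,\bullet\}^n$: the entry $y_j = 1$ iff $j \equiv \ell \pmod n$, and $y_j = 0$ iff $j \equiv \ell - my \pmod n$ for some $y \in \{1, \dots, \lfloor \ell/m \rfloor\}$, with $\bullet$ otherwise. Since $\ell = mn - dn - 1 \equiv -dn - 1 \equiv -1 \pmod{?}$— wait, more carefully, $\ell \equiv mn - dn - 1 \pmod n$ gives $\ell \equiv -1 \pmod n$ only if $n \mid dn$, which holds, so $\ell \equiv n-1 \pmod n$; hence $y_{n-1} = 1$. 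Then I would count: $\lfloor \ell/m \rfloor = \lfloor (mn - dn - 1)/m \rfloor = n - d - 1$ (using $0 < dn + 1 \le \dots$, and $n - d \ge 1$), so there are $n - d - 1$ potential $0$-positions $\ell - m, \ell - 2m, \dots, \ell - (n-d-1)m$ mod $n$, and I must show these residues, together with the position $n-1$, exhaust $\{0, 1, \dots, n-1\}$ — i.e. there are no $\bullet$'s in $\mathbf{y}$, so $\mathbf{v}(m,n,\ell) = \mathbf{y}(m,n,\ell)$ has length $n$ minus... actually $\mathbf{v}$ has length $n' = \lfloor \ell/m\rfloor + 1 = n - d$. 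Hmm: the claim is $\mathbf{v}(m,n,\ell) = 0^{gb}1$, which has length $gb + 1 = n - d$. Good, consistent. So I need to show $\mathbf{y}(m,n,\ell)$ consists of a block of $0$'s followed by a single $1$ (after deleting $\bullet$'s), equivalently that the unique $1$ (at residue $n-1$) comes last in reading order and the remaining non-$\bullet$ entries are all $0$. This reduces to showing the residues $\{\ell - km \bmod n : 1 \le k \le n-d-1\}$ together with $\{n-1\}$ and the $\bullet$-positions partition $\{0,\dots,n-1\}$, and that after removing $\bullet$'s the $1$ is rightmost. I would use the coprimality of $m,n$ so that $k \mapsto \ell - km \bmod n$ is injective, hence these are $n - d - 1$ distinct residues, and then identify the remaining $d$ residues (one of which is $n-1$) as the $\bullet$-positions. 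The analogous computation for $\mathbf{x}(m,n,\ell) \in \{0,1,\bullet\}^m$, using $\ell \equiv m - ? \pmod m$ and $\lfloor \ell/n \rfloor$, should produce $\mathbf{u}(m,n,\ell) = 0^{g-1}10^{g(a-1)}$ of length $m' = \lfloor \ell/n\rfloor + 1 = ga = g \cdot a$; here the position of the $1$ (at residue $\ell \bmod m$) must land so that $g-1$ zeros precede it and $g(a-1)$ zeros follow, which pins down exactly which residues become $\bullet$.

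I expect the main obstacle to be the bookkeeping in statement (2): translating the residue conditions of Lemma \ref{lemma: x and y from m,n,l} into the precise block structure $0^{g-1}10^{g(a-1)}$ and $0^{gb}1$ requires carefully tracking, for each of the $d$ (resp. $c$) $\bullet$-positions, where it falls relative to the unique $1$, and this is where the relation $ad - bc = 1$ and the inequalities $c \le a$, $d \le b$ do the real work. A clean way to organize this might be to use the semigroup description: $x_i = 1$ iff $i + mn - \ell - m$ is an $n$-generator of $\Gamma_{m,n}$, and $x_i = 0$ iff $\ell - n - i \in \Gamma_{m,n}$, as in the proof of Lemma \ref{lemma: x and y from m,n,l}; then since $mn - \ell - m = dn + 1 - m = dn - (ga + c) + 1$, I can phrase everything in terms of membership in $\Gamma_{m,n}$ and its complement $mn - (m+n) - \Gamma_{m,n}$, reducing to counting lattice points. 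An alternative, possibly cleaner, route for the whole lemma would be to appeal to Example \ref{ex:iterated torus knot as Ktau} and Lemma \ref{lem:K-L isotopic}: since $K_{\tau_{ga,gb}}$ is the $(g, gab g + 1)$-cable of $T(a,b)$, whose binary-sequence presentation $K_{0^{g-1}10^{g(a-1)}, 0^{gb}1}$ is recorded in \S\ref{ss:intro knot families}(2), one could instead verify statement (1) and then deduce statement (2) from the already-known correspondence, checking only that the triple $(m,n,\ell)$ here is a legitimate witness for $\tau_{ga,gb}$ — but since the lemma is stated as a purely combinatorial identity, I would present the direct residue computation as the primary argument.
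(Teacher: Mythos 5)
Your approach for statement (2) --- plugging the specific triple into Lemma \ref{lemma: x and y from m,n,l} and working out residues --- is indeed the paper's route, but the paper finds a shortcut at the step you flag as the main obstacle, and your arithmetic has some slips along the way.

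The paper does not need to locate the $\bullet$'s at all. For $\mathbf{y}$, one computes $\ell\equiv -1\equiv n-1\pmod n$, so the unique $1$ is the \emph{last} entry $y_{n-1}$, and $\lfloor\ell/m\rfloor=gb$ counts the zeros; since deleting $\bullet$'s keeps the $1$ at the end, $\mathbf v=0^{gb}1$ immediately. For $\mathbf x$, the paper shows $\ell\equiv g-1\pmod m$ (via the congruence $(gb+d)a\equiv 1\pmod{ga+c}$, which uses $ad-bc=1$), and then substitutes $x=ka$, $k=1,\dots,g-1$, into the $0$-condition to get $g-(gb+d)ak-1\equiv g-k-1\pmod m$; this directly pins down $x_0=\dots=x_{g-2}=0$ and $x_{g-1}=1$, and counting the $ga-1$ zeros finishes. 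Your proposal of cataloguing the $d$ (resp.\ $c$) positions that become $\bullet$ would work but is strictly more bookkeeping than needed. On the arithmetic side: $\lfloor\ell/m\rfloor=gb=n-d$, not $n-d-1$, and your ``consistency check'' $gb+1=n-d$ is false (it should be $gb+1=n-d+1$); you have two compensating off-by-one errors. These would surface if you tried to carry out the residue count you outline. (Part of the confusion may trace to the display $\ell=mn-dn-1=(gb+d)ga-1$, where the first expression should read $mn-cn-1$; the proof uses the second form.) Finally, your suggested ``cleaner alternative'' via Example \ref{ex:iterated torus knot as Ktau} and Lemma \ref{lem:K-L isotopic} is circular: the appendix exists precisely to give an independent proof of that example, so the example cannot be assumed here. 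As for statement (1): your direct box-by-box argument is reasonable, but note the paper itself leaves (1) essentially implicit; the cross-multiplied inequalities $gb<\ell/m<gb+1$ (and the analogous bound for $\ell/n$) are the needed ingredients, and the congruence argument gives the bulk of (2).
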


\begin{proof}
    According to Lemma \ref{lemma: x and y from m,n,l} one has
\begin{align*}
x_i &= \begin{cases}
    1 & \text{ if } i\equiv (gb+d)ga-1 \text{ mod } (ga+c) \\
    0 & \text{ if } i\equiv (gb+d)ga - (gb+d)x -1 \text{ mod } (ga+c) \text{ for } x\in \left\{1,\ldots,\left\lfloor \frac{(gb+d)ga-1}{gb+d}\right\rfloor\right\}\\
    \bullet & \text{ otherwise}
    \end{cases}\\
    y_j &= \begin{cases}
    1 & \text{ if } j\equiv (gb+d)ga-1 \text{ mod } (gb+d) \\
    0 & \text{ if } j\equiv (gb+d)ga - (ga+c)y-1 \text{ mod } (gb+d) \text{ for } y\in \left\{1,\ldots,\left\lfloor \frac{(gb+d)ga-1}{ga+c}\right\rfloor\right\}\\
    \bullet & \text{ otherwise}
    \end{cases}
\end{align*}

Note that $gb<\frac{(gb+d)ga-1}{ga+c}<gb+1.$ Indeed, cross-multiplying and using $ad-bc=1$ one gets

\begin{equation*}
    g^2ab+gda-1-g^2ab-gbc=g-1>0,
\end{equation*}
for the left inequality, and

\begin{equation*}
    g^2ab+gbc+ga+c-g^2ab-gad+1=g(a-1)+c+1>0
\end{equation*}
for the right inequality. Note also that 
\begin{equation*}
 (gb+d)a=gab+ad=gab+bc+1=b(ga+c)+1\equiv 1\text{ mod }ga+c.   
\end{equation*}

Using this and simplifying, one gets

\begin{align*}
x_i &= \begin{cases}
    1 & \text{ if } i\equiv g-1 \text{ mod } (ga+c) \\
    0 & \text{ if } i\equiv g - (gb+d)x - 1 \text{ mod } (ga+c) \text{ for } x\in \{1,\ldots,ga-1\}\\
    \bullet & \text{ otherwise}
    \end{cases}\\
    y_j &= \begin{cases}
    1 & \text{ if } j\equiv -1 \text{ mod } (gb+d) \\
    0 & \text{ if } j\equiv (gb+d)ga - (ga+c)y - 1 \text{ mod } (gb+d) \text{ for } y\in \{1,\ldots,gb\}\\
    \bullet & \text{ otherwise}
    \end{cases}
\end{align*}

Taking $x=ka$ with $k\in\{1,2,\ldots,g-1\}$ it follows that $g-(gb+d)ak-1\equiv g-k-1$ mod $(ga+c).$ One concludes that $x_0=x_1=\ldots=x_{g-2}=0,$ $x_{g-1}=1,$ and there are $ga-1$ zeros in ${\bf x}.$ Hence ${\bf u}=0^{g-1}10^{g(a-1)}.$ 

Also, there are $gb$ zeros in ${\bf y}$ and the its last entry is $1.$ Hence ${\bf v}=0^{gb}1.$
\end{proof}

\end{document}